\title{Twisted Graded Categories}
\begin{document}

\begin{titlepage}
    \maketitle
    \begin{abstract}
        Given a presentably symmetric monoidal $\infty$-category $\cC$ and an $\EE_{\infty}$-monoid $M$, we introduce and classify twisted graded categories, which generalize the Day convolution structure on $\Fun(M, \cC)$. 
        These are governed by a braiding encoded in symmetric group actions on tensor powers of invertible elements, whose character depends only on the $\TT$-equivariant monoidal dimension. 
        We analyze the $\TT$-action on the dimension of invertible objects and identify it with the $\TT$-transfer map. 
        As applications, we compute braiding characters in examples arising from higher cyclotomic extensions, including the $(\SS, \chrHeight+1)$-oriented extension of $\ModEn$ at all primes and heights, and the cyclotomic closure of $\Vectn$ at low heights.        
        \begin{figure}[h]
            \centering
            \includegraphics[width=\linewidth]{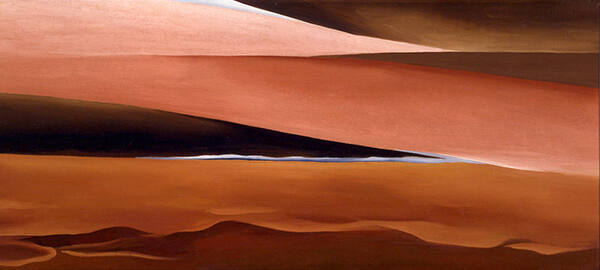}
            \caption*{Desert Abstraction, Georgia O'Keeffe, 1931}
        \end{figure}
    \end{abstract}
\end{titlepage}

\tableofcontents
\newpage

\section{Introduction}
\label{sec:intro}
    Given a space $X$ and a presentably symmetric monoidal category $\cC$\footnote{Throughout this article, we will use the term `category' to mean an `$(\infty,1)$-category.' We will use the term `space' to mean an '$(\infty,0)$-category' or an `$\infty$-groupoid'.}, one can consider the category of $X$-graded $\cC$-objects
    \begin{equation*}
        \Gr_X \cC \coloneqq \cC[X] \simeq \Fun(X, \cC).
    \end{equation*}
    Furthermore, when $X$ is equipped with the structure of a (commutative) monoid, the category $\Gr_X \cC$ naturally inherits a (symmetric) monoidal structure called the Day convolution.

    A key example of a graded category arises in algebra, where the category of $\ZZ$-graded abelian groups serves as a natural target for the homotopy (or homology) groups functor. 
    Since $\ZZ$ is a commutative monoid, one can equip this category with the Day symmetric monoidal structure. However, the functor $\pi_*$ (or $\H_*$) will not be symmetric monoidal, or even lax symmetric monoidal. 
    To remedy this, one instead considers a twisted version, in which the monoidal structure agrees with the Day convolution given by
    \begin{equation*}
        (A_{\bullet} \otimes B_{\bullet})_{\degree} = \bigoplus_{i+j = \degree} A_i \otimes B_j,
    \end{equation*}
    while the braiding is given by the Koszul sign rule.

    A similar phenomenon happens in the categorified context of super $\field$-linear categories, that is modules in $\PrL$ over the category $\sVect$ of super vector spaces. Given two $\ZZ$-graded super linear categories $\cC_{\bullet}$ and $\cD_{\bullet}$, we define their graded tensor product to be
    \begin{equation*}
        (\cC_{\bullet} \otimes \cD_{\bullet})_{\degree} = \bigoplus_{i + j = \degree} \cC_i \otimes \cD_j
    \end{equation*}
    with the braiding given by tensoring with the $(1|0)$- or $(0|1)$-dimensional super vector space, depending on the parity of the product of the degrees.

    This paper is devoted to the study of $\EE_{\infty}$-lifts of the Day convolution, generalizing the two examples discussed above. Specifically, given a commutative monoid $M$ and a presentably symmetric monoidal category $\cC$, we study symmetric monoidal structures on $\Fun(M,\cC)$ with an $\EE_1$-isomorphism to the Day convolution. We call categories equipped with such structures \emph{twisted $M$-graded categories}.
    
    A natural way to construct such categories is via the Thom construction. Let $\cU$ be a presentably symmetric monoidal category and $\Zchar \colon M \to \cU\units$.\footnote{Here, $\cU\units$ denotes the Picard spectrum of $\cU$. Since categorification plays a central and unavoidable role in this paper, we refrain from using the ambiguous notation $\pic(\cU)$, which could refer either to $\cU\units$ or to $\Mod_{\cU}(\PrL)\units$.} 
    The Thom construction of $\Zchar$ is defined to be its colimit $\Th(\Zchar) \coloneqq \colim_M \Zchar$ computed in $\cU$.
    If $\Zchar$ is moreover a map of $\EE_k$-monoids, then the Thom construction $\Th(\Zchar)$ is naturally an $\EE_k$-algebra in $\cU$. 
    When $\Zchar$ is nullhomotopic as an $\EE_k$-map, this colimit is isomorphic, as an $\EE_k$-algebra, to the monoid-algebra $\ounit_{\cU}[M]$. 
    More generally, the map $\Zchar \colon M \to \cU\units$ may be non-trivial as an $\EE_k$-map, yet become nullhomotopic when viewed as an $\EE_{\ell}$-map for some $\ell < k$. Any choice of such a nullhomotopy gives rise to an $\EE_{\ell}$-isomorphism $\Th(\Zchar) \isoto \ounit_{\cU}[M]$.
    
    We are therefore interested in the space of maps $M \to \cU\units$ with an $\EE_1$-nullhomotopy, which we denote by $\Map\Enull(M, \cU\units)$. We remark that when $M$ is grouplike, i.e.\ a connective spectrum, then
    \begin{equation*}
        \Map\Enull(M, \cU\units) \simeq \Map(M\Enull, \ounit_{\cU}\units)
    \end{equation*}
    where $M\Enull$ is the connective spectrum defined by
    \begin{equation*}
        M\Enull \coloneqq \Omega \cofib(\Sigma^{-1} \redSS[\B M] \to M) \qin \cnSp,
    \end{equation*}
    where $\redSS[-] \colon \spc_* \to \cnSp$ is the reduced suspension spectrum functor.

    Restricting to the case $\cU = \Mod_{\cC} \coloneqq \Mod_{\cC}(\PrL)$\footnote{
        The category $\Mod_{\cC}$ is not presentable, as it is too large. However, there exists a sufficiently large cardinal $\kappa$ such that any invertible $\cC$-linear category is $\kappa$-compactly generated. Since we are interested in the Thom construction, given as a colimit of a map to the Picard spectrum, we can replace $\Mod_{\cC}$ with $\Mod_{\cC}(\PrL_{\kappa})$, which is presentably symmetric monoidal. As the natural functor $\Mod_{\cC}(\PrL_{\kappa}) \to \Mod_{\cC}$ is both colimit preserving and symmetric monoidal, this replacement allows us to avoid the size issue. For brevity, we will continue to write $\Mod_{\cC}$ instead of $\Mod_{\cC}(\PrL_{\kappa})$.
    }, the monoid-algebra $\cC[M] = \Gr_M \cC$ is the category of $M$-graded objects with the Day convolution. We define:
    \begin{definition}
        Let $M$ be a commutative monoid, $\cC$ be a presentably symmetric monoidal category and $\Zchar \in \Map\Enull(M, \Mod_{\cC}\units)$. We let $\Gr^{\Zchar}_M \cC \in \CAlg_{\cC}(\PrL)$ be its Thom construction, equipped with the $\EE_1$-isomorphism to $\Gr_M \cC$.
    \end{definition}
    In particular, $\Gr^0_M \cC \simeq \Gr_M \cC$ in $\CAlg_{\cC}(\PrL)$.

    \begin{example}
        Let $\field$ be a field of characteristic different from 2. Then it admits a (non-trivial) minus one map $(-1) \colon \ZZ/2 \to \field\units$. Define the map
        \begin{equation*}
            \Kos \colon \ZZ \onto \ZZ/2 \xto{\Sq^2} \Sigma^2 \ZZ/2 \xto{\Sigma^2(-1)} \Sigma^2 \field\units \to \Mod_{\Vect}\units.
        \end{equation*}
        Since the second Steenrod square $\Sq^2$ is $\EE_1$-nullhomotopic, so is the composition. The twisted graded category $\Gr^{\Kos}_{\ZZ} \Vect$ is the usual Koszul-twisted $\ZZ$-graded category of vector spaces.

        Note that the map factors through $\ZZ/2$, and we have $\Gr^{\Kos}_{\ZZ/2} \Vect \simeq \sVect$.
    \end{example}
    
    An analogous construction gives the twisted braiding on super linear categories discussed above.



    Restricting to the discrete case, the Thom categories are exactly all twisted graded categories:   
    \begin{alphtheorem}[\cref{thm:twFun-are-all-lifts}]\label{alphthm:twFun-are-all-lifts}
        Let $A \in \Ab$ and $\cC\in \CAlg(\PrL)$ be semiadditive. Then $\Gr^{(-)}_A \cC$ defines an isomorphism between the spaces $\Map\Enull(A,\Mod_{\cC}\units)$ and the space of twisted $A$-graded categories of objects in $\cC$.
    \end{alphtheorem}

    \begin{example}[$\ZZ$-graded categories]
        In the case $A = \ZZ$, one can simply compute that $\ZZ\Enull \simeq \tau_{\ge 1}\SS$. Therefore, for any symmetric monoidal category $\cC$
        \begin{equation*}
            \Map\Enull(\ZZ, \Mod_{\cC}\units) \simeq \Map(\tau_{\ge 1} \SS, \cC\units).
        \end{equation*}
        As $\tau_{\ge 1} \SS$ is connected, maps from $\tau_{\ge 1} \SS$ to $\cC\units$ factor through the connected cover map $\Sigma \ounit_{\cC}\units \to \cC\units$:
        \begin{equation*}
            \Map\Enull(\ZZ, \Mod_{\cC}\units) \simeq \Map(\Omega \tau_{\ge 1} \SS, \ounit_{\cC}\units) \simeq \Map(\Omega \SS, \ounit_{\cC}\units).
        \end{equation*}
        In particular, if $\ounit_{\cC}$ is discrete (e.g.\ $\cC = \Mod_R$ or $\cC = \Mod_R\heart$ for $R$ discrete), then
        \begin{equation*}
             \Map\Enull(\ZZ, \Mod_{\cC}\units) \simeq \Map_{\Ab}(\ZZ/2, \ounit_{\cC}\units).
        \end{equation*}
        For example, if $\field$ is a field of characteristic different from 2, there are exactly two twisted $\ZZ$-graded structures on $\Vect$ --- the usual, and the Koszul-twisted. If $\field$ is of characteristic 2 there are no non-trivial twisted $\ZZ$-graded structures on $\Vect$.

        Note that the same holds for $\sVect$, although even in characteristic $\neq 2$, $\Gr^{\Kos}_{\ZZ} \sVect \simeq \Gr_{\ZZ} \sVect$ as symmetric monoidal categories. This corresponds to the fact that $\Gr_{\ZZ}\sVect$ admits a non-trivial isomorphism (and therefore two different $\EE_1$-trivializations). 
    \end{example}

    A variant of twisted graded categories is that of \emph{homotopy graded categories}, where instead of an $\EE_1$-isomorphism we only require an $\EE_0$-isomorphism $\Gr_A \cC \isoto \cD$ together with a compatible $\EE_1$-isomorphism of homotopy $(k,1)$-categories
    \begin{equation*}
        \h_k \Gr_A \cC \isoto \h_k \cD.  
    \end{equation*}
    We show that this data is equivalent to a map of connective spectra $A \to \Mod_{\cC}\units$ together with an $\EE_1$-nullhomotopy of the composition
    \begin{equation*}
        A \to \Mod_{\cC}\units \to \tau_{\leq k+1}\Mod_{\cC}\units,  
    \end{equation*}
    see \cref{thm:homotopy-twFun-are-all-lifts}.
    We denote the space of such maps by $\Map^{\h,k}_{\Enull}(A,\Mod_{\cC}\units)$.
    
    In the process of proving \cref{alphthm:twFun-are-all-lifts}, we study the Picard spectrum of the Day convolution, which may be of independent interest:    
    \begin{proposition}[\cref{prop:picard-of-Day}]
        Let $\cC \in \Alg_{\EE_k}(\PrL)$ be semiadditive and connected\footnote{That is, $\cC$ does not decompose as a product of symmetric monoidal categories.}.
        Let $M$ be a discrete commutative monoid. Then there is an isomorphism of $\EE_k$-groups
        \begin{equation*}
           (\Gr_M \cC)\units \simeq \cC\units \times M\units.
        \end{equation*}
    \end{proposition}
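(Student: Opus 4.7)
The plan is to construct an explicit $\EE_k$-monoidal map
\[
    \cC \times M \to \Gr_M \cC, \qquad (c, m) \mapsto \delta_m(c),
\]
where $\delta_m(c)\colon M \to \cC$ denotes the function with value $c$ at $m$ and $0$ elsewhere, and to show that its restriction to units is an equivalence of $\EE_k$-groups. Monoidality is witnessed by the Day convolution identity $\delta_m(c) \otimes \delta_{m'}(c') \simeq \delta_{m+m'}(c \otimes c')$ (the only pair $(i,j)$ summing to $m + m'$ contributing a non-zero term is $(m, m')$), giving an $\EE_k$-monoidal map $\cC\units \times M\units \to (\Gr_M \cC)\units$. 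Full faithfulness on units is immediate since $M$ is discrete: mapping spaces in $\Fun(M, \cC)$ split as products over $M$, and $\Map(\delta_m(c), \delta_{m'}(c'))$ equals $\Map_\cC(c,c')$ when $m = m'$, while no isomorphism can exist when $m \neq m'$ (the degree-$m$ component of $\delta_{m'}(c')$ vanishes while $c$ does not).

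For essential surjectivity, suppose $F \in (\Gr_M \cC)\units$ has inverse $G$, so that $F \otimes G \simeq \delta_0(\ounit_\cC)$. Since $i + j = 0$ in the commutative monoid $M$ forces $i \in M^{\times}$ with $j = -i$, evaluation at $0 \in M$ yields
\[
    \ounit_\cC \simeq \bigoplus_{i \in M^{\times}} F_i \otimes G_{-i}.
\]
The connectedness hypothesis on $\cC$ --- which I interpret as the absence of non-trivial idempotents on $\ounit_\cC$ --- forces all but one summand to vanish, and the surviving summand must itself be $\ounit_\cC$. Let $m \in M^{\times}$ be that unique index, so that $F_m \otimes G_{-m} \simeq \ounit_\cC$ exhibits $F_m \in \cC\units$ with inverse $G_{-m}$. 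Next, evaluating $F \otimes G$ at any $n \neq 0$ gives $\bigoplus_{i+j=n} F_i \otimes G_j \simeq 0$; in a semiadditive category each summand of $0$ is itself $0$ (being a retract of $0$). Taking $j = -m$, each term $F_i \otimes G_{-m}$ with $i \neq m$ vanishes, and inverting $G_{-m}$ gives $F_i \simeq 0$. Thus $F \simeq \delta_m(F_m)$ lies in the image.

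The main obstacle I anticipate is applying the connectedness hypothesis rigorously when $M^{\times}$ is infinite, so that the decomposition of $\ounit_\cC$ above has infinitely many terms. The intended argument is that any decomposition $\ounit_\cC \simeq \bigoplus_i A_i$ determines a family of pairwise orthogonal idempotents on $\ounit_\cC$ (project to each summand and reinclude); connectedness forces each such idempotent to be $0$ or $1$, and orthogonality prevents more than one from being $1$, so all but at most one $A_i$ vanish. Once this point is settled, promoting the resulting space-level equivalence $\cC\units \times M\units \to (\Gr_M \cC)\units$ to an equivalence of $\EE_k$-groups is automatic from its construction as an $\EE_k$-monoidal map.
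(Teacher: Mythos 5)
Your proposal is correct and follows the same overall strategy as the paper: the same comparison functor $(c,m)\mapsto \delta_m(c)$ (the paper's $(-)\shift{-}$), essential surjectivity by evaluating $F\otimes G$ at $0$, the observation that every summand of a zero coproduct vanishes (the paper's \cref{lem:sum-is-zero}), and the final step of tensoring with the inverse of $G_{-m}$ to kill the remaining components. The one genuine difference is how connectedness enters. The paper isolates this in \cref{lem:sum-cannot-be-unit}: given an orthogonal splitting $\ounit_{\cC}\simeq E\oplus F$ it shows $E$ is an idempotent \emph{algebra}, contradicting connectedness; you instead pass to idempotent \emph{endomorphisms} of $\ounit_{\cC}$ obtained by projecting to each summand. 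Be careful with your gloss ``connected $=$ no non-trivial idempotents on $\ounit_{\cC}$'': in the merely semiadditive (non-additive) setting this is a priori stronger than the paper's definition (indecomposability as a product, equivalently no \emph{central, complemented} idempotents) --- a connected category can have non-complemented idempotents in $\pi_0\End(\ounit_{\cC})$, as happens already for modules over a rig such as $\mathbb{N}[e]/(e^2=e)$. Your argument survives because the idempotents you use are split and come with complements (projection onto the remaining summands), and orthogonality of the corresponding objects is automatic by Eckmann--Hilton ($e_A\otimes e_B\simeq e_A\circ e_B=0$, so $A\otimes B$ is a retract of $\ounit_{\cC}$ along a null idempotent); such a complemented orthogonal pair does yield a product decomposition, which is exactly what the paper's lemma encodes. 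Two smaller points you share with the paper: inverting $G_{-m}$ uses that it is a two-sided inverse of $F_m$ (immediate for $k\ge 2$, and recoverable from $G\otimes F\simeq \delta_0(\ounit_{\cC})$ in general), and the coherent $\EE_k$-monoidality of the comparison functor is most cleanly obtained as in \cref{def:shift-and-yoneda}, via the monoidal Yoneda embedding rather than the object-level Day convolution identity.
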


    \subsection{Braiding}
        The construction of twisted graded categories gives rise to several different symmetric monoidal structures that are isomorphic as monoidal categories. The distinction between such symmetric monoidal structures lies in their braiding; that is, the different identifications of objects of the form
        \begin{equation*}
            X_1 \otimes \cdots \otimes X_{\degree}.
        \end{equation*}
        Taking all the $X_i$ to be equal, we obtain a natural $\Sm$-action on $\Tm X \coloneqq X\om$. Since $\bigsqcup_{\degree} \B \Sm$ is the free commutative monoid, the induced map
        \begin{equation*}
            \T X \colon \bigsqcup_{\degree} \B\Sm \to \cC
        \end{equation*}
        is symmetric monoidal. In a slight abuse of terminology, we refer to this map as the \emph{braiding of $X$}.
        
        For example, in the usual $\ZZ$-graded category $\Gr^{\Kos}_{\ZZ}\Vect$, the $\Sm$-braiding on $\field\shift{1}$ --- the one dimensional vector space in degree 1, is given by the sign representation of $\Sm$. In the case of $\Mod_{\sVect}$, the braiding on $\sVect\shift{1}$ is given by the higher sign representation, as studied in \cite{Ganter-Kapranov-2014-exterior-categories}.

        The braiding of invertible objects is much better understood: If $Z \in \cD\units$, the braiding map factors through the group-completion of the free commutative monoid, i.e.\ the sphere spectrum, and lands in the Picard spectrum
        \begin{equation*}
            \T Z \colon \bigsqcup_{\degree} \B\Sm \to \SS \to \cD\units \into \cD.
        \end{equation*}
        In particular, by taking connected covers, we get the map of spectra
        \begin{equation*}
            \hchar_{Z} \colon \tau_{\ge 1} \SS \to \Sigma \ounit_{\cD}\units,
        \end{equation*}
        which we also call the braiding of $Z$. 

        Given a (homotopy) twisted $A$-graded category, any relation $a_1 + \cdots + a_r = 0$ in $A$, gives rise to a natural isomorphism
        \begin{equation*}
            \hchar_{\ounit_{\cC}\shift{a_1}} \cdots \hchar_{\ounit_{\cC}\shift{a_r}} \simeq 1 \qin \Map(\tau_{\ge 1} \SS, \ounit_{\cC}\units).
        \end{equation*}

        As intuition suggests, we show that in the twisted graded case, the braiding of objects indeed determine the symmetric monoidal structure. Moreover, it suffices to consider the braiding of generators and their relations in the following sense:  
        Let $A \in \Ab$, $\cC \in \CAlg(\PrL)$, and $\Zchar \in \Map^{\h,k}\Enull(A, \Mod_{\cC}\units)$. For $V \in \cC$ and $a \in A$, let $V\shift{a} \in \Gr^{\Zchar}_A \cC$ denote the functor $A \to \cC$ sending $b \in A$ to $V$ if $b = a$, and to $\emptyset$ otherwise.

        \begin{proposition}[\cref{cor:braiding-for-general-A-determined}]
            The braiding of $\ounit_{\cC}\shift{a} \in \Gr^{\Zchar}_A \cC$ for a set of generators of $A$, together with the relations among them, determines $\Zchar \colon A \to \Sigma^2 \ounit_{\cC}\units$.\footnote{Any map $\Zchar \colon A \to \Mod_{\cC}\units$ equipped with an $\EE_1$-nullhomotopy (of some truncation) lifts uniquely to $\Sigma^2 \ounit_{\cC}\units$, see \cref{lem:homotopy-E1-null-factors-through-Sigma^2}.}
        \end{proposition}

        In the universal case $A = \ZZ$, this determines both the symmetric monoidal structure and the $\EE_1$-isomorphism to the Day monoidal structure. We can also identify when two such twisted graded categories are equivalent, i.e., when two maps $\Zchar_1, \Zchar_2 \in \Map\Enull(\ZZ, \Mod_{\cC}\units)$ agree after forgetting the $\EE_1$-nullhomotopies:
        Noting that the unit of any twisted graded category of $\cC$-objects is the unit of $\cC$, one can compare braiding of invertible objects in different twisted graded categories. 
        \begin{proposition}[\cref{lem:braiding-of-A-determined-by-Z}, \cref{cor:graded-trivial-iff-picard}]\label{prop:intro-graded-trivial-iff-picard}
            Assume that there exists $Z \in \cC\units$ and an isomorphism between the braiding of $Z \in \cC$ and that of $\ounit_{\cC}\shift{1} \in \Gr^{\Zchar}_{\ZZ} \cC$. Then $\Gr^{\Zchar}_{\ZZ} \cC \simeq \Gr_{\ZZ} \cC$ in $\CAlg_{\cC}(\PrL)$.
        \end{proposition}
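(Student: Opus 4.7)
The plan is to reduce the claim to the exact sequence of \cref{prop:intro-exact-sequence}, checked one generator at a time. The key initial observation is that an equivalence $\Gr^{\Zchar}_A \cC \simeq \Gr_A \cC$ in $\CAlg_\cC(\PrL)$ is equivalent to $\Zchar$ becoming nullhomotopic after forgetting its $\EE_1$-nullhomotopy: the underlying object of $\Gr^{\Zchar}_A\cC$ in $\CAlg_\cC(\PrL)$ is by construction the Thom category of $\Zchar$ viewed purely as an $\EE_\infty$-map $A\to \Mod_\cC\units$, so the task is to show that the image of $\Zchar$ in $\pi_0\Map_{\EE_\infty}(A, \Mod_\cC\units)$ is trivial.

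By \cref{cor:braiding-for-general-A-determined}, $\Zchar$ is determined by its braidings $\hchar_{\ounit_\cC\shift{a}}$ for $a\in S$, so the triviality check reduces to the case $A = \ZZ$ for each generator independently. For each $a\in S$, the hypothesis furnishes $Z_a\in \cC\units$ with $\hchar_{Z_a} = \hchar_{\ounit_\cC\shift{a}}$, placing this braiding in the image of the injection $\hchar_{(-)} \colon \Pic(\cC)/\Pic^{\str}(\cC) \hookrightarrow \pi_0\Map(\tau_{\ge 1}\SS, \Sigma\ounit_\cC\units)$ from \cref{prop:intro-exact-sequence}. By exactness, this image coincides with the kernel of the forget-$\EE_1$-null map to $\pi_0\Map(\ZZ, \Sigma\cC\units)$, so each generator-braiding becomes trivial after forgetting the $\EE_1$-nullhomotopy.

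The main obstacle is the assembly step: I must check that the ``determined by generators'' statement intertwines with the forgetful map $\Map\Enull(-,\Mod_\cC\units) \to \Map_{\EE_\infty}(-,\Mod_\cC\units)$, so that component-wise vanishing upgrades to vanishing of $\Zchar$ itself. Concretely, I expect to exhibit a commuting naturality square whose top arrow restricts $\Zchar$ along $S$ and whose vertical arrows forget the $\EE_1$-nullhomotopy, together with an injectivity statement on the bottom row; then a diagram chase pushes the generator-wise vanishings up to a vanishing of $\Zchar$ in $\pi_0\Map_{\EE_\infty}(A, \Mod_\cC\units)$, yielding the desired equivalence in $\CAlg_\cC(\PrL)$.
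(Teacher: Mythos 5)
Your per\nobreakdash-generator step is exactly the paper's route: \cref{lem:braiding-of-A-determined-by-Z} identifies the braiding of $\ounit_{\cC}\shift{a}$ with that of $\ounit_{\cC}\shift{1}$ in $\Gr^{\Zchar\circ a}_{\ZZ}\cC$, and \cref{cor:graded-trivial-iff-picard} together with \cref{rmrk:kernel-of-forgetting-the-E1-nullhomotopy} translate the hypothesis at $a\in S$ into the statement that the underlying $\EE_\infty$-map $\Zchar\circ a\colon \ZZ\to\Sigma\cC\units$ is null. The genuine gap is the assembly step, which you leave as an expectation and whose proposed mechanism does not work: the injectivity you would need is that restriction along the generators
\begin{equation*}
    \pi_0\hom(A,\Sigma\cC\units)\longrightarrow \prod_{a\in S}\pi_0\hom(\ZZ,\Sigma\cC\units)
\end{equation*}
is injective, and this is false in general. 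Choosing a presentation $0\to K\to \ZZ^{(S)}\to A\to 0$ with $K$ free, the long exact sequence shows the kernel of this restriction is the cokernel of $\pi_0\hom(\ZZ^{(S)},\cC\units)\to\pi_0\hom(K,\cC\units)$, i.e.\ it is built from copies of $\Pic^{\mrm{str}}(\cC)$ indexed by the relations of $A$; for instance for $A=\ZZ/n$ it is $\Pic^{\mrm{str}}(\cC)/n$, which is nonzero already for $\cC=\Gr_{\ZZ}\Vect$. The injectivity that is true, and which is what \cref{cor:braiding-for-general-A-determined} rests on, is for the braiding data itself, i.e.\ for maps into $\Sigma^2\ounit_{\cC}\units$ (equivalently out of $A\Enull$ into $\Sigma\ounit_{\cC}\units$): there the relevant kernel is controlled by $\pi_0\hom(K,\Sigma\ounit_{\cC}\units)=0$, since $K$ is free and $\Sigma\ounit_{\cC}\units$ is $1$-connective. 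Your diagram chase conflates these two targets: the whole content of \cref{prop:intro-exact-sequence} is that forgetting the $\EE_1$-nullhomotopy (passing from $\Sigma^2\ounit_{\cC}\units$ to $\Sigma\cC\units$) creates a kernel, and for non-free $A$ the relations contribute $\Pic^{\mrm{str}}$-terms which your bottom row cannot see.

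Concretely, the per-generator vanishing only tells you that the underlying map $\Zchar$ factors as $A\to\Sigma K\to\Sigma\cC\units$, i.e.\ it is controlled by a family of strict Picard elements attached to the relations of $A$, and your argument never rules out this residual class. To close the gap one has to produce a genuinely global witness out of the per-generator hypothesis --- for example an $\EE_1$-map $A\to\cC\units$ (Picard elements actually satisfying the relations of $A$, at least up to braiding) realizing the given braidings, or else use the $\EE_1$-nullhomotopy of $\Zchar$ itself (the fact that it is pulled back from a map $A\Enull\to\Sigma\ounit_{\cC}\units$) to exclude the kernel classes; in the test case $\cC=\Gr_{\ZZ}\Vect$, $A=\ZZ/2$, the offending kernel class is precisely not $\EE_1$-null, which indicates that the $\EE_1$-data must enter the assembly and cannot be forgotten before the chase, as it is in your plan. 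A smaller point: your opening claim that $\Gr^{\Zchar}_A\cC\simeq\Gr_A\cC$ is \emph{equivalent} to nullity of the underlying $\EE_\infty$-map is only used in the correct (sufficient) direction; the converse would itself require an argument, since a priori non-homotopic maps can have equivalent Thom categories.
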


        In order to prove this proposition, we notice that the braiding of invertible elements defines a map 
        \begin{equation*}
            \hchar_{(-)} \colon \cC\units \to \Map(\tau_{\ge 1} \SS, \Sigma \ounit_{\cC}\units) \simeq \Map\Enull(\ZZ, \Sigma \cC\units).
        \end{equation*}
        This in turn, admits a natural map to $\Map(\ZZ, \Sigma \cC\units)$ forgetting the $\EE_1$-nullhomotopy. \cref{prop:intro-graded-trivial-iff-picard} then follows from

        \begin{proposition}[\cref{rmrk:kernel-of-forgetting-the-E1-nullhomotopy}]\label{prop:intro-exact-sequence}
            There is a short exact sequence
            \begin{equation*}
                0 \to \Pic(\cC) / \Pic^{\mrm{str}}(\cC) \xto{\hchar_{(-)}} \pi_0 \Map\Enull(\ZZ, \Sigma \cC\units) \to \pi_0 \Map(\ZZ, \Sigma \cC\units) \to 0,
            \end{equation*}
            where $\Pic(\cC) \coloneq \pi_0\cC\units$ is the Picard group, and $\Pic^{\mrm{str}}(\cC) = \pi_0\hom(\ZZ, \cC\units)$ is the group of strict Picard elements.
        \end{proposition}

    \subsection{Braiding character}

        The braiding in twisted graded categories is a complete invariant of the symmetric monoidal structure, and as such, it can be intricate to describe or compute explicitly. 
        Drawing from representation theory, for a dualizable object $W \in \cD\dbl$, one can consider a simpler invariant, namely the character of the $\Sm$-braidings $\chi_{\Tm W}$.
        These characters assemble into a map of commutative monoids
        \begin{equation*}
            \begin{split}
                \mscr{X}_{\T W} \colon \bigsqcup_{\degree} \L\B\Sm \to \ounit_{\cD}[t^{\pm 1}], \\
                \res{\mscr{X}_{\T W}}{\L\B\Sm} \coloneqq \chi_{\Tm W} \, t^{\degree}. 
            \end{split}
        \end{equation*}
        When $\cD = \Gr^{\Zchar}_\ZZ \cC$ is a twisted $\ZZ$-graded category and $W = V\shift{1}$ for $V \in \cC\dbl$, this character coincides (see \cref{lem:braiding-characters-agree}) with the image under $\THH_{\cC}$ of the braiding
        \begin{equation*}
            \T V\shift{1} \colon \cC[\bigsqcup_{\degree} \B\Sm] \to \Gr^{\Zchar}_{\ZZ} \cC.
        \end{equation*}
        This invariant turns out to be computationally simple, and unlike the braiding, it depends only on the monoidal dimension of $W$. 
        Recall that any dualizable object $W$ admits a monoidal dimension with a $\TT$-action, $\dim(W) \in \End(\ounit_{\cD})^{\B\TT}$. The homomorphisms $\Ck \to \TT$ assemble into a map $\vee_k \B\Ck \to \B\TT$, and the corresponding restriction map $\End(\ounit_{\cD})^{\B\TT} \to \End(\ounit_{\cD})^{\vee_k \B\Ck}$ encodes the action of all finite cyclic subgroups.
        
        \begin{alphtheorem}[\cref{thm:braiding-depends-only-on-dim}]
            Let $\cD \in \CAlg(\PrL)$ and $W \in \cD\dbl$.
            Then $\mscr{X}_{\T W}$ depends only on $\dim W \in \End(\ounit_\cD)^{\vee_k \B\Ck}$.
        \end{alphtheorem} 
        The dependence is entirely explicit, with \cref{lem:character-of-Tm} giving a formula for the braiding character in terms of $\dim(W)$. Since the first version of this paper appeared on the arXiv, we have learned that Maxime Ramzi independently obtained this result in \cite[Lemma~4.7]{Ramzi-2025-endomorphisms-of-THH}.

        \begin{remark}
            The maps $\B\Ck \to \B\TT$ assemble to a map $\B\QQ/\ZZ = \colim_k \B\Ck \to \B\TT$. The restriction map for $\cD \in \CAlg(\PrL)$ factors as
            \begin{equation*}
                \End(\ounit_{\cD})^{\B\TT} \to \End(\ounit_{\cC})^{\B\QQ/\ZZ} \to \End(\ounit_{\cD})^{\vee_k \B\Ck}.
            \end{equation*}
            The further restriction to $\vee_k \B\Ck$ forgets the compatibility data between different values of $k$.
        \end{remark}

        In our case of interest, when $W$ is invertible, we classify the $\TT$-action on the dimension using the universal case, and identify it with the $\TT$-transfer map (see \cref{prop:dim-is-transfer}).

    \subsection{Twisted graded categories and orientability}
    \label{subsec:intro-galois-closed}
        The Koszul braiding on $\Gr^{\Kos}_{\ZZ}\Vect$ originates from the non-trivial braiding on $\Gr^{\Kos}_{\ZZ/2}\Vect = \sVect$. 
        The category $\sVect$ of super vector spaces is known to form a Galois extension (in the sense of Rognes \cite{Rognes-2008-Galois}) with Galois-group $\B \ZZ/2$ (see e.g.\ \cite{Johson-Freyd-2017-sVect}).
        Moreover, replacing $\Vect$ with $\sVect$ trivializes all symmetric monoidal structures on $\Gr_{\ZZ}\sVect$. 
        
        We generalize this phenomenon, demonstrating that the nontriviality of the braiding emerges precisely from the fact that $\sVect$ is the cyclotomic closure of $\Vect$. To accomplish this, we leverage the notion of orientability for $\infty$-semiadditive categories introduced in \cite{BCSY-Fourier}. Specifically, we construct a canonical Galois extension analogous to the extension $\Vect \to \sVect$ for a broad class of $\infty$-semiadditive categories, and we interpret it in terms of orientability.

        \medskip\noindent
        \textbf{$\infty$-semiadditivity.}
        The notion of $\infty$-semiadditive categories, introduced by Hopkins and Lurie \cite{Hopkins-Lurie-2013-ambi}, is a generalization of ordinary semiadditivity, replacing finite sets by $\pi$-finite spaces. Namely, a category is said to be $\infty$-semiadditive if limits and colimits along $\pi$-finite spaces coincide. 

        An important family of examples is provided by monochromatic categories. Namely, by \cite{Hopkins-Lurie-2013-ambi, CSY-teleambi} for any $R \in \Alg(\SpTn)$, $\LMod_R(\SpTn)$ is $\infty$-semiadditive.
        In particular, $\SpTn$, $\SpKn$ and $\ModEn \coloneqq \Mod_{\En}(\SpTn)$ are $\infty$-semiadditive.
        Another important example comes from categorification: If $\cC \in \CAlg(\PrL)$, then $\Mod_{\cC}$ is $\infty$-semiadditive (\cite[Corollary~5.3]{BCSY-Fourier}).

        \medskip\noindent
        \textbf{Orientability and the Fourier transform.}
        In \cite{BCSY-Fourier}, Barthel, Carmeli, Schlank, and Yanovski introduced the notion of \emph{orientations} for $\infty$-semiadditive categories, a concept that serves, in a precise sense explained below, as an analogue of roots of unity in the context of Fourier transforms.
        
        \begin{definition}
            Let $\Inprime \coloneqq \tau_{\ge 0} \Sigma^{\chrHeight} I_{\QQ_p / \Zp}$ be the truncated and shifted $p$-typical Brown--Comenetz dual of the sphere.
        \end{definition}
        $\Inprime$ is a higher analog of the group $\Inprime[0] = \mu_{p^{\infty}}$ of $p$-typical roots of unity. Its $n$-th homotopy group corresponds to higher roots of unity as in \cite{CSY-cyclotomic}. In particular, it defines a notion of height $\chrHeight$ Pontryagin duality:

        \begin{definition}
            Let $M$ be a $p$-local connective spectrum. Its height $\chrHeight$ Pontryagin dual is defined to be $\tau_{\ge 0}\hom(M, \Inprime)$.
        \end{definition}

        Let $\cC$ be presentably symmetric monoidal and let $R \in \CAlg(\cnSp)$. An $(R,\chrHeight)$-pre-orientation of $\cC$ is a map 
        \begin{equation*}
            \omega \colon \tau_{\ge 0}\hom(R, \Inprime) \to \ounit_{\cC}\units.
        \end{equation*}
        A pre-orientation defines, for any $M \in \Mod_R^{[0,\chrHeight]\hyphen\mrm{fin}}$, a Fourier transform (\cite[\textsection~3.2]{BCSY-Fourier})
        \begin{equation*}
            \mscr{F}_{\omega} \colon \ounit_{\cC}[M] \to \ounit_{\cC}^{\Map(M,\Inprime)} \qin \CAlg(\cC).
        \end{equation*}
        $\omega$ is called an orientation if the associated Fourier transform is an isomorphism for all $M$. 

        A $(\SS_{(p)}, \chrHeight)$-orientation\footnote{Equivalently, a $(\tau_{\le \chrHeight} \SS_{(p)}, \chrHeight)$-orientation.} of $\cC$ is a primitive map $\Inprime \to \ounit_{\cC}\units$, which we think of as the unit of $\cC$ having all spherical roots of unity (or that $\ounit_{\cC}$ is spherically-cyclotomically-closed).
        
        \begin{example}[{\cite[Theorem~7.8]{BCSY-Fourier}}]
            $\ModEn$ is $(\SS_{(p)},\chrHeight)$-orientable.
        \end{example}

        Orientation also behaves well under categorification
        \begin{theorem}[{\cite[Corollary~5.16]{BCSY-Fourier}}]\label{intro-thm:categorification-of-orinetations}
            Assume $R$ is $\chrHeight$-truncated. Then $\cC$ is $(R,\chrHeight)$-orientable if and only if $\Mod_{\cC}$ is $(R, \chrHeight+1)$-orientable.
        \end{theorem}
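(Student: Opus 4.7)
The plan is to first establish a natural bijection between $(R, \chrHeight)$-pre-orientations of $\cC$ and $(R, \chrHeight+1)$-pre-orientations of $\Mod_{\cC}$, and then to show this bijection preserves the Fourier-isomorphism condition. For the first step, I would observe that $\hom(R, -)$ commutes with $\Sigma$, so the only obstruction to commuting a suspension past the connective cover $\tau_{\ge 0}$ on $\hom(R, \Sigma^{\chrHeight+1} I_{\QQ_p/\Zp})$ is the $\pi_0$ term
\begin{equation*}
    \pi_0 \hom(R, \Sigma^{\chrHeight+1} I_{\QQ_p/\Zp}) \cong \hom(\pi_{\chrHeight+1} R, \QQ_p/\Zp),
\end{equation*}
which vanishes since $R$ is $\chrHeight$-truncated. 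Therefore
\begin{equation*}
    \tau_{\ge 0}\hom(R, \tau_{\ge 0}\Sigma^{\chrHeight+1} I_{\QQ_p/\Zp}) \simeq \Sigma \tau_{\ge 0}\hom(R, \Inprime).
\end{equation*}
Combining this with the tautology $\ounit_{\Mod_{\cC}}\units \simeq \cC\units$ and the identification $\Omega \cC\units \simeq \ounit_\cC\units$ (since $\Sigma \ounit_\cC\units$ is the $1$-connective cover of $\cC\units$ and the source is $1$-connective), this yields the desired bijection of pre-orientations by loop-suspension adjunction.

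For the Fourier condition, I would proceed by induction along the Postnikov tower of $M' \in \Mod_R^{[0,\chrHeight+1]\hyphen\mrm{fin}}$ using the fiber sequence
\begin{equation*}
    \Sigma^{\chrHeight+1} \pi_{\chrHeight+1} M' \to M' \to \tau_{\le \chrHeight} M',
\end{equation*}
together with a two-out-of-three argument for Fourier isomorphisms. The truncated piece $\tau_{\le \chrHeight} M'$ is $[0,\chrHeight]$-finite, so its $\Mod_\cC$-Fourier transform should be controllable by categorifying the height-$\chrHeight$ Fourier transform in $\cC$. The remaining new case, where $M' \simeq \Sigma^{\chrHeight+1} N$ for a finite $\pi_0(R)$-module $N$, is the heart of the argument: I would expect the $\Mod_\cC$-Fourier transform on $\Sigma^{\chrHeight+1} N$ to reduce, via a loop-delooping identification of norm maps, to the $\cC$-Fourier transform on $\Sigma^\chrHeight N$, matching the shift in orientability level with the shift in Postnikov height.

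The main obstacle is making this reduction rigorous. What is required is a careful comparison between the categorified ambidexterity norm maps in $\Mod_\cC$ (which define Fourier at height $\chrHeight+1$) and the suspended norm maps in $\cC$ (which define Fourier at height $\chrHeight$). I would expect this to follow from the interaction between categorification and semiadditive cardinality developed by Hopkins--Lurie and refined in \cite{CSY-teleambi}; but establishing the compatibility at the level of full Fourier transforms, rather than of individual norms, will likely require an explicit universal-case computation, and is the most delicate step.
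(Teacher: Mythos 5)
First, a point of orientation: the paper does not prove this statement at all — it is imported verbatim as \cite[Corollary~5.16]{BCSY-Fourier} and used as a black box (e.g.\ to pass between the $(\SS_{(p)},\chrHeight)$-orientation of $\cC$ and the $(\tau_{\le\chrHeight}\SS_{(p)},\chrHeight+1)$-orientation of $\Mod_{\cC}$ in \cref{sec:Galois-closed-unit}). So there is no internal proof to compare against; what I can assess is whether your proposal would constitute a proof on its own.

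Your first step is fine and matches how the paper actually transports orientations: since $R$ is $\chrHeight$-truncated, $\pi_0\hom(R,\Sigma^{\chrHeight+1}I_{\QQ_p/\Zp})\cong\Hom(\pi_{\chrHeight+1}R,\QQ_p/\Zp)=0$, so $\tau_{\ge 0}\hom(R,\Inprime[\chrHeight+1])\simeq\Sigma\tau_{\ge 0}\hom(R,\Inprime)$, and together with $\ounit_{\Mod_{\cC}}\units\simeq\cC\units$ and $\Omega\cC\units\simeq\ounit_{\cC}\units$ this identifies pre-orientations on the two sides (this is exactly the map $\Sigma\Inprime\to\Sigma\ounit_{\cC}\units\to\cC\units$ displayed in the introduction). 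The gap is in the second step, and it is not a technicality: the entire mathematical content of the cited result is the comparison you defer. Two things are missing. (i) Your ``two-out-of-three along the Postnikov tower'' is not formal: an orientation is the condition that $\mscr{F}_{\omega}\colon\ounit[M]\to\ounit^{\Map(M,\In)}$ is an isomorphism of commutative algebras, and for a fiber sequence $\Sigma^{\chrHeight+1}\pi_{\chrHeight+1}M'\to M'\to\tau_{\le\chrHeight}M'$ the algebra $\ounit[M']$ is \emph{not} an extension of $\ounit[\tau_{\le\chrHeight}M']$ by $\ounit[\Sigma^{\chrHeight+1}\pi_{\chrHeight+1}M']$; one needs a descent/induction argument showing that the Fourier transform is compatible with such extensions (in \cite{BCSY-Fourier} this is a separate reduction to ``generating'' modules such as $\Sigma^i\ZZ/p^j$, with its own proof), so you cannot simply invoke two-out-of-three. (ii) The ``heart of the argument,'' reducing the height-$(\chrHeight+1)$ Fourier transform of $\Mod_{\cC}$ on $\Sigma^{\chrHeight+1}N$ to the height-$\chrHeight$ Fourier transform of $\cC$ on $\Sigma^{\chrHeight}N$, is exactly what you acknowledge you have not established. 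This requires identifying $\cC[\B M]$ with an Eilenberg--Moore/module category over $\ounit_{\cC}[M]$ (an affineness statement) and checking that under this identification the categorified norm data defining $\mscr{F}$ in $\Mod_{\cC}$ matches the suspended norm data in $\cC$ — none of which is routine, and without it the ``if and only if'' is unproven in both directions. As it stands, your text is a plausible roadmap to the BCSY proof rather than a proof.
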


        \begin{example}
            $\Mod_{\ModEn}$ is $(\tau_{\le \chrHeight}\SS_{(p)}, \chrHeight+1)$-orientable.
        \end{example}

        Returning to our original example, let $\field$ be a cyclotomically-closed field of characteristic 0. Then $\Vect$ is $(\SS_{(p)},0)$-orientable, or equivalently, $(\ZZ_{(p)}, 0)$-orientable, for every prime $p$, and therefore $\Mod_{\Vect}$ is $(\ZZ_{(p)}, 1)$-orientable. 
        Since $\pi_1 \SS = \ZZ/2$, this orientability automatically extends to $(\SS_{(p)}, 1)$-orientability for any $p \neq 2$.

        However, $\Mod_{\Vect}$ fails to be $(\SS_{(2)}, 1)$-orientable, reflecting the fact that $\Vect$ does not admit the degree 0 roots of unity at the prime 2, corresponding to $\ZZ/2 \simeq \pi_0 \mu^{(1)}_{\SS_{(2)}}$.
        These missing roots can be adjoined by passing to the category $\sVect$.
    
        \medskip\noindent
        \textbf{Galois extensions.}
        In \cite{Rognes-2008-Galois}, Rognes extended the notion of $G$-Galois extensions to the setting of arbitrary $\EE_1$-groups and presentably symmetric monoidal categories. This generalizes classical Galois theory, allowing also split extensions.
        
        \begin{example}[The trivial Galois extension]
            Assume $G$ is $\cC$-dualizable. Then $\ounit_{\cC}^G$ is a $G$-Galois extension of $\ounit_{\cC}$.
        \end{example}


        In our original example, $\sVect$ is a $\B\ZZ/2$-Galois extension of $\Vect$, and provides a non-discrete example. Moreover, it is the Galois closure (\cite{Deligne-2002-tensor-categories}, \cite{Johson-Freyd-2017-sVect}).

        \medskip\noindent
        \textbf{Oriented extension.}
        We now specialize to the case where $\cC$ is $(\SS_{(p)},\chrHeight)$-oriented.
        By \cref{intro-thm:categorification-of-orinetations}, $\Mod_{\cC}$ is $(\tau_{\le \chrHeight} \SS_{(p)}, \chrHeight+1)$-oriented, which is detected by the orientation map of $\cC$:
        \begin{equation*}
            \tau_{\ge 1} \Inprime[\chrHeight+1] = \Sigma \Inprime \to \Sigma \ounit_{\cC}\units \to \cC\units,
        \end{equation*}
        identifying the roots of unity of degrees $1$ through $\chrHeight+1$ in $\cC$.

        
        We construct a Galois extension $\cC[\omega^{(0)}_{\SS_{(p)}}]$ of $\cC$, which adds all roots of unity of degree 0 (i.e.\ $\pi_0 \Inprime[\chrHeight+1]$). That is, it is a universal extension such that $\Mod_{\cC[\omega^{(0)}_{\SS_{(p)}}]}$ is $(\SS_{(p)}, \chrHeight+1)$-oriented.
       
        Let $\pin \coloneqq \pi_{\chrHeight+1}(\SS_{(p)})$ be the $p$-local $(\chrHeight+1)$-st stable stem, and $\pinD$ be its Pontryagin dual. Using the cofiber sequence arising from $\Inprime[\chrHeight+1] \to \pi_0(\Inprime[\chrHeight+1]) = \pinD$
        \begin{equation*}
            \Sigma \Inprime \to \Inprime[\chrHeight+1] \to \pinD \to \Sigma^2 \In,
        \end{equation*}
        we get a map
        \begin{equation*}
            \zeta = \zeta_n \colon \pinD \to \Sigma^2\Inprime \to \Sigma^2\ounit_{\cC}\units \to \Mod_{\cC}\units.
        \end{equation*}

        \begin{definition}
            Let $\cC$ be a $(\SS_{(p)},\chrHeight)$-oriented category. We define $\cC[\omega^{(0)}_{\SS_{(p)}}]$ as the homotopy twisted $\pinD$-graded category induced by the map
            \begin{equation*}
                \zeta\colon \pinD \to \Mod_{\cC}\units.
            \end{equation*}
        \end{definition}

        This category is $(\SS_{(p)},\chrHeight+1)$-oriented, and it is the universal $(\SS_{(p)},\chrHeight+1)$-oriented category over $\cC$, in the following sense:
        \begin{proposition}[\cref{prop:cyc-is-Galois}, \cref{lem:cyc_is_cyc_closed}, \cref{prop:C-cyclotoimcally-closed-iff-gradedpi-is-trivial}]\label{prop-intro:C-cyclotoimcally-closed-iff-gradedpi-is-trivial}
           $\cC[\omega^{(0)}_{\SS_{(p)}}]$ is a $\B^{\chrHeight+1}\pin$-Galois extension of $\cC$, and the following are equivalent:
           \begin{enumerate}
               \item The category $\Mod_\cC$ is $(\SS_{(p)},\chrHeight+1)$-orientable.
               \item $\cC[\omega^{(0)}_{\SS_{(p)}}]$ is a trivial Galois extension, i.e.\ $\cC[\omega^{(0)}_{\SS_{(p)}}] \simeq \cC^{\B^{\chrHeight+1} \pin}$.
               \item $\cC[\omega^{(0)}_{\SS_{(p)}}] \simeq \Gr_{\pinD} (\cC)$, i.e.\ the braiding of $\cC[\omega^{(0)}_{\SS_{(p)}}]$ is trivial.
           \end{enumerate}
        \end{proposition}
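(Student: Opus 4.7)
The plan is to establish the Galois statement first and then derive the three equivalences via a combination of obstruction theory and the categorified Fourier transform. By construction, $\cC[\omega^{(0)}_{\SS_{(p)}}]$ is the Thom category associated to $\zeta \colon \pinD \to \Mod_\cC\units$ equipped with its canonical $\EE_1$-nullhomotopy, so \cref{alphthm:twFun-are-all-lifts} allows me to translate freely between statements about the $\EE_\infty$-structure of $\zeta$ and statements about $\cC[\omega^{(0)}_{\SS_{(p)}}]$ in $\CAlg_\cC(\PrL)$.

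For the Galois claim (\cref{prop:cyc-is-Galois}), the key input is that since $\cC$ is $(\SS_{(p)}, \chrHeight)$-oriented, the categorification theorem of \cite{BCSY-Fourier} gives that $\Mod_\cC$ is $(\tau_{\le \chrHeight}\SS_{(p)}, \chrHeight+1)$-oriented. Applied to the finite discrete module $\pinD$, the resulting Fourier transform yields a canonical $\EE_\infty$-isomorphism $\Gr_{\pinD}\cC \simeq \cC^{\B^{\chrHeight+1}\pin}$ in $\CAlg_\cC(\PrL)$, which is precisely the trivial $\B^{\chrHeight+1}\pin$-Galois extension. I would then verify the two Galois axioms for $\cC[\omega^{(0)}_{\SS_{(p)}}]$: the unit axiom $\cC \isoto (\cC[\omega^{(0)}_{\SS_{(p)}}])^{\B^{\chrHeight+1}\pin}$ and the comultiplication axiom $\cC[\omega^{(0)}_{\SS_{(p)}}] \otimes_\cC \cC[\omega^{(0)}_{\SS_{(p)}}] \isoto \prod_{\B^{\chrHeight+1}\pin} \cC[\omega^{(0)}_{\SS_{(p)}}]$. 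The comultiplication axiom is the main work: self-tensoring a Thom category produces the Thom category on the sum of the twists, and the orientation-induced Fourier transform on the diagonal then identifies it with the desired product.

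For the equivalences (\cref{prop:C-cyclotoimcally-closed-iff-gradedpi-is-trivial}), I would exploit the cofiber sequence
\begin{equation*}
\Sigma \Inprime \to \Inprime[\chrHeight+1] \to \pinD \xto{\partial} \Sigma^2\Inprime.
\end{equation*}
By the very definition of $\zeta$ we have $\zeta = \partial$ post-composed with the shifted orientation $\Sigma^2\Inprime \to \Sigma^2 \ounit_\cC\units \to \Mod_\cC\units$. Hence the map $\Sigma\Inprime \to \cC\units$ coming from the $(\SS_{(p)}, \chrHeight)$-orientation of $\cC$ extends along $\Sigma\Inprime \to \Inprime[\chrHeight+1]$ to a $(\SS_{(p)}, \chrHeight+1)$-orientation of $\Mod_\cC$ if and only if $\zeta$ is $\EE_\infty$-nullhomotopic. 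By \cref{alphthm:twFun-are-all-lifts} this is equivalent to $\cC[\omega^{(0)}_{\SS_{(p)}}] \simeq \Gr_{\pinD}\cC$ in $\CAlg_\cC(\PrL)$, establishing (1) $\Leftrightarrow$ (3). Combining with the Fourier isomorphism $\Gr_{\pinD}\cC \simeq \cC^{\B^{\chrHeight+1}\pin}$ from the Galois paragraph then gives (3) $\Leftrightarrow$ (2).

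I expect the main obstacle to be the comultiplication check in the Galois axioms. While self-tensoring $\cC[\omega^{(0)}_{\SS_{(p)}}]$ formally gives the Thom category on $\pinD \oplus \pinD$ with twist $\zeta \oplus \zeta$, identifying this coherently as a product of copies of $\cC[\omega^{(0)}_{\SS_{(p)}}]$ indexed by $\B^{\chrHeight+1}\pin$ demands a careful diagonal Fourier argument. Throughout, the $\EE_1$- versus $\EE_\infty$-nullhomotopy distinction must be tracked so that the resulting isomorphisms live in $\CAlg_\cC(\PrL)$ rather than only at the $\EE_1$-level.
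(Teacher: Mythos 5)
Your handling of the three-way equivalence tracks the paper closely: the paper also proves (1)$\Leftrightarrow$(3) by observing that $\zeta$ is $\EE_\infty$-null exactly when the orientation $\Sigma\Inprime \to \cC\units$ extends along $\Sigma\Inprime \to \Inprime[\chrHeight+1]$ (using the rotated cofiber sequence and the universal property of the Thom construction, with \cite[Theorem~5.15]{BCSY-Fourier} supplying the translation to orientability of $\Mod_\cC$, exactly as in your sketch), and it also gets (2)$\Leftrightarrow$(3) from the semiadditive Fourier identification $\Gr_{\pinD}\cC \simeq \cC^{\B^{\chrHeight+1}\pin}$. So that portion is essentially the paper's argument.

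The gap is in the Galois claim. You propose to verify Rognes' axioms directly, but you never construct the $\B^{\chrHeight+1}\pin$-action on $\cC[\omega^{(0)}_{\SS_{(p)}}]$ — and without it neither the fixed-point (unit) axiom nor statement (2) can even be formulated for the twisted category. This is not a formality: the trivial extension $\cC^{\B^{\chrHeight+1}\pin}$ carries an evident translation action, but transporting it through the twist $\zeta$ is precisely the content of \cref{lem:image-under-Fourier-and-zeta} and its corollary, where the paper runs the Fourier transform one categorical level up (using that $\Mod_{\Mod_\cC}$ is $(\ZZ,\chrHeight+2)$-oriented) to identify $\Mod_\cC[\pinD] \simeq \Mod_\cC^{\B^{\chrHeight+2}\pin}$ \emph{equivariantly}, exhibiting $\cC[\omega^{(0)}_{\SS_{(p)}}]$ as the image of $\cC^{\B^{\chrHeight+1}\pin}$ under a symmetric monoidal functor together with its residual $\B^{\chrHeight+1}\pin$-action. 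With that in hand the paper does not check the axioms at all: it cites \cite[3.12]{BMCSY-cycloshift} to transport the Galois property along this functor, the trivial extension being Galois because $\B^{\chrHeight+1}\pin$ is dualizable in $\Mod_\cC$. Your comultiplication sketch (Thom isomorphism on the difference direction plus a diagonal Fourier argument) is plausible once the action is in place, but the unit axiom is only named, not argued, and both checks are exactly what the transport argument lets one avoid. To repair your route, either construct the action via the level-$(\chrHeight+2)$ equivariant Fourier transform as in the paper (at which point the transport argument is shorter than axiom-checking), or give an independent construction of the $\B^{\chrHeight+1}\pin$-action on the Thom category compatible with the grading and then carry out both axiom verifications.
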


        \begin{example}
            $\sVect = \Vect[\omega^{(0)}_{\SS_{(2)}}]$. As its braiding is non-trivial, it gives an alternative proof of the non-orientability of $\Mod_{\Vect}$.
        \end{example}

        We study these categories in low  heights. Specifically, we show that for $\chrHeight \le 4$ and any $\pichar\in\pinD$, the $\Ck$-action on the dimension of $\ounit_{\cC}\shift{\pichar} \in \cC[\omega^{(0)}_{\SS_{(p)}}]$ is trivial for all $k$ (\cref{lem:trivial-T-action-0-1}, \cref{lem:trivial-T-action-2}). This allows us to compute the corresponding braiding characters.

        Moreover, in the special case when $\cC=\ModEn$, we can compute the corresponding braiding characters in all primes and heights:

        \begin{alphtheorem}[\cref{thm:chromatic-braiding-character}]\label{alphthm:chromatic-braiding-character}
            Let $\pichar\in \pinD$. Then
            \begin{enumerate}
                \item If $p=2$, $n \le 2$ and $\pichar$ is not 2-divisible, then the braiding character of $\En\shift{\pichar} \in \ModEn{}[\omega^{(0)}_{\SS_{(p)}}]$ is the braiding character of $\Sigma\En \in \ModEn$;
                \item Otherwise, the braiding character of $\En\shift{\pichar} \in \ModEn{}[\omega^{(0)}_{\SS_{(p)}}]$ is the braiding character of $\En \in \ModEn$.
            \end{enumerate}
        \end{alphtheorem}

        As a corollary, we show:
        \begin{corollary}[\cref{cor:chromatic-T-action-zeta-trivial}]\label{cor:intro-chromatic-T-action}
            Let $p$ be a prime and $\chrHeight \ge 1$. Let $V \in \ModEn{}[\omega^{(0)}_{\SS_{(p)}}]\dbl$. Then the $\Ck$-action on $\dim(V)$ is trivial for every $k$.
        \end{corollary}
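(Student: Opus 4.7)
My approach is to reduce the statement to the invertible generators of the form $\En\shift{\pichar}$ and then apply \cref{alphthm:chromatic-braiding-character}. Set $\cC' \coloneqq \ModEn{}[\omega^{(0)}_{\SS_{(p)}}]$. The underlying stable $\infty$-category of $\cC'$ is $\Fun(\pinD, \ModEn)$, so any dualizable object splits as a finite direct sum
\begin{equation*}
V \simeq \bigoplus_i V_i\shift{\pichar_i}, \qquad V_i \in \ModEn\dbl, \ \pichar_i \in \pinD.
\end{equation*}
Each summand factors in $\cC'$ as $V_i\shift{\pichar_i} \simeq V_i \otimes \En\shift{\pichar_i}$, where $V_i$ is regarded in grading $0$ via the symmetric monoidal inclusion $\ModEn \hookrightarrow \cC'$. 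Since the $\TT$-equivariant dimension is additive in direct sums and multiplicative in tensor products, it suffices to show that the $\Ck$-action is trivial separately on $\dim V_i$ in $\ModEn$ and on $\dim \En\shift{\pichar_i}$ in $\cC'$.

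For the first factor, every dualizable object of $\ModEn$ is a perfect $\En$-module, so $\dim V_i$ is an integer combination of $\pm 1$ in $\pi_0 \En$, on which any $\TT$-action is automatically trivial.

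For the second factor, \cref{alphthm:chromatic-braiding-character} identifies the braiding character of $\En\shift{\pichar_i}$ in $\cC'$ with that of either $\En$ or $\Sigma\En$ in $\ModEn$. Both $\En$ and $\Sigma\En$ have dimension $\pm 1$ equipped with the trivial $\TT$-action, so their braiding characters arise from a trivial $\Ck$-equivariant dimension via the dependence result on $\dim W \in \End(\ounit)^{\vee_k \B\Ck}$. Because $\En\shift{\pichar_i}$ is invertible, \cref{prop:dim-is-transfer} represents the $\TT$-action on its dimension as a map out of $\Sigma\SS[\B\TT]$ factoring through the transfer; this rigid description ensures that for invertibles the braiding character is a complete invariant of the $\vee_k \B\Ck$-equivariant dimension. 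Comparing with the known trivial case then forces triviality of the $\Ck$-action on $\dim\En\shift{\pichar_i}$ for every $k$.

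The main technical hurdle is the last implication — upgrading agreement of braiding characters to agreement of $\Ck$-equivariant dimensions. For a general dualizable object this implication fails, since the character is only a trace and can coincide for inequivalent $\Ck$-actions; the key input that makes it go through here is the invertibility of $\En\shift{\pichar_i}$, which via \cref{prop:dim-is-transfer} pins down the $\TT$-action on the dimension rigidly enough to be recovered from its characters on cyclic subgroups. All other steps are reductions using standard additivity and multiplicativity of equivariant dimensions.
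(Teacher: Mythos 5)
Your reduction steps (splitting $V \simeq \bigoplus_i V_i\shift{\pichar_i}$, factoring $V_i\shift{\pichar_i} \simeq V_i \otimes \En\shift{\pichar_i}$, and disposing of $\dim V_i$ via dualizable $=$ perfect, which is \cref{cor:no-action-on-dim-En}) are exactly the paper's proof of \cref{cor:T-action-trivial-in-zeta}. The gap is in how you treat $\dim \En\shift{\pichar_i}$: you invoke \cref{alphthm:chromatic-braiding-character} as a black box, but in the body of the paper that theorem (\cref{thm:chromatic-braiding-character}) is itself deduced from the statement you are asked to prove --- its proof cites \cref{cor:T-action-is-trivial-in-Z-graded}, which rests on \cref{cor:T-action-trivial-in-zeta} and \cref{cor:chromatic-T-action-zeta-trivial}, i.e.\ precisely the triviality of the $\Ck$-action on these dimensions. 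So, as written, your argument is circular relative to the paper's development. The actual content lives in \cref{prop:chromatic-T-action-trivial}, proved via the decategorification map, the commutativity of the character diagram (\cref{cor:commutativity-of-character-diagram-suspension-In}, \cref{lem:commutativity-of-character-diagram-En}) and the classification of truncated units (\cref{prop:truncated-units}); none of this input is reproduced or replaced in your proposal, so the essential work behind the corollary is missing. (The introduction's phrasing ``as a corollary'' of the chromatic braiding character theorem reflects expositional order, not the logical order of the proofs in \cref{subsec:chromatic-braiding-character}.)

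Separately, your justification of the crux implication --- passing from agreement of braiding characters back to agreement of $\Ck$-equivariant dimensions --- is off target. The correct reason has nothing to do with invertibility or \cref{prop:dim-is-transfer}: by \cref{lem:character-of-Tm}, the component of the braiding character at a single $p^k$-cycle $\sigma \in \Sm[p^k]$ is literally $\dim(W)$ equipped with the $\Csigma \cong \ZZ/p^k$-action obtained by restriction along $\ZZ/p^k \into \TT$, so a homotopy of braiding characters restricts on that component to an identification of the $\Ck$-equivariant dimensions. In particular the implication holds for arbitrary dualizable objects, contrary to your claim that it fails in general, and your proposed mechanism (that the transfer description of $\dim$ on invertibles makes the character a ``complete invariant'' of the $\vee_k \B\Ck$-equivariant dimension) is asserted rather than proved and is not how the paper argues. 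With that step repaired your route coincides with the one sketched in the introduction, but to make it a genuine proof you would still have to establish the character computation independently, which brings you back to the chromatic arguments of \cref{subsubsec:character-diagram} and \cref{subsubsec:truncated-units}.
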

        
        These results, taking into account \cref{prop:intro-graded-trivial-iff-picard}, are compatible with the orientability conjecture of Barthel, Carmeli, Schlank and Yanvoski \cite[Conjecture~7.10]{BCSY-Fourier}, which conjectures that $\Mod_{\ModEn}$ is $(\SS_{(p)},\chrHeight+1)$-orientable. In particular, assuming this conjecture, using \cref{prop-intro:C-cyclotoimcally-closed-iff-gradedpi-is-trivial} and \cite[Proposition~10.11]{Mathew-2016-Galois}, it is simple to extend \cref{cor:intro-chromatic-T-action} to show that the $\TT$-action on $\dim(V)$ is trivial for any $V \in \ModEn{}[\omega^{(0)}_{\SS_{(p)}}]\dbl$. Moreover, by \cref{prop:intro-graded-trivial-iff-picard}, \cref{alphthm:chromatic-braiding-character} extends significantly to show that the braiding (and not only the braiding character) agrees with the braiding of a Picard element.


    \subsection{Organization}
        In \cref{sec:twisted-graded-categories} we introduce (twisted) graded categories. We start in \cref{subsec:graded-categories-and-day} with the study of the Day convolution, mainly with the Picard spectrum of graded categories. In \cref{subsec:Thom-cateogries} we construct twisted graded categories using the Thom construction, and show it exhausts all symmetric monoidal structures for discrete group-like monoids. Finally, in \cref{subsec:examples} we present some examples of interest.

        In \cref{sec:braiding} we discuss generally the braiding and its character in a symmetric monoidal category. In \cref{subsec:braiding} we introduce the braiding functor and discuss the braiding of invertible objects. In \cref{subsec:equivariant-trace} we study the monoidal trace and the monoidal dimension of invertible objects, and in \cref{subsec:braiding-character} we introduce the braiding character of a dualizable object, and show it only depends on the monoidal dimension of the object, remembering the $\TT\supseteq \Ck$-action for all $k$.

        In \cref{sec:graded-braiding}, we study the braiding and the braiding character in the special case of twisted graded categories. In \cref{subsec:graded-braiding} we show that in this case the collection of braidings of $\ounit_{\cC}\shift{a}$, determine the symmetric monoidal structure, and give a condition for triviality of the symmetric monoidal structure, forgetting the $\EE_1$-isomorphism. In \cref{subsec:graded-braiding-character} we show that the braiding character can be understood internally in twisted graded categories and study it in certain family of examples. Finally, in \cref{subsec:exterior-algebras}, we study the free commutative algebras of elements in degree 1, in $\ZZ$-twisted graded categories. These give an analog of (graded) exterior and symmetric algebras. We also relate these to 1-dimensional representations of the symmetric group.

        In \cref{sec:Galois-closed-unit}, we study the braiding of twisted graded categories over base categories $\cC$ that are $(\SS_{(p)},\chrHeight)$-oriented. We begin in \cref{subsec:graded-categories-and-orientability} by relating this structure to the $(\SS_{(p)},\chrHeight+1)$-orientability of $\Mod_{\cC}$, which enables us to interpret $\cC[\omega^{(0)}_{\SS{(p)}}]$ as the universal $(\SS_{(p)},\chrHeight+1)$-oriented category over $\cC$. In \cref{subsec:low-heights}, we analyze the braiding character of objects of the form $\ounit_{\cC}\shift{\alpha} \in \cC[\omega^{(0)}_{\SS{(p)}}]$ for $\pichar \in \pinD$ in heights $\chrHeight \le 4$. Finally, in \cref{subsec:chromatic-braiding-character}, we study the corresponding braiding character in the case $\cC = \ModEn$ for arbitrary prime and height, and show that in this case it trivializes in the correct sense.
    
    \subsection{Conventions}

         We use the following terminology and notation:
        \begin{enumerate}
            \item The category of spaces (or animae, or groupoids) is denoted by $\spc$.
            \item The category of spectra is denoted $\Sp$ and the its full subcategory of connective spectra is denoted $\cnSp$.
            \item We denote by $\cC\core\subseteq \cC$ the maximal subgroupoid of a category $\cC$.
            \item We denote the space of morphisms between two objects $X,Y$ in a category $\cC$ by $\Map_{\cC}(X,Y)$ and omit $\cC$ when it is clear from context. If $\cC$ is stable we denote the mapping spectrum of $X,Y$ by $\hom_{\cC}(X,Y)$ or by $\hom(X,Y)$ if $\cC$ is clear from context.
            \item The category of presentable categories with colimit-preserving functors is denoted by $\PrL$. For $\cC \in \CAlg(\PrL)$ we denote its category of modules by $\Mod_{\cC} \coloneqq \Mod_{\cC}(\PrL)$.
            \item For a category $\cC$ we denote its homotopy $(k,1)$-category by $\h_k \cC$. We use the convention $\h\cC = \h_1 \cC$ for the homotopy $(1,1)$-category.
            \item For a category $\cC$, an object $X \in \cC$ and a space $A \in \spc$, we denote the constant limit and colimit of $X$ along $A$ (if they exist) by $X^A$ and $X[A]$ respectively.
            \item We denote the free commutative monoid (in $\spc$) by $\MM = (\Fin\core, \sqcup) = \bigsqcup_{\degree}\B\Sm$.
            \item For a symmetric monoidal category $\cC$ we denote its full subcategory spanned by dualizable objects by $\cC\dbl$ and the maximal subgroupoid by $\cC\dblspace$.
            \item For a symmetric monoidal category $\cC$ we denote by $\cC\units$ its Picard spectrum and by $\Pic(\cC)$ its $\pi_0$. To avoid confusion, we do not use the common notation $\Pic(R)$ to mean $\Pic(\Mod_R)$.
            \item We denote the loops functor of connective spectra by $\Omega \colon \cnSp \to \cnSp$ and distinguish it from the desuspension of (not necessarily connective) spectra which we denote $\Sigma^{-1} \colon \Sp \to \Sp$.
            \item We denote the connective free loops functor by $\L$. We use it both for the functor $\Map(\TT,-) \colon \spc \to \spc$ and for $\tau_{\ge 0}\hom(\SS[\TT], -) \colon \cnSp \to \cnSp$.
            \item We denote the non-connective free loops functor of spectra by $\Lnc \coloneqq \hom(\SS[\TT], -) \colon \Sp \to \Sp$. For a connective spectrum $X$,  $\L X = \tau_{\ge 0} \Lnc X$.
            \item For a height $\chrHeight$ and a prime $p$ we denote by $\Kn$, $\Tn$ the corresponding Morava $K$-theory and any telescope of height $\chrHeight$. For a formal group $\mbb{G}$ of height $\chrHeight$ over $\Fpbar$ and an algebraically-closed field $L$ we denote by $\En(L) = \En(L, \mbb{G})$ the corresponding Morava $E$-theory. When $L$ does not play an important role we omit it from the notation.
            \item Inspired by \cite{Burklund-Schlank-Yuan-2022-Nullstellensatz}, we call maps to Nullstellensatzian objects \quotes{geometric points}.
            \item We denote by $C^{\chrHeight}_{\chrHeight-t}(L)$ the $\Kn[\chrHeight - t]$-localization of the splitting algebra of the $p$-divisible group on $L_{K(n-t)}\En(L)$, as constructed in \cite{Stapleton-2013-HKR, Lurie-2019-Elliptic3}. For any $\pi$-finite $p$-local space $A$, we denote the corresponding transchromatic character by
            \begin{equation*}
                \chi^{t,\HKR}_{(-)} \colon \En(L)^A \to C^{\chrHeight}_{\chrHeight-t}(L)^{\L^t A}.
            \end{equation*}

        \end{enumerate}

    \subsection{Acknowledgements}
        We thank Nathaniel Stapleton and our advisor Tomer Schlank for inspiring discussions that led to this project. We are especially grateful to Tomer Schlank for his patience, valuable insights, and many helpful conversations. We extend special thanks to Millie Rose for her insightful discussions and assistance in the early stages of the project. We also thank the entire Seminarak group, particularly Shay Ben-Moshe and Lior Yanovski for many helpful discussions and comments on previous drafts. We thank Achim Krause and Maxime Ramzi for many helpful comments on a previous version.
          
        The first author acknowledges the hospitality of the University of Chicago, where the project was primarily written. The second author expresses gratitude to the Hebrew University, where most of the project was developed.

\section{Twisted graded categories}
\label{sec:twisted-graded-categories}
    Let $\cC$ be a presentably symmetric monoidal category and $M$ a commutative monoid. In this section, we study different symmetric monoidal structures on the category $\Fun(M, \cC)$ of $M$-graded $\cC$-objects. The simplest example we consider is the Day convolution, which corresponds to the constant colimit $\cC[M]$ in $\CAlg_{\cC}(\PrL)$. 
    In \cref{subsec:graded-categories-and-day}, we study this symmetric monoidal structure and compute its Picard spectrum when $M$ is discrete and $\cC$ is semiadditive and connected. 
    
    In \cref{subsec:Thom-cateogries}, we employ the Thom construction to construct symmetric monoidal structures on $\Fun(M,\cC)$ that agree $\EE_1$-monoidally with the Day convolution. When $M = A$ is an abelian group, we prove that these exhaust all possible structures.
        
    Finally, in \cref{subsec:examples}, we explore examples of twisted graded categories, reconstructing in particular the Koszul twist in any category admitting a minus one.

    \subsection{Graded categories and Day convolution}
    \label{subsec:graded-categories-and-day}
        Let $\cC$ be an $\EE_k$-monoidal category and $M$ be an $\EE_k$-monoid for $1 \le k \le \infty$. In this subsection, we study the Picard $\EE_k$-group of the $\EE_k$-monoidal category $\Fun(M, \cC)$ equipped with the Day monoidal structure, in the case when $\cC$ is presentable and $M$ is discrete. 
        Under mild assumptions, we show it is isomorphic to $\cC\units \times M\units$.

        \begin{definition}
            Define the category of $M$-graded objects in $\cC$ as $\Gr_M \cC \coloneqq \Fun(M, \cC)$. We endow this category with the Day $\EE_k$-monoidal structure.

            Equivalently, $\Gr_M \cC$ is the Thom construction of the trivial map $M \xto{0} \Mod_{\cC}\units$ (see \cite{Antolin-Barthel-2019-Thom} or \cref{subsec:Thom-cateogries}).
        \end{definition}

        \begin{definition}\label{def:shift-and-yoneda}\ 
            \begin{enumerate}
                \item\label{item:<0>} 
                The map of commutative monoids $0 \to M$ over $\Mod_{\cC}\units$ induces an $\EE_k$-monoidal functor
                \begin{equation*}
                    (-)\shift{0} \colon \cC \to \Gr_M\cC.
                \end{equation*}
                It sends $X\in \cC$ to the graded object $X\shift{0} \colon M \to \cC$, which is $X[\Omega_0 M]$ at the connected component of $0 \in M$, and is $\emptyset$ at all other connected components.

                \item\label{item:1<->}
                The Yoneda embedding $\yo \colon M\op \to \FunDay(M, \cC) = \Gr_M \cC$ is $\EE_k$-monoidal (\cite[Corollary~4.8.1.12, Remark~4.8.1.13]{Lurie-HA}, \cite[Proposition~3.3]{Ben-Moshe-Schlank-2024-K-theory}). Denote the $\EE_k$-monoidal functor
                \begin{equation*}
                    \ounit_{\cC}\shift{-} \colon M \isoto M\op \xto{\yo} \Gr_M \cC.
                \end{equation*}
                It sends $m\in M$ to the graded object $\ounit_{\cC}\shift{m} \colon M \to \cC$, which is $\ounit_{\cC}[\Omega_m M]$ at the connected component of $m\in M$, and $\emptyset$ at all other connected components.

                \item\label{item:(-)<->}
                Taking the tensor product of the above two definitions, we define the $\EE_k$-monoidal functor
                \begin{equation*}
                    (-)\shift{-} \colon \cC \times M \xto{(-)\shift{0} \times \ounit_{\cC}\shift{-}} \Gr_M \cC \times \Gr_M \cC \xto{\otimes_{\Day}} \Gr_M \cC.
                \end{equation*}
                It sends $(X, m)$ to the graded object $X\shift{m} \colon M \to \cC$ which is $X[\Omega_m M]$ at the connected component of $m\in M$, and $\emptyset$ at all other connected components.
            \end{enumerate}
        \end{definition}
        
        We detour now for a quick discussion about the Picard spectrum, and more generally, the Picard $\EE_k$-group.        
        Let $\cU\in\Alg_{\EE_{k+1}}(\PrL)$. Then $\cU$ admits a unique $\EE_{k+1}$-monoidal, colimit preserving functor
        \begin{equation*}
            \ounit_{\cU}[-] \colon \spc \to \cU
        \end{equation*}
        and its right adjoint is $\Map(\ounit_{\cU}, -)$. Taking $\EE_k$-algebras we get an adjunction
        \begin{equation*}
            \ounit_{\cU}[-] \colon \Mon_{\EE_k}(\spc) \rightleftarrows \Alg_{\EE_k}(\cU) \cocolon \Map(\ounit_{\cU}, -).
        \end{equation*}
        The forgetful functor from group-like $\EE_k$-monoids admits a right adjoint $(-)\units$
        \begin{equation*}
            \fgt \colon \Grp_{\EE_k}(\spc) \rightleftarrows \Mon_{\EE_k}(\spc) \cocolon (-)\units
        \end{equation*}
        which takes a monoid to its collection of connected components spanned by invertible elements. Composing the two right adjoints we get a functor
        \begin{equation*}
            (-)\units \colon \Alg_{\EE_k}(\cU) \to \Grp_{\EE_k}(\spc).
        \end{equation*}
    
        When $\cU$ is symmetric monoidal we have a commutative diagram
        \begin{equation*}
            \begin{tikzcd}
                {\CAlg(\cU)} & {\Alg(\cU)} \\
                \cnSp & {\Grp(\spc).}
                \arrow["\fgt", from=1-1, to=1-2]
                \arrow["{(-)\units}", from=1-1, to=2-1]
                \arrow["{(-)\units}", from=1-2, to=2-2]
                \arrow["\fgt", from=2-1, to=2-2]
            \end{tikzcd}
        \end{equation*}
    
        Choosing $\cU = \Cat$, for any $\cC\in \Mon_{\EE_k}(\Cat)$ we call $\cC\units$ the Picard $\EE_k$group of $\cC$.
        The commutativity of the diagram above implies that when $\cC$ is symmetric monoidal, its Picard $\EE_1$-group agrees with the underlying $\EE_1$ structure of its Picard spectrum.
    
        We also denote $\Pic(\cC) \coloneqq \pi_0 \cC\units$.

        We recall the following definition:
        \begin{definition}\label{def:connected}
            Let $\cC$ be an $\EE_k$-monoidal category. We say that $\cC$ is connected if it can not be decomposed as a product of $\EE_k$-monoidal categories. Equivalently, the unit $\ounit_{\cC}$ does not admit central idempotents (\cite[Proposition~6.28]{BCSY-Fourier}).   
        \end{definition}
    
        The goal of this section is the following:
    
        \begin{theorem}\label{prop:picard-of-Day}
            Let $\cC \in \Alg_{\EE_k}(\PrL)$ be semiadditive and connected.
            Let $M$ be a discrete commutative monoid. Then the functor of \cref{def:shift-and-yoneda}(\labelcref{item:(-)<->}) induces an isomorphism of $\EE_k$-groups
            \begin{equation*}
               \cC\units \times M\units \isoto (\Gr_M \cC)\units.
            \end{equation*}
        \end{theorem}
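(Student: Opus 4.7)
The proposed map is obtained by applying $(-)\units$ to the $\EE_k$-monoidal functor $(-)\shift{-}$ of Definition~\ref{def:shift-and-yoneda}(\ref{item:(-)<->}); since $(-)\units$ lands in $\EE_k$-groups via a functor whose underlying functor to $\spc$ is conservative, it suffices to check that the induced map on underlying spaces is an equivalence. Concretely, I need to verify bijectivity on $\pi_0$ together with equivalences on the higher homotopy groups.

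The main input for $\pi_0$-surjectivity is the symmetric monoidal colimit functor $\Phi \coloneqq \colim_M \colon \Gr_M \cC \to \cC$ induced by the $\EE_{\infty}$-monoid map $M \to \ast$; concretely, $\Phi(Z) = \bigoplus_m Z_m$, so $\Phi(V\shift{m}) = V$. Since $\Phi$ is $\EE_k$-monoidal it preserves invertibility, so for any $X \in (\Gr_M \cC)\units$ the direct sum $\bigoplus_m X_m = \Phi(X)$ is invertible in $\cC$. The key lemma---and the hardest step---is that any invertible $V$ in a connected semiadditive $\cC$ is indecomposable: given a splitting $V \simeq \bigoplus_\alpha A_\alpha$, tensor with $V^{-1}$ to get $\ounit_\cC \simeq \bigoplus_\alpha A_\alpha \otimes V^{-1}$; the inclusion-projection pairs define an orthogonal family of idempotents in $\End_\cC(\ounit_\cC)$, and connectedness (no non-trivial central idempotents on $\ounit_\cC$) forces exactly one to be $\id_{\ounit_\cC}$ (so that $A_{\alpha_0} \otimes V^{-1} \simeq \ounit_\cC$ and hence $A_{\alpha_0} \simeq V$) while the rest are zero; semiadditivity then ensures that a zero idempotent defines a zero retract, so $A_\alpha = 0$ for $\alpha \neq \alpha_0$. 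Applied to $\Phi(X)$, this produces a unique $m_0 \in M$ with $X_{m_0} \in \cC\units$ and $X_m = 0$ for $m \neq m_0$, so $X \simeq X_{m_0}\shift{m_0}$. Running the same argument on $X^{-1} \simeq Y\shift{n_0}$ and comparing with $X \otimes X^{-1} \simeq \ounit_\cC\shift{0}$ forces $m_0 + n_0 = 0$ in $M$, so $m_0 \in M\units$.

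For $\pi_0$-injectivity and the higher homotopy, I directly compute the relevant mapping spaces. Since $M$ is discrete,
\[
    \Map_{\Gr_M \cC}(V\shift{m}, W\shift{n}) \simeq \prod_{p \in M} \Map_\cC((V\shift{m})_p, (W\shift{n})_p),
\]
and factors of the form $\Map_\cC(-, 0)$ and $\Map_\cC(0, -)$ are contractible by semiadditivity (the initial and terminal objects coincide). The product therefore equals $\Map_\cC(V, W)$ when $m = n$, and is the contractible space of zero maps otherwise, which can never contain an equivalence between non-zero invertibles; this yields $\pi_0$-injectivity. Specializing to the unit gives $\End_{\Gr_M \cC}(\ounit_\cC\shift{0}) \simeq \End_\cC(\ounit_\cC)$, so the automorphism spaces at the basepoints of the two Picard spaces agree, and---combined with the discreteness of $M\units$---the map is an equivalence on all $\pi_i$ for $i \ge 1$.
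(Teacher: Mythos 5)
Your proof is correct, but the surjectivity step takes a genuinely different route from the paper's. The paper never leaves $\Gr_M \cC$: it writes $X \otimes_{\Day} X^{-1} \simeq \ounit_{\cC}\shift{0}$, evaluates the Day convolution degreewise, and applies two lemmas --- that a coproduct equivalent to $\emptyset$ has all summands $\emptyset$, and that an orthogonal decomposition $E \oplus F \simeq \ounit_{\cC}$ forces one summand to be the unit (proved by exhibiting $E$ as an idempotent algebra, contradicting connectedness). The invertibility of the supporting degree $m_0$ then comes for free from the indexing over pairs $a+b=0$. You instead push $X$ forward along the monoidal colimit functor $\Gr_M \cC \to \cC$ (a functor the paper uses elsewhere but not in this proof) and prove a standalone key lemma: an invertible object of a nonzero connected semiadditive category admits no nontrivial coproduct decomposition, via orthogonal idempotents in $\pi_0\End(\ounit_{\cC})$. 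This is the same underlying principle (connectedness kills nontrivial idempotents) packaged differently; your version isolates a reusable statement about invertible objects, at the cost of needing the extra step with $X^{-1}$ and the degree bookkeeping to see $m_0 \in M\units$. On the injectivity/higher-homotopy side your explicit mapping-space computation is essentially the paper's ``fully faithful'' argument, but done more carefully: the functor $\cC \times M \to \Gr_M \cC$ is not literally fully faithful for $m \neq n$ (empty versus contractible mapping spaces), and your observation that the unique zero map cannot be an equivalence between nonzero invertibles is exactly what makes the Picard-level comparison go through; the reduction of the higher homotopy to the unit component via grouplikeness is standard and fine.
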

    
        We will need the following two simple lemmas:
    
        \begin{lemma}\label{lem:sum-is-zero}
            Let $\cC$ be a category. Assume there is a collection of objects $\{X_i\}_i$ such that $\bigsqcup_{i} X_i \simeq \emptyset$. Then $X_i \simeq \emptyset$ for all $i$.
        \end{lemma}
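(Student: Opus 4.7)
The plan is to reduce the statement to the strictness of the initial object. For each index $i$, the canonical coproduct inclusion $\iota_i \colon X_i \to \bigsqcup_j X_j$, postcomposed with the given isomorphism $\bigsqcup_j X_j \simeq \emptyset$, produces a morphism $X_i \to \emptyset$. In any category whose initial object is strict --- meaning every object admitting a morphism to $\emptyset$ is itself initial --- this immediately forces $X_i \simeq \emptyset$. Strictness of the initial object holds in any $\infty$-topos, and more generally in any presentable $\infty$-category, which covers all settings in which the lemma is applied (in particular the proof of \cref{prop:picard-of-Day}, where $\cC$ is presentable).

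A variant argument tailored to the semiadditive hypothesis in force in \cref{prop:picard-of-Day} avoids appealing to strictness directly. Isolating the $i$-th summand as $\bigsqcup_j X_j \simeq X_i \sqcup \bigsqcup_{j \neq i} X_j$ and using semiadditivity to obtain a projection $\pi_i \colon \bigsqcup_j X_j \to X_i$ with $\pi_i \circ \iota_i = \mathrm{id}_{X_i}$, the vanishing of the coproduct lets the identity of $X_i$ factor through the zero object, so $X_i \simeq 0$.

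I do not anticipate any substantive obstacle: the lemma is a brief formal isolation of a routine fact, and the only real decision is which ambient hypothesis (strict initial object, or semiadditivity) to invoke. Either route produces a proof of a few lines, and the statement is likely quoted precisely so that subsequent arguments --- decomposing graded objects whose underlying coproduct over $M$ is initial --- can conclude that each graded component vanishes.
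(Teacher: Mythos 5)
Your first route contains a genuine error: it is not true that the initial object is strict in every presentable $\infty$-category. Any pointed presentable category is a counterexample ($\Sp$, $\Mod_R$, any stable or semiadditive category): there the initial object is the zero object, every object maps to it, and almost no object is initial. This matters precisely in the setting where the lemma is used. In the proof of \cref{prop:picard-of-Day} the ambient category is semiadditive (the lemma is applied to conclude $X_a \otimes Y_b \simeq 0$ from $\bigoplus_{a+b=m} X_a \otimes Y_b \simeq 0$), so $\emptyset$ is the zero object and is not strict; producing a morphism $X_i \to \emptyset$ from the coproduct inclusion proves nothing there. So the route you intended as the general argument collapses exactly in the intended application, and it also cannot prove the lemma as stated, since the statement assumes only ``Let $\cC$ be a category.''

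Your second route (the retraction $\pi_i \circ \iota_i \simeq \id_{X_i}$ built from zero maps) is correct wherever zero maps exist, e.g.\ under semiadditivity, and it does suffice for \cref{prop:picard-of-Day}; note only that you should build $\pi_i$ on each summand separately (identity on $X_i$, zero on $X_j$ for $j\neq i$), which works for infinite index sets as well. The paper's proof is essentially this argument, but with one extra twist that removes the pointedness hypothesis altogether: the needed ``trivial'' maps $X_j \to Y$ are manufactured from the hypothesis itself, as the composites $X_j \to \bigsqcup_k X_k \simeq \emptyset \to Y$. Using them, for any $Y$ one exhibits $\Map(X_{i_0},Y)$ as a retract of $\Map(\bigsqcup_j X_j, Y) \simeq \Map(\emptyset, Y) \simeq \pt$, hence contractible, so $X_{i_0}$ is initial. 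This proves the lemma for an arbitrary category, with no appeal to strictness or to (semi)additive structure; your proposal as written does not.
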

        \begin{proof}
            Let $Y \in \cC$. For any $i$ we call the following map the trivial map
            \begin{equation*}
                \emptyset \colon X_i \to \bigsqcup_j X_j \simeq \emptyset \to Y.
            \end{equation*}
            Choose an index $i_0$. The trivial maps $X_i \xto{\emptyset} Y$ for $i\neq i_0$, provide a retraction
            \begin{equation*}
                \Map(X_{i_0}, Y) \to \Map(\bigsqcup_j X_j, Y)
            \end{equation*}
            to the post-compisition map with $X_{i_0} \to \bigsqcup_j X_j$.
            Therefore $\Map(X_{i_0}, Y)$ is a retract of $\Map(\bigsqcup_j X_j, Y) \simeq \Map(\emptyset, Y) \simeq \pt$ and therefore contractible. Thus $X_{i_0} \simeq \emptyset$.
        \end{proof}
    
        \begin{lemma}\label{lem:sum-cannot-be-unit}
            Let $0 \neq \cC\in \Alg_{\EE_k}(\PrL)$ be semiadditive and connected. Let $E,F\in \cC$ be orthogonal, that is $E \otimes F \simeq 0$. Assume that $E \oplus F \simeq \ounit_{\cC}$. Then either $E = \ounit_{\cC}$ and $F = 0$ or $E = 0$ and $F = \ounit_{\cC}$.
        \end{lemma}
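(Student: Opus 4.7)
The plan is to extract a central idempotent of $\ounit_{\cC}$ from the biproduct decomposition $\ounit_{\cC} \simeq E \oplus F$, and then conclude via the characterization of connectedness from \cite[Proposition~6.28]{BCSY-Fourier}: a connected $\cC$ admits no nontrivial central idempotents. To begin, semiadditivity makes $\ounit_{\cC} \simeq E \oplus F$ a biproduct, equipped with inclusions $i_E, i_F$ and projections $\pi_E, \pi_F$ satisfying the standard matrix identities $\pi_E i_E = \mrm{id}_E$, $\pi_F i_F = \mrm{id}_F$, $\pi_E i_F = 0 = \pi_F i_E$, and $i_E \pi_E + i_F \pi_F = \mrm{id}_{\ounit_{\cC}}$. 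Setting $e \coloneqq i_E \pi_E$ and $f \coloneqq i_F \pi_F$ in $\End(\ounit_{\cC})$ yields orthogonal idempotents summing to $\mrm{id}_{\ounit_{\cC}}$.

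To promote $e$ from a mere idempotent endomorphism to a genuine central idempotent, I would use the orthogonality hypothesis: tensoring $\ounit_{\cC} \simeq E \oplus F$ with $E$ gives $E \simeq E^{\otimes 2} \oplus (E \otimes F) \simeq E^{\otimes 2}$, and similarly $F \simeq F^{\otimes 2}$. Combined with the projections $\pi_E, \pi_F$, this endows $E$ and $F$ with commutative idempotent algebra structures and realizes $\cC$ as the product of the categories of $E$- and $F$-modules. Connectedness then forces $e \in \{0, \mrm{id}_{\ounit_{\cC}}\}$: if $e = 0$, then $i_E = e \circ i_E = 0$, whence $\mrm{id}_E = \pi_E \circ i_E = 0$, so $E \simeq 0$ and $F \simeq \ounit_{\cC}$; the case $e = \mrm{id}_{\ounit_{\cC}}$ is symmetric and yields $E \simeq \ounit_{\cC}$ with $F \simeq 0$.

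The main subtlety is the central-idempotent upgrade: verifying that the idempotent endomorphism $e$ extracted from the biproduct genuinely qualifies as a central idempotent in the strict monoidal sense of \cite[Proposition~6.28]{BCSY-Fourier}. The orthogonality $E \otimes F \simeq 0$ is precisely the input that converts the biproduct datum into an honest monoidal decomposition; without it one would only obtain an idempotent in $\End(\ounit_{\cC})$ that need not correspond to a splitting of $\cC$, and the connectedness hypothesis would have no leverage.
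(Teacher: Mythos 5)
Your proof is correct and takes essentially the same route as the paper: the key step in both is that tensoring $\ounit_{\cC} \simeq E \oplus F$ with $E$ and using $E \otimes F \simeq 0$ exhibits $E$ (with unit map the projection $\ounit_{\cC} \to E$) as an idempotent algebra, which connectedness forces to be trivial, giving the dichotomy. Your extra packaging via the biproduct idempotent $e = i_E \pi_E$ and the product decomposition into module categories is harmless but not needed beyond the idempotent-algebra step the paper uses directly.
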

        \begin{proof}
            Consider the composition $u \colon \ounit \simeq E \oplus F \xto{\pr_E} E$. Then the map $\id_E \otimes u \colon E \to E\otimes E$ factors as
            \begin{equation*}
                \id_E \otimes u \colon E \simeq E \otimes (E \oplus F) \simeq (E \otimes E) \oplus (E\otimes F) \simeq (E\otimes E).
            \end{equation*}
            That is, $E$ is an idempotent algebra in $\cC$, in contradiction to the connectedness of $\cC$.

        \end{proof}

        \begin{proof}[Proof of \cref{prop:picard-of-Day}]
            Consider the $\EE_k$-monoidal map
            \begin{equation*}
                (-)\shift{-} \colon \cC \times M \to \Gr_M \cC
            \end{equation*}
            of \cref{def:shift-and-yoneda}(\labelcref{item:(-)<->}). It is fully faithful, as for each $m\in M$ the map $(-)\shift{m} \colon \cC \to \Gr_M \cC$ is fully faithful. Taking Picard $\EE_k$-groups, it induces
            \begin{equation*}
                \cC\units \times M\units \to (\Gr_M \cC)\units.
            \end{equation*}
    
            It is left showing it is essentially surjective.
            Let $X\in (\Gr_M \cC)\units$ and call its inverse $Y$. As $(-)\shift{0} \colon \cC \to \Gr_M \cC$ is $\EE_k$-monoidal, the unit of $\Gr_M \cC$ is $\ounit_{\cC}\shift{0}$. That is, it is $\ounit_{\cC}$ at $0\in M$ and $0\in \cC$ everywhere else. The Day convolution is given by
            \begin{equation*}
                (X \otimes_{\Day} Y)_m \simeq \bigoplus_{a+b = m} (X_a \otimes Y_b).
            \end{equation*}
            Therefore,
            \begin{equation*}
                \begin{split}
                    & \bigoplus_{a + b = 0} (X_a \otimes Y_b) \simeq \ounit_{\cC} \\
                    & \bigoplus_{a + b = m} (X_a \otimes Y_b) \simeq 0 \text{ for any $m \neq 0$}.
                \end{split}
            \end{equation*}
            By \cref{lem:sum-cannot-be-unit}, there exists a unique $m_0 \in M$ (necessarily invertible) for which $X_{m_0} \otimes Y_{-m_0} \simeq \ounit_{\cC}$, and together with \cref{lem:sum-is-zero}, for any $(a,b)\neq (m_0, -m_0)$ 
            \begin{equation*}
                X_a \otimes Y_b \simeq 0.
            \end{equation*}
            In particular, for any $a \neq m_0$, $X_a \otimes Y_{-m_0} \simeq 0$, but as $Y_{-m_0}$ is invertible, $X_a \simeq 0$. Therefore $X \simeq X_{m_0}\shift{m_0}$.
        \end{proof}
    
        \begin{remark}
            \cref{prop:picard-of-Day} does not hold if we replace $M$ by a non-discrete $\EE_k$-monoid. For example, if we take a connective spectrum of the form $\Sigma^2 A$ for $A\in \Ab$, and $\cC$ a strict 1-category, then any functor $\Sigma^2 A \to \cC$ is trivial and $\FunDay(\Sigma^2 A, \cC) \simeq \cC$. Therefore
            \begin{equation*}
                (\Gr_{\Sigma^2 A} \cC)\units = \cC\units \neq \cC\units \times \Sigma^2 A.
            \end{equation*}
        \end{remark}

    \subsection{Thom categories}
    \label{subsec:Thom-cateogries}
        
        \begin{definition}\label{def:twisted-Fun}
            Let $\cC \in \Alg_{\EE_{k}}(\PrL)$ for $1 \le k \le \infty$. Let $M$ be a discrete commutative monoid and $\Zchar \colon M \to \Sigma \cC\units$ be an $\EE_k$-monoidal map, equipped with an $\EE_1$-nullhomotopy. 
            Define the category $\Gr^{\Zchar}_M \cC$ of \emph{$\Zchar$-twisted $M$-graded objects} to be the Thom construction (i.e.\ the colimit in $\Mod_{\cC}$) of the map
            \begin{equation*}
                M \xto{\Zchar} \Sigma \cC\units \to \Mod_{\cC}\units.
            \end{equation*}
            It is an $\EE_k$-algebra in $\Mod_{\cC}$ (see e.g. \cite{Antolin-Barthel-2019-Thom}).
        \end{definition}

        \begin{lemma}\label{lem:twFun-Day-E1}
            The underlying $\EE_1$-algebra of $\Gr^{\Zchar}_M(\cC)$ is identified with $\Gr_M \cC \in \Alg_{\cC}(\PrL)$ via the $\EE_1$-nullhomotopy of $M\to \Sigma \cC\units$.
        \end{lemma}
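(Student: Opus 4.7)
The plan is to derive the lemma as a formal consequence of the naturality of the Thom construction under change of $\EE_k$-structure. The Thom construction, as a colimit in the module category $\Mod_{\cC}$, is functorial for $\EE_\ell$-monoidal maps and commutes with the forgetful functors $\Alg_{\EE_k}(\Mod_\cC) \to \Alg_{\EE_\ell}(\Mod_\cC)$ for any $1 \le \ell \le k$; this is built into the $\infty$-operadic presentation cited in \cref{def:twisted-Fun}. In particular, the underlying $\EE_1$-algebra of $\Gr^{\Zchar}_M \cC$ is the Thom construction of the underlying $\EE_1$-monoidal map $M \to \Mod_\cC\units$.

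Next, I would observe that the Thom construction of the zero $\EE_1$-map $M \to 0 \to \Mod_\cC\units$ is, by unwinding the definition of the Thom construction as a colimit of the associated local system, the constant colimit $\ounit_{\Mod_\cC}[M] \simeq \cC[M]$ equipped with its Day $\EE_1$-monoidal structure. Concretely, the constant diagram $M \to \Mod_\cC$ at $\cC$ has colimit $\cC[M]$ in $\PrL$, and when $M$ is given its $\EE_1$-structure the corresponding colimit in $\Alg_{\EE_1}(\Mod_\cC)$ is exactly $\Gr_M \cC$ with the Day convolution.

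The given $\EE_1$-nullhomotopy of $\Zchar$ is by definition a path in $\Map_{\EE_1}(M, \Sigma \cC\units)$ from the underlying $\EE_1$-map of $\Zchar$ to the zero $\EE_1$-map. Post-composing with the $\EE_1$-map $\Sigma \cC\units \to \Mod_\cC\units$ produces a path in $\Map_{\EE_1}(M, \Mod_\cC\units)$ from the underlying $\EE_1$-map of the composite $M \to \Mod_\cC\units$ to the zero $\EE_1$-map. Applying the $\EE_1$-functorial Thom construction to this path yields an isomorphism
\[
    \Gr^{\Zchar}_M \cC \simeq \Gr_M \cC \qin \Alg_{\EE_1}(\Mod_\cC) = \Alg_\cC(\PrL),
\]
which is the claimed identification.

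The main obstacle is justifying cleanly that the Thom construction is $\EE_\ell$-functorial for every $\ell$ and that this functoriality commutes with the forgetful functor $\Alg_{\EE_k} \to \Alg_{\EE_\ell}$ — i.e.\ packaging the Thom colimit as a natural transformation of functors on $\Alg_{\EE_\ell}(\spc)_{/\Mod_\cC\units}$ compatible across all $\ell \le k$. This is essentially formal given the operadic model of the Thom construction from the reference in \cref{def:twisted-Fun}, but it is where the substance of the argument lies; once in hand, the lemma is an immediate application.
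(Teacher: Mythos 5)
Your proof is correct and follows essentially the same route as the paper: the $\EE_1$-nullhomotopy identifies the underlying $\EE_1$-monoidal map with the trivial one, and functoriality of the Thom construction (colimit) then identifies $\Gr^{\Zchar}_M\cC$ with the trivial colimit $\cC[M] \simeq \Gr_M\cC$ as $\EE_1$-algebras. The compatibility of the Thom construction with the forgetful functors, which you flag as the remaining substance, is treated as standard (via the cited Thom-construction formalism) in the paper as well.
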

        
        \begin{proof}
            The map of $\EE_1$-groups
            \begin{equation*}
                \Zchar \colon M \to \Sigma \cC\units
            \end{equation*}
            comes with a null homotopy by assumption. Therefore, the colimit along $\Zchar$ is identified with the trivial colimit $\cC[M] \simeq \Gr_M \cC$.
        \end{proof}

        \begin{corollary}
            Let $m_1, m_2 \in M$. Then for any  \begin{equation*}
                \Zchar \colon M \to \Sigma \cC\units
            \end{equation*}
            with an $\EE_1$-nullhomotopy we have $\ounit_{\cC}\shift{m_1} \otimes \ounit_{\cC}\shift{m_2} \simeq \ounit_{\cC}\shift{m_1 + m_2}$ in $\Gr_M^\phi(\cC)$. 
        \end{corollary}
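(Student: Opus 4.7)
The plan is to reduce the question to a computation in the untwisted Day convolution, using the preceding \cref{lem:twFun-Day-E1}. The tensor product $\otimes$ appearing in the statement depends only on the underlying $\EE_1$-monoidal structure of $\Gr^{\Zchar}_M(\cC)$. By \cref{lem:twFun-Day-E1}, the chosen $\EE_1$-nullhomotopy of $\Zchar$ induces an $\EE_1$-monoidal isomorphism $\Gr^{\Zchar}_M(\cC) \simeq \Gr_M \cC$ in $\Alg_{\cC}(\PrL)$, and moreover it is along this very identification that the shift functor $\ounit_{\cC}\shift{-}$ appearing in the statement is defined for the twisted category.

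Having transported the problem to the Day convolution side, the claim becomes the assertion that
\begin{equation*}
    \ounit_{\cC}\shift{m_1} \otimes_{\Day} \ounit_{\cC}\shift{m_2} \simeq \ounit_{\cC}\shift{m_1 + m_2} \qin \Gr_M \cC.
\end{equation*}
This is precisely the $\EE_1$-monoidality of the functor $\ounit_{\cC}\shift{-} \colon M \to \Gr_M \cC$ recorded in \cref{def:shift-and-yoneda}(\labelcref{item:1<->}): that functor was obtained from the Yoneda embedding $\yo \colon M\op \to \FunDay(M,\cC)$, which is $\EE_k$-monoidal (and in particular $\EE_1$-monoidal) for every $1 \le k \le \infty$ by the cited results of Lurie and of Ben Moshe–Schlank.

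There is no genuine obstacle here; the only subtlety worth noting is making sure one really is allowed to invoke the Day identification of \cref{lem:twFun-Day-E1} to compute a plain tensor product, i.e. that the shift notation $\ounit_{\cC}\shift{m_i}$ used inside $\Gr^{\Zchar}_M\cC$ is by definition the image of $\ounit_{\cC}\shift{m_i} \in \Gr_M \cC$ under the $\EE_1$-monoidal isomorphism of \cref{lem:twFun-Day-E1}. Once this bookkeeping is in place, the proof is a one-line application of $\EE_1$-monoidality of the Yoneda embedding.
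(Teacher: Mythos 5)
Your proof is correct and follows exactly the route the paper intends: the corollary is stated without proof as an immediate consequence of \cref{lem:twFun-Day-E1}, the point being precisely that the tensor product is $\EE_1$-data, so it can be computed in $\Gr_M\cC$ where the $\EE_1$-monoidality of $\ounit_{\cC}\shift{-}$ (\cref{def:shift-and-yoneda}) gives the isomorphism. Your bookkeeping remark about the shift functor being defined through that identification is exactly the right subtlety to flag.
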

    
        \begin{definition}
            Using the identification of \cref{lem:twFun-Day-E1}, we can define the $\EE_1$-map as in \cref{def:shift-and-yoneda}(\labelcref{item:(-)<->})
            \begin{equation*}
                (-)\shift{-} \colon \cC \times M \to \Gr^{\Zchar}_M(\cC).
            \end{equation*}
        \end{definition}
    
        We will be interested specifically in the case where $k = \infty$ and $M$ is grouplike. In this case $\EE_k$-groups are identified with connective spectra.
        
        \begin{definition}
            Let $X,Y \in \cnSp$. Define the space $\Map\Enull(X,Y)$ of maps $X \to Y$ that are $\EE_1$-null as the fiber
            \begin{equation*}
                \Map\Enull(X,Y) \coloneqq \fib \big( \Map_{\cnSp}(X,Y) \to \Map_{\EE_1}(X,Y) \big).
            \end{equation*}
        \end{definition}

        Using the fully faithful functor $\B \colon \Grp(\spc) \into \spc_*$ and the $\redSS[-] \dashv \fgt$ adjunction, where $\redSS[-]\colon \spc_* \to \cnSp$ is the reduced suspension spectrum functor, one can write
        \begin{equation*}
            \Map_{\EE_1}(X, Y) \simeq \Map_{\spc_*}(\B X, \B Y) \simeq \Map_{\cnSp}(\redSS[\B X], \Sigma Y) \simeq \Map_{\Sp}(\Sigma^{-1} \redSS[\B X], Y).
        \end{equation*}
        Therefore
        \begin{equation*}
            \Map\Enull(X,Y) \simeq \Map_{\Sp}(\cofib(\Sigma^{-1} \redSS[\B X] \to X), Y).
        \end{equation*}
        We moreover notice that the cofiber $\cofib(\Sigma^{-1} \redSS[\B X] \to X)$ is connected.

        \begin{definition}
            Let $X \in \cnSp$. Define
            \begin{equation*}
                X\Enull \coloneqq \Omega \cofib(\Sigma^{-1} \redSS[\B X] \to X) \in \cnSp.
            \end{equation*}
        \end{definition}

        \begin{corollary}\label{cor:E1-null}
            For any $X,Y \in \cnSp$
            \begin{equation*}
                \Map(X\Enull, \Omega Y) \simeq \Map\Enull(X,Y).
            \end{equation*}
        \end{corollary}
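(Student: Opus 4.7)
\textbf{The plan} is to chain together the identifications already spelled out in the two paragraphs immediately before the definition of $X\Enull$, and then to absorb one factor of $\Omega$ into the target using that $\Omega \colon \Sp \to \Sp$ is an autoequivalence. Concretely, the text has already shown
\begin{equation*}
    \Map\Enull(X,Y) \simeq \Map_{\Sp}\big(\cofib(\Sigma^{-1} \redSS[\B X] \to X),\, Y\big),
\end{equation*}
by unfolding the fiber defining $\Map\Enull(X,Y)$, using the fully faithful embedding $\B \colon \Grp(\spc) \into \spc_*$ to convert $\EE_1$-maps into pointed maps, applying the $\redSS[-] \dashv \fgt$ adjunction to land in $\Sp$, and invoking the fiber/cofiber sequence for mapping spectra.

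Next, I would apply the equivalence $\Map_{\Sp}(C, Y) \simeq \Map_{\Sp}(\Omega C, \Omega Y)$, valid for any $C, Y \in \Sp$ because $\Omega \colon \Sp \to \Sp$ is an autoequivalence. Taking $C = \cofib(\Sigma^{-1} \redSS[\B X] \to X)$ turns the right-hand side above into $\Map_{\Sp}(X\Enull, \Omega Y)$ by the very definition of $X\Enull$. Since $C$ is connected (as the text explicitly observes), its loops $X\Enull$ is connective; and $\Omega Y$ is connective because $Y$ is. Hence the mapping spectrum agrees with the ordinary mapping space computed in $\cnSp$, yielding $\Map(X\Enull, \Omega Y)$ as desired.

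\textbf{The main obstacle}, such as it is, is purely bookkeeping: keeping track of whether mapping spaces are being computed in $\cnSp$, $\Sp$, $\spc_*$, or $\Mon_{\EE_1}(\spc)$, and ensuring the connectivity hypotheses align at each step. There is no substantive new input in the corollary itself; it merely repackages the corepresenting object of the functor $Y \mapsto \Map\Enull(X, Y)$ as a connective spectrum, with the $\Omega$ absorbed into the target so that the final formula refers only to constructions internal to $\cnSp$.
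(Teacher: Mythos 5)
Your proposal is correct and follows essentially the same route the paper intends: it repackages the already-established identification $\Map\Enull(X,Y) \simeq \Map_{\Sp}\big(\cofib(\Sigma^{-1}\redSS[\B X] \to X), Y\big)$ using the invertibility of desuspension on $\Sp$ together with the noted connectedness of the cofiber (so that $\Omega$ of it agrees with its desuspension and maps into $\Sigma^{-1}Y$ factor through the connective cover $\Omega Y$). The only caveat is notational: the paper reserves $\Omega$ for the connective loops functor and $\Sigma^{-1}$ for the autoequivalence of $\Sp$, so your "$\Omega\colon\Sp\to\Sp$ is an autoequivalence" should read $\Sigma^{-1}$, but the connectivity observation you make is exactly what reconciles the two.
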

            
        \begin{definition}
            Let $\cC \in \CAlg(\PrL)$ and $\cD \in \Alg_{\cC}(\PrL)$. Define the space $\Lift(\cD)$, of $\EE_{\infty}$-lifts of $\cD$, as the pullback
            \begin{equation*}
                \begin{tikzcd}
                    {\Lift(\cD)} & {{\CAlg_{\cC}(\PrL)}\core} \\
                    {\{\cD\}} & {{\Alg_{\cC}(\PrL)}\core.}
                    \arrow[from=1-1, to=1-2]
                    \arrow[from=1-1, to=2-1]
                    \arrow["\lrcorner"{anchor=center, pos=0.125}, draw=none, from=1-1, to=2-2]
                    \arrow[from=1-2, to=2-2]
                    \arrow[from=2-1, to=2-2]
                \end{tikzcd}
            \end{equation*}
        \end{definition}

        \begin{theorem}\label{thm:twFun-are-all-lifts}
            Let $A \in \Ab$ and $\cC\in \CAlg(\PrL)$ be semiadditive. The map 
            \begin{equation*}
                \Gr^{(-)}_A \cC \colon \Map(A\Enull, \cC\units) \simeq \Map\Enull(A, \Sigma \cC\units) \to \Lift(\Gr_A \cC)
            \end{equation*}
            is an isomorphism of spaces.
        \end{theorem}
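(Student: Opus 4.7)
The plan is to interpret both sides as obstruction spaces for lifting an $\EE_1$-structure to an $\EE_\infty$-structure, and to identify them via the naturality of the Thom construction.

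\textbf{Step 1 (Source rewriting).} First I would identify $\Map\Enull(A,\Sigma\cC\units) \simeq \Map\Enull(A,\Mod_{\cC}\units)$. The map $\Sigma\cC\units \to \Mod_{\cC}\units$ exhibits $\Sigma\cC\units$ as the $1$-connective cover, fitting into a cofiber sequence
\[
\Sigma\cC\units \to \Mod_{\cC}\units \to \pi_0\Mod_{\cC}\units.
\]
Since the quotient is $0$-truncated, applying $\Omega$ yields $\Omega\Mod_{\cC}\units \simeq \cC\units \simeq \Omega\Sigma\cC\units$. By \cref{cor:E1-null}, both $\Map\Enull$ spaces compute as $\Map(A\Enull,\cC\units)$.

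\textbf{Step 2 (The comparison square).} The Thom construction fits $\Gr_A^{(-)}\cC$ into a diagram of fiber sequences:
\[
\begin{tikzcd}
\Map\Enull(A,\Mod_{\cC}\units) \ar[r]\ar[d,"\Gr_A^{(-)}\cC"'] & \Map_{\CAlg}(A,\Mod_{\cC}\units) \ar[r]\ar[d,"\Th"] & \Map_{\EE_1}(A,\Mod_{\cC}\units) \ar[d,"\Th"] \\
\Lift(\Gr_A\cC) \ar[r] & \CAlg(\Mod_{\cC})^{\simeq} \ar[r] & \Alg(\Mod_{\cC})^{\simeq},
\end{tikzcd}
\]
where the top row takes the fiber over the trivial $\EE_1$-map and the bottom over $\Gr_A\cC$ equipped with its Day $\EE_1$-structure. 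Reducing the problem to showing the left vertical map is an equivalence.

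\textbf{Step 3 (Inverse construction).} Given $\cD \in \Lift(\Gr_A\cC)$, I would extract a classifying map $\Zchar_\cD$ as follows. The $\EE_1$-equivalence $\cD \simeq \Gr_A\cC$ induces an $A$-indexed decomposition $\cD \simeq \bigoplus_{a\in A}\cD_a$ in $\Mod_{\cC}$, with multiplications $\cD_a \otimes_\cC \cD_b \xrightarrow{\sim} \cD_{a+b}$ coming from the $\EE_1$-structure (and agreeing under the iso with the Day multiplication). Each $\cD_a$ is $\cC$-module equivalent to $\cC$, so the assignment $a \mapsto \cD_a$ defines a map $A \to \Mod_{\cC}\units$ which lands in the component of $\cC$, i.e.\ factors through $\Sigma\cC\units$. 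The $\EE_\infty$-structure on $\cD$ promotes it to an $\EE_\infty$-monoid map, and the $\EE_1$-iso with the Day convolution (for which this map is the zero $\EE_1$-map) provides the required $\EE_1$-nullhomotopy.

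\textbf{Step 4 (Inverse verification).} I would verify these are mutually inverse. In one direction, $\Th(\Zchar_\cD) \simeq \bigoplus_a \cD_a \simeq \cD$ with matching $\EE_\infty$-structure, because $\Th$ is precisely the colimit reconstructing the total algebra from its graded pieces. In the other, given $\Zchar \in \Map\Enull(A,\Sigma\cC\units)$, the Thom category $\Gr_A^\Zchar \cC$ has $a$-th graded piece naturally equivalent to $\Zchar(a)$, so extracting the grading recovers $\Zchar$. Compatibility with the $\EE_1$-trivialization on both sides follows from the construction.

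\textbf{Main obstacle.} The hardest step is making Step 3 rigorous, namely promoting the pointwise assignment $a \mapsto \cD_a$ into an honest $\EE_\infty$-monoid map $A \to \Mod_{\cC}\units$ coherent in all higher degrees. This requires using either the universal property of the Thom functor in the sense of Antol\'in-Camarena--Barthel (which characterizes Thom algebras among all $\EE_\infty$-algebras in $\Mod_\cC$) or, equivalently, the $\cC[-] \dashv (-)\units$ adjunction between $\cnSp$ and $\CAlg_\cC(\PrL)$ and a careful fiber-sequence chase that the middle and right vertical maps in the square of Step 2 become equivalences when restricted to the components of the trivial map and of $\Gr_A\cC$. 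The discreteness of $A$ is used here: it ensures the extraction $\cD \mapsto (a \mapsto \cD_a)$ is well-defined and that no further coherence data is hidden in connectivity of $A$.
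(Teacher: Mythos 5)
There is a genuine gap, and it sits exactly where you flag your ``main obstacle'': promoting the degreewise assignment $a \mapsto \cD_a$ to a coherent $\EE_\infty$-map $A \to \Mod_{\cC}\units$ together with an $\EE_1$-nullhomotopy. Neither of your proposed remedies fills it. The universal property of the Thom construction (\cref{prop:universal-property-thom}) classifies algebra maps \emph{out of} a Thom algebra; it does not let you extract a classifying map from an algebra that is merely known to be abstractly equivalent to $\Gr_A\cC$ as an $\EE_1$-algebra. And the fiber-sequence chase in your Step 2 fails as stated: the middle and right vertical maps are \emph{not} equivalences on the relevant components. For instance, the component of the trivial $\EE_\infty$-map has automorphism space $\Map_{\cnSp}(A,\cC\units)$, whereas $\Aut_{\CAlg_{\cC}}(\Gr_A\cC)$ also contains automorphisms induced by $\Aut_{\Ab}(A)$ (already for $A=\ZZ$, the degree-flip $n\mapsto -n$), so the induced map on these components is not an equivalence; the discrepancy only cancels after comparing with $\Aut_{\EE_1}$, which is precisely the computation your sketch omits. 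In short, your Steps 2--4 repackage the statement rather than reduce it.

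The paper resolves both points differently. The classifying map $\Zchar_{\cD}$ is obtained with all its coherences for free from the Picard spectrum: \cref{lem:cofiber-of-pic-Day} (resting on the computation $(\Gr_A\cC)\units \simeq \cC\units \times A$ of \cref{prop:picard-of-Day}) produces a cofiber sequence $\cC\units \to \cD\units \to A$ of connective spectra, and rotating it gives $\Zchar_{\cD}\colon A \to \Sigma\cC\units$ as a spectrum map, with the $\EE_1$-nullhomotopy coming from the canonical nullhomotopy of $A \to \Sigma\cD\units$; then \cref{prop:universal-property-thom} yields a $\cC$-linear symmetric monoidal comparison $\Gr^{\Zchar_{\cD}}_A\cC \to \cD$, which is an equivalence because it is one $\EE_1$-monoidally. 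Moreover, the paper never constructs a functorial inverse: it proves the map is an equivalence by showing bijectivity on $\pi_0$ (surjectivity as above; injectivity by identifying $(\Gr^{\Zchar}_A\cC)\units$ with the fiber of $\Zchar$ via the Yoneda functor) and then an equivalence of automorphism spaces, computing $\Aut_{\Lift(\Gr_A\cC)}(\Gr^{\Zchar}_A\cC)$ via the universal property of Thom applied to $\End_{\EE_\infty}$ and $\End_{\EE_1}$, and matching it with $\Aut_{\Map\Enull}(\Zchar) \simeq \Map\Enull(A,\cC\units)$ using $\Omega(\Gr^{\Zchar}_A\cC)\units \simeq \ounit_{\cC}\units \simeq \Omega\cC\units$ and \cref{cor:E1-null}. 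The Picard-spectrum analysis, which your sketch never invokes, is the essential missing ingredient; without it (or a substitute providing the coherent classifying map), your Step 3 does not go through.
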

    
        Before proving this theorem, we will need the following result from \cite{Antolin-Barthel-2019-Thom}:
        \begin{proposition}[\cite{Antolin-Barthel-2019-Thom}]\label{prop:universal-property-thom}
            Let $\cU \in \CAlg(\PrL)$ and $f\colon G \to \cU\units$ be a map of $\EE_k$-groups. Then for any $R\in \Alg_{\EE_{k+1}}(\cU)$ there is an isomorphism between the space $\Map_{\Alg_{\EE_{k}}(\cU)}(\Th(f), R)$ of algebra maps from the Thom construction of $f$ to $R$ and the space of nullhomotopies of the composition $G \to \cU\units \to \LMod_R(\cU)\units$ in $\EE_k$-groups.
        \end{proposition}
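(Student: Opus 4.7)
The plan is to exhibit both sides of the claimed equivalence as fibers of the same commutative square built from Thom constructions. The key input is \cref{prop:universal-property-thom}, which yields for each $k \in \{1, \infty\}$ a natural functor
\[
    \Th_k \colon \Map_{\EE_k}(A, \Sigma\cC\units) \to \Alg_{\EE_k, \cC}(\PrL)\core,
\]
with $\Th_1(0) \simeq \Gr_A \cC$ by \cref{lem:twFun-Day-E1}. The functors $\Th_1, \Th_\infty$ fit into a commutative square over the $\EE_\infty$-to-$\EE_1$ forgetful maps on both sides. By definition $\Map\Enull(A, \Sigma\cC\units)$ is the homotopy fiber of the left vertical map over the zero map, while $\Lift(\Gr_A \cC)$ is the homotopy fiber of the right vertical map over $\Gr_A \cC$, and the map $\Gr^{(-)}_A \cC$ is precisely the induced map on these fibers.

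To show this induced map is an equivalence, I would establish that each $\Th_k$ is a fully faithful embedding of spaces whose essential image consists of those ``$A$-graded'' $\EE_k$-algebras, namely those whose underlying $\cC$-module splits as $\bigoplus_{a \in A} L_a$ with each $L_a \in \cC\units$. On the $\EE_1$ side this amounts to the classical statement that $\EE_1$-algebra structures extending a given $A$-grading on a direct sum of invertibles are classified by $\EE_1$-monoid maps $A \to \Mod_\cC\units$; here \cref{prop:picard-of-Day} provides the key bridge via the splitting $(\Gr_A \cC)\units \simeq \cC\units \times A$, which identifies the $A$-grading data intrinsically inside the Picard $\EE_1$-group. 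The $\EE_\infty$ version then follows from \cref{prop:universal-property-thom} applied with $k = \infty$, together with the same Picard splitting used to identify the requisite nullhomotopies in the higher-coherent setting.

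With both $\Th_k$ identified as fully faithful embeddings onto compatible essential images, the square becomes a pullback when restricted there, and comparing fibers yields the desired equivalence. One uses throughout that $\Th_1(0) \simeq \Gr_A \cC$ so that the chosen basepoints of the fibers match up correctly, and that the $\EE_1$-nullhomotopy packaged into an object of $\Map\Enull(A, \Sigma\cC\units)$ corresponds under $\Th_\infty$ precisely to the $\EE_1$-isomorphism to $\Gr_A \cC$ packaged into an object of $\Lift(\Gr_A \cC)$.

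The main obstacle is the $\EE_\infty$ image characterization. While the $\EE_1$ version reduces to a familiar monoid-algebra computation, the $\EE_\infty$ upgrade requires using the discrete abelian hypothesis on $A$ in an essential way --- without it, the Picard computation in \cref{prop:picard-of-Day} would fail, and higher-order coherence data in an $\EE_\infty$-lift would no longer be captured solely by the classifying map into $\Mod_\cC\units$. Carefully promoting the $\EE_1$ classification to $\EE_\infty$ under this hypothesis, and verifying functoriality in $k$ to obtain the pullback square, is the main technical content of the argument.
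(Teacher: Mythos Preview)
Your proposal does not address the stated proposition at all. The statement to be proved is the universal property of the Thom construction: that $\EE_k$-algebra maps $\Th(f) \to R$ correspond to $\EE_k$-nullhomotopies of $G \to \cU\units \to \LMod_R(\cU)\units$. Instead, you have written an outline for proving that $\Gr^{(-)}_A \cC \colon \Map\Enull(A, \Sigma\cC\units) \to \Lift(\Gr_A \cC)$ is an equivalence --- which is \cref{thm:twFun-are-all-lifts}, a different result that \emph{uses} \cref{prop:universal-property-thom} as input. You even invoke \cref{prop:universal-property-thom} in your first paragraph as ``the key input,'' so you are assuming the very statement you are asked to prove.

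For comparison, the paper's proof of \cref{prop:universal-property-thom} proceeds by noting that the arguments of Antol\'in-Camarena--Barthel, while stated for module categories over ring spectra, go through for a general $\cU \in \CAlg(\PrL)$. Concretely, one identifies $\Map_{\Alg_{\EE_k}(\cU)}(\Th(f), R)$ with the space of $\EE_k$-lifts of $f$ along $B(R;\cU) \to \cU\units$, where $B(R;\cU)$ is the space of maps $Z \to R$ with $Z \in \cU\units$ inducing an isomorphism $R \otimes Z \isoto R$. One then shows that $B(R;\cU)$ sits in a pullback square with $\cU\units \to \LMod_R(\cU)\units$ and a contractible corner, so that such lifts are exactly nullhomotopies of the composite $G \to \cU\units \to \LMod_R(\cU)\units$. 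None of this appears in your proposal.
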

        
        \begin{proof}
            The proofs in \cite{Antolin-Barthel-2019-Thom} are done for $\cU$ being a category of modules over a commutative ring spectrum, but they hold generally. Now following the proof of \cite[Proposition~8.11]{Burklund-Schlank-Yuan-2022-Nullstellensatz}: By \cite[Theorem~3.5, Definition~3.12, Definition~3.14, Proposition~3.15]{Antolin-Barthel-2019-Thom} the space of maps of $\EE_k$-algebras $\Th(f) \to R$ is isomorphic to the space of lifts
            \begin{equation*}
                \begin{tikzcd}
                    & {B(R;\cU)} \\
                    G & {\cU\units,}
                    \arrow[from=1-2, to=2-2]
                    \arrow[dashed, from=2-1, to=1-2]
                    \arrow["f", from=2-1, to=2-2]
                \end{tikzcd}
            \end{equation*}
            where $B(R;\cU)$ is the space of maps $Z \to R$ where $Z\in \cU\units$ and when induced to $R$ it is an isomorphism $R\otimes Z \isoto R$. In the proof of \cite[Proposition~3.16]{Antolin-Barthel-2019-Thom} it is written as the pullback
            \begin{equation*}
                \begin{tikzcd}
                    {B(R; \cU)} & {B(\ounit_{\cU}; \cU)} \\
                    {\cU\units} & {\LMod_R(\cU)\units,}
                    \arrow[from=1-1, to=1-2]
                    \arrow[from=1-1, to=2-1]
                    \arrow["\lrcorner"{anchor=center, pos=0.125}, draw=none, from=1-1, to=2-2]
                    \arrow[from=1-2, to=2-2]
                    \arrow["{R\otimes-}", from=2-1, to=2-2]
                \end{tikzcd}
            \end{equation*}
            i.e.\ lifts to $B(R;\cU)$ are in correspondence with nullhomotopies of $G \to \cU\units \to \LMod_R(\cU)\units$.
        \end{proof}
    

        \begin{definition}
            Let $\cD, \cE \in \CAlg_{\cC}(\PrL)$. A functor $F \colon \cD \to \cE$ is called homotopy multiplicative if for any $X, Y \in \cD$ there is an isomorphism
            \begin{equation*}
                F(X) \otimes F(Y) \simeq F(X \otimes Y).
            \end{equation*}
            In particular, $F$ defines a map of groups
            \begin{equation*}
                F \colon \Pic(\cD) \to \Pic(\cE).
            \end{equation*}
        \end{definition}

        \begin{lemma}\label{lem:cofiber-sequence-of-picard-general}
            Assume that $\cC$ is connected. Let $\cD \in \CAlg_{\cC}(\PrL)$ with a homotopy multiplicative functor $\cD \to \Gr_A \cC$ which is an isomorphism on underlying categories. Then there exists a cofiber sequence of connective spectra
            \begin{equation*}
                \cC\units \to \cD\units \to A.
            \end{equation*}
        \end{lemma}

        \begin{proof}
            $\cD$ admits a unique $\cC$-linear symmetric monoidal functor $\cC \to \cD$ which induces a map of connective spectra $\cC\units \to \cD\units$. Denote its cofiber by $X$. 
            Since $\cC\units$ is connective, the cofiber sequence 
            \begin{equation*}
                \cC\units \to \cD\units \to X
            \end{equation*}
            is also a fiber sequence. 

            As the isomorphism $\cD \isoto \Gr_A \cC$ preserves the unit, the map $\cC\units \to \cD\units$ on connective covers identifies with the identity map $\Sigma \ounit_{\cC}\units \to \Sigma \ounit_{\cC}\units$. Therefore, by the long exact sequence of homotopy groups, $X$ is discrete and is equal to the cofiber of the map $\Pic(\cC) \to \Pic(\cD)$.

            By our assumption, $\Pic(\cD) \isoto \Pic(\Gr_A \cC)$ and therefore, by \cref{prop:picard-of-Day}, $\Pic(\cD) \cong \Pic(\cC) \times A$, and the map $\Pic(\cC) \to \Pic(\cD)$ is the inclusion. In particular, $X \simeq A$.
        \end{proof}
    
        \begin{corollary}\label{lem:cofiber-of-pic-Day}
            Let $A \in \Ab$ and $\cD\in \Lift(\Gr_A \cC)$ and assume that $\cC$ is connected. Then there is a cofiber sequence of connective spectra
            \begin{equation*}
                \cC\units \to \cD\units \to A.
            \end{equation*}
        \end{corollary}

        \begin{proof}[Proof of \cref{thm:twFun-are-all-lifts}]
            As both $\Map\Enull(A,-)$ and $\Lift[-](\Gr_A (-))$ send products to products, we may assume $\cC$ is connected.
            
            First we show that $\Gr^{(-)}_A \cC$ is an an isomorphism on $\pi_0$. Let \linebreak $\cD\in\Lift(\Gr_A \cC)$. By \cref{lem:cofiber-of-pic-Day}, $A$ sits in a cofiber sequence
            \begin{equation*}
                \cC\units \to \cD\units \to A.
            \end{equation*}
            Rolling the cofiber sequence, we get a natural map $\Zchar_{\cD} \colon A \to \Sigma \cC\units$. 
            The canonical nullhomotopy
            \begin{equation*}
                A \xto{\Zchar_{\cD}} \Sigma \cC\units \to \Sigma \cD\units
            \end{equation*}
            induces a null homotopy
            \begin{equation*}
                A \xto{\Zchar_{\cD}} \Sigma \cC\units \to \Mod_{\cC}\units \to \Mod_{\cD}\units.
            \end{equation*}
            By \cref{prop:universal-property-thom} this induces a map of $\cC$-linear symmetric monoidal categories $\Gr^{\Zchar_{\cD}}_A \cC \to \cD$, which is an $\EE_1$-isomorphism by assumption. Therefore it is an isomorphism in $\CAlg_{\cC}(\PrL)$. This proves that $\Gr^{(-)}_A \cC$ is surjective on $\pi_0$. To show it is injective, it is enough to verify that for any $\Zchar \in \Map\Enull(A, \Sigma \cC\units)$, the fiber of $\Zchar \colon A \to \Sigma \cC\units$ is $(\Gr^{\Zchar}_A \cC)\units$. Denote by $F_{\Zchar}$ the fiber of $\Zchar$. 
            Then there exists a symmetric monoidal map
            \begin{equation*}
                j_{\Zchar} \colon F_{\Zchar} \simeq F_{\Zchar}\op \xto{\yo} \FunDay(F_{\Zchar}, \cC) \to \Gr^{\Zchar}_A \cC,
            \end{equation*}
            where $\yo$ is the Yoneda embedding, which is symmetric monoidal by \cite[Theorem~3.2]{Glasman-2016-Day} or \cite[Corollary~4.8.1.12, Remark~4.8.1.13]{Lurie-HA}, and the last map is the one induced on Thom constructions from the commutative diagram
            \begin{equation*}
                \begin{tikzcd}
                    {F_{\Zchar}} & A \\
                    & {\Sigma\cC\units.}
                    \arrow[from=1-1, to=1-2]
                    \arrow["0"', from=1-1, to=2-2]
                    \arrow["\Zchar", from=1-2, to=2-2]
                \end{tikzcd}
            \end{equation*}
            In particular we get a map of connective spectra $F_{\Zchar} \to (\Gr^{\Zchar}_A \cC)\units$. Moreover, this map is an isomorphism as $\EE_1$-groups, as $\Zchar$ is trivial as an $\EE_1$-map. Thus it is an isomorphism of connective spectra, as needed.

            It is left now to show, that for any $\Zchar \in \Map\Enull(A, \Sigma \cC\units)$, the induced map
            \begin{equation*}\label{eqn:map-of-aut}
                \Aut_{\Map\Enull}(\Zchar) \to \Aut_{\Lift(\Gr_A \cC)}(\Gr^{\Zchar}_A \cC) \tag{$\star$}
            \end{equation*}
            is an isomorphism.
            $\Aut_{\Map\Enull}(\Zchar)$ is computed as the fiber
            \begin{equation*}
                \begin{split}
                    \Aut_{\Map\Enull}(\Zchar) 
                    & = \fib \big( \Omega_{\Zchar}\Map_{\cnSp}(A, \Sigma \cC\units)  \to \Omega_{\Zchar}\Map_{\EE_1}(A, \Sigma\cC\units) \big) \\
                    & \simeq \fib \big( \Map_{\cnSp}(A, \cC\units) \to \Map_{\EE_1}(A, \cC\units) \big) \\
                    & = \Map\Enull(A, \cC\units).
                \end{split}
            \end{equation*}
            On the other hand, $\Aut_{\Lift(\Gr_A \cC)}(\Gr^{\Zchar}_A \cC)$ is computed as the fiber
            \begin{equation*}
                \Aut_{\Lift(\Gr_A \cC)}(\Gr^{\Zchar}_A \cC) = \fib \big( \Aut_{\EE_{\infty}}(\Gr^{\Zchar}_A \cC) \to \Aut_{\EE_1}(\Gr^{\Zchar}_A \cC) \big).
            \end{equation*}
            The pullback square
            \begin{equation*}
                \begin{tikzcd}
                    {\Aut_{\EE_{\infty}}(\Gr^{\Zchar}_A \cC)} & {\Aut_{\EE_1}(\Gr^{\Zchar}_A \cC)} \\
                    {\End_{\EE_\infty}(\Gr^{\Zchar}_A \cC)} & {\End_{\EE_1}(\Gr^{\Zchar}_A \cC)}
                    \arrow[from=1-1, to=1-2]
                    \arrow[from=1-1, to=2-1]
                    \arrow["\lrcorner"{anchor=center, pos=0.125, rotate=0}, draw=none, from=1-1, to=2-2]
                    \arrow[from=1-2, to=2-2]
                    \arrow[from=2-1, to=2-2]
                \end{tikzcd}
            \end{equation*}
            implies that
            \begin{equation*}
                \Aut_{\Lift(\Gr_A \cC)}(\Gr^{\Zchar}_A \cC) = \fib \big( \End_{\EE_{\infty}}(\Gr^{\Zchar}_A \cC) \to \End_{\EE_1}(\Gr^{\Zchar}_A \cC) \big).
            \end{equation*}
            Using \cref{prop:universal-property-thom}, $\End_{\EE_k}(\Gr^{\Zchar}_A \cC)$ is identified with the space of $\EE_k$-nullhomotopies of the composition
            \begin{equation*}
                A \to \Sigma \cC\units \to \Sigma (\Gr^{\Zchar}_A \cC)\units.
            \end{equation*}
            The identity map $\Gr^{\Zchar}_A \cC \to \Gr^{\Zchar}_A \cC$ induces a specific nullhomotopy, therefore, the space of $\EE_k$-nullhomotopies is equivalent to the space of $\EE_k$-maps
            \begin{equation*}
                A \to (\Gr^{\Zchar}_A \cC)\units.
            \end{equation*}
            We get that $\Aut_{\Lift(\Gr_A \cC)}(\Gr^{\Zchar}_A \cC)$ is identified with
            \begin{equation*}
                 \fib \big( \Map_{\cnSp}(A, (\Gr^{\Zchar}_A \cC)\units) \to \Map_{\EE_1}(A, (\Gr^{\Zchar}_A \cC)\units) \big),
            \end{equation*}
            i.e.\ $\Aut_{\Lift(\Gr_A \cC)}(\Gr^{\Zchar}_A \cC) \simeq \Map\Enull(A, (\Gr^{\Zchar}_A \cC)\units)$.
    
            Following the above identifications, the map $\labelcref{eqn:map-of-aut}$, is identified with the map
            \begin{equation*}
                \Map\Enull(A, \cC\units) \to \Map\Enull(A, (\Gr^{\Zchar}_A \cC)\units)
            \end{equation*}
            induced by the symmetric monoidal functor $\cC \to \Gr^{\Zchar}_A \cC$.
            As this functor induces the isomorphism $\Omega \cC\units \simeq \Omega (\Gr^{\Zchar}_A \cC)\units \simeq \ounit_{\cC}\units$, the claim follows from \cref{cor:E1-null}.
        \end{proof}


    \subsection{Homotopy graded categories}
        A variant of twisted graded categories that naturally arises, and will be used in \cref{sec:Galois-closed-unit}, is the notion of \emph{homotopy graded categories}, which admit a monoidal trivialization only after passing to homotopy categories.
        \begin{definition}
            Let $A \in \Ab$ and $k \ge 1$. A $k$-homotopy $A$-graded category is a symmetric monoidal category $\cD \in \CAlg_{\cC}(\PrL)$ together with an $\EE_0$-isomorphism $\Gr_A \cC \isoto \cD$ and an extension of it to an $\EE_1$-isomorphism of $k$-homotopy categories $\h_k \Gr_A \cC \isoto \h_k \cD$.

            We denote the space of $A$-graded $\cC$-linear categories by $\Lift^{\h,k}(\Gr_A \cC)$.
        \end{definition}

        \begin{corollary}\label{cor:cofiber-sequence-for-Lift^h}
            Let $\cD \in \Lift^{\h,k}(\Gr_A \cC)$ and assume that $\cC$ is connected. Then there is a cofiber sequence of connective spectra
            \begin{equation*}
                \cC\units \to \cD \units \to A.
            \end{equation*}
        \end{corollary}
        \begin{proof}
            This follows from \cref{lem:cofiber-sequence-of-picard-general}.
        \end{proof}

        \begin{definition}\label{def:Map-Enull-homotopy}
            Let $X, Y$ be connective spectra. Let $\Map^{\h,k}\Enull(X,Y)$ be the space of maps of connective spectra $X \to Y$ with an $\EE_1$-nullhomotopy of the composition $X \to Y \to \tau_{\le k+1} Y$.
        \end{definition}

        \begin{notation}
            By an abuse of notation, for an abelian group $A$ and $\Zchar \in \Map^{\h,k}\Enull(A, \Mod_{\cC}\units)$, we denote the Thom construction of $\Zchar$ also by $\Gr_A^{\Zchar} \cC$.
        \end{notation}

        Although for $\Zchar \in \Map^{\h}\Enull(A, \Mod_{\cC}\units)$ it is generally not true that $\Gr_A^{\Zchar} \cC$ is twisted graded, we will now show it is $k$-homotopy graded. We will need the following lemmas:

        \begin{lemma}\label{lem:MaphEnull-fiber-sequence}
            Let $A \in \Ab$ and $X \to Y \to Z$ be an exact sequence of connective spectra. Assume that $Z$ is $k$-truncated. Then
            \begin{equation*}
                \Map^{\h,k}\Enull(A, X) \to \Map^{\h,k}\Enull(A, Y) \to \Map^{\h,k}\Enull(A, Z)
            \end{equation*}
            is a fiber sequence of spaces.
        \end{lemma}
        \begin{proof}
            The sequence of $(k+1)$-truncations
            \begin{equation*}
                \tau_{\le k+1} X \to \tau_{\le k+1} Y \to \tau_{\le k+1} Z
            \end{equation*}
            is a cofiber sequence in the category of $(k+1)$-truncated spectra. As $Z \simeq \tau_{\le k+1} Z$ is $k$-truncated, it is also a cofiber sequence in spectra. As the forgetful to $\EE_1$-groups commutes with limits, this is also a fiber sequence of $\EE_1$-groups. Therefore, we have a commutative diagram where the rows are fiber sequences
            \begin{equation*}
                \begin{tikzcd}
                	{\Map_{\Sp}(A, X)} & {\Map_{\Sp}(A, Y)} & {\Map_{\Sp}(A, Z)} \\
                	{\Map_{\EE_1}(A, \tau_{\le k+1} X)} & {\Map_{\EE_1}(A, \tau_{\le k+1} Y)} & {\Map_{\EE_1}(A, \tau_{\le k+1} Z).}
                	\arrow[from=1-1, to=1-2]
                	\arrow[from=1-1, to=2-1]
                	\arrow[from=1-2, to=1-3]
                	\arrow[from=1-2, to=2-2]
                	\arrow[from=1-3, to=2-3]
                	\arrow[from=2-1, to=2-2]
                	\arrow[from=2-2, to=2-3]
                \end{tikzcd}
            \end{equation*}
            The result follows by taking fibers.
        \end{proof}

        \begin{lemma}\label{lem:MaphEnull-of-1-truncated-is-zero}
            Let $A$ be an abelian group and let $X$ be a $1$-truncated, connective spectrum. Then $\Map\Enull(A, X) \simeq \Map^{h, k}\Enull(A, X)$ is contractible.
        \end{lemma}
        \begin{proof}
            Consider the exact sequence
            \begin{equation*}
                \Sigma \pi_1 X \to X \to \pi_0 X.
            \end{equation*}
            Then by \cref{lem:MaphEnull-fiber-sequence}, it gives rise to a fiber sequence of spaces
            \begin{equation*}
                \Map^{\h, k}\Enull(A, \Sigma \pi_1 X) \to \Map^{\h, k}\Enull(A, X) \to \Map^{\h, k}\Enull(A, \pi_0 X).
            \end{equation*}
            Therefore it is enough to prove the claim when $X$ is concentrated in a single degree. If $X$ is concentrated in degree 0, i.e.\ an abelian group, the claim is clear. Assume therefore $X = \Sigma A'$ for an abelian group $A'$. We want to show that the fiber of
            \begin{equation*}
                \Map_{\Sp}(A, \Sigma A') \to \Map_{\EE_1}(A, \B A')
            \end{equation*}
            at 0 is contractible. As both spaces are 1-truncated, it is enough to verify that $\pi_0, \pi_1$ of $\Map^{\h, k}\Enull(A, \Sigma A')$ are trivial. On $\pi_1$, the above map is
            \begin{equation*}
                \pi_1 \Map_{\Sp}(A, \Sigma A')
                \simeq \pi_0 \Map_{\Ab}(A, A')
                \simeq \pi_0\Map_{\Grp\heart}(A, A') 
                \simeq \pi_0 \Map_{\EE_1}(A, \Omega \B A') 
                \simeq \pi_1 \Map_{\EE_1}(A, \B A').
            \end{equation*}
            So it is left showing that a map of spectra $A \to \Sigma A'$ is null if and only if it is $\EE_1$-null. Let $F$ be the fiber of $A \to \Sigma A'$, then it is also the fiber of $\EE_1$-groups. In particular, the map is $\EE_1$-nullhomotopic if and only if $F \simeq A \times B$ and the map is the projection, which is true if and only if the map is zero as a map of spectra.
        \end{proof}
        
        \begin{corollary}\label{lem:homotopy-E1-null-factors-through-Sigma^2}
            Let $A \in \Ab$. Then 
            \begin{equation*}
                \Map^{\h, k}\Enull(A, \Mod_{\cC}\units) \simeq \Map^{\h,k}\Enull(A, \Sigma \cC\units) \simeq \Map^{\h,k}\Enull(A, \Sigma^2 \ounit_{\cC}\units).
            \end{equation*}
        \end{corollary}
        \begin{proof}
            Consider the fiber sequences
            \begin{equation*}
                \begin{split}
                    & \Sigma \cC\units \to \Mod_{\cC}\units \to \pi_0 \Mod_{\cC}\units, \\
                    & \Sigma^2 \ounit_{\cC}\units \to \Mod_{\cC}\units \to \tau_{\le 1} \Mod_{\cC}\units.
                \end{split}
            \end{equation*}
            By \cref{lem:MaphEnull-fiber-sequence}, it gives rise to fiber sequences of spaces
            \begin{equation*}
                \begin{split}
                    & \Map^{\h,k}\Enull(A, \Sigma \cC\units) \to \Map^{\h,k}\Enull(A, \Mod_{\cC}\units) \to \Map^{\h,k}\Enull(A, \pi_0 \Mod_{\cC}\units), \\
                    & \Map^{\h,k}\Enull(A, \Sigma^2 \ounit_{\cC}\units) \to \Map^{\h,k}\Enull(A, \Mod_{\cC}\units) \to \Map^{\h,k}\Enull(A, \tau_{\le 1} \Mod_{\cC}\units).
                \end{split}
            \end{equation*}
            By \cref{lem:MaphEnull-of-1-truncated-is-zero} $\Map^{\h,k}\Enull(A, \pi_0 \Mod_{\cC}\units)$ and $\Map^{\h,k}\Enull(A, \tau_{\le 1} \Mod_{\cC}\units)$ are contractible.
        \end{proof}

        \begin{lemma}\label{lem:fiber-sequence-of-homotopy-Enull}
            Let $\cC$ be connected. Let $\Zchar \in \Map^{\h,k}\Enull(A, \Mod_{\cC}\units) \simeq \Map^{\h,k}\Enull(A, \Sigma \cC\units)$. Then the fiber of $\Zchar \colon A \to \Sigma \cC\units$ is identified with $(\Gr_A^{\Zchar} \cC)\units$.
        \end{lemma}

        \begin{proof}
            Let $F'_{\Zchar}$ be the fiber of $\Zchar \colon A \to \Sigma^2\ounit\units$, which easily seen to have $\pi_0 F'_{\Zchar} \simeq A$.
            The commutative diagram
            \begin{equation*}
                \begin{tikzcd}
                    {F'_{\Zchar}} & A \\
                    & {\Mod_{\cC}\units,}
                    \arrow[from=1-1, to=1-2]
                    \arrow["0"', from=1-1, to=2-2]
                    \arrow["\Zchar", from=1-2, to=2-2]
                \end{tikzcd}
            \end{equation*}
            gives rise to a symmetric monoidal functor
            \begin{equation*}
                \Gr_{F'_{\Zchar}} \cC \to \Gr_A^{\Zchar} \cC.
            \end{equation*}
            Composing with the Yoneda embedding we get a map of connective spectra
            \begin{equation*}
                F'_{\Zchar} \to (\Gr_{F'_{\Zchar}} \cC)\units \to (\Gr_A^{\Zchar} \cC)\units.
            \end{equation*}
            Choosing a homotopy multiplicative section $A \simeq \pi_0 F'_{\Zchar} \to F'_{\Zchar}$ we get that the map
            \begin{equation*}
                \ounit_{\cC} \shift{-}\colon A \to (\Gr_A^{\Zchar} \cC)\units
            \end{equation*}
            is a homotopy multiplicative, i.e.\ $\ounit_{\cC}\shift{a+b} \simeq \ounit_{\cC}\shift{a} \otimes \ounit_{\cC}\shift{b}$ for any $a, b \in A$. Therefore the $\EE_0$-isomorphism $\Gr_A^{\Zchar} \cC \simeq \Gr_A \cC$ is homotopy multiplicative: for any $X \simeq \bigsqcup_a X_a \shift{a}, Y \simeq \bigsqcup_a Y_a\shift{a} \in \Gr_A^{\Zchar} \cC$, 
            \begin{equation*}
                X \otimes Y \simeq \bigsqcup_{a,b} X_a\shift{a} \otimes Y_b\shift{b} \simeq \bigsqcup_{a,b} (X_a \otimes Y_b)\shift{a+b} \simeq X \otimes_{\Day} Y.
            \end{equation*}
            It follows that $\Pic(\Gr_A^{\Zchar} \cC) \simeq \Pic(\Gr_A \cC) \simeq \Pic(\cC) \times A$.
            
            Now, let $F_{\Zchar}$ be the fiber of $\Zchar \colon A \to \Sigma \cC\units$. 
            The nullhomotopy of $F_{\Zchar} \to A \to \Mod_{\cC}\units$ once again gives rise to a symmetric monoidal functor
            \begin{equation*}
                \Gr_{F_{\Zchar}} \cC \to \Gr_A^{\Zchar} \cC.
            \end{equation*}
            Composing with the Yoneda embedding we get a map of connective spectra
            \begin{equation*}
                F_{\Zchar} \to (\Gr_{F_{\Zchar}} \cC)\units \to (\Gr_A^{\Zchar} \cC)\units.
            \end{equation*}
            By construction, this map is an isomorphism on $\pi_0$. The connected cover of $F_{\Zchar}$ is the fiber, in connected spectra, of the map $0 \to \Sigma \cC\units$ which is $\Sigma \ounit_{\cC}\units \simeq \tau_{\ge 1} (\Gr_A^{\Zchar} \cC)\units$. Therefore $F_{\Zchar} \to (\Gr_A^{\Zchar} \cC)\units$ is an isomorphism    . 
        \end{proof}

        Note that for $\cC \in \PrL$, its homotopy category $\h_k \cC$ admits all coproducts, and left adjoint functors $\cC \to \cD$ preserve them after passage to homotopy categories. Therefore, the homotopy category functor lands in $\Catcoprod$--- the (huge) category of large categories with all small coproducts and functors preserving them.
        By \cite[\textsection~4.8.1]{Lurie-HA}, the category $\Catcoprod$ admits a bilinear symmetric monoidal structure $\otimes^{\sqcup}$. 

        \begin{lemma}
            Let $M$ be a discrete monoid and $\cE \in \Alg(\Catcoprod)$. Then the monoid algebra $\cE[M] \in \Alg(\Catcoprod)$ is isomorphic to $\FunDay(M, \cE)$. 
        \end{lemma}
        \begin{proof}
            Let $(-)^{\sqcup} \colon \Cat \to \Catcoprod$ be the functor freely adding coproducts. That is, for any $\cI \in \Cat$,
            \begin{equation*}
                \cI^{\sqcup} \subseteq \PSh(\cI)
            \end{equation*}
            is generated by coproducts under the image of the Yoneda embedding and the functoriality is given by left Kan extensions, noting that it preserves coproducts of representable objeects. That is, for any functor $f \colon \cI \to \cJ$ the following diagram commutes
            \begin{equation*}
                \begin{tikzcd}
                	{\cI^{\sqcup}} & {\PSh(\cI)} \\
                	{\cJ^{\sqcup}} & {\PSh(\cJ).}
                	\arrow[hook, from=1-1, to=1-2]
                	\arrow["{f^{\sqcup}}", from=1-1, to=2-1]
                	\arrow["{f_!}", from=1-2, to=2-2]
                	\arrow[hook, from=2-1, to=2-2]
                \end{tikzcd}
            \end{equation*}
            By \cite[\textsection~4.8.1]{Lurie-HA}, the functor $(-)^{\sqcup}$ is symmetric monoidal. In particular, it lifts to a functor 
            \begin{equation*}
                (-)^{\sqcup} \colon \Mon(\Cat) \to \Alg(\Catcoprod).
            \end{equation*}
            Following the above discussion, for $X \in \Set$, 
            \begin{equation*}
                X^{\sqcup} \simeq \Fun(X, \Set)
            \end{equation*}
            and for $M \in \Mon(\Set)$ it is equipped with the Day monoidal structure
            \begin{equation*}
                M^{\sqcup} \simeq \FunDay(M, \Set) = \Gr_M \Set.
            \end{equation*}
            The claim now follows as
            \begin{equation*}
                \cE[M] \simeq \cE \otimes^{\sqcup} M^{\sqcup} \simeq \cE \otimes^{\sqcup} \FunDay(M, \Set) \simeq \FunDay(M, \cE).
            \end{equation*}
        \end{proof}

        \begin{corollary}\label{cor:homotopy-category-of-graded-is-graded}
            Let $A \in \Ab$. Then 
            \begin{equation*}
                \h_k \Gr_A \cC \simeq (\h_k\cC)[A] \qin \Alg(\Catcoprod).
            \end{equation*}
        \end{corollary}

        \begin{lemma}\label{lem:E1-iso-on-homotopy-categories}
            Let $A \in \Ab$ and assume $\cC$ is connected.  
            Let $\cD \in \CAlg_{\cC}(\PrL)$ together with an $\EE_0$-isomorphism $\Gr_A(\cC) \isoto \cD$.
            Then equipping the isomorphism of $(k,1)$-categories
            \begin{equation*}
                \h_k\Gr_A(\cC) \isoto \h_k\cD
            \end{equation*}
            with an $\EE_1$-structure is equivalent to a factorization of the map of spaces
            \begin{equation*}
                \begin{tikzcd}
                	&& {\tau_{\le k}\cD\units} \\
                	{\tau_{\le k}(\Gr_A \cC)\units} & {\tau_{\le k}(\Gr_A \cC)\core} & {\tau_{\le k}\cD\core}
                	\arrow[from=1-3, to=2-3]
                	\arrow[dashed, from=2-1, to=1-3]
                	\arrow[from=2-1, to=2-2]
                	\arrow[from=2-2, to=2-3]
                \end{tikzcd}
            \end{equation*}
            as an isomorphism of $\EE_1$-groups.
        \end{lemma}
        
        \begin{proof}
            The first implication follows as $\tau_{\le k} \cD\units \simeq (\h_k \cD)\units$ and $\tau_{\le k} (\Gr_A \cC)\units \simeq (\h_k \Gr_A \cC)\units$. 

            Assume the isomorphism $\Gr_A \cC \isoto \cD$ is equipped with an $\EE_1$-isomorphism of groups
            \begin{equation*}
                \tau_{\le k} (\Gr_A \cC)\units \isoto \tau_{\le k} \cD\units
            \end{equation*}
            factoring the above map. Consider the composition
            \begin{equation*}
                A \to \tau_{\le k} (\Gr_A \cC)\units \isoto \tau_{\le k} \cD\units .
            \end{equation*}
            This is an $\EE_1$-map, thus by \cref{cor:homotopy-category-of-graded-is-graded}, it induces a unique coproduct preserving $\h_k\cC$-linear $\EE_1$-functor
            \begin{equation*}
                \h_k \Gr_A \cC \simeq (\h_k \cC)[A] \to \h_k \cD,
            \end{equation*}
            in particular it lifts the given $\EE_0$-isomorphism $\Gr_A \cC \isoto \cD$ on $k$-homotopy categories.
        \end{proof}

        \begin{corollary}\label{cor:Thom-of-homotopy-null-map}
            Let $\cC$ be connected. The functor $\Gr_A^{(-)} \colon \Map^{\h,k}\Enull(A, \Mod_{\cC}\units) \to \CAlg_{\cC}(\PrL)$ lifts naturally to
            \begin{equation*}
                \Gr_A^{(-)} \Map^{\h,k}\Enull(A, \Mod_{\cC}\units) \to \Lift^{\h,k}(\Gr_A \cC).
            \end{equation*}
        \end{corollary}
        \begin{proof}
            Considering \cref{lem:E1-iso-on-homotopy-categories}, it is enough to show that the map of spaces 
            \begin{equation*}
                \tau_{\le k}(\Gr_A^{\Zchar} \cC)\units \to \tau_{\le k}(\Gr_A^{\Zchar} \cC)\core \simeq \tau_{\le k}(\Gr_A \cC)\core
            \end{equation*}
            factors through an isomorphism of $\EE_1$-groups
            \begin{equation*}
                \tau_{\le k}(\Gr_A^{\Zchar} \cC)\units \isoto \tau_{\le k}(\Gr_A \cC)\units.
            \end{equation*}
            Applying $k$-truncation to the rolled-back fiber sequence of \cref{lem:fiber-sequence-of-homotopy-Enull}, and using that $A$ is discrete we get the exact sequence of spectra
            \begin{equation*}
                \tau_{\le k}\cC\units \to \tau_{\le k}(\Gr_A^{\Zchar}\cC)\units \to A .
            \end{equation*}
            Rolling the sequence forward then gives the map
            \begin{equation*}
                A \xto{\Zchar} \Sigma \cC\units \to \tau_{\le k+1}\Sigma \cC\units \simeq \Sigma \tau_{\le k}\cC\units.
            \end{equation*}
            As this maps is $\EE_1$-nullhomotopic, the above fiber sequence gives an $\EE_1$-isomorphism
            \begin{equation*}
                \tau_{\le k}(\Gr_A^{\Zchar} \cC)\units \simeq \tau_{\le k} \cC\units \times A \simeq \tau_{\le k}(\Gr_A \cC)\units
            \end{equation*}
            as needed.
        \end{proof}

        \begin{theorem}\label{thm:homotopy-twFun-are-all-lifts}
            Let $\cC$ be connected. The Thom construction lifts to an isomorphism of sets
            \begin{equation*}
                \Gr_A^{(-)} \cC \colon \pi_0 \Map^{\h,k}\Enull(A, \Mod_{\cC}\units) \isoto \pi_0 \Lift^{\h,k}(\Gr_A \cC).
            \end{equation*}
        \end{theorem}

        \begin{proof}
            As both $\Map^{\h, k}\Enull(A,-)$ and $\Lift^{\h,k}[-](\Gr_A (-))$ send products to products, we may assume $\cC$ is connected.
            
            Let $\cD\in\Lift^{\h,k}(\Gr_A \cC)$. By \cref{cor:cofiber-sequence-for-Lift^h}, $A$ sits in a cofiber sequence
            \begin{equation*}
                \cC\units \to \cD\units \to A.
            \end{equation*}
            Rolling the cofiber sequence, we get a natural map $\Zchar_{\cD} \colon A \to \Sigma \cC\units$. 
            The canonical nullhomotopy
            \begin{equation*}
                A \xto{\Zchar_{\cD}} \Sigma \cC\units \to \Sigma \cD\units
            \end{equation*}
            induces a null homotopy
            \begin{equation*}
                A \xto{\Zchar_{\cD}} \Sigma \cC\units \to \Mod_{\cC}\units \to \Mod_{\cD}\units.
            \end{equation*}
            By \cref{prop:universal-property-thom} this induces a map of $\cC$-linear symmetric monoidal categories $\Gr^{\Zchar_{\cD}}_A \cC \to \cD$, which is an $\EE_0$-isomorphism and agrees with the monoidal trivialization of homotopy categories. Therefore it is an isomorphism in $\Lift^{\h,k}(\Gr_A \cC)$. This proves that $\Gr^{(-)}_A \cC$ is surjective on $\pi_0$. It is injective by \cref{cor:cofiber-sequence-for-Lift^h}, \cref{lem:MaphEnull-fiber-sequence}.
        \end{proof}

        \begin{remark}
            Similarly to \cref{thm:twFun-are-all-lifts}, one can promote this isomorphism to an isomorphism of spaces. Carrying this out requires an analysis of the Thom construction in non-presentable categories, which lies beyond the scope of this paper.
        \end{remark}

        \begin{remark}
            One can phrase a similar statement for the case $k=0$, where $\h_0 \cD \coloneqq \pi_0 \cD\core$. In this case, by essentially the same argument, the Thom construction yields a bijection between maps of spectra $A \to \Sigma^2 \ounit_{\cC}\units$ and presentable symmetric monoidal $\cC$-linear categories $\cD$ equipped with an $\EE_0$-isomorphism $\Gr_A \cC \isoto \cD$ that is homotopy commutative.
        \end{remark}

    \subsection{Examples}
    \label{subsec:examples}
        We show some interesting examples of twisted graded categories. Our examples will be focused on the case $A = \ZZ$, in this case $\ZZ\Enull = \tau_{\ge 1} \SS$. The most famous twisted $\ZZ$-graded category is the category of graded objects with the Koszul twist. 
        
        We start with the trivial example.
        
        \begin{example}\label{exm:twFun-0-is-Day}
            Let $0 \colon A \to \Sigma \cC\units$ be the trivial map. Then $\Gr^{0}_{A} \cC \simeq \Gr_A \cC$.
        \end{example}

        When $\cC$ admits a minus one, one can reconstruct the Koszul twist. In the proof of this we will consider the universal category admitting a minus one.
    
        \begin{lemma}\label{lem:universal-case}
            Let $X \in \cnSp$. The universal category $\cC\in\CAlg(\PrL)$ with a morphism of connective spectra $X \to \ounit_{\cC}\units$ is 
            \begin{equation*}
                \spc[\Sigma X] \simeq \Mod_{X}(\spc) \in \CAlg(\PrL).
            \end{equation*}
            That is,
            \begin{equation*}
                \Fun_{\CAlg(\PrL)}(\Mod_X (\spc), \cC) \simeq \Map_{\cnSp}(X, \ounit_{\cC}\units).
            \end{equation*}
        \end{lemma}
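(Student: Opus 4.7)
The plan is to combine three standard pieces: (i) the universal property of Day convolution as the free presentably symmetric monoidal category on a commutative monoid, (ii) the equivalence between grouplike commutative monoids in $\spc$ and connective spectra, and (iii) the $(\Sigma,\Omega)$ adjunction on $\cnSp$ together with the identification $\Omega \cC\units \simeq \ounit_{\cC}\units$.

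Viewing $\Sigma X$ as a commutative monoid in $\spc$ (with underlying space $\Omega^{\infty}\Sigma X = \B X$ and $\EE_{\infty}$-structure coming from the connective spectrum $\Sigma X$), I would chain the following equivalences:
\begin{equation*}
    \Fun_{\CAlg(\PrL)}(\spc[\Sigma X], \cC)
    \simeq \Map_{\CAlg(\spc)}(\Sigma X, \cC\core)
    \simeq \Map_{\cnSp}(\Sigma X, \cC\units)
    \simeq \Map_{\cnSp}(X, \ounit_{\cC}\units).
\end{equation*}
The first step is the universal property of Day convolution (see e.g.\ \cite[Proposition~3.3]{Ben-Moshe-Schlank-2024-K-theory}). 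The second uses that $\Sigma X$ is grouplike, hence any commutative monoid map into $\cC\core$ lands in the submonoid $\cC\units$ of $\otimes$-invertibles, combined with the equivalence between grouplike commutative monoids and connective spectra. The third is the $(\Sigma, \Omega)$-adjunction on $\cnSp$, applied to the connective spectrum $\cC\units$.

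For the identification $\spc[\Sigma X] \simeq \Mod_X(\spc)$ inside $\CAlg(\PrL)$: at the level of underlying presentable categories this is the classical equivalence $\Fun(\B X, \spc) \simeq \Mod_X(\spc)$, sending a local system over $\B X$ to its fiber with the induced $X$-action. To upgrade this to an equivalence of commutative algebras in $\PrL$ — matching Day convolution on the left with the relative tensor structure induced by the $\EE_\infty$-structure of $X$ on the right — the cleanest route is Yoneda in $\CAlg(\PrL)$: once one verifies that $\Mod_X(\spc)$ corepresents the same functor $\cC \mapsto \Map_{\cnSp}(X, \ounit_{\cC}\units)$ (via base-change, whose universal property records an $\EE_\infty$-map of $X$ into $\ounit_{\cC}\units$), both sides corepresent the same functor on $\CAlg(\PrL)$ and hence are canonically isomorphic. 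The main point requiring actual care, as opposed to routine adjunction bookkeeping, is precisely this matching of symmetric monoidal structures; everything else is formal.
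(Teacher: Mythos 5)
Your chain is correct, but it takes a genuinely different route from the paper's proof. The paper works directly with $\Mod_X(\spc)$ and Morita theory: it rewrites $\Map_{\CAlg(\PrL)}(\Mod_X(\spc),\cC)$ as $\cC$-linear symmetric monoidal functors out of $\cC\otimes\Mod_X(\spc)\simeq\Mod_{\ounit_{\cC}[X]}(\cC)$ into $\Mod_{\ounit_{\cC}}(\cC)$, identifies these with $\Map_{\CAlg(\cC)}(\ounit_{\cC}[X],\ounit_{\cC})$ by \cite[Theorem~4.8.4.6, Corollary~4.8.5.22]{Lurie-HA}, and finishes with the $\ounit_{\cC}[-]\dashv(-)\units$ adjunction. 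You instead prove the universal property for the model $\spc[\Sigma X]$, using the monoid-algebra adjunction $\spc[-]\dashv(-)\core$ between $\CMon(\spc)$ and $\CAlg(\PrL)$, grouplikeness of $\Sigma X$ to land in $\cC\units$, and $\Sigma\dashv\Omega$ together with $\Omega\,\cC\units\simeq\ounit_{\cC}\units$; all three steps are sound, and this route is more elementary (it only needs the adjunctions already set up in \cref{subsec:graded-categories-and-day}) and makes the appearance of the suspension transparent. What the paper's route buys is the universal property stated for $\Mod_X(\spc)$, which is the description actually used later (e.g.\ $\Mod_{\ZZ/2}(\spc)$ in \cref{prop:Kos-is-Kos}). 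Note, however, that your concluding Yoneda step is not a shortcut: verifying that $\Mod_X(\spc)$ corepresents $\cC\mapsto\Map_{\cnSp}(X,\ounit_{\cC}\units)$ ``via base-change'' is precisely the paper's Morita computation, so to get the displayed statement for $\Mod_X(\spc)$ rather than for $\spc[\Sigma X]$ you must either carry out that computation or invoke the standard symmetric monoidal identification $\Mod_X(\spc)\simeq\FunDay(\B X,\spc)$ --- a point which, in the opposite direction, the paper's proof also leaves implicit. The two arguments are thus complementary halves of the same statement, and either one suffices for the universal property itself.
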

    
        \begin{proof}
            For any $\cC\in \CAlg(\PrL)$, by Morita theory (\cite[Theorem~4.8.4.6, Corollary~4.8.5.22]{Lurie-HA}) and using the $\ounit_{\cC}[-] \dashv (-)\units$ adjunction:
            \begin{equation*}
                \begin{split}
                    \Map_{\CAlg(\PrL)}(\Mod_{X}(\spc), \cC) 
                    & \simeq \Map_{\CAlg_{\cC}(\PrL)}(\cC \otimes \Mod_{X}(\spc), \cC) \\
                    & \simeq \Map_{\CAlg_{\cC}(\PrL)}(\Mod_{\ounit_{\cC}[X]}(\cC), \Mod_{\ounit_{\cC}} (\cC)) \\
                    & \simeq \Map_{\CAlg(\cC)}(\ounit_{\cC}[X], \ounit_{\cC}) \\
                    & \simeq \Map_{\cnSp}(X, \ounit_{\cC}\units).
                \end{split}
            \end{equation*}
    
        \end{proof}

        \begin{definition}\label{def:Koszul}
            Let $\cC \in \CAlg(\PrL)$ and assume $\cC$ admits a minus one, i.e.\ a group homomorphism
            \begin{equation*}
                (-1) \colon \ZZ/2 \to \ounit_{\cC}\units.
            \end{equation*}
            Define $\Kos \colon \ZZ \to \Sigma \cC\units$ as the composition
            \begin{equation*}
                \Kos \colon \ZZ \onto \ZZ/2 \xto{\Sq^2} \Sigma^2 \ZZ/2 \xto{\Sigma^2 (-1)} \Sigma^2 \ounit_{\cC}\units \to \Sigma \cC\units.
            \end{equation*}
            The factorization through $\Sigma^2 \ounit_{\cC}\units$ gives a natural $\EE_1$-nullhomotopy.

            Equivalently, it is the map
            \begin{equation*}
                \ZZ\Enull = \tau_{\ge 1} \SS \to \Sigma \ZZ/2 \xto{\Sigma (-1)} \Sigma \ounit_{\cC}\units \to \cC\units.
            \end{equation*}
        \end{definition}

        \begin{proposition}\label{prop:Kos-is-Kos}
            Let $\cC\in\CAlg(\PrL)$ admitting a minus one $(-1) \colon \ZZ/2 \to \ounit_{\cC}\units$. 
            Then the braiding in $\Gr^{\Kos}_{\ZZ} \cC$ is given by a Koszul sign:
            For any $X \in \cC$, and integers $n,m\in\ZZ$, the isomorphism 
            \begin{equation*}
                X\shift{n}\otimes X\shift{m} \isoto X\shift{m} \otimes X\shift{n}
            \end{equation*}
            is given by multiplication with $(-1)^{nm} \in \ounit_{\cC}\units$.
        \end{proposition}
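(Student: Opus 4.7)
The plan is to reduce the general statement to a single elementary braiding of $\ounit_{\cC}\shift{1}^{\otimes 2}$ and to compute that elementary braiding as $-1$ using the braiding character.

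Under the $\EE_1$-identification $\Gr^{\Kos}_{\ZZ}\cC\simeq \Gr_{\ZZ}\cC$ of \cref{lem:twFun-Day-E1}, the symmetry of $\Gr^{\Kos}_{\ZZ}\cC$ differs from the Day symmetry on $\Gr_{\ZZ}\cC$ by a twist; the proposition asserts that on $X\shift{n}\otimes X\shift{m}\simeq (X\otimes X)\shift{n+m}$ this twist is multiplication by $(-1)^{nm}$. Writing $X\shift{n} \simeq X\shift{0}\otimes \ounit_{\cC}\shift{1}^{\otimes n}$ and similarly for $m$, and using that $(-)\shift{0}\colon \cC\to \Gr^{\Kos}_{\ZZ}\cC$ is $\EE_\infty$-monoidal (hence preserves symmetries), the twist splits as the product of (i)~the trivial twist on $X\otimes X$ coming from $(-)\shift{0}$ and (ii)~the twist on $\ounit_{\cC}\shift{1}^{\otimes n}\otimes \ounit_{\cC}\shift{1}^{\otimes m}$. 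Since $\ounit_{\cC}\shift{1}^{\otimes(n+m)}$ carries a $\Sigma_{n+m}$-action from the symmetric monoidal structure and the block swap is a permutation with $nm$ inversions, the claim reduces to showing that the $\Sigma_2$-twist on $\ounit_{\cC}\shift{1}^{\otimes 2}$ equals $-1$.

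This $\Sigma_2$-twist $\sigma$ is an element of $\pi_0\ounit_{\cC}\units = \Aut_{\Gr^{\Kos}_{\ZZ}\cC}(\ounit_{\cC}\shift{2})$, namely the value at the generator of $\pi_1\tau_{\ge 1}\SS=\ZZ/2$ of $\pi_1$ of the braiding character
\begin{equation*}
\hchar_{\ounit_{\cC}\shift{1}}\colon \tau_{\ge 1}\SS \to \Sigma\ounit_{\cC}\units.
\end{equation*}
By the classification of twisted $\ZZ$-graded categories in \cref{thm:twFun-are-all-lifts}, together with \cref{cor:E1-null} and the braiding-character-determination result stated as \cref{prop:braiding-of-1<1>-determines-the-character} in the introduction, $\hchar_{\ounit_{\cC}\shift{1}}$ is canonically identified with the spectrum-level map underlying $\Kos$, namely the composition $\tau_{\ge 1}\SS\onto \Sigma\ZZ/2 \xto{\Sigma(-1)}\Sigma\ounit_{\cC}\units$ of \cref{def:Koszul}. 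Its $\pi_1$ is precisely the minus-one homomorphism $\ZZ/2\to \pi_0\ounit_{\cC}\units$, so $\sigma=-1$, as required.

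The main obstacle is the identification $\hchar_{\ounit_{\cC}\shift{1}}=\Kos$: one must verify that the braiding character of the tautological Yoneda element $\ounit_{\cC}\shift{-}\colon \ZZ\to \Gr^{\Kos}_{\ZZ}\cC$ recovers the defining datum $\Kos$ under the translation \cref{cor:E1-null}. A clean alternative is to reduce to the universal case: by \cref{lem:universal-case}, the universal $\cC\in\CAlg(\PrL)$ admitting a minus one is $\spc[\Sigma\ZZ/2]\simeq \Mod_{\ZZ/2}(\spc)$; for this $\cC$ the sign $\sigma$ can be read off directly from the Thom construction applied to the tautological $\Kos$, and the general case follows by pushforward along the symmetric monoidal functor $\Mod_{\ZZ/2}(\spc)\to \cC$ classifying the minus one.
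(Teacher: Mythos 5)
Your argument is correct, but it takes a genuinely different route from the paper. The paper proves the statement by passing to the universal category with a minus one, $\Mod_{\ZZ/2}(\spc)\simeq\spc[\Sigma\ZZ/2]$ (\cref{lem:universal-case}), identifying $\Gr^{\Kos}_{\ZZ}\spc[\B\ZZ/2]\simeq\spc[\tau_{\le 1}\SS]$ by unstraightening the fiber sequence $\Sigma\ZZ/2\to\tau_{\le 1}\SS\to\ZZ$ and commuting colimits, and then reading off the sign from the fact that the canonical symmetric monoidal map $\Fin\core\to\tau_{\le 1}\SS$ is the sign on $\pi_1$. You instead deduce the elementary $\Sigma_2$-twist on $\ounit_{\cC}\shift{1}^{\otimes 2}$ from the general structural result \cref{prop:braiding-of-1<1>-determines-the-character} (the braiding of $\ounit_{\cC}\shift{1}$ in $\Gr^{\Zchar}_{\ZZ}\cC$ is $\hchar_{\Zchar}$), combined with the "equivalently" description of $\Kos$ in \cref{def:Koszul}, and then propagate to general $(n,m)$ by the parity of the block swap, using that the $\Sigma_{n+m}$-action on an invertible object lands in the abelian group $\pi_0\ounit_{\cC}\units$ and hence factors through the sign. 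This is sound: I checked that \cref{prop:braiding-of-1<1>-determines-the-character} depends only on \cref{lem:cofiber-of-pic-Day} and the proof of \cref{thm:twFun-are-all-lifts}, not on \cref{prop:Kos-is-Kos}, so there is no circularity --- though where the paper places this proposition (Section 2.3) your proof would be a forward reference to Section 4.1. What each approach buys: yours is shorter once the graded-braiding machinery is available and cleanly isolates the only computation ($\pi_1$ of $\hchar_{\Kos}$ is the minus-one homomorphism); the paper's is self-contained at that point in the text and produces the explicit model $\spc[\tau_{\le 1}\SS]$, which it reuses later (e.g.\ in identifying $f(\Sq^2)=\gamma_{1,1}$). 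Two small points: your reduction step sweeps the degree-zero cross-braidings ($X\shift{0}$ against $\ounit_{\cC}\shift{m}$) into "$(-)\shift{0}$ is $\EE_\infty$-monoidal"; strictly one should note these agree with the Day braidings because $\Gr^{\Kos}_{\ZZ}\cC$ is a commutative $\cC$-algebra and the twist restricted along $0\to\ZZ$ is trivial --- a gloss comparable to the paper's own "it suffices to prove the claim for $X=\ounit_{\cC}$." Also, what you call the "braiding character" $\hchar_{\ounit_{\cC}\shift{1}}$ is what the paper calls the braiding; the braiding character is the $\THH$-level invariant $\mscr{X}$, which plays no role here.
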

        
        \begin{proof}
            It suffices to prove the claim for the universal presentably symmetric monoidal category admitting a minus one $\spc[\Sigma \ZZ/2] \simeq \Mod_{\ZZ/2}(\spc)$ (\cref{lem:universal-case}). We first consider the non-presentable analog. The universal symmetric monoidal category admitting a minus one is $\B \ZZ/2$ and its endomorphism monoid unit is the connective spectrum $\ZZ/2 = \End(\ounit_{\B\ZZ/2})$.
            The Koszul map is the composition
            \begin{equation*}
                \ZZ \onto \ZZ/2 \xto{\Sq^2} \Sigma^2 \ZZ/2 \simeq \Sigma (\B\ZZ/2)\units,
            \end{equation*}
            which is known to have a fiber $\tau_{\le 1} \SS$ at each $n\in \ZZ$.            
            Rolling the fiber sequence further we get the fiber sequence
            \begin{equation*}
                \Sigma \ZZ/2 \to \tau_{\le 1}\SS \to \ZZ.
            \end{equation*}
            By unstraightening, $\tau_{\ge 1}\SS \simeq \colim_{\ZZ} \Sigma{\ZZ/2}$ (which is the non-presentable analog of $\Gr^{\Kos}_{\ZZ} \B\ZZ/2$). By commutation of colimits
    
            \begin{equation*}
                \Gr^{\Kos}_{\ZZ} \spc[\B \ZZ/2] \simeq \spc[\colim_{\ZZ} \B\ZZ/2] \simeq \spc[\tau_{\le 1}\SS].
            \end{equation*}
            Now, as it suffices to prove the claim for $X = \ounit_{\cC}$, we can prove it for $\tau_{\le 1} \SS$.
            This follows by observing that the canonical symmetric monoidal map 
            \begin{equation*}
                \bigsqcup_{\degree} \B\Sm \simeq \Fin\core \to \SS \to \tau_{\le 1} \SS
            \end{equation*}
            is the sign map on $\pi_1$.
        \end{proof}
        
        \begin{remark}\label{rmrk:2-structurs-on-GrVect}
            Note that for $\cC = \Vect$, symmetric monoidal structures on $\Gr_{\ZZ}\cC$ with an $\EE_1$-trivialization are the same as maps of spectra $\tau_{\ge 1} \SS \to  \Vect\units$  which are the same as maps of abelian groups $\ZZ/2 \to k\units$. So we get that there are exactly two symmetric monoidal structures on $\Gr_{\ZZ}\Vect$ with an $\EE_1$-trivialization: The usual graded and the Koszul-graded.
        \end{remark}

        \subsubsection*{Higher minus one}

        This example can be extended to a higher notion of minus one:
        In \cite[Definition~4.2]{CSY-cyclotomic} Carmeli, Schlank and Yanovski define higher roots of unity, and in particular one can define a higher minus one:
        \begin{definition} 
            Let $\cC\in\CAlg(\PrL)$ of height $\chrHeight$. A minus one of height $\chrHeight$ in $\cC$ is a map of connective spectra
            \begin{equation*}
                \minusone \colon \Sigma^{\chrHeight}\ZZ/2 \to \ounit_{\cC}\units.
            \end{equation*}
            We say that it is primitive if the only commutative algebra $S\in\CAlg(\cC)$ rendering the diagram of connective spectra
            \begin{equation*}
                \begin{tikzcd}
            	{\Sigma^{\chrHeight} \ZZ/2} & \ounit_{\cC}\units \\
            	0 & S\units
            	\arrow[from=1-1, to=2-1]
            	\arrow["\minusone", from=1-1, to=1-2]
            	\arrow[from=1-2, to=2-2]
            	\arrow[from=2-1, to=2-2]
                \end{tikzcd}
            \end{equation*}
            commutative is $S=0$.
        \end{definition}

        In \cite[Definition~4.7]{CSY-cyclotomic} the higher cyclotomic extension $\ounit_{\cC}[\minusone]$ is defined and in \cite[Proposition~4.8]{CSY-cyclotomic} it is proved that $\Mod_{\ounit_{\cC}[\minusone]}(\cC)$ admits a primitive $(-1)$ of height $\chrHeight$. In the case where $\cC$ is stable and virtually $(\Fq{2}, \chrHeight)$-oriented (\cite[\S~6]{BCSY-Fourier}), $\ounit_{\cC}[\minusone]$ is a $e = (\ZZ/2)\units$-Galois extension \cite[Proposition~6.11]{BCSY-Fourier}, and thus canonically isomorphic to $\ounit_{\cC}$. In particular such $\cC$ admits a primitive higher $(-1)$.

        \begin{example}
            For any $R\in\CAlg(\SpTn)$ at $p=2$, $\Mod_R(\SpTn)$ is virtually $(\Fq{2},\chrHeight)$-orientable (\cite[Proposition~7.27, Corollary~4.21]{BCSY-Fourier}) thus admits a minus one of height $\chrHeight$.
            In particular, the categories $\SpTn$, $\SpKn=\Mod_{\SSKn}(\SpTn)$ and $\ModEn=\Mod_{\En}(\SpTn)$ admit height $\chrHeight$ minus ones.
        \end{example}

        Other examples come from categorification.
        \begin{definition}\label{def:Vectn}
            Let $\field$ be a field of characteristic 0. 
            Define $\Vectn[1] \coloneqq \Vect \in \CAlg(\PrL)$ and $\Vectnp \coloneqq \Mod_{\Vectn} \in \CAlg(\PrL)$.

            To be more formal, we choose a sequence of inaccessible cardinals $\kappa_1 \subseteq \kappa_2 \subseteq \cdots$, and consider $\Vectn[1]$ as lying in $\CAlg(\PrL_{\kappa_1})$. We then define $\Vectnp \coloneqq \Mod_{\Vectn}(\PrL_{\kappa_{\chrHeight}}) \in \CAlg(\PrL_{\kappa_{\chrHeight+1}})$. We will usually omit this careful description and always assume we work in $\PrL_{\kappa_{\chrHeight}}$ for large enough $\chrHeight$.
        \end{definition}
        
        One can construct by hands\footnote{It is actually possible to extend the results of Carmeli, Schlank and Yanovski to show that any (not necessarily stable) category which is virtually $(\Fq{2},\chrHeight)$-orientable has a higher minus one, as one can show that in this case as well, $\ounit_{\cC}[\omega_{p^r}^{(n)}]$ is $e$-Galois.} a higher minus one in $\Vectn$:
        \begin{example}
            For a category $\cC\in\CAlg(\PrL)$ we have a map of connective spectra 
            \begin{equation*}
                \Sigma\ounit_{\cC}\units \to \cC\units = \cC\units
            \end{equation*}
            corresponding to the connected component of the unit.
            By induction we get a map of connective spectra
            \begin{equation*}
                \Sigma^{\chrHeight} \field\units \to (\Vectn)\units.
            \end{equation*}
            $\Vectn$ is the unit of $\Vectnp$ and thus $(-1)\in \field\units$ defines a height $\chrHeight$ minus one of $\Vectnp$:
            \begin{equation*}
                \SnCt \xto{\Sigma^{\chrHeight} (-1)} \Sigma^{\chrHeight} \field\units \to (\Vectn)\units.
            \end{equation*}
        \end{example}

        The Koszul twist required a map $\ZZ \to \Sigma^2 \ZZ/2$ (of which there are exactly 2). A higher Koszul twist can be achieved by a map $\hchar \colon \ZZ \to \Sigma^{\chrHeight+2} \ZZ/2$, or equivalently an element in $\H^{\chrHeight+2}(\ZZ, \Fq{2}) \coloneqq \pi_{-n-2}\hom(\ZZ, \Fq{2})$.
        \begin{definition}\label{def:higher-minus-one}
            Let $\cC \in \CAlg(\PrL)$ and $\minusone \colon \Sigma^{\chrHeight} \ZZ/2 \to \ounit_{\cC}\units$ be a minus one of height $\chrHeight$. Let $\hchar \in \H^{\chrHeight+2}(\ZZ, \Fq{2})$. Then define 
            \begin{equation*}
                \Zchar_{\hchar} \colon \ZZ \xto{\hchar} \Sigma^{\chrHeight+2} \ZZ/2 \xto{\Sigma^2 \minusone} \Sigma^2 \ounit_{\cC}\units \to \Sigma \cC\units,
            \end{equation*}
            and
            \begin{equation*}
                \Gr^{\hchar}_{\ZZ} \cC \coloneqq \Gr^{\Zchar_{\hchar}}_{\ZZ} \cC.
            \end{equation*}
        \end{definition}

        \subsubsection*{Dual stable stems in the oriented case}  
        \label{Galois-closed}
        Assume that $\field$ is a cyclotomically-closed field. One can think of the Koszul map $\ZZ \to \Sigma^2 \ZZ/2$ as a map $\ZZ \to \Sigma^2 \mu_{\infty}$. This can be extended in the $\infty$-semiadditive case when the unit is \quotes{cyclotomically-closed}:
        In \cite{BCSY-Fourier}, the authors introduce the notion of orientability for higher semiadditive categories, which is an analog of primitive roots of unity. 
        First we extend the group $\mu_{\infty}$ of roots of unity.
        \begin{definition}
            Let $\chrHeight \ge 0$.
            \begin{enumerate}
                \item Let $\In \coloneqq \tau_{\ge 0} \Sigma^{\chrHeight} I_{\QQ/\ZZ}$ be the truncated and shifted Brown--Comenetz dual of the sphere.
                \item For a prime $p$, let $\Inprime \coloneqq \tau_{\ge 0} \Sigma^{\chrHeight} I_{\QQ_p/\Zp}$ be the $p$-typical truncated and shifted Brown--Comenetz dual of the sphere.
            \end{enumerate}
        \end{definition}

        \begin{remark}
            As $\QQ / \ZZ \simeq \bigoplus_p \QQ_p / \Zp$ and the homotopy groups of the sphere are finitely generated, $\In \simeq \bigoplus_p \Inprime$.
        \end{remark}

        \begin{remark}\label{rmrk:maps-to-In}
            Let $X$ be a connective spectrum. Then, as $\Inprime$ is $p$-local and $X$ is connective
            \begin{equation*}
                \Map_{\cnSp}(X, \Inprime) 
                \simeq \Map_{\cnSp}(X_{(p)}, \Inprime) 
                \simeq \Map_{\cnSp}(X_{(p)}, \tau_{\ge \chrHeight}\Sigma^{\chrHeight} I_{\QQ_p/\Zp})
                \simeq \Map_{\Sp}(\Sigma^{-\chrHeight} X_{(p)}, I_{\QQ_p/\Zp}).
            \end{equation*}
            Therefore,
            \begin{equation*}
                \pi_0 \Map_{\cnSp}(X, \Inprime) \simeq \widehat{\pi}_{\chrHeight}(X_{(p)}) \coloneqq \Hom_{\Ab}(\pi_{\chrHeight} X_{(p)}, \QQ_p/\Zp).
            \end{equation*}
            Similarly,
            \begin{equation*}
                \pi_0 \Map_{\cnSp}(X, \In) \simeq \widehat{\pi}_{\chrHeight}(X).
            \end{equation*}
        \end{remark}

        We work in both the integral and $p$-typical settings, and will generally write $\In$ to denote either $\In$ (in the integral case) or $\Inprime$ (in the $p$-typical case), depending on context. We trust the reader to distinguish between the two when necessary.

        The connective spectrum $\In$ defines a height $\chrHeight$ analog of the Pontryagin dual, sending $M \in \Sp$ to $\tau_{\ge 0}\hom(M,\In)$.
        
        \begin{definition}
            Let $\cC$ be a presentably symmetric monoidal, $\infty$-semiadditive category. Let $R \in \CAlg(\cnSp)$. An $(R,\chrHeight)$-pre-orientation of $\cC$ is a map of connective spectra
            \begin{equation*}
                \omega \colon \tau_{\ge 0} \hom(R, \In) \to \ounit_{\cC}\units.
            \end{equation*}
        \end{definition}

        \begin{definition}
            An $R$-module $M$ is $[0,\chrHeight]$-finite if it is connective, $\chrHeight$-truncated and all its homotopy groups are finite. We denote the full subcategory of $[0,\chrHeight]$-finite $R$-modules by $\Mod_R^{[0,\chrHeight]\hyphen\mrm{finite}}$
        \end{definition}

        \begin{proposition}[{\cite[Proposition~3.10]{BCSY-Fourier}}]
            The space of $(R,\chrHeight)$-pre-orientations of $\cC$ is isomorphic to the space of natural transformations 
            \begin{equation*}
                \ounit_{\cC}[-] \to \ounit_{\cC}^{\Map(-,\In)} \qin \Fun(\Mod_R^{[0,\chrHeight]\hyphen\mrm{finite}}, \CAlg(\cC)).
            \end{equation*}
        \end{proposition}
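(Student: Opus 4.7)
The plan is to reduce the space of natural transformations to $\Map(\widehat{R}, \ounit_{\cC}\units)$ by applying the free-forgetful adjunction $\ounit_{\cC}[-] \dashv (-)\units$, rewriting via the canonical $R$-linear evaluation pairing, and then using Pontryagin self-duality on $\Mod_R^{[0,\chrHeight]\hyphen\mrm{fin}}$; throughout I write $\widehat{M} := \Map(M, \In)$.

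First, I would apply the adjunction $\ounit_{\cC}[-] \dashv (-)\units$ between $\cnSp$ and $\CAlg(\cC)$ pointwise in $M$. Since this adjunction is natural in both variables, it identifies the space of natural transformations in the proposition with the space of natural transformations $M \to \Map(\widehat{M}, \ounit_{\cC}\units)$ of functors $\Mod_R^{[0,\chrHeight]\hyphen\mrm{fin}} \to \cnSp$, using that $(\ounit_{\cC}^{\widehat{M}})\units \simeq (\ounit_{\cC}\units)^{\widehat{M}}$.

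Next, I would identify $\widehat{M} \simeq \hom_R(M, \widehat{R})$ (which follows from base change along $\SS \to R$), so that via the tensor-hom adjunction such a natural transformation corresponds to a natural pairing $M \otimes_R \widehat{M} \to \ounit_{\cC}\units$. The canonical $R$-linear evaluation $\mathrm{ev}_M \colon M \otimes_R \widehat{M} \to \widehat{R}$ converts any pre-orientation $\omega \colon \widehat{R} \to \ounit_{\cC}\units$ into such a natural pairing by post-composition, yielding a map from $\Map(\widehat{R}, \ounit_{\cC}\units)$ to the space of natural pairings.

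The main remaining step, which I expect to be the crux of the argument, is to show that this map is an equivalence; equivalently, that the natural map $\colim_M \bigl( M \otimes_R \hom_R(M, \widehat{R}) \bigr) \to \widehat{R}$ obtained by gluing the evaluations $\mathrm{ev}_M$ is an isomorphism. The essential ingredient is height-$\chrHeight$ Pontryagin duality $\widehat{(-)} \colon \Mod_R^{[0,\chrHeight]\hyphen\mrm{fin}} \simeq (\Mod_R^{[0,\chrHeight]\hyphen\mrm{fin}})\op$: under this equivalence the coend reduces to a Morita-style trace calculation for the identity functor on $\Mod_R^{[0,\chrHeight]\hyphen\mrm{fin}}$, which forces it to agree with $\widehat{R}$. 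The ancillary verifications --- chiefly that $(-)\units$ commutes with the relevant (co)limits so that the adjunction identifications in the first step propagate through the naturality structure --- should then be routine.
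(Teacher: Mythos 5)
You should first note that the paper does not prove this statement at all: it is quoted verbatim from \cite[Proposition~3.10]{BCSY-Fourier}, so there is no in-paper argument to compare against, and your proposal has to be judged against what a complete proof actually requires.

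Judged that way, there is a genuine gap, and it sits exactly at the step you treat as a formality. By the paper's conventions, $\ounit_{\cC}^{\Map(M,\In)}$ is the constant limit over the \emph{space} $\Map(M,\In)\simeq\Omega^{\infty}\widehat{M}$, so applying $\ounit_{\cC}[-]\dashv(-)\units$ pointwise identifies the space of natural transformations with natural maps $M \to (\ounit_{\cC}\units)^{\Omega^{\infty}\widehat{M}} \simeq \hom\bigl(\SS[\Omega^{\infty}\widehat{M}],\ounit_{\cC}\units\bigr)$, i.e.\ with natural $\SS$-linear pairings $M \otimes \SS[\Omega^{\infty}\widehat{M}] \to \ounit_{\cC}\units$, in which the group algebra of the \emph{underlying space} of $\widehat{M}$ appears, not the spectrum $\widehat{M}$ itself, and with no $R$-linearity in the second variable. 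Your ``tensor-hom adjunction'' step silently replaces $\SS[\Omega^{\infty}\widehat{M}]$ by $\widehat{M}$ and $\otimes_{\SS}$ by $\otimes_{R}$. But showing that naturality in $M$ forces every such family of pairings to factor through the spectrum-level, $R$-bilinear evaluation $M\otimes_{R}\widehat{M}\to\widehat{R}$ followed by a single map $\widehat{R}\to\ounit_{\cC}\units$ is precisely the content of the proposition; equivalently, the statement amounts to identifying (the connective cover of) the coend $\int^{M} M\otimes \SS[\Omega^{\infty}\widehat{M}]$ with $\widehat{R}$, and nothing in your argument addresses this. Already for $\chrHeight=0$, $R=\ZZ$, this is the nontrivial fact that a compatible system of Fourier transforms for all finite abelian groups is the same as a map $\QQ/\ZZ\to\ounit_{\cC}\units$.

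Moreover, even the reduced statement you isolate as the crux is not established by the argument you sketch. The category $\Mod_R^{[0,\chrHeight]\hyphen\mrm{fin}}$ is neither stable nor generated by $R$-dualizable objects (truncated $\pi$-finite modules are essentially never perfect), so a ``Morita-style trace calculation'' is not available off the shelf; and the co-Yoneda/density argument that would compute a coend of the identity functor uses weights given by mapping \emph{spaces} $\Map_R(M,N)$, not the internal hom spectra $\hom_R(M,N)$ appearing in your coend, and these genuinely differ — it is also unclear in which enrichment your coend is to be formed. So both the reduction and the concluding computation require real arguments, which is exactly why the present paper cites \cite{BCSY-Fourier} rather than reproving the result.
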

        For a pre-orientation $\omega$, we denote the corresponding natural transformation by $\mscr{F}_{\omega}$ and call it the associated Fourier transform.

        \begin{definition}
            A pre-orientation $\omega$ is an orientation if the associated Fourier transform is a natural isomorphism. If the space of $(R,\chrHeight)$-orientation of $\cC$ is non-empty we say that $\cC$ is $(R,\chrHeight)$-orientable.
        \end{definition}

        For any $R$, $\tau_{\ge 0}\hom(R,\In) \simeq \tau_{\ge 0}\hom(\tau_{\le \chrHeight} R,\In)$. Therefore an $(R,\chrHeight)$-orientation is equivalent to a $(\tau_{\le \chrHeight} R, \chrHeight)$-orientation.

        \begin{example}
            A $(\ZZ/N, 0)$-orientation of $\Vect$ is a choice of a primitive $N$-th root of unity in $\field$. $\Vect$ is $(\SS,0)$-orientable if and only if $\field$ is cyclotomically-closed.
        \end{example}

        \begin{example}
            Let $\cC$ be $\infty$-semiadditive. A $(\ZZ/p^r,\chrHeight)$-orientation of $\cC$ is equivalent to a primitive $p^r$-th root of unity of height $\chrHeight$, as in \cite{CSY-cyclotomic}.
        \end{example}

        We interpret the $(\SS, \chrHeight)$-orientability of $\cC$ as expressing that the unit $\ounit_{\cC}$ is spherically cyclotomically closed.       
        
        \begin{example}[{\cite[Theorem~7.8]{BCSY-Fourier}}]
            $\ModEn$ is $(\SS,\chrHeight)$-orientable.
        \end{example}


        \begin{definition}\label{def:cyc-closure}
            A (height $\chrHeight$) cyclotomic-closure of $\ounit_{\cC}$ is an algebra $\ounit_{\cC}\cyc \in \CAlg(\cC)$ such that:
            \begin{enumerate}
                \item $\Mod_{\ounit_{\cC}\cyc}(\cC)$ is $(\SS,\chrHeight)$-orientable.
                \item $\ounit_{\cC} \to \ounit_{\cC}\cyc$ is a Galois extension in the sense of Rognes \cite{Rognes-2008-Galois}.
                \item It is the minimal such extension: For any Galois extension $\ounit \to R$ such that $\Mod_R(\cC)$ is $(\SS,\chrHeight)$-orientable, there exists a map $\ounit_{\cC}\cyc \to R$.
            \end{enumerate}
        \end{definition}
        We do not prove the existence of Galois closures in this paper but instead assume it. The general existence of cyclotomic extensions will be addressed in a forthcoming paper by the first author. In particular we assume the existence and fix a cyclotomic closure $\cVectn$ of $\Vectn$. We make no assumptions about its uniqueness.


        \begin{example}\label{exm:higher-minus-one-for-galois-closed}
            Any $\cC$ which is $(\SS_{(2)},\chrHeight)$-oriented admits a higher minus one. It is defined as the composition
            \begin{equation*}
                \Sigma^{\chrHeight} \ZZ/2 \to \Sigma^{\chrHeight} \QQ_2/\ZZ_2 \to \Inprime[2] \to \ounit_{\cC}\units.
            \end{equation*}
        \end{example}

        \begin{remark}
            The map $\Vectn \to \cVectn$, together with the higher minus one map \linebreak$\minusone \colon \Sigma^{\chrHeight} \ZZ/2 \to (\Vectn)\units$ induces a higher minus one of $\cVectn$. It agrees with the one of \cref{exm:higher-minus-one-for-galois-closed}.
        \end{remark}

        \begin{notation}
            Denote $\pinD \coloneqq \widehat{\pi}_{\chrHeight+1}(\SS)$. When working $p$-locally we would mean $\pinD = \widehat{\pi}_{\chrHeight+1}(\SS_{(p)})$.
        \end{notation}

        \begin{definition}\label{def:dual-stable-stems}
            Let $\cC \in \CAlg(\PrL)$ be $(\SS,\chrHeight)$-orientable. Let $\pichar \in \pinD = \pi_0(\Inplus)$. Then $\pichar$ induces a map
            \begin{equation*}
                \pichar \colon \SS \to \Inplus.
            \end{equation*}
            Taking connected covers, we get a map
            \begin{equation*}
                \tau_{\ge 1} \pichar \colon \tau_{\ge 1} \SS \to \Sigma \In \to \Sigma \ounit_{\cC}\units \to \cC\units.
            \end{equation*}
            By \cref{cor:E1-null}, it is equivalent to a map
            \begin{equation*}
                \Zchar_{\pichar} \colon \ZZ \to \Sigma^2 \In \to \Sigma^2 \ounit_{\cC}\units.
            \end{equation*}
            with an $\EE_1$ null homotopy.
            We define
            \begin{equation*}
                \Gr^{\pichar}_{\ZZ} \cC \coloneqq \Gr^{\Zchar_{\pichar}}_{\ZZ} \cC.
            \end{equation*}
        \end{definition}

        \begin{example}
            Let $\field$ be a cyclotomically-closed field of characteristic 0. Then $\Vect$ is $(\SS,0)$-orientable. $\pinD[1] = \{0, \hat{\eta}\}$ has 2 elements, and
            \begin{equation*}
                \Gr^0_{\ZZ} \Vect \simeq \Gr_{\ZZ}\Vect, \qquad \Gr^{\hat{\eta}}_{\ZZ} \Vect \simeq \Gr^{\Kos}_{\ZZ} \Vect.
            \end{equation*}
        \end{example}

        \begin{remark}\label{rmrk:height-0-dual-stems}
            The same is true for any $(\SS_{(2)}, 0)$-orientable category. The orientation $\Fq{2} \to \ounit_{\cC}\units$ is a choice of a minus one, and with respect to it $\Zchar_{\hat{\eta}} = \Kos$ (as in \cref{def:Koszul}).
        \end{remark}

\section{Braiding}
\label{sec:braiding}
    In \cref{sec:twisted-graded-categories} we constructed many symmetric monoidal categories that agree monoidally. In this section we introduce an invariant to help distinguish these monoidal structures. Namely, we study what we call the braiding of an element in the category. Given an object $X \in \cD$ we consider the sequence of $\Sm$-representations $\Tm X \coloneqq X\om$. This sequence is multiplicative, in the sense that
    \begin{equation*}
        \Tm[k] X \otimes \Tm[\ell] X \simeq \Res_{\Sm[k]\times\Sm[\ell]}^{\Sm[k+\ell]} \Tm[k+\ell] X,
    \end{equation*}
    giving it a lot more structure. 
    For most categories though, it is hard to completely classify the braiding, for example, in \cref{sec:graded-braiding}, we show that in the case of twisted graded categories, the braidings of $\ounit_{\cC}\shift{a}$ completely determine the symmetric monoidal structure.

    As in usual representation theory, if $V \in \cD\dbl$, we can consider the sequence of characters $\chi_{\Tm V}$ (in the sense of \cite{Ponto-Schulman-2014-induced-character}, \cite{Hoyois-Scherozke-Sibilla-2017-traces} or \cite{Carmeli-Cnossen-Ramzi-Yanovski-2022-characters}). We introduce an invariant called the braiding character, which is a map
    \begin{equation*}
        \mscr{X}_{\T V} \colon \bigsqcup_{\degree} \L\B\Sm \to \ounit_{\cD}[t^{\pm 1}],
    \end{equation*}
    sending $\L\B\Sm$ to $\chi_{\Tm V} \, t^{\degree}$. This invariant is also multiplicative in a similar sense.
    This grading of the character on its own, does not provide with more information than the usual character $\chi_{\T V}$ (remembering the monoidality), but it will turn out to be important when applying it to twisted graded categories $\cD = \Gr^{\Zchar}_A \cC$, as we will see in the next section. 

    In \cref{subsec:braiding}, we introduce and analyze the braiding functor. In \cref{subsec:equivariant-trace}, we recall key properties of the monoidal trace and monoidal dimension, and use them to study and classify the $\TT$-action on the dimension of invertible objects. In \cref{subsec:braiding-character}, we define the braiding character and show that it depends only on $\dim(V) \in (\ounit_{\cD})^{\B\TT}$, or more precisely, on the restriction of the $\TT$-action to finite subgroups.

    \subsection{Braiding of objects}
    \label{subsec:braiding}
        For $V \in \cD$, we define its $\degree$-th permutation representation as $\Tm V \coloneqq V \om \in \cD^{\B\Sm}$. These $\Sm$-actions hold a lot of the information of the higher monoidal structure of $\cD$. In this subsection we begin our study of these representations.

        \begin{notation}
            We denote by $\MM = \Fin\core = \bigsqcup_{\degree} \B\Sm$ the free commutative monoid.\footnote{The first author strongly believes we should have a common blackboard letter for the free commutative monoid, which is the higher analog of the natural numbers. He argues that $\mathbb{M}$ is great choice for a few reasons: M is close to N, asserting the relation between $\MM$ and $\NN$; M can stand for \quotes{Monoid}; and most importantly, the letter Z (as in $\ZZ$) is the first letter in the German word for a number --- \quotes{Zahl}. The letter M is the first letter in the German word for a set --- \quotes{Menge}. The second author does not care and would have preferred we would not have presented a new notation.} 
        \end{notation}


        \begin{definition}
            Let $\cD \in \CAlg(\PrL)$. We define the braiding functor as the composition
            \begin{equation*}
                \T \colon \cD \xto{(-)\shift{1}} \cD[\MM] \xto{\mrm{Fr}_{\EE_{\infty}}} \CAlg(\cD[\MM]) \xto{\fgt} \cD[\MM].
            \end{equation*}
        \end{definition}
        Recall that $\cD[\MM] \simeq \FunDay(\MM, \cD)$, therefore $\CAlg(\cD[\MM]) \simeq \Funolax(\MM, \cD)$ is the category of lax symmetric monoidal functors. The braiding of $X \in \cD$ is the free lax symmetric monoidal functor $\MM \to \cD$ generated by $X\shift{1}$. We show it is actually symmetric monoidal:
        
        \begin{lemma}
            Let $X\in \cD$. Then $\T X$ is the unique symmetric monoidal functor $\MM \to \cD$ choosing $X$.
        \end{lemma}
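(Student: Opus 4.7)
My plan is to split the claim into uniqueness and existence: I will settle uniqueness from the universal property of $\MM$, then identify the lax functor $\T X$ with the unique strong symmetric monoidal functor via a matching universal property.

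For uniqueness, recall that $\MM = \Fin\core$ is the free symmetric monoidal $\infty$-groupoid on one object, so evaluation at $1 \in \MM$ induces an equivalence $\Map^{\otimes}(\MM, \cD) \simeq \cD\core$. Denote by $F_X \colon \MM \to \cD$ the (essentially unique) strong symmetric monoidal functor with $F_X(1) = X$; concretely, $F_X(n) = X^{\otimes n}$ with the canonical permutation $\Sm[n]$-action.

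For existence, I will show $\T X \simeq F_X$ in $\CAlg(\cD[\MM]) \simeq \Funolax(\MM, \cD)$, which implies that $\T X$ is strong symmetric monoidal. The plan is to recognize both as left adjoints to evaluation at $1$, namely $\mrm{ev}_1 \colon \CAlg(\cD[\MM]) \to \cD$, $R \mapsto R(1)$. For $\T X = \mrm{Fr}_{\EE_\infty}(X\shift{1})$, this combines the free commutative algebra adjunction with the observation that $X\shift{1}$ is supported in degree $1$, yielding $\Map_{\cD[\MM]}(X\shift{1}, R) \simeq \Map_\cD(X, R(1))$ for any $R$. For $F_X$ viewed as a lax functor, a lax natural transformation $\eta \colon F_X \Rightarrow R$ should be determined by $\eta_1 \colon X \to R(1)$: the higher components are forced to be $\eta_n \colon X^{\otimes n} \xto{\eta_1^{\otimes n}} R(1)^{\otimes n} \to R(n)$ via the strong structure of $F_X$ and the iterated lax structure of $R$, with $\Sm[n]$-equivariance automatic from the commutativity of $R$.

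The main obstacle will be the second adjunction --- rigorously verifying that a lax natural transformation out of $F_X$ is uniquely reconstructed from its value at $1$. This is essentially the universal property of $\MM$ transported to the lax setting, and the cleanest formalization uses the universal property of the Day convolution, namely that $\cD[\MM]$ is the free presentably $\cD$-linear symmetric monoidal category on $\pt$; this identifies $F_X$ with $\mrm{Fr}_{\EE_\infty}(X\shift{1})$ as the same universal commutative algebra. Once both universal properties are in hand, Yoneda yields the natural equivalence $\T X \simeq F_X$ in $\CAlg(\cD[\MM])$, and passing to underlying functors gives the lemma.
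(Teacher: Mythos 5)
Your reduction is sensible (uniqueness from the freeness of $\MM$, existence by matching corepresented functors), but the step you yourself flag as the main obstacle is exactly the mathematical content of the lemma, and your proposed fix does not actually deliver it. The claim that a lax symmetric monoidal transformation $F_X \Rightarrow R$ is determined by its component at $1$ — i.e.\ that $F_X$ corepresents $R \mapsto \Map_{\cD}(X, R(1))$ on $\Funolax(\MM,\cD) \simeq \CAlg(\cD[\MM])$ — is equivalent to saying that $F_X$ \emph{is} the free commutative algebra on $X\shift{1}$, which is what must be proved. The 1-categorical argument that the higher components are "forced" and that $\Sm[n]$-equivariance is "automatic" does not transport to the $\infty$-categorical setting: the coherence of those forced components is precisely the issue. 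Moreover, the tool you invoke to repair this — the universal property of $\cD[\MM]$ as the free presentably ($\cD$-linear) symmetric monoidal category on a point — classifies \emph{strong} symmetric monoidal colimit-preserving functors \emph{out of} $\cD[\MM]$, whereas what you need is a statement about commutative algebra objects \emph{in} $\cD[\MM]$, i.e.\ about lax functors $\MM \to \cD$; the bridge between the two is again the lemma itself, so as written the argument is circular at its crucial point.

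The paper closes this gap by a direct computation rather than by a second universal property: since the Yoneda embedding $\MM \to \cD[\MM]$ is symmetric monoidal, one has $X\shift{1}^{\otimes_{\Day} k} \simeq X^{\otimes k}[\Sm[k]]\{k\}$, concentrated in degree $k$ and carrying a \emph{free} $\Sm[k]$-action, so the orbits appearing in $\mrm{Fr}_{\EE_\infty}(X\shift{1}) = \bigsqcup_k (X\shift{1}^{\otimes_{\Day} k})_{h\Sm[k]}$ are just $X^{\otimes k}\{k\}$, and the lax structure maps of the free algebra (the mates of the evident equivalences over $\MM \times \MM$) are themselves equivalences. Hence $\T X$ is strong symmetric monoidal, and uniqueness follows from $\Tm[1] X = X$ together with the freeness of $\MM$. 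If you want to salvage your adjunction-style argument, you would need an independent proof of the corepresentability statement for $F_X$ (for instance via operadic/envelope technology or by redoing essentially the orbit computation above); citing the Day-convolution universal property alone does not suffice.
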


        \begin{proof}
            By definition
            \begin{equation*}
                \T X = \bigsqcup_{k} (X\shift{1}^{\otimes_{\Day} k})_{h\Sm[k]},
            \end{equation*}
            where the unit is the $0$-th summand map and the multiplication is induced by
            \begin{equation*}
                (X\shift{1}^{\otimes_{\Day} k})_{h\Sm[k]} \otimes_{\Day} (X\shift{1}^{\otimes_{\Day} \ell})_{h\Sm[\ell]} \to (X\shift{1}^{\otimes_{\Day} (k+\ell)})_{h\Sm[k+\ell]}.
            \end{equation*}
            $X\shift{1}$ is the functor $X\shift{0}$ tensored with the image of $1\in \MM$ under the Yoneda embedding $\MM \to \cD[\MM]$. Therefore, as the Yoneda map is symmetric monoidal, $X\shift{1}^{\otimes_{\Day} k}$ is $X\om[k]\shift{0}$ tensored with the image of $k \in \MM$ under the Yoneda embedding, i.e.\
            \begin{equation*}
                X\shift{1}^{\otimes_{\Day} k} = X\om[k][\Sm[k]]\{k\},
            \end{equation*}
            where, for $Y\in \cC^{\B\Sm[k]}$, $Y\{k\}$ is the functor sending $k\in\MM$ to $Y$ and $m\neq k$ to $\emptyset$.
            
            In particular
            \begin{equation*}
                (X\shift{1}^{\otimes_{\Day} k})_{h\Sm[k]} = X\om[k]\{k\}.
            \end{equation*}
            The multiplication is then induced by the counit map
            \begin{equation*}
                X\om[k]\{k\} \otimes_{\Day} X\om[\ell]\{\ell\} \simeq \Ind_{\Sm[k] \times \Sm[\ell]}^{\Sm[k+\ell]}X\om[(k+\ell)] \{k+\ell\} \xto{c} X\om[(k+\ell)] \{k+\ell\},
            \end{equation*}
            which is the mate of the isomorphism
            \begin{equation*}
                X\om[k]\{k\} \boxtimes X\om[\ell]\{\ell\} \isoto X\om[k+\ell]\{k,\ell\} \qin \Fun(\MM \times \MM, \cD).
            \end{equation*}
            The mate of the Day convolution is exactly the structure of the lax monoidaility, which in our case is an isomorphism. That is, $\T X\shift{1}$ is multiplicative. It is also easily seen to be unital, therefore it is symmetric monoidal.

            The claim now follows as $\Tm[1] X\shift{1} = X$, and as $\MM$ is the free commutative monoid, there exists a unique symmetric monoidal functor $\MM \to \cD$ that evaluates to $X$ in 1.
        \end{proof}
        
        \begin{remark}
            The map of spaces
            \begin{equation*}
                \B\Sm \into \MM \xto{\T X} \cD
            \end{equation*}
            chooses the permutation representation $\Tm X = X\om$ with the corresponding $\Sm$-action. Therefore the braiding of $X$ contains the information of the commutation of $X$ with itself. The braiding of $X\sqcup Y$ contains information of the commutation of $X$ and $Y$. 
        \end{remark}

        \begin{corollary}\label{cor:braiding-sends-coproducts-to-tensors}
            The braiding sends finite coproducts to tensor products:
            \begin{equation*}
                \T (X \sqcup Y) \simeq \T X \otimes_{\Day} \T Y \qin \cD[\MM].
            \end{equation*}
        \end{corollary}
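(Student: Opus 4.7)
The plan is to unwind the definition $\T = \fgt \circ \mrm{Fr}_{\EE_{\infty}} \circ (-)\shift{1}$ and push the coproduct through each of the three factors, using that the first two preserve coproducts and that the forgetful functor $\fgt\colon\CAlg(\cD[\MM])\to\cD[\MM]$ converts coproducts of commutative algebras into tensor products.

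First, I would observe that $(-)\shift{1}\colon\cD\to\cD[\MM]$ preserves coproducts (indeed, all colimits). This follows because it factors as $X\mapsto X\shift{0}\otimes_{\Day}\ounit_\cD\shift{1}$, where $(-)\shift{0}\colon\cD\to\cD[\MM]$ is left adjoint to evaluation at $0\in\MM$ and $-\otimes_{\Day}\ounit_\cD\shift{1}$ is an autoequivalence of $\cD[\MM]$ (with inverse $-\otimes_{\Day}\ounit_\cD\shift{-1}$), or directly from the explicit formula in \cref{def:shift-and-yoneda}(\labelcref{item:(-)<->}) describing $X\shift{1}$ componentwise. Hence
\begin{equation*}
    (X\sqcup Y)\shift{1} \simeq X\shift{1}\sqcup Y\shift{1} \qin \cD[\MM].
\end{equation*}

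Next, $\mrm{Fr}_{\EE_{\infty}}\colon\cD[\MM]\to\CAlg(\cD[\MM])$ is a left adjoint to $\fgt$, so it preserves coproducts, yielding
\begin{equation*}
    \mrm{Fr}_{\EE_\infty}(X\shift{1}\sqcup Y\shift{1}) \simeq \mrm{Fr}_{\EE_\infty}(X\shift{1}) \sqcup_{\CAlg} \mrm{Fr}_{\EE_\infty}(Y\shift{1}).
\end{equation*}
Finally, I would invoke the standard fact that in $\CAlg$ of any presentably symmetric monoidal category, coproducts are computed on underlying objects by the tensor product: $\fgt(A\sqcup_{\CAlg} B)\simeq \fgt(A)\otimes \fgt(B)$. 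Applying this to $\cD[\MM]$ with its Day convolution gives
\begin{equation*}
    \T(X\sqcup Y) \simeq \fgt\,\mrm{Fr}_{\EE_\infty}(X\shift{1}) \otimes_{\Day} \fgt\,\mrm{Fr}_{\EE_\infty}(Y\shift{1}) = \T X \otimes_{\Day} \T Y,
\end{equation*}
as desired. No serious obstacle is anticipated; the proof is a formal chain of adjunctions, and the only content is keeping track of which coproduct is being taken in which category.
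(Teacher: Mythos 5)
Your proposal is correct and follows essentially the same route as the paper: unwind $\T = \fgt \circ \mrm{Fr}_{\EE_\infty}\circ(-)\shift{1}$, note the first two functors preserve coproducts, and use that coproducts in $\CAlg(\cD[\MM])$ are computed by the Day tensor product on underlying objects. Your extra justification that $(-)\shift{1}$ preserves coproducts (via $(-)\shift{0}$ followed by tensoring with the invertible object $\ounit_{\cD}\shift{1}$) is a fine, slightly more explicit version of the paper's remark that it is a left adjoint.
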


        \begin{proof}
            The braiding functor was defined as the composition
            \begin{equation*}
                \T \colon \cD \xto{(-)\shift{1}} \cD[\MM] \xto{\mrm{Fr}_{\EE_{\infty}}} \CAlg(\cD[\MM]) \xto{\fgt} \cD[\MM].
            \end{equation*}
            The first two functors are left adjoint, thus preserve coproducts. Coproducts in the commutative algebra category $\CAlg(\cD[\MM])$ is the tensor product, and $\fgt$ preserves tensor products.
        \end{proof}

        When $X$ is a Picard element we consider a simpler invariant, which by abuse of notation we also call the braiding:
        \begin{definition}
            Let $Z \in \cD\units$. Then the braiding $\T Z \colon \MM \to \cD$ factors through the map of connective spectra $\T Z \colon \SS \to \cD\units$, choosing $Z$. Taking connected covers we get the map
            \begin{equation*}
                \hchar_{Z} \colon \tau_{\ge 1} \SS \to \Sigma \ounit_{\cD}\units.
            \end{equation*}
        \end{definition}

        \begin{remark}\label{rmrk:Sm-action-from-braiding}
            Consider the map of spaces 
            \begin{equation*}
                j_m \colon \B\Sm \into \MM \to \SS \xto{-m} \SS.
            \end{equation*}
            Then this is a pointed map from a connected space, therefore it factors through the connective cover
            \begin{equation*}
                j_m \colon \B\Sm \to \tau_{\ge 1} \SS.
            \end{equation*}
            Composing the braiding of $Z$ with $j_m$ gives the $\Sm$-action on $Z\om$ in the following sense:
            \begin{equation*}
                \B\Sm \xto{j_m} \tau_{\ge 1} \SS \xto{\hchar_Z} \B\ounit_{\cD}\units \simeq \B\Aut(Z\om).
            \end{equation*}
        \end{remark}
    
     \subsection{Equivariant monoidal trace and dimension}
    \label{subsec:equivariant-trace}
   
        Let $\cD\in \CMon(\Cat)$. Recall that an object $X \in \cD$ is dualizable if there exists $X\dual \in \cD$ and evaluation and coevaluation maps 
        \begin{equation*}
            \ev \colon X \otimes X\dual \to \ounit_{\cD}, \qquad \coev \colon \ounit_{\cD} \to X\dual \otimes X,
        \end{equation*}
        satisfying the so-called zigzag identities --- the compositions
        \begin{equation*}
            X \xto{\id \otimes \coev} X \otimes X\dual \otimes X \xto{\ev \otimes \id} X, \qquad 
            X\dual \xto{\coev \otimes \id} X\dual \otimes X \otimes X\dual \xto{\id \otimes \ev} X\dual
        \end{equation*}
        are isomorphic to the identity (in a coherent way). We denote the subcategory spanned by dualizable objects by $\cD\dbl$ and its core by $\cD\dblspace$. If $X \in \cD\dbl$ and $f \colon X \to X$ is an endomorphism, its trace $\tr(f) \in \End(\ounit_{\cD})$ is defined to be
        \begin{equation*}
            \ounit_{\cD} \xto{\coev} X \otimes X\dual \simeq X \otimes X\dual \xto{f \otimes \id} X \otimes X\dual \xto{\ev} \ounit_{\cD}.
        \end{equation*}
        The trace of the identity endomorphism of $X$ is called the dimension of $X$ and denoted $\dim(X) = \tr(\id_{X}) \in \End(\ounit_{\cD}) \simeq \ounit_{\cD}$. It admits a natural $\TT$-action. This can be seen either by the cobordism hypothesis at dimension $1$ or using that the trace is cyclic-invariant and using the cyclic bar construction.
        
        The space $(\cD\dblspace)^{\TT}$ is the space of a dualizable objects in $\cD$ with an automorphism, and therefore admits a monoidal trace map
        \begin{equation*}
            \tr \colon (\cD\dblspace)^{\TT} \to \ounit_{\cD}.
        \end{equation*}
    
        \begin{lemma}\label{lem:equivariant-trace}
            Let $\cD\in\CMon(\Cat)$ in which every object is dualizable. Then the trace map
            \begin{equation*}
                \tr \colon (\cD^\simeq)^{\TT} \to \ounit_{\cD}
            \end{equation*}
            is $\TT$-invariant. Its $\TT$-fixed points induces the dimension map, remembering the $\TT$-action,
            \begin{equation*}
                \dim \simeq \tr^{h\TT} \colon \cD^\simeq \to (\ounit_{\cD})^{\B\TT}.
            \end{equation*}
            Furthermore if $\cD \in \CAlg(\Catst)$ is stable, then this map factors as\footnote{Here $\TCm(\cD)$ is defined to be $\THH(\cD)^{h\TT}$.}
            \begin{equation*}
                \cD\core \to \TCm(\cD) \to (\ounit_{\cD})^{\B\TT}.
            \end{equation*}
        \end{lemma}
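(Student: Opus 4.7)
The plan is to address the three claims in order, relying on the cyclic invariance of the monoidal trace.

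First, for the $\TT$-invariance of $\tr \colon (\cD^\simeq)^\TT \to \ounit_\cD$: the source classifies pairs $(X, f)$ of a dualizable object together with a self-equivalence, and the $\TT$-action rotates the automorphism. The trace is cyclically invariant at the $(\infty,1)$-level --- essentially because, viewed via one-dimensional framed cobordism, it is encoded by the circle with a single marked point labelled by $f$, and rotating the marked point realizes the $\TT$-symmetry. I would invoke the $\infty$-categorical formulations of the trace (e.g.\ Ponto--Shulman or Hoyois--Scherotzke--Sibilla) to package this coherence cleanly, giving $\tr(fg) \simeq \tr(gf)$ as a $\TT$-invariant map, where $\ounit_\cD$ carries the trivial $\TT$-action.

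Next, we identify $\dim$ with $\tr^{h\TT}$. Taking $\TT$-homotopy fixed points of the $\TT$-invariant map $\tr$ yields
\begin{equation*}
    ((\cD^\simeq)^\TT)^{h\TT} \to (\ounit_\cD)^{h\TT} \simeq (\ounit_\cD)^{\B\TT}.
\end{equation*}
The source simplifies via the free-action identification $\Map(\TT, Y)^{h\TT} \simeq Y$ (valid for any space $Y$ with trivial $\TT$-action, since $\TT$ acts freely on itself by translation), recovering $\cD^\simeq$. Under this equivalence, an object $X$ corresponds to the constant loop $(X, \id_X)$, and the composite sends $X$ to $\tr(\id_X) = \dim(X)$. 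The remembered $\TT$-action on $\dim(X) \in (\ounit_\cD)^{\B\TT}$ precisely records the rotational symmetry of this identity loop.

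For the stable factorization through $\TCm(\cD) = \THH(\cD)^{h\TT}$: one uses that $\THH(\cD)$ is the universal recipient of trace-like invariants. Concretely, the constant-loop assignment $X \mapsto (X, \id_X)$ yields a $\TT$-invariant class in $\THH(\cD)$, while the trace extends to a $\TT$-equivariant map $\THH(\cD) \to \ounit_\cD$ with trivial target action. Taking $h\TT$-fixed points then realizes the desired factorization $\cD \to \TCm(\cD) \to (\ounit_\cD)^{\B\TT}$.

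The main obstacle is making the cyclic invariance of the trace fully precise at the $(\infty, 1)$-categorical level: spelling out the higher coherences by hand would be tedious, and so we would defer to the existing trace-method literature for this foundational input. Once that is in place, the remaining steps are formal manipulations of mapping spaces and fixed points.
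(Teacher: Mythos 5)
Your proposal is correct and follows essentially the same strategy as the paper: establish that the trace on $(\cD^\simeq)^{\TT}$ is $\TT$-equivariant (with trivial action on the target), identify $((\cD^\simeq)^{\TT})^{h\TT}\simeq \cD^\simeq$ via constant loops to get $\dim\simeq\tr^{h\TT}$, and in the stable case route the equivariant trace through $\THH(\cD)$ before taking fixed points to land in $\TCm(\cD)$. The one substantive difference is where the key coherence comes from: you treat the $\TT$-equivariant refinement of the trace as a black box cited from the trace-methods literature (your $\tr(fg)\simeq\tr(gf)$ remark is only the $1$-categorical shadow, as you acknowledge), whereas the paper actually produces it, by writing $(\cD^\simeq)^{\TT}$ as the realization of a cyclic space of isomorphisms, mapping that into the cyclic bar construction computing $\THH_{\spc}(\PSh(\cD))$ (resp.\ $\THH(\cD)$ via $\Ind(\cD)$ in the stable case), and base-changing to $\ounit_{\cD}$; this explicit construction is also what gives the stable factorization, which you only assert via the ``constant-loop class in $\THH(\cD)$.'' Your fixed-point bookkeeping, in particular the identification $\Map(\TT,Y)^{h\TT}\simeq Y$ from freeness of the translation action and the observation that an object maps to $(X,\id_X)$, is correct and is in fact spelled out more explicitly than in the paper, which simply declares this step clear.
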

        
        \begin{proof}
            Write $(\cD^\simeq)^{\TT} \in \spc^{\B\TT}$ as the geometric realization of the cyclic space $(\cD^\simeq)^{\TT}_{\bullet} \in \Fun(\Lambda\op, \spc)$, where
            \begin{equation*}
                (\cD^\simeq)^{\TT}_n = \bigsqcup_{X_0,\cdots, X_n \in \cD} \Iso(X_0,X_1)\times \cdots \times \Iso(X_{n-1}, X_n) \times \Iso(X_n,X_0).
            \end{equation*}
            Note that this admits a map to the cyclic bar construction
            \begin{equation}\label{eqn:THH_spc}
                \THH_\spc(\PSh(\cD))_n = \bigsqcup_{X_0,\cdots, X_n \in \cD} \Map(X_0,X_1)\times \cdots \times \Map(X_{\chrHeight-1}, X_n) \times \Map(X_n,X_0) \tag{$\star$}
            \end{equation}
            whose geometric realization is $\THH_{\spc}(\PSh(\cD)) \in \spc^{\B\TT}$ --- the space valued $\THH$ (i.e.\ the dimension of $\PSh(\cD)$ in $\PrL$). The geometric realization functor induces a $\TT$-equivariant map
            \begin{equation*}
                (\cD^\simeq)^{\TT} \to \THH_{\spc}(\PSh(\cD)).
            \end{equation*}
            By base change, this admits a $\TT$-equivariant map to $\THH_{\PSh(\cD)}(\PSh(\cD)) \simeq \ounit_\cD$ with the trivial $\TT$-action (see e.g. \cite{Hoyois-Scherozke-Sibilla-2017-traces} or \cite{Carmeli-Cnossen-Ramzi-Yanovski-2022-characters}), which gives rise to the $\TT$-equivariant trace map
            \begin{equation*}
                \tr \colon (\cD\core)^{\TT} \to \THH_{\spc}(\PSh(\cD)) \to \ounit_{\cD}.
            \end{equation*}
            If $\cD \in \CAlg(\Catst)$, the same construction but for $\Ind(\cD)$, replacing mapping spaces with mapping spectra in \labelcref{eqn:THH_spc}, has geometric realization $\THH_{\Sp}(\Ind(\cD)) = \THH(\cD)$. Therefore, the trace factors as 
            \begin{equation*}
                \tr \colon (\cD\core)^{\TT} \to \THH(\cD)\to \ounit_{\cD}.
            \end{equation*}
            In both cases it is clear that taking fixed points induces the dimension map, and if $\cD \in \CAlg(\Catst)$, the dimension factors through
            \begin{equation*}
                \dim \colon \cD\core \to \TCm(\cD) \to (\ounit_{\cD})^{\B\TT}.
            \end{equation*}
        \end{proof}

        \begin{corollary}\label{cor:dim-splits-exact-sequences}
             Let $\cD\in \CAlg(\Catst)$ in which every object is dualizable and 
             \begin{equation*}
                X \to Y \to Z
             \end{equation*}
             be a fiber sequence. Then 
             \begin{equation*}
                 \dim(Y) = \dim(X) + \dim(Z) \qin (\ounit_\cD)^{\B\TT}.
             \end{equation*}
        \end{corollary}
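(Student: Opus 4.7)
By \cref{lem:equivariant-trace}, the dimension map factors as
\begin{equation*}
\dim \colon \cD\core \to \TCm(\cD) \to (\ounit_\cD)^{\B\TT},
\end{equation*}
where $\TCm(\cD) = \THH(\cD)^{h\TT}$ is (the underlying space of) a spectrum. The plan is to promote this factorization to a factorization through the connective algebraic $K$-theory of $\cD$, and then invoke additivity of $K$-theory on fiber sequences.

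First I would observe that $(\cD\core, \oplus)$ carries a canonical $\EE_\infty$-monoid structure, and that the map $\cD\core \to \Omega^\infty \TCm(\cD)$ is a map of $\EE_\infty$-monoids: this is the statement that $\tr(\id_{X \oplus Z}) = \tr(\id_X) + \tr(\id_Z)$, which is built into the cyclic bar construction used in the proof of \cref{lem:equivariant-trace} (concretely, the natural block-diagonal description of $\id_{X \oplus Z}$ in the 0-simplices). Since the target is group-like, this map factors through the group completion of $(\cD\core, \oplus)$, which is (the underlying space of) the connective $K$-theory spectrum $K(\cD)$. The induced map $K(\cD) \to \TCm(\cD)$ is the Dennis trace.

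Next I would invoke the additivity theorem for the algebraic $K$-theory of stable $\infty$-categories: any fiber sequence $X \to Y \to Z$ in $\cD$ yields the equality $[Y] = [X] + [Z] \qin \pi_0 K(\cD)$. Applying the Dennis trace and composing with the map $\Omega^\infty \TCm(\cD) \to (\ounit_\cD)^{\B\TT}$ from \cref{lem:equivariant-trace} then gives $\dim(Y) = \dim(X) + \dim(Z)$ in $\pi_0 (\ounit_\cD)^{\B\TT}$, as desired.

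\textbf{Main obstacle.} The one point that requires some care is the factorization through $K(\cD)$, i.e.\ identifying our description of $\dim$ via the cyclic bar construction with the classical Dennis trace. If preferred, one can bypass $K$-theory altogether and argue directly: in a stable $\infty$-category, the additivity of trace on a cofiber sequence $X \to Y \to Z$ can be established by a matrix-style computation, choosing a (non-canonical) splitting of the underlying diagram and noting that the off-diagonal entries contribute nothing to the cyclic bar class, so that $\tr(\id_Y) = \tr(\id_X) + \tr(\id_Z)$ coherently in the $\TT$-equivariant sense. Either route suffices, and in both cases the argument is a black-boxed additivity statement applied to the factorization already provided by the preceding lemma.
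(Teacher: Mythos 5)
There is a genuine gap in your main route. The factorization you construct is through the group completion of the $\EE_\infty$-monoid $(\cD\core, \oplus)$, but that group completion is the \emph{direct-sum} K-theory $\K^{\oplus}(\cD)$, not the connective algebraic K-theory $\K(\cD)$ of the stable category. For a stable $\cD$ these genuinely differ: e.g.\ for $\cD = \Perf(k)$ the group completion of the core is $\bigoplus_{\ZZ}\ZZ$ on $\pi_0$, whereas $\K_0(\Perf(k)) = \ZZ$, precisely because the relation $[Y]=[X]+[Z]$ for \emph{non-split} fiber sequences is not imposed by group completion. So when you then invoke the additivity theorem "$[Y]=[X]+[Z]$ in $\pi_0\K(\cD)$", you are applying it to a spectrum your argument never reached; in $\pi_0\K^{\oplus}(\cD)$ that relation fails, and it is exactly the content of \cref{cor:dim-splits-exact-sequences}. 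The paper's proof avoids this by invoking the universal property of connective K-theory for the map of commutative monoids $\cD\core \to \TCm(\cD)$ (i.e.\ the Dennis/cyclotomic-trace-style factorization through $\K(\cD)$, using that $\TCm$ is an additive invariant), which is the step that actually encodes additivity for all fiber sequences; merely noting that the target is grouplike does not suffice.

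Your fallback argument does not close the gap either. A fiber sequence $X \to Y \to Z$ in a stable category does not split in general, so one cannot "choose a (non-canonical) splitting of the underlying diagram" and argue that off-diagonal matrix entries vanish; the correct replacement (the trace of a filtration-preserving endomorphism equals the sum of the traces on the associated graded, coherently and $\TT$-equivariantly, so that the identity holds in $(\ounit_{\cD})^{\B\TT}$ and not merely in $\ounit_{\cD}$) is the nontrivial additivity-of-traces theorem, not a routine computation that can be black-boxed in a sentence. To repair the proof, replace the group-completion step by the factorization of $\cD\core \to \TCm(\cD)$ through $\K(\cD)$ furnished by the universal property of connective K-theory (as in the paper), and then conclude on $\pi_0$ as you do.
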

        
        \begin{proof}
            By \cref{lem:equivariant-trace} the dimension map factors as
            \begin{equation*}
                \dim \colon \cD\core \to \TCm(\cD)\to (\ounit_\cD)^{\B\TT}.
            \end{equation*}
            The canonical map $\cD\core \to \TCm(\cD)$ factors through the cyclotomic trace of connective $\K$-theory $\K(\cD) \to \TCm(\cD)$ (see \cite[\textsection~10.3]{Blumberg-Gepner-Tabuada-2013-K-theory}), therefore splits exact sequences.
        \end{proof}

        \begin{corollary}
            Let $\cD\in \CAlg(\PrL)$. Then the trace map
            \begin{equation*}
                \tr \colon (\cD\dblspace)^{\TT} \to \ounit_{\cD}
            \end{equation*}
            is $\TT$-invariant. Its $\TT$-orbits induces the dimension map, remembering the $\TT$-action,
            \begin{equation*}
                \dim \simeq \tr^{h\TT} \colon \cD\dblspace \to (\ounit_{\cD})^{\B\TT}.
            \end{equation*}
            Furthermore if $\cD \in \CAlg(\Prst)$ is stable, then the above map splits exact sequences
        \end{corollary}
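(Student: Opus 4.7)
The plan is to reduce to Lemma \ref{lem:equivariant-trace} by passing to the full symmetric monoidal subcategory $\cD\dbl \subseteq \cD$. Since dualizable objects are closed under tensor products and contain the unit, $\cD\dbl$ is a symmetric monoidal subcategory in which, by definition, every object is dualizable. Identifying $(\cD\dbl)\core = \cD\dblspace$ and $\ounit_{\cD\dbl} = \ounit_{\cD}$, applying Lemma \ref{lem:equivariant-trace} to $\cD\dbl \in \CMon(\Cat)$ directly yields the $\TT$-invariant trace map $\tr \colon (\cD\dblspace)^{\TT} \to \ounit_{\cD}$, together with the identification $\dim \simeq \tr^{h\TT}$ after taking $\TT$-fixed points.

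For the stability statement, the first step is to verify that $\cD\dbl$ is stable whenever $\cD \in \CAlg(\Prst)$. This follows because in a stable presentably symmetric monoidal category the tensor product is biexact in each variable; consequently, the class of dualizable objects is closed under finite limits and colimits, with evaluation and coevaluation transported along fiber sequences. In particular, any fiber sequence in $\cD$ whose three terms are dualizable is again a fiber sequence in the stable category $\cD\dbl$.

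Now applying Lemma \ref{lem:equivariant-trace} to $\cD\dbl \in \CAlg(\Catst)$ gives the factorization
\begin{equation*}
    \dim \colon \cD\dblspace \to \TCm(\cD\dbl) \to (\ounit_{\cD})^{\B\TT}.
\end{equation*}
The first map is a map of commutative monoids into a spectrum, so by the universal property of connective $\K$-theory it factors through the canonical map $\cD\dblspace \to \K(\cD\dbl)$. Since $\K(\cD\dbl)$ turns fiber sequences into cofiber sequences, the composite $\dim$ splits exact sequences, exactly as in the proof of Corollary \ref{cor:dim-splits-exact-sequences}.

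The only real obstacle here is verifying that $\cD\dbl$ is a stable subcategory of $\cD$ and that the inclusion $\cD\dbl \hookrightarrow \cD$ preserves fiber sequences among dualizable objects; this is a standard consequence of biexactness of the tensor product, but it is the one step that does not follow formally from the previous lemma, since everything else is pure reduction.
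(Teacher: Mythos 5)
Your proposal is correct and matches the argument the paper intends (the corollary is stated without explicit proof, as an immediate consequence of \cref{lem:equivariant-trace} and \cref{cor:dim-splits-exact-sequences} applied to the full symmetric monoidal subcategory $\cD\dbl$, whose core is $\cD\dblspace$ and whose unit is $\ounit_{\cD}$). Your extra verification that $\cD\dbl$ is a stable subcategory closed under finite (co)limits when $\cD \in \CAlg(\Prst)$ is the standard biexactness argument and is exactly the implicit step needed, so nothing is missing.
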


        \begin{corollary}\label{cor:no-action-on-perf}
            Let $R$ be a commutative ring spectrum and $M$ be a finite $R$-module. Then the $\TT$-action on $\dim(M)$ is trivial. 
        \end{corollary}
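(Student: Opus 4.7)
The strategy is to reduce to the case $M = R$ via the structure of perfect modules and additivity of the dimension.

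Dualizable objects in $\Mod_R(\Sp)$ are precisely the perfect (compact) $R$-modules, forming the smallest stable idempotent-complete subcategory of $\Mod_R(\Sp)$ containing $R$. The pointing $\pt \to \B\TT$ splits the canonical inclusion $R \hookrightarrow R^{\B\TT}$, so ``trivial $\TT$-action on $\dim(M)$'' is the statement that $\dim(M)$ lies in this split $R$-summand. Equivalently, the composition of the spectrum map $\dim \colon \K(R) \to R^{\B\TT}$ (obtained by factoring through $\K$-theory as in the proof of \cref{cor:dim-splits-exact-sequences}) with the projection $R^{\B\TT} \twoheadrightarrow R^{\B\TT}/R \simeq \hom(\Sigma^{\infty} \B\TT, R)$ must vanish.

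By additivity, the class of dualizable objects whose dimension lands in the trivial-action summand is closed under fiber and cofiber sequences (by two-out-of-three) and under shifts. The base case $M = R$ gives $\dim(R) = 1 \in R$, which lies in the summand by construction. Hence by induction every $R$-module obtained from $R$ by finitely many iterated cofiber sequences and shifts has trivial $\TT$-action on its dimension.

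To handle retracts, I would use the naturality of $\dim$ under the symmetric monoidal unit functor $\Sp \to \Mod_R(\Sp)$, $V \mapsto V \otimes_\SS R$. Every perfect $R$-module $M$ is a retract of some $V \otimes_\SS R$ with $V \in \Sp\dbl$, and naturality gives $\dim(V \otimes_\SS R) = \dim(V) \cdot 1_R$, with $\TT$-action inherited from that on $\dim(V) \in \SS^{\B\TT}$. Since every perfect spectrum is already a finite iterated cofiber of shifts of $\SS$ (no retracts needed), the inductive argument above applies in $\Sp$ and shows $\dim(V)$ has trivial $\TT$-action; transferring back through the unit functor gives the same for $\dim(V \otimes_\SS R)$. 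A splitting $V \otimes_\SS R \simeq M \oplus N$ and additivity then yield $\dim(M) + \dim(N) \in R \subseteq R^{\B\TT}$.

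\textbf{Main obstacle.} The principal difficulty is deducing triviality for $\dim(M)$ individually rather than just for the sum $\dim(M) + \dim(N)$ in the splitting above. I expect to resolve this at the level of spectrum maps, by showing directly that $\dim_{\mathrm{red}} \colon \K(R) \to R^{\B\TT}/R$ is null. One route is to invoke the universal property of connective $\K$-theory: the vanishing on the generating class $[R]$, combined with additivity and the closure properties inherited from the thick-subcategory structure, forces the whole map to be null. An alternative is a direct manipulation of the cyclic bar construction underlying \cref{lem:equivariant-trace}, exploiting that $\id_M$ is fixed by the cyclic rotation on $\End_R(M)$.
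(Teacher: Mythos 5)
Your main argument is the same as the paper's: by \cref{cor:dim-splits-exact-sequences} the equivariant dimension factors through connective $\K$-theory and is additive in cofiber sequences, it is trivial on $R$ and its shifts, and hence it is trivial on every finite cell $R$-module. The paper's proof of this corollary is exactly that, compressed into the assertion that every object of $\Perf(R)$ is ``constructed by extensions from the unit''; the point you flag --- that a dualizable module is in general only a \emph{retract} of a finite cell module --- is precisely what that assertion elides, and you are right that additivity alone does not dispose of it: from $P \simeq M \oplus N$ one only learns that $\dim(M)+\dim(N)$ is constant.

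However, neither of your suggested repairs closes this gap. Since the dimension factors through $\K(R)$, triviality of the $\TT$-action on $\dim(M)$ depends only on the class $[M]\in\pi_0\K(R)$, and the additivity argument gives vanishing of the reduced dimension $\pi_0\K(R)\to\pi_0\hom(\Sigma^{\infty}\B\TT,R)$ only on the subgroup generated by $[R]$, i.e.\ on the classes of finite cell modules, which form the cyclic subgroup $\ZZ\cdot[R]$; this is in general a proper subgroup of $\pi_0\K(R)$. So ``the universal property of connective $\K$-theory plus thick-subcategory closure'' forces nothing: closure under retracts is exactly the closure property that additivity does not provide, and the classes of genuinely non-finite-cell perfect modules are simply never reached by your induction. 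Your alternative route rests on the unjustified claim that every perfect $R$-module is a retract of $V\otimes_{\SS}R$ with $V$ a finite spectrum --- the attaching maps of a finite cell $R$-module live in homotopy groups of $R$-modules and need not come from the sphere --- and even granting it you are back to controlling only $\dim(M)+\dim(N)$, as you yourself note. As written, then, your proof (like the paper's own one-line argument) establishes the statement only when $[M]\in\ZZ\cdot[R]$, which does cover finite cell modules and the case the paper actually needs in \cref{cor:no-action-on-dim-En}, namely $R=\En$, where $\pi_0\K(\En)\cong\ZZ$ (as the paper itself uses in the proof of \cref{lem:integral-Enp-to-En}); but for a general commutative ring spectrum the retract step is a genuine missing ingredient, not a formal consequence of additivity, and your sketched appeal to the cyclic bar construction (``$\id_M$ is fixed by rotation'') is too vague to count as an argument for it.
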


        \begin{proof}
            Every finite module is constructed by extensions from the unit $R$. The claim now follows by \cref{cor:dim-splits-exact-sequences}.
        \end{proof}

        \begin{corollary}\label{cor:no-action-on-dim-En}
            The circle action on $\dim(M)$ is trivial for every $M \in (\ModEn)\dbl$. 
        \end{corollary}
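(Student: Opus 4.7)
The plan is to follow the template of \cref{cor:no-action-on-perf}: show that every dualizable $M \in \ModEn\dbl$ lies in the thick subcategory of $\ModEn$ generated by the unit $\En$, and then apply the additivity of the dimension under cofiber sequences (\cref{cor:dim-splits-exact-sequences}), together with the fact that $\dim(\En)$ has trivial $\TT$-action because $\En$ is the unit.

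First, I would identify $\ModEn\dbl$ with the full subcategory of perfect $\En$-modules, i.e.\ retracts of finite iterated cofibers of shifts of $\En$. The unit $\En$ is compact in $\ModEn$: the functor $\Map_{\ModEn}(\En, -)$ is essentially the identity, so it preserves all colimits (in particular, filtered ones computed as $\Tn$-localizations of filtered colimits in $\Mod_{\En}(\Sp)$). By the standard fact that dualizable and compact objects coincide in a presentably symmetric monoidal stable category whose unit is a compact generator, we obtain $\ModEn\dbl = \Perf(\En)$.

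By \cref{cor:dim-splits-exact-sequences}, the dimension factors through the connective $K$-theory spectrum $K(\ModEn)$, so $\dim(M) \in \pi_0(\En)^{\B\TT}$ depends only on the class $[M] \in K_0(\ModEn)$. The cofiber sequence $\En \to 0 \to \Sigma \En$ yields $[\Sigma \En] = -[\En]$ in $K_0$, so the subgroup of $K_0(\ModEn)$ generated by the classes $[\Sigma^k \En]$ reduces to $\ZZ \cdot [\En]$. In particular, every perfect module satisfies $[M] = n [\En]$ for some $n \in \ZZ$, hence $\dim(M) = n \cdot \dim(\En)$ lies in the image of $\pi_0 \En \to \pi_0 (\En)^{\B\TT}$, i.e.\ has trivial $\TT$-action.

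The main subtlety is verifying compactness of $\En$ in $\ModEn = \Mod_{\En}(\SpTn)$ and the coincidence of compact with dualizable objects, which requires some care because of the $\Tn$-localization involved in forming $\ModEn$. Once this is in place, the rest of the proof is a direct transcription of \cref{cor:no-action-on-perf}, with the role of $\Mod_R(\Sp)\dbl = \Perf(R)$ played by $\Perf(\En) \subseteq \ModEn$.
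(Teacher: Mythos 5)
The second half of your argument (reducing to the class in $K$-theory and using additivity of the dimension, as in \cref{cor:dim-splits-exact-sequences}) is fine and is essentially the paper's \cref{cor:no-action-on-perf}. The gap is in the first half: the claim that $\En$ is compact in $\ModEn = \Mod_{\En}(\SpTn)$, and hence that dualizable objects coincide with the thick subcategory generated by the unit, is false as argued. Precisely because filtered colimits in $\ModEn$ are computed by $\Tn$-localizing the colimit in $\Mod_{\En}(\Sp)$, the functor $\Map_{\ModEn}(\En,-)$ does \emph{not} commute with them. Concretely, the filtered colimit in $\ModEn$ of $\En \xrightarrow{p} \En \xrightarrow{p} \cdots$ is the localization of $\En[1/p]$, which vanishes (it is $\Tn$-acyclic, since $\Tn\otimes\En$ has $p$-power torsion homotopy), whereas $\colim \Map_{\ModEn}(\En,\En) \simeq \En[1/p] \neq 0$. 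So the unit is not compact; indeed, if $\En$ were a compact generator of $\ModEn$, Morita theory would give $\ModEn \simeq \Mod_{\En}(\Sp)$, which is false. The principle ``dualizable $=$ compact when the unit is a compact generator'' is therefore inapplicable here.

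The identification $(\ModEn)\dbl \simeq \Perf(\En) \simeq \Mod_{\En}(\Sp)\dbl$ is a genuine theorem, not a formal consequence of compactness: it is \cite[Proposition~10.11]{Mathew-2016-Galois}, which is exactly what the paper invokes, and its proof uses the special algebraic features of $\En$ (even periodicity, with $\pi_0\En$ a complete regular Noetherian local ring), showing that a dualizable $\Kn$-local $\En$-module has finitely generated homotopy and is already perfect. To repair your proof you should replace the compactness argument by this citation (or reproduce an argument of that kind); once that identification is in place, your reduction to $K_0$ and \cref{cor:dim-splits-exact-sequences} goes through and coincides with the paper's route via \cref{cor:no-action-on-perf}.
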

        \begin{proof}
            By the proof of \cite[Proposition~10.11]{Mathew-2016-Galois}\footnote{The proposition in \cite{Mathew-2016-Galois} states that $(\ModEn)\dbl \simeq \Perf(\En)$, but the proof shows that $\pi_* M$ is finitely generated over $\pi_* \En$ for any $M \in (\ModEn)\dbl$, which implies that $M$ is actually finite.}, $(\ModEn)\dbl$ is isomorphic to the category of finite $\En$-modules in $\Sp$. The claim then follows by \cref{cor:no-action-on-perf}.        
        \end{proof}

        We recall now the definition of a character of a local system as studied in \cite{Ponto-Schulman-2014-induced-character}, \cite{Hoyois-Scherozke-Sibilla-2017-traces}, \cite{Hoyois-Safrobov-Sherozke-Sibila-2021-GRR}, and \cite{Carmeli-Cnossen-Ramzi-Yanovski-2022-characters}:
        \begin{definition}\label{def:character}
            Let $\cD \in \CAlg(\PrL)$ and $X \in \spc$. Then define
            \begin{equation*}
                \chi \colon \L X \times (\cD\dblspace)^X = \Map(\TT, X) \times (\cD\dblspace)^X \to (\cD\dblspace)^{\TT} \xto{\tr} \ounit_{\cD}.
            \end{equation*}
            For $V \in (\cD\dbl)^{X}$, denote the evaluation map by
            \begin{equation*}
                \chi_V \colon \L X \to \ounit_{\cD}
            \end{equation*}
            and call it the character of $V$.

            Equivalently, $V \in (\cD\dbl)^X$ defines a functor $V \colon \cD[X] \to \cD$, which is an internal left adjoint functor in $\Mod_{\cD}$ (\cite[Corollary~4.32]{Carmeli-Cnossen-Ramzi-Yanovski-2022-characters}). $\THH_{\cD} \colon \Mod_{\cD} \to \cD$ is functorial with respect to internal left adjoints, and the character of $V$ agrees with the image under $\THH_{\cD}$ of this functor (\cite[Proposition~5.14]{Carmeli-Cnossen-Ramzi-Yanovski-2022-characters}). That is
            \begin{equation*}
                \chi_{V} \colon \ounit_{\cD}[\L X] \simeq \THH_{\cD}(\cD[X]) \xto{\THH_{\cD}(V)} \THH_{\cD}(\cD) \simeq \ounit_{\cD}.
            \end{equation*}
        \end{definition}

        \subsubsection*{Trace of an automorphism of an invertible object}
        
        When restricting to invertible objects, rather than all dualizable objects, the trace and dimension maps become maps of connective spectra
        \begin{equation*}
            \begin{split}
                \tr \colon (\cD\units)^{\TT} \to \ounit_{\cD}\units \qin (\cnSp)^{\B\TT}, \\
                \dim \colon \cD\units \to (\ounit_{\cD}\units)^{\B\TT} \qin \cnSp.
            \end{split}
        \end{equation*}
        We can replace the Picard spectrum of $\cD$ by any connective spectrum $X$ (e.g.\ $X$ itself is a symmetric monoidal category and $X\units \simeq X$), and respectively $\ounit_{\cD}\units$ by $\Omega X$, therefore getting the natural transformations
        \begin{equation*}
            \begin{split}
                & \tr \colon (-)^{\TT} \To \Omega(-) \qin \Fun(\cnSp, (\cnSp)^{\B\TT}), \\
                & \dim \colon \id \To \Omega(-)^{\B\TT} \qin \Fun(\cnSp, \cnSp).
            \end{split}
        \end{equation*}

        These functors are representable in $(\cnSp)^{\B\TT}$ and in $\cnSp$:
        \begin{align*}
            & (-)^{\TT} = \hom(\SS[\TT], -), && \Omega(-) = \hom(\Sigma \SS, -), \\
            & \id = \hom(\SS, -), && \Omega(-)^{\B\TT} = \hom(\Sigma \SS[\B\TT], -).
        \end{align*}
        We now describe the maps representing the trace and dimension maps:
        \begin{equation*}
            \begin{split}
                & \tr^* \colon \Sigma \SS \to \SS[\TT] \qin (\cnSp)^{\B\TT}, \\
                & \dim^* \colon \Sigma \SS[\B\TT] \to \SS \qin \cnSp,
            \end{split}
        \end{equation*}
        and derive conclusions from that description. Note that, as the dimension map is the $\TT$-fixed points of the trace map, the representing map of the dimension is the $\TT$-orbits of the representing map of the trace map --- $\dim^* = (\tr^*)_{h\TT}$.

        The standard orientation on $\TT$ chooses a generator of $\Map_{\cnSp}(\Sigma \SS, \redSS[\TT]) \simeq \ZZ$, which is a canonical isomorphism $\Sigma \SS \to \redSS[\TT]$. The cofiber of the map
        \begin{equation*}
            \Sigma \SS \isoto \redSS[\TT] \to \SS[\TT]
        \end{equation*}
        is the canonical map $\SS[\TT] \to \SS$ induced by the terminal map $\TT \to \pt$. Any choice of a point in $\TT$ gives a splitting $\SS \to \SS[\TT]$, therefore we get an identification
        \begin{equation*}
            \SS \oplus \Sigma \SS \isoto \SS[\TT].
        \end{equation*}

        \begin{lemma}\label{lem:trace-is-eta+id}
            On the level of connective spectra, the map $\tr^*$ 
            \begin{equation*}
                \tr^* \colon \Sigma \SS \to \SS[\TT] \simeq \SS \oplus \Sigma \SS
            \end{equation*}
            is given by $(\eta, \id)$.
        \end{lemma}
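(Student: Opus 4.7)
The plan is to pin down the two components of $\tr^*\colon \Sigma\SS \to \SS \oplus \Sigma\SS$ separately, using the natural splitting of $\SS[\TT]$ arising from the basepoint inclusion $\iota\colon \SS \to \SS[\TT]$ and its collapse retraction $c\colon \SS[\TT] \to \SS$, with complementary section $\iota_{\Sigma\SS}\colon \Sigma\SS \to \SS[\TT]$. Write $(\tr^*)_1 \coloneqq c\circ \tr^* \in \pi_0\hom(\Sigma\SS,\SS) = \pi_1\SS = \ZZ/2$ and $(\tr^*)_2$ for the $\Sigma\SS$-component, which lies in $\pi_0\hom(\Sigma\SS,\Sigma\SS) = \pi_0\SS = \ZZ$. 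Functorially, for $X \in \cnSp$, the splitting $\SS[\TT] \simeq \SS \oplus \Sigma\SS$ identifies $X^\TT \simeq X \oplus \Omega X$; a point $\phi \in X^\TT = \Map(\SS[\TT], X)$ decomposes as a pair $(x,\alpha)$ with $x = \phi\circ \iota \in X$ and $\alpha = \phi\circ \iota_{\Sigma\SS} \in \Omega X$, and precomposition with $\tr^* = \iota\circ (\tr^*)_1 + \iota_{\Sigma\SS}\circ (\tr^*)_2$ yields the additive formula
\[
    \tr(x, \alpha) \;=\; x\cdot (\tr^*)_1 \;+\; \alpha \cdot (\tr^*)_2 \;\in\; \Omega X,
\]
where $x\cdot (\tr^*)_1$ denotes the composition $\Sigma\SS \xto{(\tr^*)_1} \SS \xto{x} X$ (and similarly for the second summand). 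I would detect each component by specializing $X = \cD\units$ for a suitable $\cD \in \CAlg(\PrL)$.

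For $(\tr^*)_2 = \id$, specialize to $Z = \ounit_\cD$, so $x = 0 \in \cD\units$ (the basepoint), and the formula reduces to $\tr(\ounit_\cD, \alpha) = \alpha\cdot (\tr^*)_2$. On the other hand, the trace of any endomorphism $\alpha$ of the unit is $\alpha$ itself, by direct unraveling of the trace zigzag with $\ev$ and $\coev$ identified as the canonical isomorphism $\ounit_\cD \otimes \ounit_\cD \simeq \ounit_\cD$. Hence $\alpha\cdot (\tr^*)_2 = \alpha$ for every $\cD$ and every $\alpha \in \ounit_\cD\units$; since the integer $(\tr^*)_2$ acts by multiplication, this forces $(\tr^*)_2 = 1 = \id$.

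For $(\tr^*)_1 = \eta$, it suffices to verify non-vanishing, as $\pi_1\SS = \{0, \eta\}$. Specialize to $\cD = \Sp$ and $Z = \Sigma\SS$ with $\alpha = \id_Z$; then $\alpha$ is the zero loop in $\Omega(\Sp\units)$, so the formula gives $\tr(\Sigma\SS, \id) = \dim(\Sigma\SS) = [\Sigma\SS]\cdot (\tr^*)_1$ in $\pi_1(\Sp\units) = \pi_0(\SS\units) = \ZZ/2$. Since $\dim(\Sigma\SS) = -1$ is the non-trivial element of $\SS\units$, the class $(\tr^*)_1$ cannot vanish, so $(\tr^*)_1 = \eta$. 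Combining the two identifications gives $\tr^* = (\eta, \id)$.

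The only non-routine step is the additive decomposition $\tr(x,\alpha) = x\cdot (\tr^*)_1 + \alpha\cdot (\tr^*)_2$: it requires translating the non-equivariant splitting of the corepresenting object $\SS[\TT]$ through the Yoneda identification $X^\TT = \Map(\SS[\TT], X)$, so that postcomposition with $\tr^*$ splits termwise into the two summands. Once this correspondence is made explicit, both component computations reduce to the elementary observations above about endomorphisms of the unit and about $\Pic(\Sp)$.
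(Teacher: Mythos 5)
Your proof is correct, and its overall shape is the one the paper uses: split $\tr^*$ along $\SS[\TT]\simeq \SS\oplus\Sigma\SS$ and determine the two components by testing on the two families of points of $(\cD\units)^{\TT}$, namely $(Z,\id_Z)$ (detecting the $\SS$-component) and $(\ounit_{\cD},u)$ (detecting the $\Sigma\SS$-component, where the observation that the trace of an automorphism of the unit is the automorphism itself forces the identity component; your closing step, that $\alpha\cdot(\tr^*)_2=\alpha$ for all $\alpha$ pins down the integer, implicitly uses an $\alpha$ of infinite order, e.g.\ $2\in\QQ^{\times}$ for $\cD=\Mod_{\QQ}$, which is fine). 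The one genuine divergence is in the $\SS$-component: the paper identifies the composite $(Z,\id_Z)\mapsto \tr(Z,\id_Z)$ with the dimension map on Picard spectra and then quotes the external result (CSY-cyclotomic, Proposition~3.20) that this natural transformation is represented by $\eta$, whereas you exploit that the target is $\pi_1\SS=\{0,\eta\}$ and rule out $0$ by the single computation $\dim(\Sigma\SS)=-1\neq 1$ in $\Sp$. This makes your argument self-contained (it in effect reproves the cited fact in the only case needed), at the cost of relying on the explicit identification $\dim(\Sigma\SS)=-1$, which is elementary; both arguments share the same mild dependence on a chosen splitting of $\SS[\TT]$, so the level of rigor is comparable.
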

        \begin{proof}
            Composing the trace map with the map $\id_{(-)} \colon \cD\units \to (\cD\units)^{\TT}$ sending $Z \in \cD\units$ to $(Z, \id_Z)$ we get the dimension map, not remembering the $\TT$-action:
            \begin{equation*}
                \dim \colon \cD\units \xto{\id_{(-)}} (\cD\units)^{\TT} \xto{\tr} \ounit_{\cD}\units.
            \end{equation*}
            By \cite[Proposition~3.20]{CSY-cyclotomic}, this map is represented by the map $\eta \colon \Sigma \SS \to \SS$. But the map $\id_{(-)}$ is represented by the natural map $\SS[\TT] \to \SS$, which under the identification $\SS[\TT] \simeq \SS \oplus \Sigma \SS$ corresponds to the projection $\SS \oplus \Sigma \SS \onto \SS$. That is, the composition
            \begin{equation*}
                \Sigma \SS \xto{\tr^*} \SS \oplus \Sigma \SS \onto \SS
            \end{equation*}
            is given by $\eta$.

            Now, as $\Aut(\ounit_{\cD}) = \ounit_{\cD}\units$ and $(\cD\units)^{\TT} = \{(Z, f) \mid f \in \Aut(Z)\}$, we have a natural map
            \begin{equation*}
                (\ounit_{\cD}, -) \colon \ounit_{\cD}\units \to (\cD\units)^{\TT}
            \end{equation*}
            sending $u \in \ounit_{\cD}\units \simeq \Aut(\ounit_{\cD})$ to $(\ounit_{\cD}, u)$. It is the fiber at $\ounit_{\cD}$ of the forgetful map
            \begin{equation*}
                \begin{split}
                    (\cD\units)^{\TT} & \mapsto \cD\units \\
                    (Z, f) & \mapsto Z,
                \end{split}
            \end{equation*}
            which is represented by $e^* \colon \SS \to \SS[\TT]$ for a choice of a base point $e \in \TT$. Therefore, the map $(\ounit_{\cD}, -)$ is represented by the cofiber of $e^*$, which corresponds to the projection $\SS \oplus \Sigma \SS \to \Sigma \SS$. The composition with the trace map sends $u\in \ounit_{\cD}\units$ to its trace as an automorphism of $\ounit_{\cD}$, which is again $u$. Therefore the composition
            \begin{equation*}
                \Sigma \SS \xto{\tr^*} \SS \oplus \Sigma \SS \onto \Sigma \SS
            \end{equation*}
            is the identity.
        \end{proof}

        \begin{proposition}\label{prop:trace-is-counit}
            The map $\tr^* \colon \Sigma \SS \to \SS[\TT]$ is identified with the composition
            \begin{equation*}
                \Sigma \SS \simeq \SS[\TT]^{h\TT} \xto{c} \SS[\TT]
            \end{equation*}
            where $c$ is the canonical map from the fixed points.
        \end{proposition}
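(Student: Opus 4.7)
The plan is to reduce the equality $\tr^* = c$ to an equality in $\pi_0$ of a mapping space, using the adjunction between constant $\TT$-spectra and homotopy fixed points. Applying this adjunction, I would write
\begin{equation*}
    \Map_{(\cnSp)^{\B\TT}}(\Sigma \SS, \SS[\TT]) \simeq \Map_{\cnSp}\bigl(\Sigma \SS, \SS[\TT]^{h\TT}\bigr).
\end{equation*}
Via Wirthm\"uller duality for the free translation action of $\TT$ on itself (whose $1$-dimensional adjoint representation contributes the suspension shift), one identifies $\SS[\TT]^{h\TT} \simeq \Sigma \SS$, so that the right-hand $\pi_0$ equals $\pi_0 \Map(\Sigma \SS, \Sigma \SS) = \ZZ$. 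Under this identification, the canonical map $c$ tautologically corresponds to $\id \in \ZZ$; it remains to show that $\tr^*$ also corresponds to $1$.

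To determine the integer representing $\tr^*$, I would examine the forgetful map to the underlying non-equivariant mapping space
\begin{equation*}
    \pi_0 \Map_{(\cnSp)^{\B\TT}}(\Sigma \SS, \SS[\TT]) \longrightarrow \pi_0 \Map_{\cnSp}(\Sigma \SS, \SS \oplus \Sigma \SS) \;=\; \ZZ/2 \oplus \ZZ.
\end{equation*}
By \cref{lem:trace-is-eta+id}, $\tr^*$ has underlying map $(\eta, \id)$. I would likewise compute the underlying map of $c$ to be $(\eta, \id)$, for instance by applying $(-)^{h\TT}$ to the $\TT$-equivariant cofiber sequence $\Sigma \SS \to \SS[\TT] \to \SS$ and chasing the resulting diagram. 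Consequently the forgetful map takes the form $n \mapsto (n\eta, n)$, which is injective since its second coordinate is the identity. As $\tr^*$ and $c$ have the same image under this forgetful map, they agree in $\pi_0$, and so $\tr^* = c$ as maps in $(\cnSp)^{\B\TT}$.

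The main obstacle will be the computation of the underlying map of $c$ as $(\eta, \id)$. The second coordinate is essentially the statement that the Wirthm\"uller isomorphism $\SS[\TT]^{h\TT} \simeq \Sigma \SS$ is compatible with the \quotes{$\Sigma \SS$-part} of the non-equivariant splitting $\SS[\TT] \simeq \SS \oplus \Sigma \SS$, which is formal. The first coordinate --- showing that composition of $c$ with the augmentation $\SS[\TT] \to \SS$ yields $\eta \colon \Sigma \SS \to \SS$ --- requires tracing through the Atiyah-duality argument, which is classical but forms the key computational step.
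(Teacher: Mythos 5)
Your argument is correct and follows essentially the same skeleton as the paper's proof: both use the $\TT$-invariance of the trace (\cref{lem:equivariant-trace}, built into the definition of $\tr^*$ as a map in $(\cnSp)^{\B\TT}$ out of the trivial $\TT$-object $\Sigma\SS$) to factor $\tr^*$ through $c$ via the fixed-point adjunction, identify $\SS[\TT]^{h\TT}\simeq \Sigma\SS$, and then pin the map down by its underlying components using \cref{lem:trace-is-eta+id}. The one real difference is where you place the computational weight: the step you call the key obstacle --- showing that the underlying map of $c$ (composed with your chosen equivalence) is $(\eta,\id)$, which amounts to the classical fact that the $S^1$-transfer restricted to the bottom cell is $\eta$ --- is not needed. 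The paper leaves the underlying map of $c$ as an unknown $(n,b\eta)$ and the factorization $(\tr^*)^{h\TT}$ as multiplication by an unknown $k$; the single input $(\eta,\id)$ for $\tr^*$ from \cref{lem:trace-is-eta+id} then forces $nk=1$ and $bk=1$, hence $k=\pm 1$. Since the proposition only asserts that $\tr^*$ agrees with $c$ composed with \emph{some} equivalence $\Sigma\SS\simeq \SS[\TT]^{h\TT}$, the possible sign is harmless, so your conclusion $\tr^*=\pm(c\circ\varphi)$ already suffices and the Atiyah-duality computation can be skipped entirely. A minor caveat on your proposed route to that computation: the sequence $\Sigma\SS\to\SS[\TT]\to\SS$ is a cofiber sequence of $\TT$-objects only after forgetting the action --- equivariantly the fiber of the augmentation is the reduced $\SS[\TT]$, whose underlying spectrum is $\Sigma\SS$ but which is not the trivial $\TT$-object --- so the diagram chase you sketch would need care; but again, with the paper's bookkeeping trick this step disappears.
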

        \begin{proof}
            By \cref{lem:equivariant-trace}, the map $\tr^* \colon \Sigma \SS \to \SS[\TT]$ is $\TT$-invariant. By the universal property of fixed points, it factors as
            \begin{equation*}
                \Sigma \SS \xto{(\tr^*)^{h\TT}} \SS[\TT]^{h\TT} \xto{c} \SS[\TT].
            \end{equation*}
            As $\SS[\TT]^{h\TT} \simeq \Sigma \SS$, the map $c$ on underlying spectra is of the form $(n, b\eta) \in \pi_0\SS \times \pi_1 \SS \cong \pi_1 \SS[\TT]$, where $n\in \ZZ$ and $b\in \{0,1\}$. The map $(\tr^*)^{h\TT} \colon \Sigma \SS \to \Sigma \SS$ is a multiplication by an integer $k$. Therefore, $\tr^*$ is of the form $(nk, bk\eta) \in \pi_1\SS[\TT]$. By \cref{lem:trace-is-eta+id}, $nk = 1$ and $bk = 1 \mod 2$. In particular $k = \pm 1$ and $(\tr^*)^{h\TT}$ is an isomorphism as needed.
        \end{proof}
   
        Recall the norm map of $\B\TT$-local system: If $X \in \cD^{\B\TT}$ there is a natural norm map
        \begin{equation*}
            \Nm \colon \Sigma X_{h\TT} \to X^{h\TT}.
        \end{equation*}
        We call the composition of the norm, with the natural map $X^{h\TT} \to X$, the \emph{transfer map} and denote it as
        \begin{equation*}
            \Tr \colon \Sigma X_{h\TT} \to X.
        \end{equation*}
        In particular for $\SS \in \Sp^{\B\TT}$ with the trivial $\TT$-action, we have a transfer map
        \begin{equation*}
            \Tr \colon \Sigma \SS[\B\TT] \to \SS.
        \end{equation*}
        
        \begin{proposition}\label{prop:dim-is-transfer}
            The map $\dim^* \colon \Sigma \SS[\B\TT] \to \SS$ is identified with the transfer map.
        \end{proposition}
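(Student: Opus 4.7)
The plan is to identify $\dim^*$ with the transfer by combining the previous proposition with the Wirthm\"uller isomorphism for $\TT$. By the discussion preceding this proposition, the dimension map is represented by $\dim^* = (\tr^*)_{h\TT}$, since taking $h\TT$-fixed points on the target of a representable functor $\cnSp \to (\cnSp)^{\B\TT}$ corresponds to taking $h\TT$-orbits on the representing object (via the adjunction $(-)_{h\TT}\dashv \mathrm{triv}$). Combined with \cref{prop:trace-is-counit}, which identifies $\tr^*$ with the counit $c\colon \mathrm{triv}(\SS[\TT]^{h\TT})\to \SS[\TT]$ of the adjunction $\mathrm{triv}\dashv (-)^{h\TT}$ under the isomorphism $\SS[\TT]^{h\TT} \simeq \Sigma \SS$, it suffices to show that the $h\TT$-orbits $c_{h\TT}\colon \Sigma\SS[\B\TT] \to \SS$ agree with $\Tr$.

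To carry this out, I would invoke the Wirthm\"uller isomorphism for the inclusion $f\colon \mathrm{pt}\to \B\TT$. Since $\TT$ is a compact Lie group of dimension $1$ with trivial adjoint representation (see \cite{Klein-2001-dualizing-object,Cnossen-2023-twisted-ambi}), this gives $f_!\SS \simeq \Sigma f_*\SS$ in $(\cnSp)^{\B\TT}$, i.e., $\SS[\TT] \simeq \Sigma\SS^{\TT}$. Under this identification and functoriality of the counit, $c$ becomes $\Sigma$ applied to the counit $c'\colon \mathrm{triv}(\SS) \to \SS^{\TT}$ of $\mathrm{triv}\dashv (-)^{h\TT}$ at $\SS^{\TT} = f_*\SS$, which is the canonical ``inclusion of constants'' (equivalently, the unit of the adjunction $U\dashv f_*$ at $\SS$).

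Taking $h\TT$-orbits and using the Wirthm\"uller iso once more to compute $g_!f_*\SS \simeq \Sigma^{-1} g_!f_!\SS = \Sigma^{-1}\SS$ (where $g\colon\B\TT\to\mathrm{pt}$), one obtains a map $\SS[\B\TT]\to \Sigma^{-1}\SS$ whose suspension recovers $c_{h\TT}$. By the standard construction of the norm $\Nm\colon \Sigma\SS[\B\TT]\to \SS^{\B\TT}$ from the Wirthm\"uller isomorphism, this shifted composition is precisely the augmentation $\SS^{\B\TT}\to \SS$ composed with $\Nm$, i.e., the transfer $\Tr$. The main obstacle is careful bookkeeping of the suspension shift introduced by the Wirthm\"uller isomorphism (arising because $\dim\TT = 1$) and of the compatibility between the ambidexterity counits and the norm map; modulo this bookkeeping, the identification is essentially tautological.
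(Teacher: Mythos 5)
Your first paragraph is fine and coincides with the paper's setup: $\dim^* = (\tr^*)_{h\TT}$ together with \cref{prop:trace-is-counit} reduces the claim to identifying the $\TT$-orbits of the fixed-point counit $c \colon \SS[\TT]^{h\TT} \to \SS[\TT]$ with the transfer. The gap is in your last step. After rewriting $c$ via the Wirthm\"uller equivalence $\SS[\TT] \simeq \Sigma\, f_*\SS$ and taking orbits, you obtain a map $\Sigma\SS[\B\TT] \to \SS$ that factors through $g_! f_* \SS$, and you assert that ``by the standard construction of the norm from the Wirthm\"uller isomorphism'' this is \emph{precisely} $\SS^{\B\TT} \to \SS$ composed with $\Nm$, calling the remaining comparison ``bookkeeping.'' But your composite does not literally pass through $g_* g^* \SS = \SS^{\B\TT}$; producing that factorization and matching the two factors against the norm is exactly the compatibility between the Wirthm\"uller equivalence, the ambidexterity counits, and $\Nm$ --- which is the entire content of the proposition, not bookkeeping. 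To make it an argument you would have to fix a specific construction or characterization of $\Nm$ (Klein, Cnossen, or Nikolaus--Scholze) and actually verify the compatibility; moreover there is a circularity risk, since the Wirthm\"uller equivalence $f_! \simeq \Sigma f_*$ for $f \colon \mathrm{pt} \to \B\TT$ is itself usually obtained from (or is equivalent to) the statement that the norm is the canonical equivalence on (co)induced objects, which is the same input you are trying to avoid supplying.

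The paper closes this gap differently, and you could too: the transfer $(\DD_{\B\TT} \otimes -)_{h\TT} \xto{\Nm} (-)^{h\TT} \to \mathrm{id}$ is a natural transformation between colimit-preserving endofunctors of $\Sp^{\B\TT}$, hence is determined by its value on the generator $\SS[\TT]$; on $\SS[\TT]$ the norm is the canonical equivalence $(\DD_{\B\TT}\otimes \SS[\TT])_{h\TT} \isoto \SS[\TT]^{h\TT}$, so the transfer there is the counit $c$, which \cref{prop:trace-is-counit} identifies with $\tr^*$; since $\SS \simeq \SS[\TT]_{h\TT}$, naturality and colimit-preservation give that the transfer at the trivial local system is $(\tr^*)_{h\TT} = \dim^*$. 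This characterization argument is what replaces your appeal to ``the standard construction,'' and without it (or an equivalent explicit comparison with a chosen model of $\Nm$) your proof is incomplete at the decisive step.
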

        \begin{proof}
            We refer the reader to \cite{Cnossen-2023-twisted-ambi} for a modern discussion about dualizing objects, $\cD$-linear functors and norm maps. See also \cite{Klein-2001-dualizing-object} and \cite{Nikolaus-Scholze-2018-TC}.
            The transfer natural transformation is
            \begin{equation*}
                \Tr \colon (\DD_{\B\TT} \otimes -)_{h\TT} \xTo{\Nm} (-)^{h \TT} \xTo{c} \id \qin \FunL(\Sp^{\B\TT}, \Sp^{\B\TT}),
            \end{equation*}
            where $\DD_{\B\TT} = \SS[\TT]^{h\TT}$ is the dualizing local system of $\B\TT$. It is a natural transformation of colimit-preserving functors, it is therefore determined by its value on $\SS[\TT]$. 
            The transfer map on $\SS[\TT]$ is the map
            \begin{equation*}
                (\DD_{\B\TT} \otimes \SS[\TT])_{h\TT} \isoto \SS[\TT]^{h\TT} \xto{c} \SS[\TT].
            \end{equation*}
            By \cref{prop:trace-is-counit}, it agrees with $\tr^*$. 
            Now, as $\SS = \SS[\TT]_{h\TT}$, the transfer map on $\SS$ is the $\TT$-orbits of $\tr^*$, i.e. $\dim^*$.
        \end{proof}

    \subsection{The braiding character}
    \label{subsec:braiding-character}
        Recall the braiding functor $\T \colon \cD \to \cD[\MM]$. Given $V \in \cD\dbl$, $\T V \colon \MM \to \cD$ induces an internal left adjoint functor $\cD[\MM] \to \cD$. Applying $\THH_{\cD}$ we get the usual character 
        \begin{equation*}
            \chi_{\T V} \colon \ounit_{\cD}[\L\MM] \to \ounit_{\cD} \qin \cD,
        \end{equation*}
        that at each $\L\B\Sm$ corresponds to the character $\chi_{\Tm V}$ of the $\Sm$-representation $\Tm V = V\om$.
        As $\T V$ was a symmetric monoidal map, the character lifts to a map of commutative algebras.
        We now notice, that by the natural grading on $\MM$, we can lift this character to a graded character
        \begin{equation*}
            \mscr{X}_{\T V} \colon \ounit_{\cD}[\L \MM] \to \ounit_{\cD}[t^{\pm 1}] \qin \CAlg(\cD)
        \end{equation*}
        that at each $\L\B\Sm$ is $\chi_{\Tm V} \, t^{\degree}$.

        \begin{definition}\label{def:braiding-character-general}
            Let $\cD \in \CAlg(\PrL)$ and $V \in \cD\dbl$. Define the braiding character of $V$ as the map
            \begin{equation*}
                \mscr{X}_{\T V} \colon \ounit_{\cD}[\L \MM] \to \ounit_{\cD}[\ZZ] \qin \CAlg(\cD)
            \end{equation*}
            obtained by applying $\THH_{\cD}$ to the braiding
            \begin{equation*}
                \T V\shift{1} \colon \cD[\MM] \to \Gr_{\ZZ}\cD
            \end{equation*}
            of $V\shift{1} \in \Gr_{\ZZ} \cD$.
            Equivalently, it is the map of commutative monoids
            \begin{equation*}
                \mscr{X}_{\T V} \colon \L\MM \to \ounit_{\cD}[\ZZ] \qin \CMon.
            \end{equation*}
        \end{definition}
        
        \begin{remark}\label{rmrk:character-from-graded-character}
            The colimit functor 
            \begin{equation*}
                \colim \colon \Gr_{\ZZ}\cD \to \cD
            \end{equation*}
            is symmetric monoidal and sends $V\shift{1}$ to $V$. It induces on $\THH_{\cD}$ the usual evaluation at 1 map $\ounit_{\cD}[\ZZ] \to \ounit_{\cD}$. Therefore, the composition
            \begin{equation*}
                \ounit_{\cD}[\L\MM] \xto{\mscr{X}_{\T V}} \ounit_{\cD}[\ZZ] \to \ounit_{\cD}
            \end{equation*}
            agrees with the usual character of the braiding $\chi_{\T V}$.
        \end{remark}

        This graded character, albeit more aesthetically pleasing, does not contain more data than the usual character:
        \begin{lemma}\label{lem:map-factors-constant}
            Let $M,N$ be commutative monoids and $f \colon M \to \ounit_{\cD}[N]$ a map of commutative monoids. Assume that $N$ is discrete, so $\ounit_{\cD}[N] \simeq \bigsqcup_N \ounit_{\cD}$, and that $f$ restricted to any connected component of $m\in M$ is non-trivial exactly at one degree $g(m)$. Then $f$ is isomorphic to the composition
            \begin{equation*}
                M \xto{\Sigma_N f, g} \ounit_{\cD} \times N \to \ounit_{\cD}[N],
            \end{equation*}
            where $\sum_N f$ is the composition $M \xto{f} \ounit_{\cD}[N] \to \ounit_{\cD}$ and the right arrow is the assembly map of the forgetful functor $\CAlg(\cD) \to \CMon$.
        \end{lemma}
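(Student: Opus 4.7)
The plan is to extract from $f$ two pieces of data---a degree monoid map $g \colon M \to N$ and an underlying-value monoid map $\Sigma_N f \colon M \to \ounit_{\cD}$---and show that they recover $f$ through the claimed composition.

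First, I would construct $g$. Since $N$ is discrete, we have a decomposition $\ounit_{\cD}[N] \simeq \bigsqcup_{n \in N} \ounit_{\cD}$ in $\cD$, and the hypothesis says that for each $m \in M$, $f$ restricted to the connected component of $m$ factors through a single summand indexed by $g(m) \in N$. Hence $g$ factors through $\pi_0 M$ and extends to a function $M \to N$ since $N$ is discrete. Multiplicativity then follows from the observation that the algebra multiplication on $\ounit_{\cD}[N]$ respects the $N$-grading: a product of elements in degrees $n_1$ and $n_2$ lies in degree $n_1 + n_2$. Applied to $f(m_1) \cdot f(m_2) \simeq f(m_1 m_2)$, this forces $g(m_1 m_2) = g(m_1) + g(m_2)$ by uniqueness of the supporting degree, and the condition $g(1_M) = 0$ is obtained similarly from the unit of $f$ landing in degree $0$.

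Next, I would build the candidate composition and compare it to $f$. Let $\iota \colon \ounit_{\cD} \to \ounit_{\cD}[N]$ denote the inclusion at degree $0$, $\yo \colon N \to \ounit_{\cD}[N]$ the Yoneda embedding, and $\mu \colon \ounit_{\cD} \times N \to \ounit_{\cD}[N]$ the monoid map $(u,n) \mapsto \iota(u) \cdot \yo(n)$; concretely, $\mu(u,n)$ is $u$ sitting in degree $n$. Let $\phi := \mu \circ (\Sigma_N f, g)$ denote the candidate. The assembly $\pi \colon \ounit_{\cD}[N] \to \ounit_{\cD}$ induced by $N \to \ast$ is the identity on each summand, so $(\Sigma_N f)(m) = \pi(f(m))$ recovers the value of $f(m)$ in its unique non-trivial summand $g(m)$. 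Hence $f(m)$ and $\phi(m)$ both equal ``$(\Sigma_N f)(m)$ placed in degree $g(m)$''.

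The main obstacle will be upgrading this pointwise agreement to a coherent equivalence in $\CMon(\spc)$. My strategy is to observe that $\mu$ is a monoid isomorphism onto the sub-commutative-monoid of $\ounit_{\cD}[N]$ consisting of elements supported in a single degree (which is precisely where $f$ lives by hypothesis), with inverse $(\pi, g_{(-)})$ extracting the value and its degree. This identifies the space of monoid maps $M \to \ounit_{\cD}[N]$ satisfying the degree hypothesis with $\Map_{\CMon}(M, \ounit_{\cD}) \times \Map_{\CMon}(M, N)$; both $f$ and $\phi$ project to the pair $(\Sigma_N f, g)$, forcing $f \simeq \phi$.
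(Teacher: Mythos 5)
Your proposal is correct and follows essentially the same route as the paper: the paper's proof likewise factors $f$ through $\ounit_{\cD} \times N$ (the single-degree submonoid) as $(h,g)$ using the support hypothesis, and then identifies $h$ with $\sum_N f$ by composing with the augmentation $\ounit_{\cD}[N] \to \ounit_{\cD}$, exactly as in your final step. Your version merely spells out the multiplicativity of $g$ and the inverse $(\pi, g_{(-)})$, details the paper leaves implicit.
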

        \begin{proof}
            By the assumption, $f$ factors as
            \begin{equation*}
                f \colon M \xto{h, g} \ounit_{\cD} \times N \to \ounit_{\cD}[N],
            \end{equation*}
            for some $h \colon M \to \ounit_{\cD}$.
            Now, the composition
            \begin{equation*}
                \ounit_{\cD} \times N \to \ounit_{\cD}[N] \to \ounit_{\cD}
            \end{equation*}
            is just the projection, so $h = \sum_N f$.
        \end{proof}

        \begin{notation}
            Let $\deg \colon \L \MM \to \ZZ$ be the map of commutative monoids, sending $\L\B\Sm$ to $\{\degree\}\subseteq \ZZ$.
        \end{notation}

        \begin{corollary}\label{cor:braiding-character-factors-constant}
            Let $\cD \in \CAlg(\PrL)$ and $V \in \cD\dbl$. The braiding character $\mscr{X}_{\T V}$ factors in $\CMon$ as
            \begin{equation*}
                \L \MM \xto{\chi_{\T V}, \deg} \ounit_{\cD} \times \ZZ \to \ounit_{\cD}[\ZZ],
            \end{equation*}
            where the right arrow is the assembly map of the forgetful functor $\CAlg(\cD) \to \CMon$.
        \end{corollary}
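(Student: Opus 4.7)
The corollary is a direct application of \cref{lem:map-factors-constant} to the data $M \coloneqq \L\MM$, $N \coloneqq \ZZ$, $f \coloneqq \mscr{X}_{\T V}$, and $g \coloneqq \deg$. My plan is therefore to verify the single hypothesis of that lemma: that the restriction of $\mscr{X}_{\T V}$ to each connected component of $\L\MM$ is non-trivial in exactly one degree of $\ounit_{\cD}[\ZZ] \simeq \bigsqcup_{\ZZ} \ounit_{\cD}$, and that this degree is prescribed by $\deg$.

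First, since the free loop space functor $\L = \Map(\TT, -)$ commutes with coproducts in $\spc$, we have $\L\MM = \bigsqcup_{\degree} \L\B\Sm$, so every connected component of $\L\MM$ lies in a unique $\L\B\Sm$, which $\deg$ sends to $\degree \in \ZZ$. Thus it suffices to show that the composition
\begin{equation*}
    \ounit_\cD[\L\B\Sm] \hookrightarrow \ounit_\cD[\L\MM] \xto{\mscr{X}_{\T V}} \ounit_\cD[\ZZ]
\end{equation*}
factors through the $\degree$-th summand.

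The main step is to unpack the construction of the braiding character. The symmetric monoidal functor $\T V\shift{1} \colon \MM \to \Gr_{\ZZ}\cD$ sends $\B\Sm \subseteq \MM$ to $V\shift{1}^{\otimes \degree} \simeq V^{\otimes \degree}\shift{\degree}$, so it restricts to a map landing in the full $\cD$-linear subcategory of $\Gr_\ZZ \cD$ spanned by objects concentrated in degree $\degree$. This subcategory is the essential image of the fully faithful $\cD$-linear embedding $(-)\shift{\degree} \colon \cD \hookrightarrow \Gr_{\ZZ}\cD$ of \cref{def:shift-and-yoneda}. Applying $\THH_{\cD}$, which is functorial with respect to internal left adjoints, we obtain a commutative diagram in which $\ounit_\cD[\L\B\Sm]$ factors as $\ounit_\cD[\L\B\Sm] \to \THH_{\cD}(\cD) \simeq \ounit_\cD \hookrightarrow \ounit_\cD[\ZZ]$, where the last arrow is the inclusion of the $\degree$-th summand. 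This is exactly the required factorization.

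Once the hypothesis of \cref{lem:map-factors-constant} is verified, the lemma produces a factorization of $\mscr{X}_{\T V}$ through $\ounit_\cD \times \ZZ \to \ounit_\cD[\ZZ]$ whose first component is $\sum_\ZZ \mscr{X}_{\T V}$. By \cref{rmrk:character-from-graded-character}, this sum coincides with $\chi_{\T V}$, yielding the claimed factorization. The only mild subtlety is keeping the statement \emph{$\cD$-linear} when passing through $\THH_{\cD}$, so that the inclusion of degree-$\degree$ objects really corresponds to the inclusion of the $\degree$-th summand of $\ounit_\cD[\ZZ]$; this is formal given the definition of the Day convolution and the behaviour of $\THH_\cD$ on inclusions of full $\cD$-linear subcategories.
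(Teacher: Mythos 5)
Your proposal is correct and follows essentially the same route as the paper: the paper's proof is exactly an application of \cref{lem:map-factors-constant} with $g = \deg$ together with \cref{rmrk:character-from-graded-character}, with the single-degree-support hypothesis treated as immediate from the construction of $\mscr{X}_{\T V}$ (each $\L\B\Sm$ contributing $\chi_{\Tm V}\, t^{\degree}$). Your extra step — factoring the restriction of $\T V\shift{1}$ to $\B\Sm$ through the embedding $(-)\shift{\degree}$ and applying $\THH_{\cD}$ — is just a more explicit verification of that hypothesis and is sound.
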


        \begin{proof}
            This follows directly from \cref{lem:map-factors-constant} with $g = \deg$, and from \cref{rmrk:character-from-graded-character}.
        \end{proof}

        Although the braiding character does not contain more data than the character of the braiding, in \cref{sec:graded-braiding} we will see that when $\cD = \Gr^{\Zchar}_A \cC$ for some abelian group $A$, this character can be interpreted internally, providing interesting insights on braiding of twisted graded categories. The rest of this subsection is devoted to showing that the braiding character is quite computable and depends only on $\dim(V) \in (\ounit_{\cD})^{\B\TT}$.

        \subsubsection*{Independence of the braiding character}


        Recall that
        \begin{equation*}
             \L\MM = \bigsqcup_{\degree} \L\B\Sm \simeq \bigsqcup_{\degree} \bigsqcup_{\sigma \in \Sm / \conj} \B\Csigma.
        \end{equation*}
        We therefore begin by recalling the centralizers and stabilizers of elements in $\Sm$.
    
        \begin{notation}
            Let $\sigma\in \Sigma_m$. We denote by
            \begin{enumerate}
                \item $\numcyc$ --- the number of cycles in the decomposition of $\sigma$ to disjoint cycles.
                \item $\Nk$ --- The number of $k$ cycles in the decomposition to disjoint cycles of $\sigma$.
            \end{enumerate}
        \end{notation}
        
        \begin{lemma}\label{lem:decomposition-of-wreath}
            The centralizer of $\sigma\in \Sm$  is
            \begin{equation*}
                \Csigma \simeq \bigsqcap_{k=1}^{\degree} (\ZZ/k \wr \Sm[\Nk]) = \bigsqcap_{k=1}^{\degree} (\ZZ/k)^{\Nk} \rtimes \Sm[\Nk].
            \end{equation*}
            The groups $\ZZ/k$ are generated by the $k$-cycles of $\sigma$ and the groups $\Sn[\Nk]$ permutes the different $k$-cycles.
        \end{lemma}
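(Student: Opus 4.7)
The plan is to use the cycle decomposition of $\sigma$ and analyze which permutations conjugate $\sigma$ to itself, building the wreath product structure from the natural short exact sequence that arises.

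First, I would fix the cycle decomposition $\sigma = c_1 c_2 \cdots c_{\numcyc}$ into disjoint cycles, and organize them by length: for each $k \in \{1,\ldots,m\}$, let $c^{(k)}_1,\ldots,c^{(k)}_{\Nk}$ be the $k$-cycles of $\sigma$, with support sets $S^{(k)}_j \subseteq \{1,\ldots,m\}$. The key classical observation is that $\tau c \tau^{-1}$ is the cycle of the same length obtained from $c$ by applying $\tau$ to its entries. Hence $\tau \in \Csigma$ if and only if $\tau$ permutes the cycles of $\sigma$ as unordered tuples: for each $k$, there exists $\rho_k \in \Sm[\Nk]$ such that $\tau(S^{(k)}_j) = S^{(k)}_{\rho_k(j)}$ and, on these supports, $\tau$ conjugates $c^{(k)}_j$ to $c^{(k)}_{\rho_k(j)}$. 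This defines a surjective group homomorphism
\begin{equation*}
    \pi \colon \Csigma \twoheadrightarrow \prod_{k=1}^{m} \Sm[\Nk].
\end{equation*}

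Next I would identify the kernel of $\pi$. An element $\tau \in \ker\pi$ preserves each $S^{(k)}_j$ setwise, and its restriction to $S^{(k)}_j$ commutes with the $k$-cycle $c^{(k)}_j$ within $\Sm[k]$ acting on $S^{(k)}_j$. It is classical that the centralizer of a single $k$-cycle in $\Sm[k]$ is the cyclic group generated by that cycle, isomorphic to $\ZZ/k$. Therefore
\begin{equation*}
    \ker\pi \cong \prod_{k=1}^{m} (\ZZ/k)^{\Nk}.
\end{equation*}

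Then I would produce a splitting. Choose, for each cycle $c^{(k)}_j$, a base point of $S^{(k)}_j$, which identifies $S^{(k)}_j$ with $\ZZ/k$ so that $c^{(k)}_j$ acts by $+1$. Given $\rho_k \in \Sm[\Nk]$, define $s(\rho_k)$ to send the $\ell$-th element of $S^{(k)}_j$ to the $\ell$-th element of $S^{(k)}_{\rho_k(j)}$. This is a homomorphism $s\colon \prod_k \Sm[\Nk] \to \Csigma$ with $\pi \circ s = \id$, so the extension splits. Under the conjugation action induced by $s$, each $\Sm[\Nk]$ permutes the coordinates of $(\ZZ/k)^{\Nk}$ according to the tautological action on the index set of $k$-cycles, which is precisely the defining action of the wreath product. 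Combining yields
\begin{equation*}
    \Csigma \cong \prod_{k=1}^{m}\bigl((\ZZ/k)^{\Nk} \rtimes \Sm[\Nk]\bigr) = \prod_{k=1}^{m} (\ZZ/k \wr \Sm[\Nk]),
\end{equation*}
as claimed. There is no genuine obstacle here; the mildest subtlety is that the splitting depends on a choice of base points in each cycle, but different choices differ by an element of $\ker\pi$, so the resulting wreath product structure is canonical.
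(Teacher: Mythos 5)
Your proof is correct. Note that the paper does not actually prove this lemma — it records it as a classical fact and cites the literature (Bernhardt–Niemeyer–R\"ober–Wollenhaupt) — and your argument is precisely the standard one behind that citation: the surjection of $\Csigma$ onto $\prod_k \Sm[\Nk]$ by permuting cycles of equal length, the identification of the kernel with $\prod_k (\ZZ/k)^{\Nk}$ via the fact that a $k$-cycle generates its own centralizer in $\Sigma_k$, and the base-point splitting exhibiting the wreath product structure. All steps are sound, including your remark that the splitting's dependence on base points only changes it by an element of the kernel.
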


        \begin{proof}
            This result is classical. See e.g.\ \cite{Bernhardt-Niemeyer-Rober-Wollenhaupt-2022-wreath} for a proof.
        \end{proof}

        \begin{corollary}
            The free loops space of $\B\Sm$ is
            \begin{equation*}
                \L\B\Sm  = \bigsqcup_{\sigma \in \Sm / \mrm{conj}} \bigsqcap_{k=1}^{\degree} \B(\ZZ/k \wr \Sigma_{\Nk}).
            \end{equation*}
        \end{corollary}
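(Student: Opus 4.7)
The plan is to reduce this to the previous lemma together with the standard groupoid description of the free loop space of a $1$-truncated classifying space. Since $\Sm$ is a discrete group, $\B\Sm$ is a $1$-type and the free loop space admits the classical identification
\begin{equation*}
    \L\B G \simeq \bigsqcup_{[g] \in G / \mrm{conj}} \B C_G(g)
\end{equation*}
for any discrete group $G$. First I would recall this description: the mapping space $\Map(\TT, \B G)$ is the underlying groupoid of the functor category from the one-object groupoid $\TT \simeq \B\ZZ$ into the one-object groupoid $G$, which by direct unwinding classifies conjugacy classes of group homomorphisms $\ZZ \to G$ (i.e.\ elements of $G$ up to conjugation), with automorphism group at $g$ given by the centralizer $C_G(g)$.

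Next I would specialize to $G = \Sm$. By \cref{lem:decomposition-of-wreath}, the centralizer of $\sigma \in \Sm$ is isomorphic to the product of wreath products
\begin{equation*}
    C_{\Sm}(\sigma) \simeq \bigsqcap_{k=1}^{\degree} (\ZZ/k \wr \Sm[\Nk]),
\end{equation*}
where $\Nk$ is the number of $k$-cycles in the disjoint cycle decomposition of $\sigma$. Substituting this into the general decomposition gives the claimed formula
\begin{equation*}
    \L\B\Sm \simeq \bigsqcup_{\sigma \in \Sm / \mrm{conj}} \bigsqcap_{k=1}^{\degree} \B(\ZZ/k \wr \Sigma_{\Nk}).
\end{equation*}

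There is essentially no obstacle here; the content lies entirely in the preceding lemma about centralizers, which has already been invoked. The only point that requires any care is ensuring that the identification $\L\B G \simeq \bigsqcup_{[g]} \B C_G(g)$ is interpreted in spaces (i.e.\ as $\infty$-groupoids), rather than as ordinary groupoids, but since $G$ is discrete and $\B G$ is $1$-truncated the two agree.
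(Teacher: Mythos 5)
Your argument is correct and is exactly the (implicit) argument of the paper: the corollary is stated there as an immediate consequence of \cref{lem:decomposition-of-wreath} together with the standard identification $\L\B G \simeq \bigsqcup_{[g]} \B C_G(g)$ for a discrete group $G$. Your care about interpreting this in spaces versus ordinary groupoids is fine but unproblematic, since $\B\Sm$ is a $1$-type.
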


        \begin{definition}\label{def:stabilizer-action}
            Let $\cD$ be a symmetric monoidal category.
            Let $\sigma \in \Sm$ and $W \in \cD \dbl$. Then $(\dim W)^{\numcyc} \in \ounit_{\cD}$ admits a natural $\Csigma$-action, which we describe as follows:
            By \cref{lem:decomposition-of-wreath}, 
            \begin{equation*}
                \Csigma \simeq \bigsqcap_{k = 1}^{\degree} ((\ZZ / k)^{\Nk} \rtimes \Sigma_{\Nk}).
            \end{equation*}
            The component $(\ZZ / k)_{\tau} \subseteq (\ZZ / k)^{\Nk}$ corresponding to a cycle $\tau \subseteq \sigma$ of length $k$, acts on the component $(\dim (W))_{\tau}$ by restriction along the natural map $\ZZ/k \into \TT$. $\Sigma_{\Nk}$ acts on $\bigsqcap_{\substack{\tau \subseteq \sigma, \\ |\tau| = k}} (\dim W)_{\tau}$ by permutation.
    
            We denote this element with the $\Csigma$-action also by the name $(\dim W)^{\numcyc} \in \End(\ounit_{\cD})^{\B\Csigma}$.
        \end{definition}
    
        \begin{lemma}\label{lem:character-of-Tm}
            Let $\cD$ be a symmetric monoidal category and $W \in \cD\dbl$. Let $\T W \colon \MM \to \cD$ be the braiding of $W$. Then its character is the map
             \begin{equation*}
                \chi_{\T W} \colon \L\MM \to \ounit_{\cD},
            \end{equation*}
            that chooses $(\dim W)^{\numcyc}$ at each $\B\Csigma \subseteq \L\B\Sm$, with the action of \cref{def:stabilizer-action}.
        \end{lemma}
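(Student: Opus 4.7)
The plan is to exploit the symmetric monoidal structure of $\T W$ in order to reduce the whole computation to a single cyclic permutation, by recognising $\L \MM$ as a free commutative monoid.

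First I would show that $\L\MM$ is the free commutative monoid on $\bigsqcup_{k \ge 1} \B \ZZ/k$, where each generator $\B\ZZ/k$ sits inside $\L \B\Sm[k]$ as the component of a single $k$-cycle (whose centraliser is $\ZZ/k$). Indeed, by \cref{lem:decomposition-of-wreath}, the component of an element $\sigma$ with cycle type $(\Nk)$ is
\[
\B \Csigma \simeq \prod_{k \ge 1} \B\bigl( \ZZ/k \wr \Sm[\Nk] \bigr),
\]
which is precisely the $(\Nk)$-homogeneous piece of the free commutative monoid on $\bigsqcup_{k \ge 1} \B \ZZ/k$. Consequently, any map of commutative monoids out of $\L\MM$ is determined by its restrictions to these generators.

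Next, since $\T W \colon \MM \to \cD$ is symmetric monoidal, the induced internal left adjoint $\cD[\MM] \to \cD$ is $\cD$-linear symmetric monoidal, and applying $\THH_\cD$ lands in $\CAlg(\cD)$. This shows that $\chi_{\T W} \colon \L \MM \to \ounit_\cD$ is a map of commutative monoids, and hence by the previous paragraph it is determined by its restrictions to the $\B \ZZ/k$. Multiplicativity under the free commutative monoid structure then forces the general value $(\dim W)^{\numcyc}$ at a $\sigma$ of cycle type $(\Nk)$, with the $\Sm[\Nk]$-action on like-length cycles automatically encoded as the symmetric power of the generator contributions --- matching \cref{def:stabilizer-action}, provided the factor for each individual $k$-cycle is correctly identified.

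The crux, and what I expect to be the main obstacle, is therefore the computation at a single generator $\B \ZZ/k \subseteq \L \B \Sm[k]$: one must show that the character value at a $k$-cycle, equipped with the action of its own centraliser $\langle \tau \rangle \cong \ZZ/k$, equals $\dim W$ with the $\TT$-action restricted along $\ZZ/k \hookrightarrow \TT$. Unwinding the cyclic bar description of $\THH$ used in the proof of \cref{lem:equivariant-trace}, the relevant composite $\B \ZZ/k \to \L\B\Sm[k] \to \THH_{\spc}(\PSh(\cD)) \to \ounit_\cD$ is induced by cyclically rotating the $k$-fold composition $\id_W \circ \cdots \circ \id_W$, while the $\TT$-action on $\dim W$ comes from the cyclic structure on a single copy of $\id_W$. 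The compatibility then reduces to the standard identification of the $\ZZ/k$-rotation of $k$ tensor copies with the restriction along $\ZZ/k \hookrightarrow \TT$ of the canonical $\TT$-rotation on $\THH_\cD(\cD)$. This is precisely the kind of equivariant-trace compatibility developed in \cite{Hoyois-Scherozke-Sibilla-2017-traces, Carmeli-Cnossen-Ramzi-Yanovski-2022-characters}, which I would invoke directly rather than reprove.
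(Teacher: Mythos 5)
Your reduction is sound and genuinely different from the paper's argument. You first identify $\L\MM$ (the groupoid of finite sets with an automorphism, i.e.\ finite $\ZZ$-sets) as the free commutative monoid on $\bigsqcup_{k\ge 1}\B\ZZ/k$, which is correct: cycle decomposition exhibits every finite $\ZZ$-set uniquely as a disjoint union of transitive ones, matching the decomposition $\B\Csigma \simeq \prod_k \B(\ZZ/k \wr \Sm[\Nk])$ of \cref{lem:decomposition-of-wreath}. Since $\T W$ is symmetric monoidal and induces an internal left adjoint $\cD[\MM]\to\cD$, the character is indeed a map of commutative monoids, so it is determined by its restriction to the generators, and the multiplicative extension automatically produces $(\dim W)^{\numcyc}$ with the wreath-product action of \cref{def:stabilizer-action}. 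The paper instead bypasses all of this bookkeeping by passing to the universal case: by Harpaz's $1$-dimensional cobordism hypothesis \cite{Harpaz-2012-cobordism}, it suffices to compute in $\Bordn[1]$, where the trace of a permutation acting on $\Tm(+)$ is literally a disjoint union of circles, one per cycle, with $\Ck$ rotating each circle and $\Sm[\Nk]$ permuting like circles; the entire lemma, including the single-cycle case, is read off from that picture.

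The gap is at what you yourself call the crux: the claim that at a single $k$-cycle the character equals $\dim W$ with the $\TT$-action restricted along $\ZZ/k\into\TT$, i.e.\ the $\Ck$-equivariant identification of the trace of the cyclic permutation on $W^{\otimes k}$ (with the residual action of the centralizer $\Ck\subseteq\Sm[k]$) with $\dim(W)|_{\Ck}$. This is not a formal consequence of the results you cite: \cite{Hoyois-Scherozke-Sibilla-2017-traces} and \cite{Carmeli-Cnossen-Ramzi-Yanovski-2022-characters} give the $\TT$-equivariance of the trace map and the induced character formula, but not this subdivision-type statement comparing the rotation of $k$ tensor copies with the restricted rotation on a single copy; it is precisely the nontrivial content of the lemma, as evidenced by the fact that the paper supplies it via the cobordism hypothesis and that Ramzi's independent proof \cite[Lemma~4.7]{Ramzi-2025-endomorphisms-of-THH} had to be given separately rather than quoted from those references. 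To close your argument you would either have to carry out the cyclic-bar/edgewise-subdivision computation you gesture at (identifying the composite $\B\ZZ/k \to \L\B\Sm[k] \to \THH$ with the canonical $\Ck\subseteq\TT$ structure on the trace of $\id_W$), or invoke the cobordism hypothesis for the universal dualizable object as the paper does --- at which point the single-cycle case and the general case come for the same price.
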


        \begin{proof}
            This can be proved in the universal case: The cobordism hypothesis, as conjectured by Baez--Dolan \cite{Baez-Dolan-1995-cobordism}, suggests in particular that the universal symmetric monoidal category with a dualizable object, is the category of framed cobordisms $\Bordn[1]$. In \cite{Lurie-2008-cobordism}, Lurie sketches a proof of the cobordism hypothesis for $n$-dualizable objects in symmetric monoidal $n$-categories. In \cite{Harpaz-2012-cobordism}, Harpaz proves completely the case for $n = 1$. That is, the evaluation at $+$ (the point with a positive framing) induces an isomorphism
            \begin{equation*}
                \Map^{\otimes}(\Bordn[1], \cD) \isoto \cD\dblspace.
            \end{equation*}
            
            In $\Bordn[1]$, the category of framed points and  framed $1$-dimensional bordisms between them as morphisms, the universal dualizable object is $+$. Its dual is $-$ (the point with a negative framing). $\Tm(+)$ is a collection of $\degree$ positively-framed points and $\Sm$ acts by permuting them. 
            The trace of $\sigma \in \Sm$ consists of one circle for every cycle in $\sigma$ (see \cref{fig:trace-of-(123)(45)} for an example). It is now clear that $\Sn[\Nk]$ acts by permuting the circles of the same length and that the $\ZZ/k$-action on each cycle is obtained by rotating the circle.
            \tikzset{every picture/.style={line width=0.75pt}} 

            \begin{figure}[ht]
                \centering
                \includegraphics[width=0.4\linewidth]{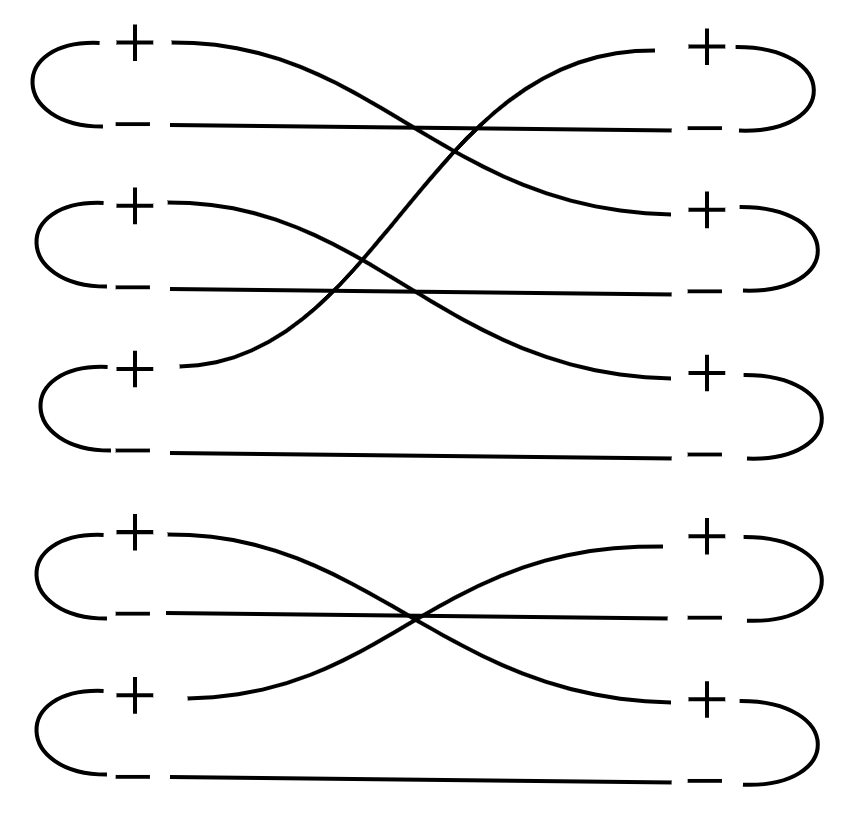}
                \caption{The trace of $(1,2,3)(4,5) \in \Sigma_5$ acting on $\Tm[5](+)$}
                \label{fig:trace-of-(123)(45)}
            \end{figure}
        \end{proof}

        Note that this action only depends on the restriction of the $\TT$-action to any finite subgroup, i.e. on the image of $\dim(V)$ under the restriction map $(\ounit_{\cD})^{\B\TT} \to (\ounit_{\cD})^{\vee_k \B\Ck}$.

        We now simply get the following lemma, which was independently proven in \cite[Lemma~4.7]{Ramzi-2025-endomorphisms-of-THH}.

        \begin{theorem}\label{thm:braiding-depends-only-on-dim}
            Let $\cD \in \CAlg(\PrL)$ and $V \in \cD\dbl$.
            Then the braiding character of $V$ depends only on $\dim V \in (\ounit_\cC)^{\vee_k \B\Ck}$.
        \end{theorem}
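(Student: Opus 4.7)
The plan is to assemble the pieces already built up in the section, since the theorem is essentially a formal consequence of Corollary \ref{cor:braiding-character-factors-constant} and Lemma \ref{lem:character-of-Tm}.

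First I would reduce from the braiding character to the usual character of the braiding. By Corollary \ref{cor:braiding-character-factors-constant}, the braiding character $\mscr{X}_{\T V}$ factors in $\CMon$ as
\begin{equation*}
    \L\MM \xrightarrow{(\chi_{\T V},\, \deg)} \ounit_{\cD} \times \ZZ \to \ounit_{\cD}[\ZZ].
\end{equation*}
Since the degree map $\deg \colon \L\MM \to \ZZ$ depends only on the combinatorics of $\MM$ and not on $V$, it suffices to show that the ordinary character $\chi_{\T V} \colon \L\MM \to \ounit_{\cD}$ is determined by $\dim V \in \End(\ounit_{\cD})^{\vee_k \B\Ck}$.

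Next I would invoke Lemma \ref{lem:character-of-Tm}, which already computes $\chi_{\T V}$ explicitly: on the component $\B\Csigma \subseteq \L\B\Sm$ corresponding to a conjugacy class $[\sigma] \in \Sm/\conj$, the character takes the value $(\dim V)^{\numcyc}$ with the $\Csigma$-action described in Definition \ref{def:stabilizer-action}. The key observation is that this action is constructed entirely out of two ingredients: (i) the restriction of the $\TT$-action on $\dim V$ along the inclusions $\ZZ/k \hookrightarrow \TT$ for each cycle length $k$ appearing in $\sigma$, and (ii) the tautological permutation action of $\Sm[\Nk]$ on tensor factors. Neither ingredient sees the full $\B\TT$-action; both are determined by the image of $\dim V$ in $\End(\ounit_{\cD})^{\vee_k \B\Ck}$ under the restriction map $\End(\ounit_{\cD})^{\B\TT} \to \End(\ounit_{\cD})^{\vee_k \B\Ck}$.

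Putting these together: the value of $\chi_{\T V}$ on each component of $\L\MM$ and hence the whole map of commutative monoids $\chi_{\T V}$ is a function only of $\dim V|_{\vee_k \B\Ck}$. Combined with the first step, the same is true of $\mscr{X}_{\T V}$, proving the theorem. There is no real obstacle here beyond carefully citing the two prior results; the only subtle point is making sure the construction in Definition \ref{def:stabilizer-action} is genuinely natural in $(\dim V, \mrm{action})$, so that two dualizable objects with identified $\vee_k \B\Ck$-equivariant dimensions produce identified $\Csigma$-equivariant elements $(\dim V)^{\numcyc}$. This naturality is immediate from the construction, which only ever uses the cyclic actions and symmetric group permutations, so the argument closes.
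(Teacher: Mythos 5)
Your proof is correct and follows essentially the same route as the paper: the paper's own proof is precisely the two-step citation of \cref{lem:character-of-Tm} (the character of $\T V$ is $(\dim V)^{\numcyc}$ with an action built only from the $\Ck \subseteq \TT$ restrictions and permutations) followed by \cref{cor:braiding-character-factors-constant}. Your added remarks about the degree map and naturality of the $\Csigma$-action are the same observations the paper records informally right after \cref{lem:character-of-Tm}.
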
 
        
        \begin{proof}
            From \cref{lem:character-of-Tm}, $\chi_{\T W}$ depends only on $\dim W$. The result now follows by \cref{cor:braiding-character-factors-constant}.
        \end{proof}

        By the identification of the $\TT$-action on the dimension with the $\TT$-transfer map \cref{prop:dim-is-transfer}, when $V$ is invertible, we can rephrase this as follows

        \begin{corollary}\label{cor:braiding-depends-only-on-dim-as-transfer}
            Let $\cD \in \CAlg(\PrL)$ and $V \in \cD\units$. Then the braiding character of $V$ depends only on the map
            \begin{equation*}
                \Sigma \SS [\vee_k \B\Ck] \to \Sigma \SS[\B\TT] \xto{\Tr} \SS \xto{\T V} \cD\units. 
            \end{equation*}
        \end{corollary}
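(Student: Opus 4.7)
The plan is to deduce this corollary directly from Theorem \ref{thm:braiding-depends-only-on-dim} together with Proposition \ref{prop:dim-is-transfer}. The first asserts that $\mscr{X}_{\T V}$ depends only on $\dim V \in (\ounit_{\cD})^{\vee_k \B\Ck}$; the second identifies the representing map of $\dim$ on invertibles with the $\TT$-transfer. So it should suffice to translate $\dim V$ into the concrete composition of connective spectra displayed in the statement.

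First I will observe that since $V \in \cD\units$, the element $\dim V$ is itself invertible, so lies in $(\ounit_{\cD}\units)^{\vee_k \B\Ck}$. The connective spectrum $\Sigma \SS[\vee_k \B\Ck]$ is connected, since $\vee_k \B\Ck$ is a wedge of connected pointed spaces, so any map into $\cD\units$ factors through the connected cover $\tau_{\ge 1}\cD\units \simeq \Sigma \ounit_{\cD}\units$. Applying the loops--suspension adjunction therefore provides a natural identification
\begin{equation*}
    \Map_{\cnSp}(\Sigma \SS[\vee_k \B\Ck], \cD\units) \simeq (\ounit_{\cD}\units)^{\vee_k \B\Ck},
\end{equation*}
under which a map $\alpha \colon \Sigma \SS[\vee_k \B\Ck] \to \cD\units$ corresponds to a $\vee_k \B\Ck$-equivariant element of $\ounit_{\cD}\units$.

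Next, using Proposition \ref{prop:dim-is-transfer}, the $\TT$-equivariant element $\dim V \in (\ounit_{\cD}\units)^{\B\TT}$ is represented in the sense of Yoneda by the composition $\Sigma \SS[\B\TT] \xto{\Tr} \SS \xto{\T V} \cD\units$, obtained by post-composing the universal representing map $\dim^* = \Tr$ with the map $\T V$ choosing $V$. Restricting the $\TT$-action along $\vee_k \B\Ck \to \B\TT$ corresponds to pre-composition with $\Sigma \SS[\vee_k \B\Ck] \to \Sigma \SS[\B\TT]$, producing precisely the map appearing in the statement. Combining this identification with Theorem \ref{thm:braiding-depends-only-on-dim} yields the corollary.

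I do not expect any step of this argument to be genuinely difficult; the only point that could cause confusion is the fact that the composition in the statement has target $\cD\units$ rather than $\ounit_{\cD}\units$, but the connectivity argument above shows these two are equivalent data via adjunction. All the substantive input comes from the two results cited.
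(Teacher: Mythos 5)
Your proposal is correct and is essentially the paper's own argument: the corollary is stated there as a direct rephrasing of \cref{thm:braiding-depends-only-on-dim} via \cref{prop:dim-is-transfer}, with no further proof given, and your spelled-out translation (invertibility of $\dim V$, connectivity of $\Sigma\SS[\vee_k \B\Ck]$ forcing factorization through $\Sigma\ounit_{\cD}\units$, and restriction along $\vee_k\B\Ck \to \B\TT$ as precomposition with the representing transfer map) is exactly the intended reasoning. The only cosmetic point is the harmless conflation of the spectrum-level cotensor $(\ounit_{\cD}\units)^{\vee_k\B\Ck}$ with the corresponding mapping space, which does not affect the argument.
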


\section{Braiding and graded categories}
\label{sec:graded-braiding}

    In \cref{sec:twisted-graded-categories}, we constructed several symmetric monoidal structures that agree $\EE_1$-monoidally. In \cref{sec:braiding}, we discussed the braiding and the braiding characters as invariants of such structures. We now specialize and study these invariants in the case of twisted graded categories.
    
    We begin in \cref{subsec:graded-braiding} by showing that the braiding of $\ounit_\cC\shift{a}$ for $a \in A$ is almost a complete invariant of twisted graded categories: it recovers the symmetric monoidal structure, but not the $\EE_1$-isomorphism with the Day convolution structure. In the universal case $A = \ZZ$, it recovers both.
    We also identify when two twisted graded categories are the same, forgetting the $\EE_1$-trivialization, in term of the braiding.
    In the special case where $\Mod_{\cC}$ is $(\chrHeight+1)$-connected and that $\ounit_{\cC}$ is truncated, we show that the braiding of $\ounit_{\cC}\shift{a}$ depends only on a map of spaces $\B\Sinfty \to \Sigma \ounit_{\cC}\units$.
    Then, in \cref{subsec:graded-braiding-character}, we study the braiding character and show that, for $\ZZ$-graded categories and a dualizable object $V\in \cC\dbl$, the braiding character of $V\shift{1}$ agrees with the image under $\THH_\cC$ of the braiding of $V\shift{1}$. 
    We also examine the behavior of this character in graded categories for dualizable objects with finite support.
    Finally, in \cref{subsec:exterior-algebras}, we analyze the levelwise orbits of the braiding, yielding a graded-commutative algebra that we call the twisted exterior algebra. Using the previous results, we derive a generating function comparing the dimensions of twisted exterior algebras for different maps $\ZZ\Enull\to \cC\units$.

    \subsection{Braiding in graded categories}
    \label{subsec:graded-braiding}
        Let $A \in \Ab$ and $\Zchar \colon A \to \Sigma \cC\units$ with an $\EE_1$-nullhomotopy. If the braiding of $\ounit_{\cC}\shift{a}$ is given to us for all $a\in A$, it is simple to compute the braiding of any object in $\Gr^{\Zchar}_A \cC$ with finite support. 
        We now see that these braidings actually determine $\Zchar$, forgetting the $\EE_1$-nullhomotopy, and in particular the whole symmetric monoidal structure.

        For any $a\in A$, we define a new map
        \begin{equation*}
            \Zchar \circ a \colon \ZZ \xto{a} A \xto{\Zchar} \Sigma \cC\units,
        \end{equation*}
        which admits a canonical $\EE_1$-nullhomotopy.

        As $\ounit_{\cC}\shift{a}$ and $\ounit_{\cC}\shift{1}$ are invertible in $\Gr^{\Zchar}_A \cC$ and $\Gr^{\Zchar \circ a}_{\ZZ} \cC$ respectively, we can think of their braiding as maps $\hchar_{\ounit_{\cC}\shift{a}}, \hchar_{\ounit_{\cC}\shift{1}} \colon \tau_{\ge 1} \SS \to \Sigma \ounit_{\cC}\units$.
        \begin{lemma}\label{lem:braiding-of-A-determined-by-Z}
            The braiding of $\ounit_{\cC}\shift{1}$ in $\Gr^{\Zchar \circ a}_\ZZ \cC$ agrees with the braiding of $\ounit_{\cC}\shift{a}$ in $\Gr^{\Zchar}_A \cC$.
        \end{lemma}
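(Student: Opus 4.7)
The plan is to produce a $\cC$-linear symmetric monoidal comparison functor from $\Gr^{\Zchar \circ a}_\ZZ \cC$ to $\Gr^{\Zchar}_A \cC$ that sends $\ounit_{\cC}\shift{1}$ to $\ounit_{\cC}\shift{a}$, and to exploit the fact that such a functor necessarily acts as the identity on the Picard spectrum of the common unit $\ounit_{\cC}$.

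First, I would observe that the group homomorphism $a \colon \ZZ \to A$ fits into a commutative triangle of $\EE_\infty$-maps to $\Sigma \cC\units$, and that the $\EE_1$-nullhomotopy of $\Zchar \circ a$ may be taken to be the one obtained by pulling back the chosen $\EE_1$-nullhomotopy of $\Zchar$ along $a$. By functoriality of the Thom construction (see \cref{def:twisted-Fun} and \cref{prop:universal-property-thom}), this yields a $\cC$-linear symmetric monoidal functor
\[
\Phi \colon \Gr^{\Zchar \circ a}_{\ZZ} \cC \longrightarrow \Gr^{\Zchar}_{A} \cC
\]
and, by the description of $(-)\shift{-}$ via the Yoneda embedding in \cref{def:shift-and-yoneda}, we have $\Phi(\ounit_{\cC}\shift{1}) \simeq \ounit_{\cC}\shift{a}$.

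Second, since $\Phi$ is symmetric monoidal and the braiding $\T X$ is characterized as the unique symmetric monoidal extension $\MM \to \cD$ of $X$, we obtain a canonical equivalence
\[
\Phi \circ \T \ounit_{\cC}\shift{1} \simeq \T \ounit_{\cC}\shift{a} \qin \Fun^{\otimes}(\MM, \Gr^{\Zchar}_A \cC).
\]
Both $\ounit_{\cC}\shift{1}$ and $\ounit_{\cC}\shift{a}$ are invertible, so passing to connected covers gives two maps of connective spectra $\hchar_{\ounit_{\cC}\shift{1}}$ and $\hchar_{\ounit_{\cC}\shift{a}}$ landing in $\Sigma \ounit_{\Gr^{\Zchar \circ a}_\ZZ \cC}\units$ and $\Sigma \ounit_{\Gr^{\Zchar}_A \cC}\units$ respectively.

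Finally, since $\Phi$ is $\cC$-linear, it preserves the unit of $\cC$; in particular the map of Picard spectra $\Phi\units$ induces the identity on $\ounit_{\cC}\units$ after identifying $\Omega (\Gr^{(-)}_{(-)} \cC)\units \simeq \ounit_{\cC}\units$ through the canonical map from $\cC$. Thus under these identifications the map $\Sigma \Phi\units$ intertwines the two braiding maps, and we conclude $\hchar_{\ounit_{\cC}\shift{1}} \simeq \hchar_{\ounit_{\cC}\shift{a}}$ as maps $\tau_{\ge 1} \SS \to \Sigma \ounit_{\cC}\units$. The only subtle point is to make sure the identifications of units on the two sides are tracked consistently through the Thom construction; the rest of the argument is formal naturality of $\T$ with respect to symmetric monoidal functors.
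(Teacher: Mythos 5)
Your proposal is correct and follows essentially the same route as the paper: the paper also constructs the symmetric monoidal functor $a_! \colon \Gr^{\Zchar\circ a}_{\ZZ}\cC \to \Gr^{\Zchar}_A\cC$ from the map of spectra over $\Sigma\cC\units$, notes it sends $\ounit_{\cC}\shift{1}$ to $\ounit_{\cC}\shift{a}$, places both braidings in a commutative triangle of symmetric monoidal maps out of $\SS$, and concludes by taking connected covers. Your explicit remark that $\cC$-linearity forces the identification $\Omega(\Gr^{(-)}_{(-)}\cC)\units \simeq \ounit_{\cC}\units$ on both sides is exactly the (implicit) point that makes the paper's final step work.
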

        
        \begin{proof}
            The map $a \colon \ZZ \to A$ is a map of spectra over $\Sigma \cC\units$ and therefore defines a symmetric monoidal map $a_! \colon \Gr^{\Zchar\circ a}_{\ZZ} \cC \to \Gr^{\Zchar}_A \cC$, sending $\ounit_{\cC}\shift{1}$ to $\ounit_{\cC}\shift{a}$. We therefore have a commutative diagram of symmetric monoidal maps
            \begin{equation*}
                \begin{tikzcd}
                    && {(\Gr^{\Zchar \circ a}_{\ZZ} \cC))\units} \\
                    \SS && {(\Gr^{\Zchar}_A \cC)\units.}
                    \arrow["{a_!}", from=1-3, to=2-3]
                    \arrow["{\T\ounit_{\cC}\shift{1}}", from=2-1, to=1-3]
                    \arrow["{\T\ounit_{\cC}\shift{a}}"', from=2-1, to=2-3]
                \end{tikzcd}
            \end{equation*}
            The claim follows by taking connected covers.
        \end{proof}

        We may therefore direct our attention to the case $A = \ZZ$.

        \begin{lemma}\label{lem:maps-from-Z-are-from-truncated-sphere}
            Let $X$ be a connective spectrum. There is a bijection
            \begin{equation*}
                \pi_0 \Map(\ZZ, \Sigma^2 X) \simeq \pi_0 \Map(\tau_{\ge 1} \SS, \Sigma X).
            \end{equation*}
        \end{lemma}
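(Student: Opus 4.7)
The plan is to route the comparison through the space $\Map\Enull(\ZZ, \Sigma^2 X)$ and identify it with each side separately. The key input is Corollary~\ref{cor:E1-null}, which combined with the earlier computation $\ZZ\Enull \simeq \tau_{\ge 1}\SS$ gives
\[
\Map\Enull(\ZZ, \Sigma^2 X) \simeq \Map(\tau_{\ge 1}\SS, \Omega \Sigma^2 X) \simeq \Map(\tau_{\ge 1}\SS, \Sigma X),
\]
where the last isomorphism uses that $X$ (hence $\Sigma X$) is connective, so $\Omega \Sigma^2 X \simeq \Sigma X$ as connective spectra. Passing to $\pi_0$ yields one half of the desired bijection.

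For the other half, I will use the defining fiber sequence
\[
\Map\Enull(\ZZ, \Sigma^2 X) \to \Map_{\cnSp}(\ZZ, \Sigma^2 X) \to \Map_{\EE_1}(\ZZ, \Sigma^2 X)
\]
together with the observation that $\ZZ$ is the free $\EE_1$-group on one generator. Concretely, using the $\B \dashv \Omega$ adjunction between grouplike $\EE_1$-monoids and pointed connected spaces, one has
\[
\Map_{\EE_1}(\ZZ, \Sigma^2 X) \simeq \Map_*(S^1, \B\,\Omega^{\infty} \Sigma^2 X) \simeq \Omega^{\infty} \Sigma^2 X.
\]

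Now I take the long exact sequence of homotopy groups of the fiber sequence. Its relevant fragment reads
\[
\pi_1 \Omega^{\infty}\Sigma^2 X \;\longrightarrow\; \pi_0 \Map\Enull(\ZZ, \Sigma^2 X) \;\longrightarrow\; \pi_0 \Map_{\cnSp}(\ZZ, \Sigma^2 X) \;\longrightarrow\; \pi_0 \Omega^{\infty} \Sigma^2 X.
\]
The outer two groups are $\pi_{-1} X$ and $\pi_{-2} X$, both of which vanish because $X$ is connective. Hence the middle map is an isomorphism, and concatenating with the identification from the first paragraph gives
\[
\pi_0 \Map(\ZZ, \Sigma^2 X) \;\cong\; \pi_0 \Map\Enull(\ZZ, \Sigma^2 X) \;\cong\; \pi_0 \Map(\tau_{\ge 1}\SS, \Sigma X),
\]
as required. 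There is no real obstacle here — the only thing to be careful about is the identification $\Map_{\EE_1}(\ZZ, Y) \simeq \Omega^\infty Y$ for $Y$ a connective spectrum, which encodes the fact that forgetting from $\EE_\infty$-groups to $\EE_1$-groups represents the underlying space; once this is in hand, the argument is a bookkeeping exercise in the long exact sequence.
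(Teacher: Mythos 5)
Your proof is correct, but it takes a somewhat different route from the paper's. The paper argues directly in spectra: rotating the Postnikov cofiber sequence $\tau_{\ge 1}\SS \to \SS \to \ZZ$ to $\Sigma^{-1}\ZZ \to \tau_{\ge 1}\SS \to \SS$ and applying $\hom(-,\Sigma X)$ gives a fiber sequence whose long exact sequence, together with the vanishing of $\pi_0(\Sigma X)=\pi_{-1}X$ and $\pi_{-1}(\Sigma X)=\pi_{-2}X$, yields the bijection in three lines. You instead interpolate through $\Map\Enull(\ZZ,\Sigma^2 X)$: one leg uses \cref{cor:E1-null} together with $\ZZ\Enull\simeq\tau_{\ge 1}\SS$ and $\Omega\Sigma^2X\simeq\Sigma X$; the other uses the defining fiber sequence and the fact that $\ZZ$ is the free grouplike $\EE_1$-monoid on a point, so that $\Map_{\EE_1}(\ZZ,\Sigma^2X)\simeq\Omega^{\infty}\Sigma^2X$, whose $\pi_1$ and $\pi_0$ are again $\pi_{-1}X$ and $\pi_{-2}X$. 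So the essential vanishing input is identical, and indeed unwinding the (asserted but not proved in the paper) identification $\ZZ\Enull\simeq\tau_{\ge 1}\SS$ amounts to the very same cofiber sequence, since $\Sigma^{-1}\redSS[\B\ZZ]\simeq\SS$ and $\cofib(\SS\to\ZZ)\simeq\Sigma\tau_{\ge 1}\SS$. What your packaging buys is that the bijection is exhibited explicitly as \emph{forgetting the $\EE_1$-nullhomotopy}, which is precisely how the lemma is exploited in the corollary that follows it, so you in effect prove that corollary simultaneously; the cost is extra machinery and one point you should make explicit, namely that injectivity on $\pi_0$ in your long exact sequence of the fibration is justified because $\pi_1$ of the base vanishes (equivalently, all three mapping spaces are grouplike, so the sequence can be treated spectrally). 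Neither caveat affects correctness.
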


        \begin{proof}
            Consider the exact sequence of spectra
            \begin{equation*}
                \Sigma^{-1} \ZZ \to \tau_{\ge 1} \SS \to \SS
            \end{equation*}
            which induces the exact sequence of spectra
            \begin{equation*}
                 \hom(\SS, \Sigma X) \to \hom(\tau_{\ge 1} \SS, \Sigma X) \to \hom(\Sigma^{-1} \ZZ, \Sigma X).
            \end{equation*}
            The result now follows as $\pi_0$ and $\pi_{-1}$ of $\Sigma X = \hom(\SS, \Sigma X)$ vanish.
        \end{proof}
        
        \begin{corollary}
            Let $\cC \in \CAlg(\PrL)$. Then composing with $\Sigma^2\ounit_{\cC}\units \to \Sigma \cC\units$ defines a bijection
            \begin{equation*}
                \pi_0 \Map_{\cnSp}(\ZZ, \Sigma^2 \ounit_{\cC}\units) \simeq \pi_0 \Map\Enull(\ZZ, \Sigma \cC\units).
            \end{equation*}
            That is, a map $\ZZ \to \Sigma \cC\units$ with an $\EE_1$-nullhomotopy is equivalent to a map $\ZZ \to \Sigma^2 \ounit_{\cC}\units$.
        \end{corollary}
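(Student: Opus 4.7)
\medskip
\noindent\textbf{Proof proposal.}
The claim is essentially a bookkeeping corollary that chains together \cref{cor:E1-null}, the explicit computation of $\ZZ\Enull$, and \cref{lem:maps-from-Z-are-from-truncated-sphere}. The plan is to build the bijection as a zig-zag of natural identifications, then check that the composite is induced by post-composition with the connected-cover map $\Sigma^2\ounit_{\cC}\units \to \Sigma\cC\units$.

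First, I would apply \cref{cor:E1-null} to rewrite
\[
    \Map\Enull(\ZZ, \Sigma\cC\units) \simeq \Map_{\cnSp}(\ZZ\Enull, \Omega\Sigma\cC\units) \simeq \Map_{\cnSp}(\ZZ\Enull, \cC\units),
\]
and invoke the computation $\ZZ\Enull \simeq \tau_{\ge 1}\SS$ recorded in the introduction. (This follows from the defining cofiber sequence $\Sigma^{-1}\redSS[\B\ZZ] \to \ZZ$: since $\redSS[\B\ZZ] = \redSS[S^1] = \SS$, the map $\SS \to \ZZ$ is the unit, and its cofiber is $\Sigma\tau_{\ge 1}\SS$.) Second, since $\tau_{\ge 1}\SS$ is $1$-connective, any map from it to $\cC\units$ factors uniquely through the $1$-connected cover $\tau_{\ge 1}\cC\units \to \cC\units$. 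The identity $\Omega\cC\units \simeq \ounit_{\cC}\units$ gives $\tau_{\ge 1}\cC\units \simeq \Sigma\ounit_{\cC}\units$, and this cover is obtained by suspending the connected-cover map $\ounit_{\cC}\units \simeq \Omega\cC\units \to \cC\units$. Thus
\[
    \Map_{\cnSp}(\tau_{\ge 1}\SS, \cC\units) \simeq \Map_{\cnSp}(\tau_{\ge 1}\SS, \Sigma\ounit_{\cC}\units).
\]

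Third, I would invoke \cref{lem:maps-from-Z-are-from-truncated-sphere} with $X = \ounit_{\cC}\units$ to get
\[
    \pi_0\Map_{\cnSp}(\tau_{\ge 1}\SS, \Sigma\ounit_{\cC}\units) \simeq \pi_0\Map_{\cnSp}(\ZZ, \Sigma^2\ounit_{\cC}\units).
\]
Composing the three identifications gives the desired bijection. To confirm that the resulting bijection is induced by post-composition with $\Sigma^2\ounit_{\cC}\units \to \Sigma\cC\units$, I would chase naturality: each of the three steps is natural in the target spectrum (the first by functoriality of $(-)\Enull$, the second by the universal property of the connective cover, and the third by the fiber sequence used in the proof of \cref{lem:maps-from-Z-are-from-truncated-sphere}), and under these naturalities the comparison map $\Sigma^2\ounit_{\cC}\units \to \Sigma\cC\units$ is carried to the identity on $\Map_{\cnSp}(\ZZ, \Sigma^2\ounit_{\cC}\units)$. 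The only mild obstacle is this last verification, which is purely a matter of unwinding the definitions rather than proving new content.
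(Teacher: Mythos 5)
Your proposal is correct and follows essentially the same route as the paper: identify $\ZZ\Enull \simeq \tau_{\ge 1}\SS$, use the connectedness of $\tau_{\ge 1}\SS$ to factor maps through the connected cover $\Sigma\ounit_{\cC}\units \to \cC\units$, and then apply the lemma comparing $\pi_0\Map(\ZZ,\Sigma^2 X)$ with $\pi_0\Map(\tau_{\ge 1}\SS,\Sigma X)$. The only blemish is the harmless slip $\redSS[S^1]\simeq \SS$ (it is $\Sigma\SS$, so that $\Sigma^{-1}\redSS[\B\ZZ]\simeq \SS$), which does not affect your conclusion $\ZZ\Enull \simeq \tau_{\ge 1}\SS$.
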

        \begin{proof}
            Since $\ZZ\Enull = \tau_{\ge 1} \SS$, 
            \begin{equation*}
                \Map\Enull(\ZZ, \Sigma \cC\units) \simeq \Map(\tau_{\ge 1} \SS, \cC\units).
            \end{equation*}
            Since $\tau_{\ge 1} \SS$ is connected, $\Map(\tau_{\ge 1} \SS, \cC\units) \simeq \Map(\tau_{\ge 1} \SS, \tau_{\ge 1} \cC\units) \simeq \Map(\tau_{\ge 1} \SS, \Sigma \ounit_{\cC}\units)$.
            Finally, by \cref{lem:maps-from-Z-are-from-truncated-sphere}, 
            \begin{equation*}
                \pi_0 \Map\Enull(\ZZ, \Sigma \cC\units) \simeq \pi_0 \Map(\tau_{\ge 1} \SS, \Sigma \ounit_{\cC}\units) \simeq \pi_0\Map(\ZZ, \Sigma^2 \ounit_{\cC}\units).
            \end{equation*}
        \end{proof}

        \begin{corollary}\label{cor:E1-nullhomotopy-and-surjection}
            The map $\pi_0\Map(\ZZ, \Sigma^2 \ounit_{\cC}\units) \to \pi_0\Map(\ZZ, \Sigma \cC\units)$ is surjective.
        \end{corollary}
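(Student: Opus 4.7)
The plan is to translate the statement, via the preceding corollary, into a vanishing claim. The preceding corollary identifies $\pi_0\Map(\ZZ, \Sigma^2 \ounit_{\cC}\units)$ with $\pi_0\Map\Enull(\ZZ, \Sigma\cC\units)$ by composition with the connected cover map $\Sigma^2 \ounit_{\cC}\units \to \Sigma\cC\units$. Under this identification, the map in question becomes the forgetful map $\pi_0 \Map\Enull(\ZZ, \Sigma\cC\units) \to \pi_0 \Map_{\cnSp}(\ZZ, \Sigma\cC\units)$ that discards the $\EE_1$-nullhomotopy, and the task reduces to proving that this forgetful map is surjective.

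For this I would invoke the defining fiber sequence
\begin{equation*}
    \Map\Enull(\ZZ, \Sigma\cC\units) \to \Map_{\cnSp}(\ZZ, \Sigma\cC\units) \to \Map_{\EE_1}(\ZZ, \Sigma\cC\units),
\end{equation*}
with fiber taken over the basepoint given by the trivial $\EE_1$-map. The long exact sequence of homotopy groups of a fiber sequence reduces the surjectivity in question to the vanishing $\pi_0 \Map_{\EE_1}(\ZZ, \Sigma\cC\units) = 0$.

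To establish this vanishing, I would use that $\ZZ$, viewed as a grouplike $\EE_1$-monoid, is the free such object on one generator: it is the group completion of the free $\EE_1$-monoid $\bigsqcup_{n \ge 0} \pt$ on a single point. The universal property of the free $\EE_1$-monoid combined with the group-completion/forgetful adjunction yields $\Map_{\EE_1}(\ZZ, Y) \simeq Y$ for every grouplike $\EE_1$-monoid $Y$. Applied to $Y = \Omega^{\infty} \Sigma\cC\units$, this gives $\pi_0 \Map_{\EE_1}(\ZZ, \Sigma\cC\units) \cong \pi_0 \Sigma\cC\units = \pi_{-1}\cC\units$, which vanishes because $\cC\units$ is a connective spectrum. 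There is no substantial obstacle here; the only step that deserves a sentence of care is the identification $\Map_{\EE_1}(\ZZ, Y) \simeq Y$, which formalizes the intuition that an $\EE_1$-map out of $\ZZ$ is specified, with no further coherence data, by the image of $1$.
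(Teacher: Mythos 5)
Your proof is correct and takes essentially the same route as the paper: the paper likewise reduces the claim to showing that every $\EE_1$-map $\ZZ \to \Sigma \cC\units$ is nullhomotopic, which it sees by delooping to a pointed map $S^1 \simeq \B\ZZ \to \B\Sigma\cC\units$ into a simply connected space. Your identification $\Map_{\EE_1}(\ZZ, Y) \simeq Y$ via the freeness of $\ZZ$ as a grouplike $\EE_1$-monoid is just a repackaging of that delooping computation, and your vanishing $\pi_0 \Sigma\cC\units = \pi_{-1}\cC\units = 0$ is the same connectivity input as the paper's simple connectivity of $\B\Sigma\cC\units$.
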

        \begin{proof}
            It is enough to show that any map $\ZZ \to \Sigma \cC\units$ is $\EE_1$-nullhomotopic, or equivalently that the pointed map
            \begin{equation*}
                S^1 \simeq \B\ZZ \to \B\Sigma \cC\units
            \end{equation*}
            is null. This is true as $\B\Sigma\cC\units$ is simply connected.
        \end{proof}
        
        \begin{remark}\label{rmrk:kernel-of-forgetting-the-E1-nullhomotopy}
            The exact sequence $\Sigma \ounit_{\cC}\units \to \cC\units \to \Pic(\cC)$ induces an exact sequence
            \begin{equation*}
                \hom(\ZZ, \Sigma^2 \ounit_{\cC}\units) \to \hom(\ZZ, \Sigma\cC\units) \to \hom(\ZZ, \Sigma \Pic(\cC)).
            \end{equation*}
            The long exact sequence of homotopy groups together with \cref{cor:E1-nullhomotopy-and-surjection} imply there is a short exact sequence
            \begin{equation*}
                0 \to \Pic(\cC)/\Pic^{\mrm{str}}(\cC) \to \pi_0\hom(\ZZ, \Sigma^2 \ounit_{\cC}\units) \to \pi_0 \hom(\ZZ, \Sigma \cC\units) \to 0,
            \end{equation*}
            where $\Pic^{\mrm{str}}(\cC) = \pi_0 \hom(\ZZ, \cC\units)$ is the group of strict Picard elements.
            The map
            \begin{equation*}
                \Pic(\cC) / \Pic^{\mrm{str}}(\cC) \to \pi_0 \hom(\ZZ, \Sigma^2 \ounit_{\cC}\units) \simeq \pi_0 \hom(\tau_{\ge 1}\SS, \Sigma \ounit_{\cC}\units)
            \end{equation*}
            sends a Picard element $Z$ to its braiding $\hchar_{Z}$.
        \end{remark}

        \begin{definition}
            Let $\Zchar \colon \ZZ \to \Sigma^2\ounit_{\cC}\units$. Then by \cref{lem:maps-from-Z-are-from-truncated-sphere}, it is equivalent to a map 
            \begin{equation*}
                \hchar_{\Zchar} \colon \tau_{\ge 1}\SS \to \Sigma\ounit_{\cC}\units
            \end{equation*}
            such that the composition $\ZZ \to \Sigma \tau_{\ge 1} \SS \xto{\hchar_{\Zchar}} \Sigma^2\ounit_{\cC}\units$ is $\Zchar$.
        \end{definition}

        \begin{corollary}\label{cor:graded-trivial-iff-picard}
            Let $\Zchar \colon \ZZ \to \Sigma^2 \ounit_{\cC}\units$ be a map of connective spectra. The symmetric monoidal category $\Gr^{\Zchar}_{\ZZ} \cC$ is isomorphic to $\Gr_{\ZZ} \cC$ if and only if there is $Z \in \cC\units$ and an isomorphism $\hchar_{Z} \simeq \hchar_{\Zchar}$.
        \end{corollary}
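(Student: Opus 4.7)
The plan is to combine the short exact sequence of Remark~\ref{rmrk:kernel-of-forgetting-the-E1-nullhomotopy} with the Picard fiber sequence of Lemma~\ref{lem:cofiber-of-pic-Day} to reduce the statement to the assertion that $\Zchar$ is $\EE_\infty$-null. Under the identification $\pi_0\hom(\ZZ,\Sigma^2\ounit_\cC\units)\simeq\pi_0\Map\Enull(\ZZ,\Sigma\cC\units)$ sending the class of $\Zchar$ to $\hchar_\Zchar$, the cited remark identifies the kernel of the forgetful map to $\pi_0\hom(\ZZ,\Sigma\cC\units)\simeq\pi_0\Map_{\EE_\infty}(\ZZ,\Sigma\cC\units)$ with $\Pic(\cC)/\Pic^{\mrm{str}}(\cC)$ via $Z\mapsto\hchar_Z$. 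Consequently, the existence of $Z\in\cC\units$ with $\hchar_Z\simeq\hchar_\Zchar$ is equivalent to $[\Zchar]=0$ in $\pi_0\Map_{\EE_\infty}(\ZZ,\Sigma\cC\units)$, and it will suffice to show that $\Gr^\Zchar_\ZZ\cC\simeq\Gr_\ZZ\cC$ in $\CAlg_\cC(\PrL)$ if and only if $\Zchar$ is $\EE_\infty$-null.

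The $(\Leftarrow)$ direction will follow immediately from the functoriality of the Thom construction: an $\EE_\infty$-nullhomotopy of $\Zchar$ yields a $\cC$-linear symmetric monoidal equivalence $\Gr^\Zchar_\ZZ\cC\simeq\Gr^0_\ZZ\cC=\Gr_\ZZ\cC$. For the harder $(\Rightarrow)$, I will start with such an equivalence $\alpha\colon\Gr^\Zchar_\ZZ\cC\isoto\Gr_\ZZ\cC$ and apply Lemma~\ref{lem:cofiber-of-pic-Day} to both sides, producing fiber sequences of connective spectra
\[
\cC\units\to(\Gr^\Zchar_\ZZ\cC)\units\to\ZZ,\qquad \cC\units\to(\Gr_\ZZ\cC)\units\to\ZZ,
\]
whose rolled connecting maps $\ZZ\to\Sigma\cC\units$ recover $\Zchar$ and $0$ respectively (by the extraction of $\Zchar_\cD$ in the proof of Theorem~\ref{thm:twFun-are-all-lifts}). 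Because $\alpha$ is a morphism in $\CAlg_\cC(\PrL)$, it commutes with the unit maps from $\cC$, so $\alpha\units$ will assemble into a morphism of fiber sequences that is the identity on $\cC\units$ and an induced autoequivalence $f\colon\ZZ\isoto\ZZ$ on the quotients. Extending one step to the right and using commutativity with the connecting maps will give $\Sigma\id_{\cC\units}\circ\Zchar=0\circ f$, hence $\Zchar=0$ in $\pi_0\hom(\ZZ,\Sigma\cC\units)$ as needed.

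The main technical point will be justifying that the connecting map of the Picard fiber sequence of $\Gr^\Zchar_\ZZ\cC$ is $\Zchar$ itself (up to the isomorphism $f$ of $\ZZ$) and not merely some other class representing the same twisted graded category; this will use the fact that the assignment $\cD\mapsto\Zchar_\cD$ in the proof of Theorem~\ref{thm:twFun-are-all-lifts} is inverse to the Thom construction on $\pi_0$, so that in particular $\Zchar_{\Gr^\Zchar_\ZZ\cC}$ and $\Zchar$ agree in $\pi_0\Map\Enull(\ZZ,\Sigma\cC\units)$ and a fortiori as $\EE_\infty$-maps.
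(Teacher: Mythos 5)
Your proof is correct and takes essentially the same route as the paper: both arguments reduce the statement, via the exact sequence of \cref{rmrk:kernel-of-forgetting-the-E1-nullhomotopy}, to the claim that $\Gr^{\Zchar}_{\ZZ}\cC \simeq \Gr_{\ZZ}\cC$ in $\CAlg_{\cC}(\PrL)$ if and only if the composite $\ZZ \to \Sigma^2\ounit_{\cC}\units \to \Sigma\cC\units$ is $\EE_{\infty}$-null. The only difference is that you spell out the \quotes{only if} direction through an explicit morphism of the Picard fiber sequences of \cref{lem:cofiber-of-pic-Day}, together with the identification of the connecting map with $\Zchar$ from the proof of \cref{thm:twFun-are-all-lifts}, a step the paper's proof leaves implicit.
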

        \begin{proof}
            Forgetting the $\EE_1$ isomorphism to $\Gr_{\ZZ} \cC$, the symmetric monoidal structure is determined by the map $\ZZ \xto{\Zchar} \Sigma^2 \ounit_{\cC}\units \to \Sigma \cC\units$. It is the Day symmetric monoidal structure if and only if this map is null. By \cref{rmrk:kernel-of-forgetting-the-E1-nullhomotopy}, the composition is nullhomotopic if and only if $\hchar_{\Zchar} \simeq \hchar_{Z}$ for some $Z \in \cC\units$.
        \end{proof}

        \begin{proposition}\label{prop:braiding-of-1<1>-determines-the-character}
            Let $\Zchar \colon \ZZ \to \Sigma^2 \ounit_{\cC}\units$. Then $\hchar_{\Zchar}$ is the braiding of $\ounit_{\cC}\shift{1}$ in $\Gr^{\Zchar}_\ZZ \cC$.
        \end{proposition}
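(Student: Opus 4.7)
The plan is to compare $\hchar_{\ounit_\cC\shift{1}}$ and $\hchar_\Zchar$ via a diagram chase on the Picard fiber sequence of $\Gr^{\Zchar}_\ZZ \cC$. By \cref{lem:cofiber-of-pic-Day}, together with the construction in the proof of \cref{thm:twFun-are-all-lifts} (which explicitly identifies $\Zchar_{\Gr^{\Zchar}_\ZZ \cC}$ with $\Zchar$), we have a fiber sequence of connective spectra
\begin{equation*}
    \cC\units \to (\Gr^{\Zchar}_\ZZ \cC)\units \xto{\deg} \ZZ,
\end{equation*}
whose connecting map $\ZZ \to \Sigma \cC\units$ is the image of $\Zchar$ under $\Sigma^2 \ounit_\cC\units \to \Sigma \cC\units$.

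First I would observe that $\ounit_\cC\shift{1}$ lies in degree $1$, so the composition $\SS \xto{\T\ounit_\cC\shift{1}} (\Gr^{\Zchar}_\ZZ \cC)\units \xto{\deg} \ZZ$ agrees with the canonical map $\SS \to \tau_{\le 0}\SS = \ZZ$; any map of connective spectra $\SS \to \ZZ$ is determined by its effect on $\pi_0$, which here is $1$. Comparing fibers produces a morphism of fiber sequences from $\tau_{\ge 1}\SS \to \SS \to \ZZ$ to $\cC\units \to (\Gr^{\Zchar}_\ZZ \cC)\units \to \ZZ$. Since $\tau_{\ge 1}\SS$ is $1$-connective, the induced map on left fibers factors uniquely through the $1$-connected cover $\tau_{\ge 1}\cC\units \simeq \Sigma\ounit_\cC\units$, and by the very definition of the braiding of an invertible object this factored map is $\hchar_{\ounit_\cC\shift{1}}$.

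Finally, the morphism of fiber sequences extends one step to the right, yielding a commutative square in which the top row is the connecting map $\ZZ \to \Sigma \tau_{\ge 1}\SS$ of the Postnikov cofiber sequence $\tau_{\ge 1}\SS \to \SS \to \ZZ$, the left vertical is the identity on $\ZZ$, the bottom row is $\Zchar \colon \ZZ \to \Sigma\cC\units$, and the right vertical is $\Sigma\hchar_{\ounit_\cC\shift{1}} \colon \Sigma \tau_{\ge 1}\SS \to \Sigma^2 \ounit_\cC\units \to \Sigma\cC\units$. Commutativity says precisely that $\Zchar$ is the composition $\ZZ \to \Sigma \tau_{\ge 1}\SS \xto{\Sigma\hchar_{\ounit_\cC\shift{1}}} \Sigma^2 \ounit_\cC\units$, which is the defining relation of $\hchar_\Zchar$ under the bijection in \cref{lem:maps-from-Z-are-from-truncated-sphere}. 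The one delicate point is the identification of the connecting map of the Picard fiber sequence with $\Zchar$ itself; this is precisely what was verified at the end of the proof of \cref{thm:twFun-are-all-lifts} via the map $F_{\Zchar} \to (\Gr^{\Zchar}_\ZZ \cC)\units$ induced by the Yoneda embedding, so once that identification is invoked, the argument reduces to the diagram chase above.
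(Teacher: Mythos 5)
Your setup (steps one and two) is fine and follows the same route as the paper: the fiber sequence of \cref{lem:cofiber-of-pic-Day} with the connecting map identified with (the image of) $\Zchar$ via the proof of \cref{thm:twFun-are-all-lifts}, the degree argument for $\deg \circ \T\ounit_{\cC}\shift{1}$, and the identification of the induced map on fibers with $\hchar_{\ounit_{\cC}\shift{1}}$ are all correct. The gap is in the last step. Your comparison square has bottom-right corner $\Sigma\cC\units$: its commutativity says only that $\Zchar$ and $\Sigma\hchar_{\ounit_{\cC}\shift{1}} \circ \partial$ (where $\partial \colon \ZZ \to \Sigma\tau_{\ge 1}\SS$ is the connecting map) become homotopic \emph{after} composing with $\Sigma^2\ounit_{\cC}\units \to \Sigma\cC\units$. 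The defining relation of $\hchar_{\Zchar}$ from \cref{lem:maps-from-Z-are-from-truncated-sphere} is an equality in $\pi_0\Map(\ZZ, \Sigma^2\ounit_{\cC}\units)$, and by \cref{rmrk:kernel-of-forgetting-the-E1-nullhomotopy} the comparison map $\pi_0\Map(\ZZ, \Sigma^2\ounit_{\cC}\units) \to \pi_0\Map(\ZZ, \Sigma\cC\units)$ has kernel $\Pic(\cC)/\Pic^{\mrm{str}}(\cC)$, which is nonzero in general (already for $\cC = \Vect$ in characteristic $\neq 2$). So "commutativity says precisely that $\Zchar$ is the composition $\ZZ \to \Sigma\tau_{\ge 1}\SS \to \Sigma^2\ounit_{\cC}\units$" is a non sequitur: what you have shown is that $\hchar_{\ounit_{\cC}\shift{1}}$ agrees with $\hchar_{\Zchar}$ only up to the braiding of a Picard element of $\cC$, i.e.\ only after forgetting the $\EE_1$-trivialization --- exactly the ambiguity that \cref{cor:graded-trivial-iff-picard} and the $\sVect$ example show is genuinely present.

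The paper closes this gap with an extra ingredient you dropped: besides the sequence $\cC\units \to (\Gr^{\Zchar}_{\ZZ}\cC)\units \to \ZZ$, it also maps the Postnikov sequence $\tau_{\ge 1}\SS \to \SS \to \ZZ$ into the connected-cover cofiber sequence $\Sigma\ounit_{\cC}\units \to (\Gr^{\Zchar}_{\ZZ}\cC)\units \to \Pic(\Gr^{\Zchar}_{\ZZ}\cC)$, using that the braiding square $\hchar_{\ounit_{\cC}\shift{1}}$, $\T\ounit_{\cC}\shift{1}$ commutes by definition. This produces a comparison of connecting maps landing in $\Sigma^2\ounit_{\cC}\units$ itself, namely $\Sigma\hchar_{\ounit_{\cC}\shift{1}} \circ \partial \simeq \big(\ZZ \xto{\ounit_{\cC}\shift{1}} \Pic(\Gr^{\Zchar}_{\ZZ}\cC) \to \Sigma^2\ounit_{\cC}\units\big)$, and the latter composite is then identified with $\Zchar$ using the compatibility of the two cofiber sequences of $(\Gr^{\Zchar}_{\ZZ}\cC)\units$ and the section $\ZZ \to \Pic(\Gr^{\Zchar}_{\ZZ}\cC)$. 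To repair your argument you would need to carry out this finer comparison (or some equivalent device that pins the identity down in $\Sigma^2\ounit_{\cC}\units$ rather than in $\Sigma\cC\units$); as written, the proof establishes a strictly weaker statement.
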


        \begin{proof}
            By \cref{lem:cofiber-of-pic-Day}, there is a cofiber sequence
            \begin{equation*}
                \cC\units \to (\Gr^{\Zchar}_{\ZZ}\cC)\units \to \ZZ,
            \end{equation*}
            and by the proof of \cref{thm:twFun-are-all-lifts}, the cofiber map $\ZZ \to \Sigma \cC\units$ is $\Zchar$.

            Consider also the cofiber sequence of the connected cover
            \begin{equation*}
                \Sigma \ounit_{\cC}\units \to (\Gr^{\Zchar}_{\ZZ}\cC)\units \to \Pic(\Gr^{\Zchar}_{\ZZ} \cC),
            \end{equation*}
            which admits a natural map to the previous cofiber sequence.

            The element $\ounit_{\cC}\shift{1}\in(\Gr^{\Zchar}_{\ZZ}\cC)\units$ induces a map $\T \ounit_{\cC}\shift{1} \colon \SS \to (\Gr^{\Zchar}_{\ZZ}\cC)\units$ which, by definition, its connected cover is the braiding of $\ounit_{\cC}\shift{1}$. On $\pi_0$ it is a map $\ZZ \to \Pic(\Gr^{\Zchar}_{\ZZ} \cC)$, which is a retract of the natural map $\Pic(\Gr^{\Zchar}_{\ZZ} \cC) \to \ZZ$. 
            
            We summarize this data as the following diagram where the columns are cofiber sequences:
            \begin{equation*}
                \begin{tikzcd}
                    {\tau_{\ge 1}\SS} & {\Sigma\ounit_{\cC}\units} & {\cC\units} \\
                    \SS & {(\Gr^{\Zchar}_{\ZZ}\cC)\units} & {(\Gr^{\Zchar}_{\ZZ}\cC)\units} \\
                    \ZZ & {\Pic(\Gr^{\Zchar}_{\ZZ}\cC)} & \ZZ \\
                    {\Sigma\tau_{\ge 1}\SS} & {\Sigma^2\ounit_{\cC}\units} & {\Sigma\cC\units.}
                    \arrow["{\hchar_{\ounit_{\cC}\shift{1}}}", from=1-1, to=1-2]
                    \arrow[from=1-1, to=2-1]
                    \arrow[from=1-2, to=1-3]
                    \arrow[from=1-2, to=2-2]
                    \arrow[from=1-3, to=2-3]
                    \arrow["{\T \ounit_{\cC}\shift{1}}", from=2-1, to=2-2]
                    \arrow[from=2-1, to=3-1]
                    \arrow[equals, from=2-2, to=2-3]
                    \arrow[from=2-2, to=3-2]
                    \arrow[from=2-3, to=3-3]
                    \arrow["{\ounit_{\cC}\shift{1}}", from=3-1, to=3-2]
                    \arrow[curve={height=12pt}, equals, from=3-1, to=3-3]
                    \arrow[from=3-1, to=4-1]
                    \arrow[from=3-2, to=3-3]
                    \arrow[from=3-2, to=4-2]
                    \arrow["\Zchar"{description}, dashed, from=3-3, to=4-2]
                    \arrow["\Zchar", from=3-3, to=4-3]
                    \arrow["{\Sigma\hchar_{\ounit_{\cC}\shift{1}}}", from=4-1, to=4-2]
                    \arrow[from=4-2, to=4-3]
                \end{tikzcd}
            \end{equation*}
            We deduce that the composition $\ZZ \to \Sigma \tau_{\ge 1} \SS \xto{\Sigma \hchar_{\ounit_{\cC}\shift{1}}} \Sigma^2 \ounit_{\cC}\units$ is isomorphic $\Zchar$. Therefore, $\hchar_{\ounit_{\cC}\shift{1}} \simeq \hchar_{\Zchar}$.
        \end{proof}

        \begin{corollary}\label{cor:braiding-determines-monoidal-structure}
            The braiding of $\ounit_{\cC}\shift{1}$ determines the symmetric monoidal structure of $\Gr^{\Zchar}_\ZZ \cC$ as a $\cC$-linear symmetric monoidal category.
        \end{corollary}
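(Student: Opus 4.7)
The plan is to chain together the classification of twisted graded $\ZZ$-categories with the identification of braidings just established. More precisely, by \cref{thm:twFun-are-all-lifts}, the isomorphism class of a $\cC$-linear symmetric monoidal lift of $\Gr_{\ZZ} \cC$ is classified by an element of $\pi_0 \Map\Enull(\ZZ, \Sigma\cC\units)$. By \cref{lem:maps-from-Z-are-from-truncated-sphere}, together with \cref{cor:E1-nullhomotopy-and-surjection} and the discussion immediately preceding \cref{prop:braiding-of-1<1>-determines-the-character}, this set is naturally in bijection with $\pi_0 \Map(\tau_{\ge 1}\SS, \Sigma\ounit_{\cC}\units)$ via the assignment $\Zchar \mapsto \hchar_{\Zchar}$.

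The preceding \cref{prop:braiding-of-1<1>-determines-the-character} identifies $\hchar_{\Zchar}$ with the braiding $\hchar_{\ounit_{\cC}\shift{1}}$ of $\ounit_{\cC}\shift{1}$ computed inside $\Gr^{\Zchar}_{\ZZ} \cC$. Composing the two bijections yields a correspondence
\begin{equation*}
    \bigl[\Gr^{\Zchar}_{\ZZ} \cC \in \CAlg_{\cC}(\PrL)\bigr] \;\longleftrightarrow\; \bigl[\hchar_{\ounit_{\cC}\shift{1}} \in \Map(\tau_{\ge 1}\SS, \Sigma\ounit_{\cC}\units)\bigr],
\end{equation*}
in particular an injection from isomorphism classes of twisted graded $\ZZ$-categories to homotopy classes of braidings of $\ounit_{\cC}\shift{1}$, which is the content of the corollary.

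No genuine obstacle is expected, as the substantive work has already been carried out in \cref{thm:twFun-are-all-lifts} and \cref{prop:braiding-of-1<1>-determines-the-character}; the corollary is a purely formal consequence. The only point requiring some care is tracking $\cC$-linearity throughout the chain of identifications, which is automatic since both \cref{thm:twFun-are-all-lifts} and the Thom construction defining $\Gr^{\Zchar}_{\ZZ} \cC$ are formulated natively in $\CAlg_{\cC}(\PrL)$, and the braiding of $\ounit_{\cC}\shift{1}$ factors through $\Sigma\ounit_{\cC}\units$ rather than merely $\cC\units$.
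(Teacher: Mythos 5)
Your proposal is correct and follows essentially the same route the paper intends: the corollary is an immediate consequence of \cref{prop:braiding-of-1<1>-determines-the-character} (identifying the braiding of $\ounit_{\cC}\shift{1}$ with $\hchar_{\Zchar}$), the bijection of \cref{lem:maps-from-Z-are-from-truncated-sphere} recovering $\Zchar$ from $\hchar_{\Zchar}$, and the classification/construction of twisted graded categories from $\Zchar$ in \cref{thm:twFun-are-all-lifts}. The only minor remark is that the full classification theorem is slightly more than needed (the Thom construction $\Zchar \mapsto \Gr^{\Zchar}_{\ZZ}\cC$ already suffices once $\Zchar$ is pinned down), but this does not affect correctness.
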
 

        For a general abelian group $A$, \cref{cor:braiding-determines-monoidal-structure} does not hold verbatim. A similar statement holds, reproducing the map $\Zchar$ while forgetting the $\EE_1$-nullhomotopy. In this case, however, one must also keep track of the relations among the braidings of the generators of $A$.

        Let $A$ be an abelian group and $\Zchar \in \Map^{\h,1}\Enull(A, \Mod_{\cC}\units)$, that is $\Zchar \colon A \to \Mod_{\cC}\units$ together with an $\EE_1$-nullhomotopy of the composition
        \begin{equation*}
            A \xto{\Zchar} \Mod_{\cC}\units \to \tau_{\le 2} \Mod_{\cC}\units.
        \end{equation*}
        Therefore, by \cref{cor:Thom-of-homotopy-null-map}, the Thom construction $\Gr_A^{\Zchar} \cC$ is equipped with an $\EE_0$-isomorphism $\Gr_A \cC \isoto \Gr_A^{\Zchar} \cC$ together with an $\EE_1$-lift on homotopy categories.
            
        \begin{definition}
            Let $w = (a_1,\dots, a_r) \in A^r$ be a word. By the above discussion, there is a canonical isomorphism
            \begin{equation*}
                i_w \colon \hchar_{\ounit_{\cC}\shift{a_1}} \cdots \hchar_{\ounit_{\cC}\shift{a_r}} \isoto \hchar_{\ounit_{\cC}\shift{a_1 + \cdots + a_r}} \qin \h\Gr_A^{\Zchar} \cC.
            \end{equation*}
        \end{definition}

        \begin{corollary}\label{cor:braiding-for-general-A-determined}
            Let $A = \langle S \mid R \rangle$ be an abelian group given by generators and relations. Let $\Zchar \in \Map^{\h,1}(A, \Mod_{\cC}\units) \simeq \Map^{\h, 1}(A, \Sigma^2 \ounit_{\cC}\units)$. 
            Then the set of braidings $\hchar_{\ounit_{\cC}\shift{a}}$ for $a\in S$ along with the isomorphisms $i_w$ for $w \in R$, determine the lift $\Zchar \colon A \to \Sigma^2 \ounit_{\cC}$, without the nullhomotopy, and in particular the symmetric monoidal structure of $\Gr^{\Zchar}_A \cC \in \CAlg_{\cC}(\PrL)$.
        \end{corollary}

        \begin{proof}
            By \cref{lem:homotopy-E1-null-factors-through-Sigma^2}, $\Map^{\h,1}(A, \Mod_{\cC}\units) \simeq \Map^{\h,1}(A, \Sigma^2 \ounit_{\cC}\units)$.
            By \cref{prop:braiding-of-1<1>-determines-the-character}, \cref{lem:maps-from-Z-are-from-truncated-sphere}, for any $a \in S$, the braiding $\hchar_{\ounit_{\cC}\shift{a}}$ determines the map 
            \begin{equation*}
                \ZZ \xto{a} A \xto{\Zchar} \Sigma^2 \ounit_{\cC}\units.
            \end{equation*}
            Therefore, the set of braidings determines the map
            \begin{equation*}
                \ZZ^{\oplus S} \onto A \xto{\Zchar} \Sigma^2 \ounit_{\cC}\units.
            \end{equation*}
            Consider the cofiber sequence
            \begin{equation*}
                \ZZ^{\oplus R} \to \ZZ^{\oplus S} \to A.
            \end{equation*}
            For any $w = (a_1,\dots,a_r) \in R$, again by \cref{prop:braiding-of-1<1>-determines-the-character}, \cref{lem:maps-from-Z-are-from-truncated-sphere} the composition
            \begin{equation*}
                \ZZ \xto{w} \ZZ^{\oplus R} \to \ZZ^{\oplus S} \to A \to \Sigma^2 \ounit_{\cC}\units
            \end{equation*}
            is equivalent to the data of $\hchar_{\ounit_{\cC}\shift{a_1}} \cdots \hchar_{\ounit_{\cC}\shift{a_r}}$. The isomorphism $i_w$ gives a nullhomotopy of this map. Therefore, the data of braidings and relations determines a map $\ZZ^{\oplus S} \to \Sigma^2 \ounit_{\cC}\units$ with a nullhomotopy of its composition to $\ZZ^{\oplus R}$, or equivalently a map $A \to \Sigma^2 \ounit_{\cC}\units$. It is straightforward to see that this is the original map $\Zchar$.
        \end{proof}


        \begin{remark}[The braiding of a higher minus one as a map of spaces]
            Assume $\cC$ admits a higher minus one $\Sigma^{\chrHeight} \ZZ/2 \to \ounit_{\cC}\units$. We can understand quite concretely the braiding, and therefore the symmetric monoidal structure, of $\ZZ$-graded categories induced by it and by a cohomology class $\hchar \in \H^{\chrHeight+2}(\ZZ, \Fq{2})$ (\cref{def:higher-minus-one}). The braiding map $\MM \to \Sigma \ounit_{\cC}\units$, choosing $\ounit_{\cC}\shift{1}$ factors through $\Sigma^{\chrHeight+1}\ZZ/2$ and defines a cohomology class $f(\hchar) \in \H^{\chrHeight+1}(\MM, \Fq{2})$, classifying the underlying map of spaces of the braiding, and which does not depend on $\cC$.

            Working with the universal category admitting a minus one of height $\chrHeight$, i.e.\ $\Mod_{\Sigma^{\chrHeight}\ZZ/2}\spc$, one can verify that $f$ commutes with Steenrod squares. That is, considering the map 
            \begin{equation*}
                \Sq^i \hchar \colon \ZZ \xto{\hchar} \Sigma^{\chrHeight+2} \ZZ/2 \xto{\Sq^i} \Sigma^{\chrHeight+i+2} \ZZ/2,
            \end{equation*}
            the corresponding cohomology class is 
            \begin{equation*}
                f(\Sq^i \hchar) \simeq \Sq^i f(\hchar) \colon \MM \xto{f(\hchar)} \Sigma^{\chrHeight+1} \ZZ/2 \xto{\Sq^i} \Sigma^{\chrHeight+i+1} \ZZ/2.
            \end{equation*}

            In particular, $f \colon \H^{*+1}(\ZZ, \Fq{2}) \to \H^*(\MM, \Fq{2})$, for $* \ge 1$, is a map of modules over the Steenrod algebra $\StAlg$. We can reduce to the case
            \begin{equation*}
                f \colon \StAlg^{\ge 2}[1] \to \H^{\ge 1}(\MM, \Fq{2}).
            \end{equation*}
            The graded module $\StAlg^{\ge 2}$ is generated by ${\Sq^{2^j}}$ for $j \ge 1$, therefore all cohomology classes appearing are Steenrod squares applied to monomials in $f(\Sq^{2^j}) \in \H^{2^j-1}(\MM, \Fq{2})$. In \cite{Giusti-Salvatore-Sinha-2022-cohomology-of-symmetric-groups}, the Hopf ring $\H^*(\MM, \Fq{2})$, is completely described. In particular they show that it is generated as a Hopf ring by classes $\gamma_{\ell, k} \in \H^{k(2^\ell - 1)}(\B \Sigma_{k 2^{\ell}})$. 
            Using \cref{prop:Kos-is-Kos}, one can show that $f(\Sq^2) = \gamma_{1,1}$, and using \cite[Theorem 8.3]{Giusti-Salvatore-Sinha-2022-cohomology-of-symmetric-groups} and the Cartan formula, $\Sq^I \gamma_{1,1} \in \{0, \gamma_{1,1}^{|I|-1}\}$. We now claim by induction on $i$ that $f(\Sq^i) \in \{0, \gamma_{1,1}^{i-1}\}$. If $i$ is not a power of 2, then by the Adem relations we can express $f(\Sq^i)$ as a linear combination of $\Sq^{i_1}f(\Sq^{i_2})$ for small values of $i_1$, therefore finishing by induction. 
            If $i$ is a power of $2$, we prove by a new induction, that $f(\Sq^j)\in \{0, \gamma_{1,1}^{j-1}\}$ for $i < j < 2i$. For this, we use the Adem relations to express $\Sq^j$ as a linear combination of products of smaller squares. Now we express $\Sq^{2i-1}$ also as $\Sq^{i-1}\Sq^i$ plus lower terms. Therefore, by induction, $f(\Sq^{i-1}\Sq^i) \in \{0,\gamma_{1,1}^{2i-2}\}$. We then get that 
            \begin{equation*}
                f(\Sq^i)^2 = \Sq^{i-1}f(\Sq^i) = f(\Sq^{i-1}\Sq^i) \in \{0, \gamma_{1,1}^{2i-2}\}.
            \end{equation*}
            As $\H^*(\MM, \Fq{2})$ is free divided powers algebra, $f(\Sq^i) \in \{0, \gamma_{1,1}^{i-1}\}$.

            This shows that there are only two families of cohomology classes coming from this construction, one is the trivial, and the second is $\{\gamma_{1,1}^{\degree}\}$. 
        \end{remark}

        \begin{remark}\label{rmrk:maps-from-symmetric-groups-and-infty}
            Inverting the map $+1 \colon \MM \to \MM$ on the free commutative monoid $\MM$, regarded as an $\MM$-module in spaces, yields 
            \begin{equation*}
                \MM[-1] \simeq \ZZ \times \B\Sigma_{\infty}
            \end{equation*}
            (see \cite[Appendix~C]{Bunke-Nikolaus-Tamme-2018-Belinson-regulator} for the general relation between element inversions and localizations). 
            Although $\MM[-1]$ is not the group completion in spaces, after stabilization one obtains 
            \begin{equation*}
                \SS[\ZZ \times \B\Sigma_{\infty}] \simeq \SS[\SS],
            \end{equation*}
            by the Barratt--Priddy--Quillen theorem \cite{Barratt-Priddy-1972,Quillen-2006-higher-K}.
            Looking at the connected covers we get an isomorphism
            \begin{equation*}
                \redSS[\B\Sinfty] \simeq \redSS[\tau_{\ge 1} \SS] \qin \cnSp.
            \end{equation*}

            Define the map of spaces
            \begin{equation*}
                j_{\infty} \colon  \B\Sinfty \xto{u} \redSS[\B\Sinfty] \simeq \redSS[\tau_{\ge 1} \SS] \xto{c} \tau_{\ge 1} \SS,
            \end{equation*}
            where $u, c$ are the unit and counit of the $(\redSS[-] \dashv \text{forgetful})$ adjunction. Equivalently, $j_{\infty}$ is the connected cover of the canonical map $\B\Sigma_{\infty} \times \ZZ \to \SS$ between the module localization and the group completion.
            
            The maps $j_m \colon \B\Sm \to \tau_{\ge 1} \SS$ of \cref{rmrk:Sm-action-from-braiding} can be written equivalently as the composition $\B\Sm \to \B\Sinfty \xto{j_{\infty}} \tau_{\ge 1} \SS$.
        \end{remark}
        
        \begin{definition}
            Let $\Zchar \colon \ZZ \to \Sigma^2\ounit_{\cC}\units$. Define the $\Sinfty$-representation
            \begin{equation*}
                \hchar_{\Zchar}^{\infty} \colon \B\Sinfty \xto{j_{\infty}} \tau_{\ge 1} \SS \xto{\hchar_{\Zchar}} \B\ounit_{\cC}\units.
            \end{equation*}
        \end{definition}

        \begin{remark}\label{rmrk:BSinfty-is-truncated-sphere-after-stabilization}
           The map $\hchar_{\Zchar}^{\infty}$ determines the map $\hchar_{\Zchar}$ as a map of pointed spaces. Indeed, this data is equivalent to the map of connective spectra
            \begin{equation*}
                \redSS[\B\Sinfty] \simeq \redSS[\tau_{\ge 1} \SS] \xto{c} \tau_{\ge 1} \SS \xto{\hchar_{\Zchar}} \Sigma \ounit_{\cC}\units.
            \end{equation*}
        \end{remark}

        \subsubsection*{Braiding and truncatedness in highly connected categories}

        We can say more in the case where $\Mod_{\cC}$ is $(\chrHeight+1)$-connected, as defined in \cite{BCSY-Fourier}.
        \begin{definition}[{\cite[\textsection~6.4]{BCSY-Fourier}}]\label{def: cat connectedness}
            A symmetric monoidal category $\cC$ is said to be $d$-connected (at $p$) if for any $d$-truncated, $\pi$-finite ($p$-local) space $A$, the natural map
            \begin{equation*}
                A \to \Map_{\CAlg(\cC)}(\ounit^A, \ounit)
            \end{equation*}
            is an isomorphism.
        \end{definition}

        \begin{definition}
            A connective spectrum is $\pi$-finite if it is truncated and all its homotopy groups are finite. A spectrum is $\pi$-torsion if it is isomorphic to a (filtered) colimit of $\pi$-finite spectra. Let $\Sp\pitor\subseteq \cnSp$ be the subcategory of $\pi$-torsion connective spectra and $\plocSp\pitor \subseteq \cnSp$ be the subcategory of $\pi$-finite $p$-local connective spectra. Denote the right adjoints of the inclusions by
            \begin{equation*}
                (-)\pitor \colon \cnSp \to \Sp\pitor, \qquad (-)_{(p)}\pitor \colon \cnSp \to \plocSp\pitor.
            \end{equation*}
        \end{definition}

        \begin{theorem}[{\cite[Proposition~6.38, Corollary~6.40, Corollary~6.45, Theorem~6.58]{BCSY-Fourier}}]\label{thm:connectedness-implies-pitor}
            Let $\cC \in \CAlg(\PrL)$ be $\infty$-semiadditive and $(\Fp, \chrHeight)$-oriented at $p$. Assume that $\Mod_{\cC}$ is $(\chrHeight+1)$-connected at $p$, then
            \begin{equation*}
                (\ounit_{\cC} \units)\pitor_{(p)} \simeq \Inprime.
            \end{equation*}
        \end{theorem}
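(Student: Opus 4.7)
The plan is to prove $(\ounit_\cC\units)\pitor_{(p)} \simeq \Inprime$ by showing that both connective spectra corepresent the same functor on $\plocSp\pitor$: namely, for every $X \in \plocSp\pitor$, I will produce a natural isomorphism
\[
\Map_{\cnSp}(X, \ounit_\cC\units) \simeq \Map_{\cnSp}(X, \Inprime),
\]
which suffices since $(-)\pitor_{(p)}$ is right adjoint to the inclusion $\plocSp\pitor \hookrightarrow \cnSp$, so the two objects have the same corepresented functor on $\plocSp\pitor$.

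The computation proceeds in three steps. First I would apply the $(\ounit_\cC[-] \dashv (-)\units)$ adjunction to rewrite
\[
\Map_{\cnSp}(X, \ounit_\cC\units) \simeq \Map_{\CAlg(\cC)}(\ounit_\cC[X], \ounit_\cC).
\]
Second, the $(\Fp,\chrHeight)$-orientation of $\cC$ supplies the Fourier isomorphism $\ounit_\cC[M] \simeq \ounit_\cC^{\Map(M, \Inprime)}$ for every $[0,\chrHeight]$-finite $\Fp$-module $M$; I would extend this from $\Fp$-modules to arbitrary $X \in \plocSp\pitor$ by induction along the Postnikov filtration, whose successive fibers are shifts of $\Fp$-vector spaces and thus amenable to Fourier. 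The output is a natural algebra isomorphism $\ounit_\cC[X] \simeq \ounit_\cC^{A_X}$ with $A_X \coloneqq \tau_{\le \chrHeight}\Map(X, \Inprime)$, a $\chrHeight$-truncated $\pi$-finite $p$-local space. Third, I would pass to module categories via Morita theory (in the spirit of \cref{lem:universal-case}), obtaining
\[
\Map_{\CAlg(\cC)}(\ounit_\cC^{A_X}, \ounit_\cC) \simeq \Map_{\CAlg(\Mod_\cC)}(\Mod_\cC^{A_X}, \Mod_\cC),
\]
and then invoke the assumed $(\chrHeight+1)$-connectedness of $\Mod_\cC$ (\cref{def: cat connectedness}) to identify this mapping space with $A_X$ itself. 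Composing the chain yields $\Map_{\cnSp}(X, \ounit_\cC\units) \simeq A_X \simeq \Map_{\cnSp}(X, \Inprime)$, naturally in $X$, as required.

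The main obstacle will be the Postnikov extension in step two. The orientation hypothesis directly guarantees Fourier only for $[0,\chrHeight]$-finite $\Fp$-modules, and since $\Inprime$ itself is not strictly $\pi$-finite (its top homotopy group is $\QQ_p/\Zp$), one must carefully verify that iterated Fourier transforms compose with the correct multiplicative structure on $\ounit_\cC[X]$ and that the assembled dual is indeed $\tau_{\le \chrHeight}\Map(X, \Inprime)$. This is precisely where the substantial BCSY machinery cited (Proposition~6.38 through Theorem~6.58) enters, bootstrapping the $\Fp$-linear orientation into a full $\pi$-torsion duality statement via cumulative higher ambidexterity.
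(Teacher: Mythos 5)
You should note at the outset that the paper does not prove this statement at all: it is quoted directly from \cite{BCSY-Fourier} (Proposition~6.38, Corollary~6.40, Corollary~6.45, Theorem~6.58), so the comparison is with the argument carried out there, which your outline is essentially attempting to reconstruct.

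The genuine gap is in your second step. The Postnikov layers of a general $\pi$-finite $p$-local connective spectrum $X$ are shifts of finite abelian $p$-groups, not of $\Fp$-vector spaces, so the given $(\Fp,\chrHeight)$-orientation does not make them ``amenable to Fourier'': for a layer $\Sigma^k \ZZ/p^j$ with $j \ge 2$ you would need a $(\ZZ/p^j,\chrHeight)$-orientation, i.e.\ a primitive higher $p^j$-th root of unity in $\ounit_{\cC}$, and producing these from a mere $\Fp$-orientation is exactly the nontrivial content of the cited BCSY results. This is where the $(\chrHeight+1)$-connectedness of $\Mod_{\cC}$ must already be used (splitting the higher cyclotomic extensions, equivalently lifting maps $\Sigma^k \ZZ/p \to \ounit_{\cC}\units$ along $\ZZ/p^j \to \ZZ/p$), not only in your third step as you propose. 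Indeed, the conclusion $(\ounit_{\cC}\units)\pitor_{(p)} \simeq \Inprime$ is precisely the assertion that $\ounit_{\cC}$ contains coherent roots of unity of all orders $p^j$ and all heights $\le \chrHeight$, and a scheme whose only use of connectedness is the computation $\Map_{\CAlg(\Mod_{\cC})}(\cC^{A_X},\cC) \simeq A_X$ (as in \cref{def: cat connectedness}) cannot manufacture those higher-order torsion classes by a formal Postnikov induction; so the division of labor between your steps two and three does not work as stated. Two further points you elide: gluing Fourier equivalences along a Postnikov tower requires an Eilenberg--Moore/affineness descent input, and the Morita move in your third step silently uses affineness of the $\chrHeight$-truncated $\pi$-finite space $A_X$ over $\cC$, i.e.\ $\cC^{A_X} \simeq \Mod_{\ounit_{\cC}^{A_X}}(\cC)$, which rests on $\infty$-semiadditivity and is not supplied by \cref{lem:universal-case} alone; finally, since $\Inprime$ is $\pi$-torsion but not $\pi$-finite, your Yoneda conclusion needs the extra (easy, but necessary) passage through filtered colimits.
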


        \begin{example}[{\cite[Theorem~7.8]{BCSY-Fourier}}]
            $\ModEn$ is $(\chrHeight+1)$-connected at the prime $p$. It is $1$-connected at all primes $\ell \neq p$.
        \end{example}

        \begin{corollary}
            Let $\cC \in \CAlg(\PrL)$ be $\infty$-semiadditive. Assume that $\cC$ is $(\Fp, \chrHeight)$-oriented and that $\Mod_{\cC}$ is $(\chrHeight+1)$-connected at all primes $p$. Then
            \begin{equation*}
                (\ounit_{\cC}\units)\pitor \simeq \In.
            \end{equation*}
        \end{corollary}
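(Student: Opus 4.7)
The plan is to apply \cref{thm:connectedness-implies-pitor} at each prime $p$ to get the local identification $(\ounit_{\cC}\units)\pitor_{(p)} \simeq \Inprime$, and to assemble these via a $p$-primary decomposition of $\pi$-torsion connective spectra. The main technical input is that both $(\ounit_{\cC}\units)\pitor$ and $\In$ split canonically as a direct sum over $p$ of their $p$-local pieces, which reduces the global statement to the prime-by-prime one.

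First I would establish the decomposition: for any $Y \in \Sp\pitor$, the canonical map $\bigoplus_p Y_{(p)} \to Y$ is an isomorphism. This follows because any $\pi$-finite spectrum splits canonically as a finite direct sum of its $p$-local parts (since the orders of its homotopy groups involve only finitely many primes), and a $\pi$-torsion spectrum is by definition a filtered colimit of $\pi$-finite ones, so the decomposition passes to the colimit. The same observation shows that the two functors $((-)\pitor)_{(p)}$ and $(-)\pitor_{(p)}$ on $\cnSp$ agree, since both are right adjoint to the composite inclusion $\plocSp\pitor \hookrightarrow \cnSp$. Likewise, using the decomposition $\mathbb{Q}/\mathbb{Z} \simeq \bigoplus_p \mathbb{Q}_p/\mathbb{Z}_p$ together with the formula of \cref{rmrk:maps-to-In}, one checks directly that $\pi_k \In \simeq \bigoplus_p \pi_k \Inprime$ in every degree (the Pontryagin dual of the finitely generated abelian group $\pi_{\chrHeight-k}\SS$ distributes over the direct sum), giving $\In \simeq \bigoplus_p \Inprime$.

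Assembling these ingredients produces the chain of isomorphisms
\begin{equation*}
    (\ounit_{\cC}\units)\pitor \simeq \bigoplus_p ((\ounit_{\cC}\units)\pitor)_{(p)} \simeq \bigoplus_p (\ounit_{\cC}\units)\pitor_{(p)} \simeq \bigoplus_p \Inprime \simeq \In,
\end{equation*}
which is the claim. The main subtlety lies in the handling of the top homotopy group $\pi_\chrHeight \In \simeq \mathbb{Q}/\mathbb{Z}$, which is infinite and whose decomposition into $p$-primary components is a direct sum (not a product); this is what makes $\In$ itself, rather than any profinite analogue, the correct target. A cleaner way to package the argument is to assemble the orientations at each prime into a canonical map $\In \simeq \bigoplus_p \Inprime \to \ounit_{\cC}\units$, which factors through $(\ounit_{\cC}\units)\pitor$ since $\In$ is $\pi$-torsion, and then verify that this candidate map is an isomorphism after $p$-localizing for every $p$ by invoking \cref{thm:connectedness-implies-pitor}.
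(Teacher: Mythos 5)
Your proof is correct and follows the route the paper evidently intends: the corollary is stated there without proof, as the immediate prime-by-prime assembly of \cref{thm:connectedness-implies-pitor} via the canonical $p$-primary decomposition of $\pi$-torsion connective spectra, which is exactly what you carry out. One point in your favor is that you use the direct-sum decomposition $\In \simeq \bigoplus_p \Inprime$ and explicitly justify it degreewise — this is the correct form (the paper's earlier remark writes the decomposition as a product over $p$, which taken literally fails in the top degree $\pi_{\chrHeight}\In \cong \QQ/\ZZ$), and your attention to that top homotopy group is precisely the care the assembly requires.
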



        In an announced work of Johnson-Freyd and Reuter, they construct a pro-Galois extension $\Vectn \to \overline{\catname{Vect}}_{\field}^n$, such that $\Mod_{\overline{\catname{Vect}}_{\field}^n}$ is $(\chrHeight+1)$-connected. As an example, $\overline{\catname{Vect}}_{\field} = \sVect$ (\cite{Johson-Freyd-2017-sVect}). 

        Having introduced the necessary definitions and notation, we now return to the study of braidings. We restrict to the case where the underlying space of $\ounit_{\cC}$ is truncated, so that $\Sigma \ounit_{\cC}\units$ is also truncated. In this setting, the map
        \begin{equation*}
            \hchar_{\Zchar} \colon \tau_{\ge 1} \SS \to \Sigma \ounit_{\cC}\units
        \end{equation*}
        factors through some truncation $\tau_{[1,d+1]} \SS$ which is a $\pi$-finite spectrum.

        \begin{corollary}\label{cor:truncated-unit-truncated-map}
            Assume that $\ounit_{\cC}$ is $d$-truncated. Then $\hchar_{\Zchar}$ factors uniquely as
            \begin{equation*}
                \hchar_{\Zchar} \colon \tau_{\ge 1} \SS \to \tau_{\ge [1, d+1]} \SS \xto{\hchar_{\Zchar}^{\pi}} \Sigma(\ounit_{\cC}\units)\pitor \to \Sigma \ounit_{\cC}\units.
            \end{equation*}
        \end{corollary}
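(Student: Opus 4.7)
The plan is to split the factorization into two successive reflective localizations of connective spectra: Postnikov truncation at level $d+1$, followed by the $\pi$-torsion reflector $(-)\pitor$.

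First I would observe that $\ounit_{\cC}\units$ inherits $d$-truncatedness from $\ounit_{\cC}$, since the higher homotopy groups of the Picard spectrum coincide with those of $\ounit_{\cC}$ while $\pi_0\ounit_{\cC}\units = (\pi_0\ounit_{\cC})^{\times}$ is discrete. Consequently $\Sigma\ounit_{\cC}\units$ is $(d+1)$-truncated, and by the universal property of Postnikov truncation (which is a left adjoint to the inclusion of $(d+1)$-truncated spectra), $\hchar_{\Zchar}$ factors uniquely through the truncation map $\tau_{\ge 1}\SS \to \tau_{[1,d+1]}\SS$.

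Next I would verify that $\tau_{[1,d+1]}\SS$ is $\pi$-finite: it is truncated by construction, and its nonzero homotopy groups are the stable stems $\pi_i\SS$ for $1 \le i \le d+1$, each a finite abelian group by Serre's theorem. Hence, by the adjunction defining $(-)\pitor$ as the right adjoint to $\Sp\pitor \subseteq \cnSp$, the composite $\tau_{[1,d+1]}\SS \to \Sigma\ounit_{\cC}\units$ lifts uniquely through $(\Sigma\ounit_{\cC}\units)\pitor$. A direct check on the universal property (using that $\Sigma$ and $\Omega$ both preserve $\pi$-finiteness, and hence $\pi$-torsion) yields a natural isomorphism $(\Sigma\ounit_{\cC}\units)\pitor \simeq \Sigma(\ounit_{\cC}\units)\pitor$, producing the middle map $\hchar_{\Zchar}^{\pi}$.

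Uniqueness at both stages is automatic from the two universal properties, so the composite factorization is unique. There is no substantive obstacle; the content is really just the packaging of two standard reflective adjunctions applied in sequence. The only step demanding a brief check is the compatibility $\Sigma(-)\pitor \simeq (\Sigma-)\pitor$, which follows directly from the adjunction $\Sigma \dashv \Omega$ restricted to $\Sp\pitor$.
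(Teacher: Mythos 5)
The first two-thirds of your argument are fine and agree with what the paper does implicitly: truncatedness of $\ounit_{\cC}$ forces $\hchar_{\Zchar}$ to factor uniquely through $\tau_{[1,d+1]}\SS$, and this spectrum is $\pi$-finite by Serre, so the composite factors uniquely through the coreflection $(\Sigma\ounit_{\cC}\units)\pitor$. The gap is your last step: the claimed equivalence $(\Sigma\ounit_{\cC}\units)\pitor \simeq \Sigma\bigl((\ounit_{\cC}\units)\pitor\bigr)$ does \emph{not} follow from the adjunction $\Sigma \dashv \Omega$, and it is false in general. The coreflection $(-)\pitor$ is a right adjoint and has no reason to commute with the left adjoint $\Sigma$; the fact that $\Sigma$ preserves $\pi$-torsion only gives a comparison map $\Sigma(Y\pitor) \to (\Sigma Y)\pitor$, not an equivalence. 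Concretely, take $Y = \ZZ$ (discrete): then $Y\pitor \simeq 0$, so $\Sigma(Y\pitor) \simeq 0$, while
\begin{equation*}
    (\Sigma \ZZ)\pitor \simeq \QQ/\ZZ,
\end{equation*}
since $\QQ/\ZZ \simeq \fib(\Sigma\ZZ \to \Sigma\QQ)$ is $\pi$-torsion and every map from a $\pi$-torsion spectrum to $\Sigma\QQ$ is null. This is not a marginal corner case for this paper: the difference between $(\Sigma Y)\pitor$ and $\Sigma(Y\pitor)$ is exactly the divisible-torsion (Brown--Comenetz type) contribution in degree $0$, the same phenomenon that makes $(\ounit_{\cC}\units)\pitor \simeq \In$ in \cref{thm:connectedness-implies-pitor}, so it cannot be absorbed into a "direct check."

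The statement is nevertheless true, and the repair uses the connectedness of the source rather than any commutation of $(-)\pitor$ with $\Sigma$. Since $\tau_{[1,d+1]}\SS$ is a connected connective spectrum, the counit gives $\tau_{[1,d+1]}\SS \simeq \Sigma\,\Omega\tau_{[1,d+1]}\SS$, and $\Omega\tau_{[1,d+1]}\SS$ is again $\pi$-finite (its homotopy groups are the stable stems $\pi_1\SS,\dots,\pi_{d+1}\SS$). Using $\Sigma \dashv \Omega$ and $\Omega\Sigma \simeq \id$ on $\cnSp$, for any connective $Y$ one has
\begin{equation*}
    \Map_{\cnSp}\bigl(\tau_{[1,d+1]}\SS, \Sigma Y\bigr) \simeq \Map_{\cnSp}\bigl(\Omega\tau_{[1,d+1]}\SS, Y\bigr),
\end{equation*}
so applying the defining adjunction of $(-)\pitor$ to the target $Y = \ounit_{\cC}\units$ (where the source $\Omega\tau_{[1,d+1]}\SS$ is honestly $\pi$-finite) and suspending back yields the unique factorization through $\Sigma(\ounit_{\cC}\units)\pitor$, which is what the corollary asserts. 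In other words, your second reflective localization must be applied to $\ounit_{\cC}\units$ after desuspending, not to $\Sigma\ounit_{\cC}\units$ before.
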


        \begin{lemma}
            If $\cC$ has an $\chrHeight$-truncated unit, then $\Mod_{\cC}$ has an $(\chrHeight+1)$-truncated unit.
        \end{lemma}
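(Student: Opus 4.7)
The plan is to unwind the definitions so that the statement becomes a manipulation of the connective spectra involved, and then apply a standard truncation argument to a short cofiber sequence relating them.

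First, I would identify what "truncated unit" means on each side. Interpreting the statement (consistently with the usage in \cref{cor:truncated-unit-truncated-map}), \enquote{$\ounit_\cC$ is $\chrHeight$-truncated} means that the Picard spectrum $\ounit_\cC\units$ is $\chrHeight$-truncated as a connective spectrum. The unit of $\Mod_\cC$ is $\cC$ itself (viewed as a $\cC$-module), and since $\cC$ is the free $\cC$-module on one generator, $\End_{\Mod_\cC}(\cC) \simeq \cC$ as an $\EE_{\infty}$-algebra in $\PrL$. Therefore the Picard spectrum of the unit of $\Mod_\cC$ is $\cC\units$, and the lemma reduces to showing that $\cC\units$ is $(\chrHeight+1)$-truncated whenever $\ounit_\cC\units$ is $\chrHeight$-truncated.

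Next I would invoke the identification $\Omega \cC\units \simeq \ounit_\cC\units$ (the loops of the Picard $\infty$-groupoid at the basepoint $\ounit_\cC$ are the automorphisms of $\ounit_\cC$, which form $\Omega^{\infty}\ounit_\cC\units$). Equivalently, the connected cover of $\cC\units$ is $\Sigma\ounit_\cC\units$, yielding a cofiber sequence of connective spectra
\begin{equation*}
    \Sigma\ounit_\cC\units \to \cC\units \to \Pic(\cC),
\end{equation*}
where $\Pic(\cC) = \pi_0 \cC\units$ is a discrete abelian group.

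Finally I would conclude by a truncation bookkeeping: by hypothesis $\Sigma\ounit_\cC\units$ is $(\chrHeight+1)$-truncated, while $\Pic(\cC)$ is $0$-truncated and in particular $(\chrHeight+1)$-truncated. The long exact sequence of homotopy groups associated to the cofiber sequence above then forces $\pi_k \cC\units = 0$ for $k > \chrHeight+1$, so $\cC\units$ is $(\chrHeight+1)$-truncated, as desired. There is no substantial obstacle; the entire proof is bookkeeping around the standard connective-cover fiber sequence for Picard spectra.
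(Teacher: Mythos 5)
Your argument is correct and is essentially the paper's: the paper disposes of the lemma with the single delooping observation $\ounit_{\cC} \simeq \Omega\, \cC\core$ (the unit data of $\Mod_{\cC}$ deloops that of $\cC$, so the truncation degree rises by exactly one), and your cofiber sequence $\Sigma \ounit_{\cC}\units \to \cC\units \to \Pic(\cC)$ together with the identification $\End_{\Mod_{\cC}}(\cC) \simeq \cC$ is the same observation phrased on Picard spectra. The only difference is interpretive --- the paper words truncatedness via the underlying space/core while you use $\ounit_{\cC}\units$ --- but your reading is precisely the form in which the hypothesis is used in \cref{cor:truncated-unit-truncated-map} and the proposition following it, so nothing is lost.
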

        \begin{proof}
            It follows by the identity $\ounit_{\cC} \simeq \Omega \cC\core \in \spc$.
        \end{proof}

        \begin{corollary}
            The unit of $\Vectn$ is $\chrHeight$-truncated.
        \end{corollary}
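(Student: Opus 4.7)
The approach is a direct induction on $\chrHeight$, with the preceding lemma supplying the entire inductive step. Since $\Vectnp = \Mod_{\Vectn}$ by definition, the lemma tells us immediately that if the unit of $\Vectn$ is $\chrHeight$-truncated, then the unit of $\Vectnp$ is $(\chrHeight+1)$-truncated. So the only substantive content beyond invoking the lemma is verifying the base case $\chrHeight = 1$.

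For the base case, $\Vectn[1] = \Vect$ is an ordinary $1$-category, so all its mapping spaces are discrete sets. In particular, using the identification $\ounit_{\Vect} \simeq \Omega\, \Vect\core$ from the preceding lemma, the unit is identified with a discrete automorphism space, which is $0$-truncated and in particular $1$-truncated. Inducting on $\chrHeight$ and applying the preceding lemma at each stage then gives the corollary. There is no real obstacle here: the argument is packaged entirely into the preceding lemma and the observation that $\Vect$ sits at level $1$ of the tower as a strict $1$-category.
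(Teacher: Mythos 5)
Your proof is correct and matches the paper's (implicit) argument: the corollary is exactly the induction you describe, with the preceding lemma $\Vectnp = \Mod_{\Vectn}$ supplying the inductive step and the base case being that $\Vect$ is a $1$-category with discrete (hence $0$-, and in particular $1$-truncated) unit. Nothing further is needed.
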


        \begin{proposition}
            Let $\cC \in \CAlg(\PrL)$ be $\infty$-semiadditive. Assume that for every $p$, $\cC$ is $(\Fp,\chrHeight_p)$-orientable and $\Mod_{\cC}$ is $(\chrHeight_p+1)$-connected. 
            Assume that $\ounit_{\cC}$ is $d$-truncated.
            Let $\Zchar_1, \Zchar_2 \colon \ZZ \to \Sigma^2 \ounit_{\cC}\units$. Then $\Zchar_1 \simeq \Zchar_2$ if and only if $\hchar_{\Zchar_1}^{\infty} \simeq \hchar_{\Zchar_2}^{\infty}$.

            That is, the $\Sinfty$-representation $\hchar_{\Zchar}^{\infty}$ determines the symmetric monoidal structure on $\Gr^{\Zchar}_{\ZZ} \cC$.
        \end{proposition}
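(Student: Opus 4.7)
The stated implication is the contrapositive of $\Zchar_1 \simeq \Zchar_2 \Rightarrow \hchar_{\Zchar_1}^{\infty} \simeq \hchar_{\Zchar_2}^{\infty}$, that is, of the well-definedness of the assignment $\Zchar \mapsto \hchar_{\Zchar}^{\infty}$ on isomorphism classes. My plan is to verify this well-definedness by unpacking the two-step construction of $\hchar^{\infty}$ from $\Zchar$ and observing that each step is homotopy-functorial.

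For the first step, I will invoke \cref{lem:maps-from-Z-are-from-truncated-sphere}: the cofiber sequence $\Sigma^{-1}\ZZ \to \tau_{\ge 1}\SS \to \SS$, together with the vanishing of $\pi_0$ and $\pi_{-1}$ of $\Sigma\ounit_\cC\units$ (automatic since $\ounit_\cC\units$ is connective), yields a natural bijection
\begin{equation*}
    \pi_0\Map_{\cnSp}(\ZZ,\Sigma^2\ounit_\cC\units) \isoto \pi_0\Map_{\cnSp}(\tau_{\ge 1}\SS,\Sigma\ounit_\cC\units),
\end{equation*}
under which $[\Zchar] \mapsto [\hchar_\Zchar]$. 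Hence $\Zchar_1 \simeq \Zchar_2$ forces $\hchar_{\Zchar_1} \simeq \hchar_{\Zchar_2}$. For the second step, I will use the definition $\hchar_\Zchar^{\infty} = \hchar_\Zchar \circ j_\infty$, with $j_\infty \colon \B\Sinfty \to \tau_{\ge 1}\SS$ the canonical pointed map of \cref{rmrk:maps-from-symmetric-groups-and-infty}; post-composition with a fixed map is homotopy-functorial, so $\hchar_{\Zchar_1} \simeq \hchar_{\Zchar_2}$ yields $\hchar_{\Zchar_1}^{\infty} \simeq \hchar_{\Zchar_2}^{\infty}$. Combining these two steps and taking the contrapositive gives the statement.

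The only technical input is the bijection in the first step, i.e.\ \cref{lem:maps-from-Z-are-from-truncated-sphere}, which is purely a long-exact-sequence calculation. Notably, the hypotheses on $\cC$ in the statement ($\infty$-semiadditivity, orientability at every prime, $(\chrHeight_p+1)$-connectedness of $\Mod_\cC$, and $d$-truncatedness of $\ounit_\cC$) do not enter the proof of the stated implication as written. They are instead what would be required for the converse direction suggested by the concluding sentence, namely that $\Zchar \mapsto \hchar_\Zchar^{\infty}$ is also \emph{injective} on homotopy classes, so that $\hchar^{\infty}$ is a complete invariant. That harder direction would proceed by factoring $\hchar_\Zchar$ through the $\pi$-finite map $\tau_{[1,d+1]}\SS \to \Sigma(\ounit_\cC\units)\pitor$ via \cref{cor:truncated-unit-truncated-map}, identifying $(\ounit_\cC\units)\pitor \simeq \In$ via \cref{thm:connectedness-implies-pitor}, and then running a Pontryagin-duality argument for maps out of the resulting $\pi$-finite truncation; that is where the main obstacle would lie.
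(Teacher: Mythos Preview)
Your reading of the literal implication is correct, and your two-step argument for that direction is fine. However, the proposition as stated appears to have the inequalities reversed: the concluding sentence (``$\hchar_{\Zchar}^{\infty}$ determines the symmetric monoidal structure'') and the paper's own proof both establish the \emph{converse}, namely that $\hchar_{\Zchar_1}^{\infty} \simeq \hchar_{\Zchar_2}^{\infty}$ forces $\Zchar_1 \simeq \Zchar_2$. This is the substantive direction, and it is precisely where all of the hypotheses are used---as you yourself note. So while your proof of the literal statement is valid, it misses the intended content.

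Your sketch of the hard direction is on the right track but stops short of the key step. The paper's argument runs as follows. Work one prime at a time. Using $d$-truncatedness and \cref{thm:connectedness-implies-pitor}, the map $\hchar_{\Zchar}^{\pi}$ becomes an element of $\widehat{\pi}_{\chrHeight_p+1}(\tau_{\ge 1}\SS_{(p)})$, while $\hchar_{\Zchar}^{\infty}$ becomes an element of $\widehat{\pi}_{\chrHeight_p+1}(\redSS[\tau_{\ge 1}\SS]_{(p)})$ via the identification $\redSS[\B\Sinfty] \simeq \redSS[\tau_{\ge 1}\SS]$ from \cref{rmrk:maps-from-symmetric-groups-and-infty}. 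The passage from the former to the latter is induced by the unit map $u \colon \tau_{\ge 1}\SS \to \SS[\tau_{\ge 1}\SS]$ of the $(\redSS[-] \dashv \text{forgetful})$ adjunction; the crucial observation is that the counit $c$ provides a \emph{retraction} of $u$ on underlying spaces (zig-zag identities), hence on $\widehat{\pi}_{\chrHeight_p+1}$, so the assignment $\hchar_{\Zchar}^{\pi} \mapsto \hchar_{\Zchar}^{\infty}$ is injective. Your outline identifies the correct ingredients (truncation, $\pi$-torsion identification, Pontryagin duality) but does not isolate this retraction, which is where the injectivity actually comes from.
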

        
        \begin{proof}
            It is enough to verify that $\hchar_{\Zchar}^{\infty}$ determines $\hchar_{\Zchar}$. By \cref{cor:truncated-unit-truncated-map} it is enough to verify that it determines $\hchar_{\Zchar}^{\pi}$. Moreover, it is enough to check it after $p$-localization at all primes $p$. Let $p$ be a prime and consider
            \begin{equation*}
                (\hchar_{\Zchar}^{\infty})_{(p)} \colon \redSS[\B\Sinfty]_{(p)} \to \Sigma(\ounit_{\cC}\units)_{(p)}.
            \end{equation*}
            As $(\Sigma\ounit_{\cC}\units)_{(p)}$ is $(d+1)$-truncated, this map is equivalent to a map
            \begin{equation*}
                (\hchar_{\Zchar}^{\infty})_{(p)} \colon \Sigma^{-1} \tau_{\le d+1} \redSS[\B\Sinfty]_{(p)} \to (\ounit_{\cC}\units)_{(p)}.
            \end{equation*}
            Note moreover, that $\Sigma^{-1} \tau_{\le d+1} \redSS[\B\Sinfty]_{(p)} \simeq \tau_{\le d} \Omega \redSS[\B\Sinfty]_{(p)}$ is a $\pi$-finite connective spectrum. Therefore, by \cref{thm:connectedness-implies-pitor}, it is equivalent to a map
            \begin{equation*}
                (\hchar_{\Zchar}^{\infty})_{(p)} \colon \tau_{\le d} \Omega \redSS[\B\Sinfty]_{(p)} \to \Inprime[\chrHeight_p].
            \end{equation*}
            By \cref{rmrk:maps-to-In} and \cref{rmrk:BSinfty-is-truncated-sphere-after-stabilization}, it is the same as an element
            \begin{equation*}
                (\hchar_{\Zchar_i}^{\infty})_{(p)} \in \widehat{\pi}_{\chrHeight_p}(\tau_{\le d} \Omega \redSS[\B\Sinfty]_{(p)}) \simeq \widehat{\pi}_{\chrHeight_p+1}(\redSS[\tau_{\ge 1} \SS]_{(p)}).
            \end{equation*}

            The map $(\hchar_{\Zchar}^{\pi})_{(p)}$ is a map of connective spectra
            \begin{equation*}
                \hchar_{\Zchar}^{\pi} \colon \tau_{[1,d+1]} \SS_{(p)} \to \Sigma\Inprime[\chrHeight_p],
            \end{equation*}
            which is equivalent to a map
            \begin{equation*}
                \hchar_{\Zchar}^{\pi} \colon \Omega \tau_{[1, d+1]} \SS_{(p)} \to \Inprime[\chrHeight_p].
            \end{equation*}
            By \cref{rmrk:maps-to-In}, this is the same as an element
            \begin{equation*}
                \hchar_{\Zchar}^{\pi} \in \widehat{\pi}_{\chrHeight_p}(\Omega \tau_{[1,d+1]} \SS_{(p)}) \simeq \widehat{\pi}_{\chrHeight_p +1}(\tau_{\ge 1} \SS_{(p)}).
            \end{equation*}

            Following \cref{rmrk:BSinfty-is-truncated-sphere-after-stabilization}, $(\hchar_{\Zchar}^{\infty})_{(p)}$ is the image of $(\hchar_{\Zchar}^{\pi})_{(p)}$ under the natural map from the unit map $\SS[\tau_{\ge 1} \SS] \to \tau_{\ge 1} \SS$. But this map has a section on the level of underlying spaces by the zig-zag identities, which in particular gives a retract on the level of $\hat{\pi}_{\chrHeight_p+1}$. That is, the map $\hchar_{\Zchar}^{\pi} \mapsto \hchar_{\Zchar}^{\infty}$ is injective.
        \end{proof}
        
    \subsection{Braiding character in graded categories}
    \label{subsec:graded-braiding-character}
        In \cref{subsec:graded-braiding} we saw that the braiding of different objects, together with the relations between them, determines the symmetric monoidal structure of twisted graded categories. In the special case of $\ZZ$-graded categories, this gives a complete invariant. Therefore, this invariant is in general computationally hard to use. The braiding character introduced in \cref{subsec:braiding-character} is a simpler invariant, and has an internal description within graded categories.

        \subsubsection*{Characters and Thom constructions}

        We use the theory of $\cC$-atomic objects as defined in \cite{Ben-Moshe-Schlank-2024-K-theory} when $\cC$ is a mode and in \cite{Ben-Moshe-2024-naturality-Yoneda, Ramzi-2023-seperability} for general $\cC$. In our categories of interest, atomic objects are just dualizable objects.
        \begin{definition}
            Let $\cD \in \Mod_{\cC}$. An object $X \in \cD$ is called $\cC$-atomic (or just atomic when $\cC$ is clear from context) if $X[-] \colon \cC \to \cD$ is an internal left adjoint in $\Mod_{\cC}$.

            We denote the full subcategory consisting of $\cC$-atomic objects in $\cD$ by $\cD\cCat$ (or just $\cD\at$ when $\cC$ is clear from context).
        \end{definition}

        \begin{example}[{\cite[Lemma~6.6]{Ramzi-2023-seperability}}]
            $\cC\at \simeq \cC\dbl$.
        \end{example}

        \begin{example}[{\cite[Lemma~4.50]{Ramzi-2024-rigid}}]\label{exm:rigit-atomic-dualizable}
            Let $\cD \in \CAlg(\cC)$ be rigid over $\cC$, in the sense of \cite[Definition~9.1.2]{Gaitsgory-Rozenblyum-2019-DAG}. Then $\cD\at \simeq \cD\dbl$
        \end{example}

        Our categories of interest are Thom categories, which are all rigid:
        \begin{proposition}\label{lem:Thom-is-rigid}
             Let $X\in \cnSp$ and $f \colon X \to \Mod_{\cC}\units$ be a map of connective spectra. 
             Then $\Th(f)$ is rigid over $\cC$. In particular, $\Th(f)\at \simeq \Th(f)\dbl$.
        \end{proposition}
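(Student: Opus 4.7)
The plan is to verify rigidity of $\Th(f)$ as a $\cC$-algebra in the sense of \cite[Definition~9.1.2]{Gaitsgory-Rozenblyum-2019-DAG}: both the unit $u\colon \cC \to \Th(f)$ and the multiplication $\mu\colon \Th(f)\otimes_\cC\Th(f) \to \Th(f)$ should be internal left adjoints in $\Mod_\cC$, with the right adjoint of $\mu$ moreover $\Th(f)$-bilinear. Once established, the \quotes{in particular} clause follows immediately from \cref{exm:rigit-atomic-dualizable}.

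The key observation is that $\Th(f) \simeq \colim_{x \in X} f(x)$ in $\Mod_\cC$, where each $f(x)\in \Mod_\cC\units$ is invertible. Invertibility provides canonical isomorphisms $f(x) \otimes_\cC f(y) \simeq f(x+y)$, so that
\begin{equation*}
\Th(f) \otimes_\cC \Th(f) \simeq \colim_{(x,y)\in X\times X} f(x+y) \simeq \Th(f\circ m),
\end{equation*}
where $m\colon X\times X \to X$ is the multiplication on the underlying space of $X$. Under this identification $\mu$ is precisely the map of Thom constructions induced by $m$.

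A right adjoint to $\mu$ is then produced by restriction along $m$. In the untwisted case $f = 0$ (where $\Th(f) \simeq \cC[X] = \Fun(X,\cC)$), restriction $m^*\colon \cC[X]\to \cC[X\times X]$ is right adjoint to the assembly $\cC[m]$, is $\cC$-linear and colimit-preserving (colimits in functor categories are computed objectwise), and is $\cC[X]$-bilinear because $m$ is a map of commutative monoids. In the twisted case, this right adjoint transports along the invertible twist provided by $f$. An analogous argument, replacing $m$ by the basepoint inclusion $\pt\hookrightarrow X$, produces the right adjoint to the unit $u$.

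The main technical obstacle I anticipate is verifying the coherent $\Th(f)$-bilinearity of the right adjoint to $\mu$ in the twisted setting: it amounts to checking that the $\EE_\infty$-structure on $f$ intertwines the left and right $\Th(f)$-actions on $\Th(f)\otimes_\cC\Th(f)$ in a manner compatible with $m^*$. I would dispatch this by reducing to the universal example $X = \Mod_\cC\units$ with $f = \id$, where the bilinearity is built into the $\EE_\infty$-structure on the Picard spectrum, and then propagate the structure to general $f$ by functoriality of the Thom construction.
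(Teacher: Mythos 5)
Your setup is fine up to and including the identification $\Th(f)\otimes_{\cC}\Th(f)\simeq \Th(f\circ m)$ and the treatment of the unit (the basepoint carries no twist, so $i_!\colon \cC \to \Th(f)$ with $\cC$-linear right adjoint $i^*$ is exactly what the paper does). The gap is at the step you yourself flag as the "main technical obstacle": producing an internal, $\Th(f)$-bilinear right adjoint to $\mu$ by taking $m^*$ in the untwisted case and then "transporting along the invertible twist." The pointwise trivializations $f(x)\simeq \cC$ are not coherent — that failure of coherence \emph{is} the twist — so there is no canonical equivalence of $\Th(f)$-bimodules between $\Th(f\circ m)$ and $\cC[X\times X]$ along which the untwisted adjoint could be conjugated; asserting such a transport essentially assumes what has to be proven. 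The proposed rescue via the universal case $X=\Mod_{\cC}\units$, $f=\id$ does not repair this: $\Mod_{\cC}\units$ is a large spectrum, so the Thom construction over it is not an object of $\Mod_{\cC}(\PrL)$ in any obvious sense, and even granting a suitable smallification, the claim that the bilinearity "is built into the $\EE_\infty$-structure on the Picard spectrum" is precisely the coherence statement at issue, not an argument for it.

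The missing idea, and the paper's route, is to untwist \emph{after} base change to $\Th(f)$ rather than before: by the Thom isomorphism of Antolin--Barthel, $\Th(f)\otimes_{\cC}\Th(f)\simeq \Th(f)[X]$ as $\Th(f)$-modules, and under this identification $\mu$ becomes the colimit functor $X_!\colon \Th(f)[X]\to \Th(f)$, whose right adjoint $X^*$ (the constant-diagram functor) is manifestly colimit-preserving and $\Th(f)$-linear. Combined with the criterion of Ramzi (\cite[Lemma~4.54]{Ramzi-2024-rigid}), which reduces rigidity to atomicity of the unit plus internal adjointness of $\mu$ in $\Mod_{\Th(f)}(\cC)$, this closes exactly the step your transport argument leaves open; the "in particular" clause then follows from \cref{exm:rigit-atomic-dualizable} as you say. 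If you want to keep your formulation via $\Th(f\circ m)$, you still need some version of this relative Thom isomorphism to exhibit the right adjoint and its linearity, so the shortcut does not avoid the key input.
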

        \begin{proof}
            By \cite[Lemma 4.54]{Ramzi-2024-rigid} it suffices to verify that:
            \begin{enumerate}
                \item\label{cond:atomic-unit} The unit $\ounit_{\Th(f)}$ is $\cC$-atomic,
                \item\label{cond:multiplication-is-left-adjoint} the multiplication map $\Th(f)\otimes_\cC \Th(f) \to \Th(f) \in \Mod_{\Th(f)}(\cC)$ is internally left adjoint.
            \end{enumerate} 

            Let $i \colon 0 \to X$ be the zero map. Then $\ounit_{\Th(f)}[-] \colon \cC \to \Th(f)$ is the functor $i_!$ which admits a $\cC$-linear right adjoint $i^*$. Therefore $\ounit_{\Th(f))}$ is atomic.
            
            By the Thom isomorphism (\cite[Proposition~3.16,Corollary 3.17]{Antolin-Barthel-2019-Thom}), 
            \begin{equation*}
                \Th(F)\otimes_\cC \Th(F)\simeq \Th(F)[X].
            \end{equation*}
            Under this isomorphism, the multiplication map is $X_!$, which admits a $\Mod_{\Th(F)}(\cC)$-linear right adjoint $X^*$, proving \labelcref{cond:multiplication-is-left-adjoint}.                
            The implication is \cref{exm:rigit-atomic-dualizable}.
        \end{proof}

        The topological Hochschild homology of a Thom ring spectrum was computed in \cite{Blumberg-Cohen-Schlichtkrull-2010-THH-of-Thom}. Their results were extended to $\THH_{\cC}$ of a $\cC$-linear Thom category, for general presentably symmetric monoidal category $\cC$ in \cite{Carmeli-Cnossen-Ramzi-Yanovski-2022-characters}. We recall their results:

        \begin{definition}\label{def:character-of-a-map}
            Let $X \in \spc$, $Y \in \cnSp$ and $f \colon X \to Y$. Define the character of $f$ as the map\footnote{
                In the case $Y = \cC\units$, $f$ is an $X$-local system of invertible elements and therefore its character (\cref{def:character}) is defined. By \cref{lem:trace-is-eta+id}, it agrees with this definition.

                When $X = \BG$ is a pointed connected space and $Y = \B^m A$ for an abelian group $A$, this map on $\pi_0$ is known as the \emph{transgression} of $f$ (see \cite{Willerton-2008-transgression}).
            }        
        
        \begin{equation*}
                \chi_f \colon \L X \xto{\L f} \L Y \simeq Y \times \Omega Y \xto{\eta + \id} \Omega Y.
            \end{equation*}
        \end{definition}
        In the case $Y = \Mod_{\cC}\units$, then $\Omega Y \simeq \cC\units$.

        \begin{remark}\label{rmrk:character-between-spectra}
            If $X \in \cnSp$ and $f \colon X \to Y$ is a map of connective spectra, then, by naturality of the decomposition of $\L$ and of $\eta + \id$, the character of $f$ is isomorphic to the map
            \begin{equation*}
                \L X \simeq X \times \Omega X \xto{\eta + \id} \Omega X \xto{\Omega f} \Omega Y.
            \end{equation*}
        \end{remark}
        
        \begin{theorem}[{\cite[Theorem~1]{Blumberg-Cohen-Schlichtkrull-2010-THH-of-Thom}, \cite[Corollary~7.15]{Carmeli-Cnossen-Ramzi-Yanovski-2022-characters}}]\label{thm:THH-of-Th}
            Let $X$ be a space and $f \colon X \to \Mod_{\cC}\units$. Then $\Th(f)\in \Mod_{\cC}$ is dualizable and
            \begin{equation*}
                \THH_{\cC}(\Th(f)) \simeq \Th(\chi_f).
            \end{equation*}
        \end{theorem}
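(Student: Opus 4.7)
The approach is to compute $\THH_\cC(\Th(f))$ via the cyclic bar construction and to exploit that the Thom construction converts tensor powers and geometric realizations into manipulations on the base space $X$.

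I would first establish dualizability. The candidate dual is $\Th(-f) \in \Mod_\cC$, where $-f \colon X \to \Mod_\cC\units$ is the composition of $f$ with the inversion automorphism of the Picard $\EE_\infty$-group. By the Thom isomorphism (\cite[Proposition~3.16]{Antolin-Barthel-2019-Thom}), $\Th(f) \otimes_\cC \Th(-f)$ is identified with $\Th(h)$, where $h \colon X \times X \to \Mod_\cC\units$ sends $(x, y)$ to $f(x) \otimes f(y)^{-1}$. Restricting along the diagonal $\Delta \colon X \to X \times X$ together with the canonical nullhomotopy $h \circ \Delta \simeq 0$ produces the coevaluation $\cC \to \Th(f) \otimes_\cC \Th(-f)$, and a dual construction yields the evaluation; the zig-zag identities are formal consequences of the $\EE_\infty$-group structure on $\Mod_\cC\units$.

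For the main computation, $\THH_\cC(\Th(f))$ is the geometric realization of the cyclic bar construction whose $n$-th level is $\Th(f)^{\otimes_\cC (n+1)}$. Iterating the Thom isomorphism, this tensor power is $\Th(f_n)$, where $f_n \colon X^{n+1} \to \Mod_\cC\units$ sends $(x_0, \ldots, x_n)$ to $f(x_0) \otimes \cdots \otimes f(x_n)$. Because the Thom construction preserves colimits and the geometric realization of the cyclic space $X^{\bullet+1}$ is the free loop space $\L X$, one obtains $\THH_\cC(\Th(f)) \simeq \Th(g)$ for a canonical map $g \colon \L X \to \Mod_\cC\units$.

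The main obstacle is identifying $g$ with $\chi_f$. I would use the splitting $\L Y \simeq Y \times \Omega Y$ for the $\EE_\infty$-space $Y = \Mod_\cC\units$ and analyze each component of $g$ separately. The $Y$-component of $g$ is $f$ composed with evaluation at the basepoint of the loop, matching the $Y$-component of $\L f$. The $\Omega Y \simeq \cC\units$ component encodes the monodromy around the loop, which by \cref{lem:trace-is-eta+id} corresponds to the $\eta$-map $\Sigma \SS \to \SS$; tracing through the cyclic structure shows this component is $\eta$ composed with the $\Omega$-component of $\L f$. Summing these gives $g \simeq (\eta + \id) \circ \L f$, which is exactly $\chi_f$ as defined in \cref{def:character-of-a-map}. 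Verifying the monodromy identification rigorously is the technical heart of the theorem and is where the careful manipulations of the cyclic-bar construction in the cited references become essential.
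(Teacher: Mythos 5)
The paper itself offers no proof of this statement: it is quoted from Blumberg--Cohen--Schlichtkrull and from \cite[Corollary~7.15]{Carmeli-Cnossen-Ramzi-Yanovski-2022-characters}, so the relevant benchmark is the argument of the latter. Measured against the statement actually being proved, your central step does not go through. Here $X$ is an arbitrary space and $f$ an arbitrary map, so $\Th(f)\in\Mod_{\cC}$ carries no algebra structure, and $\THH_{\cC}$ is the relative THH/dimension functor $\Mod_{\cC}\to\cC$ (defined on dualizable objects, respectively internal left adjoints, as in \cref{def:character}), not a cyclic bar construction applied to $\Th(f)$. Your simplicial object $\Th(f)^{\otimes_{\cC}(\bullet+1)}$ has no face maps unless $X$ is at least an $\EE_1$-monoid and $f$ an $\EE_1$-map; and even granting such structure, its geometric realization is an object of $\Mod_{\cC}$ (a $\cC$-linear category), one categorical level above the asserted answer $\Th(\chi_f)\in\cC$. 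The mismatch surfaces in your own conclusion: the map $g$ produced by your realization takes values in $\Mod_{\cC}\units$, whereas $\chi_f$ of \cref{def:character-of-a-map} takes values in $\cC\units\simeq\Omega\,\Mod_{\cC}\units$, so the identification $g\simeq(\eta+\id)\circ\L f$ cannot be literally correct. What you have sketched is essentially the proof of the classical, one-level-down Blumberg--Cohen--Schlichtkrull theorem (THH of a Thom $\EE_1$-ring spectrum inside $\cC$), which genuinely uses the multiplicativity of $f$.

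The argument that does prove the statement stays inside the trace formalism. Writing $\Th(f)=p_{!}f$ for $p\colon X\to\ast$, dualizability follows because $\cC[X]$ is self-dual in $\Mod_{\cC}$ and $f$ is an invertible local system; your duality paragraph (dual $\Th(-f)$, evaluation and coevaluation via the Thom isomorphism and the diagonal) is fine in outline, and compare \cref{lem:Thom-is-rigid} for the multiplicative case. The dimension of $p_{!}f$ is then computed by the categorified character machinery of \cite{Carmeli-Cnossen-Ramzi-Yanovski-2022-characters} (functoriality of $\THH_{\cC}$ under internal left adjoints together with their character formula for colimits of invertible local systems), which outputs exactly the transgression $\chi_f=(\eta+\id)\circ\L f$ and hence $\THH_{\cC}(\Th(f))\simeq\colim_{\L X}\chi_f=\Th(\chi_f)$. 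If you want a cyclic-bar-style computation, it must be run one categorical level down (on the algebra or on mapping objects of generators), not on $\Th(f)$ itself.
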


         \begin{lemma}
            Let $A \in \Ab$ and $\Zchar \in \Map\Enull(A, \Sigma\cC\units)$. Then
            \begin{equation*}
                \THH_{\cC}(\Gr^{\Zchar}_A \cC) \simeq \ounit_{\cC}[A] \qin \CAlg(\Gr_A \cC),
            \end{equation*}
            and in particular does not depend on $\Zchar$.
        \end{lemma}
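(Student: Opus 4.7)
The plan is to apply \cref{thm:THH-of-Th} to the Thom presentation $\Gr^{\Zchar}_A \cC = \Th(\Zchar)$, which identifies $\THH_{\cC}(\Gr^{\Zchar}_A \cC) \simeq \Th(\chi_{\Zchar})$ in $\CAlg(\cC)$, where $\chi_{\Zchar} \colon \L A \to \cC\units$ is the character of $\Zchar$ viewed as a map $A \to \Sigma \cC\units \to \Mod_{\cC}\units$. The computation then reduces to showing that $\chi_{\Zchar}$ is nullhomotopic, regardless of the choice of $\Zchar$.

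For this, I would use \cref{rmrk:character-between-spectra} to factor $\chi_{\Zchar}$ as
\[
\L A \simeq A \oplus \Omega A \xto{\eta + \id} \Omega A \xto{\Omega \Zchar} \cC\units.
\]
Since $A$ is a discrete abelian group, $\Omega A$ vanishes in connective spectra (its homotopy groups are $\pi_i(\Omega A) = \pi_{i+1}(A) = 0$ for $i \ge 0$), so the intermediate target is the zero spectrum and $\chi_{\Zchar} \simeq 0$. Therefore $\Th(\chi_{\Zchar}) \simeq \ounit_{\cC}[\L A] \simeq \ounit_{\cC}[A]$, and the resulting isomorphism is manifestly independent of $\Zchar$.

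The remaining task is to upgrade this identification from $\CAlg(\cC)$ to $\CAlg(\Gr_A \cC)$. For this one uses that the Thom construction $\Th(g)$ of a map $g \colon X \to \Mod_{\cC}\units$ is canonically an algebra in $\Gr_X \cC$ via the tautological $X$-grading on $\colim_X g$, and then traces this structure through the identification $\L A \simeq A$ (valid since $A$ is discrete) and the triviality of $\chi_{\Zchar}$, under which the $\L A$-grading becomes the standard $A$-grading on $\ounit_{\cC}[A] = \bigoplus_{a \in A} \ounit_{\cC}\shift{a}$. The main obstacle I expect is purely bookkeeping: verifying the compatibility of the two $A$-gradings on each side under the identification of \cref{thm:THH-of-Th}, which should follow from naturality of the character construction in the domain Thom category.
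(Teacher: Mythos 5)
Your proposal is correct and follows essentially the same route as the paper: apply \cref{thm:THH-of-Th} and observe via \cref{rmrk:character-between-spectra} that $\chi_{\Zchar}$ factors through $\Omega A \simeq 0$ since $A$ is discrete, so the Thom construction of the character is the trivial one, $\ounit_{\cC}[\L A] \simeq \ounit_{\cC}[A]$. The extra bookkeeping you flag about the $A$-grading is harmless but not needed beyond what the identification $\L A \simeq A$ already provides.
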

        
        \begin{proof}
            By \cref{thm:THH-of-Th} it is enough to see that the character map $\chi_{\Zchar} \colon \L A \to \cC\units$ is zero. But by \cref{rmrk:character-between-spectra} it factors through $\Omega A \simeq 0$.
        \end{proof}

        \subsubsection*{Braiding characters}

        We start by defining a different version of the braiding character of objects in degree exactly 1 in $\ZZ$-graded categories.
        \begin{definition}
            Let $\Zchar \colon \ZZ \to \Sigma^2 \ounit_{\cC}\units$. Let $V \in \cC\dbl$. By \cref{lem:Thom-is-rigid}, the braiding of $V\shift{1} \in \Gr^{\Zchar}_{\ZZ} \cC$ 
            \begin{equation*}
                \T V\shift{1} \colon \cC[\MM] \to \Gr^{\Zchar}_{\ZZ} \cC
            \end{equation*}
            is internal left adjoint. Define
            \begin{equation*}
                \mscr{X}^{\Zchar}_{\T V\shift{1}} \colon \ounit_{\cC}[\L \MM] \to \ounit_{\cC}[\ZZ] \qin \CAlg(\cC)
            \end{equation*}
            as its image under $\THH_{\cC}$.
        \end{definition}

        We can define this variant in any graded category with an object concentrated in a single degree, as we always have a symmetric monoidal map $\Gr^{\Zchar\circ a}_{\ZZ} \cC \to \Gr^{\Zchar}_A \cC$ sending $V\shift{1}$ to $V\shift{a}$.

        \begin{definition}\label{def:braiding-character-of-shift-by-an-element}
            Let $A \in \Ab$ and $\Zchar \in \Map\Enull(A, \Sigma \cC\units)$. Let $V \in \cC\dbl$ and $a\in A$. Then define 
            \begin{equation*}
                \mscr{X}^{\Zchar}_{\T V\shift{a}} \coloneqq \mscr{X}^{\Zchar \circ a}_{\T V\shift{1}} \colon \L\MM \to \ounit_{\cC}[\ZZ].
            \end{equation*}
        \end{definition}

        \begin{lemma}\label{lem:braiding-characters-agree}
            Let $A\in \Ab$ and $\Zchar \in \Map\Enull(A, \Sigma \cC\units)$. Let $V \in \cC\dbl$ and $a\in A$.
            Then $\mscr{X}^{\Zchar}_{\T V\shift{a}}$ is the braiding character of $V\shift{a}$ in the sense of \cref{def:braiding-character-general}.
        \end{lemma}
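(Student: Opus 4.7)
The plan is to reduce to the case $A = \ZZ$, $a = 1$, apply \cref{cor:braiding-character-factors-constant} to both sides, and identify them through their underlying characters.

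By definition $\mscr{X}^{\Zchar}_{\T V\shift{a}} = \mscr{X}^{\Zchar \circ a}_{\T V\shift{1}}$, and by naturality of \cref{def:braiding-character-general} along the $\cC$-linear symmetric monoidal functor $a_! \colon \Gr^{\Zchar \circ a}_\ZZ \cC \to \Gr^\Zchar_A \cC$ from \cref{lem:braiding-of-A-determined-by-Z}, which sends $V\shift{1} \mapsto V\shift{a}$ and preserves units, the braiding character of $V\shift{a}$ in $\Gr^\Zchar_A \cC$ (in the sense of \cref{def:braiding-character-general}) matches that of $V\shift{1}$ in $\Gr^{\Zchar \circ a}_\ZZ \cC$. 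This reduces the claim to showing $\mscr{X}^\Zchar_{\T V\shift{1}} \simeq \mscr{X}_{\T V\shift{1}}$ for $V\shift{1} \in \cD \coloneqq \Gr^\Zchar_\ZZ \cC$.

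By \cref{cor:braiding-character-factors-constant} applied to $V\shift{1} \in \cD$, $\mscr{X}_{\T V\shift{1}}$ factors as $(\chi_{\T V\shift{1}}, \deg) \colon \L\MM \to \ounit_\cC \times \ZZ \to \ounit_\cC[\ZZ]$ in $\CMon$. For the LHS I will verify the hypothesis of \cref{lem:map-factors-constant}: the braiding $\T V\shift{1} \colon \cC[\MM] \to \cD$ sends $n \in \MM$ to $V^{\otimes n}\shift{n}$, which lies in the $n$-th $\ZZ$-graded component of $\cD$, and functoriality of $\THH_\cC$ together with the identification $\THH_\cC(\cD) \simeq \ounit_\cC[\ZZ]$ then shows that $\mscr{X}^\Zchar_{\T V\shift{1}}$, restricted to the $\L\B\Sm$ component of $\L\MM$, is supported at degree $n = \deg(\sigma)$. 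Thus \cref{lem:map-factors-constant} yields a factorization $\mscr{X}^\Zchar_{\T V\shift{1}} \simeq (\chi', \deg)$, where $\chi'$ is the composition of $\mscr{X}^\Zchar_{\T V\shift{1}}$ with the augmentation $\ounit_\cC[\ZZ] \to \ounit_\cC$.

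It remains to identify $\chi' \simeq \chi_{\T V\shift{1}}$. Under the identification $\THH_\cC(\cD) \simeq \ounit_\cC[\ZZ]$, the augmentation $\ounit_\cC[\ZZ] \to \ounit_\cC$ corresponds to $\THH_\cC$ of the $\cC$-linear internal left adjoint $\colim \colon \cD \to \cC$. Hence $\chi'$ is $\THH_\cC$ of the composite $\cC[\MM] \xto{\T V\shift{1}} \cD \xto{\colim} \cC$, which classifies the $\MM$-local system in $\cC$ sending $n \mapsto V^{\otimes n}$ with the twisted $\Sm$-action induced by the braiding in $\cD$; by \cite[Proposition~5.14]{Carmeli-Cnossen-Ramzi-Yanovski-2022-characters} this is the character of that local system, evaluating on $\sigma \in \Sm$ to $\tr_\cC(V^{\otimes n}, \sigma_{\mrm{tw}})$. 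On the other hand $\chi_{\T V\shift{1}}(\sigma) = \tr_\cD(V^{\otimes n}\shift{n}, \sigma)$. The main obstacle is to verify these traces coincide: since $V^{\otimes n}\shift{n} \otimes (V^\dual)^{\otimes n}\shift{-n}$ lies entirely in $\ZZ$-degree $0$ of $\cD$, the zigzag computing $\tr_\cD$ takes place in the degree-$0$ subcategory, where canonical Picard trivializations of $\ounit_\cC\shift{n} \otimes \ounit_\cC\shift{-n}$ identify the $\cD$-internal $(\ev, \coev)$ for $V^{\otimes n}\shift{n}$ with the $\cC$-internal ones for $V^{\otimes n}$, so the whole twist is absorbed into the $\Sm$-action and no extra factor appears. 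Combining this with the factorization above, both $\mscr{X}^\Zchar_{\T V\shift{1}}$ and $\mscr{X}_{\T V\shift{1}}$ are identified with $(\chi_{\T V\shift{1}}, \deg)$, hence agree.
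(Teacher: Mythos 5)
Your reduction to the universal case $A=\ZZ$, $a=1$, and the use of \cref{lem:map-factors-constant} and \cref{cor:braiding-character-factors-constant} to reduce the comparison to the two underlying characters, is exactly the paper's strategy. The gap is in your final identification $\chi' \simeq \chi_{\T V\shift{1}}$. You route the augmentation through $\THH_{\cC}(\colim)$ and then claim that the $\cD$-trace of $\sigma$ on $V^{\otimes \degree}\shift{\degree}$ (for $\cD = \Gr^{\Zchar}_{\ZZ}\cC$) equals the $\cC$-trace of the transported automorphism of $V^{\otimes \degree}$, ``with no extra factor''. This is false: the trace uses the symmetry of $\cD$, and braiding the degree-$\degree$ factor past its degree-$(-\degree)$ dual contributes a unit, measured by $\dim_{\cD}(\ounit_{\cC}\shift{\degree})$. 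Concretely, take $\cC = \Vect$, $\Zchar = \Kos$, $V = \field$: the $\Sm$-action on $(\field\shift{1})^{\otimes \degree} = \field\shift{\degree}$ is the sign representation, so the $\cC$-character of the transported local system is $\sgn(\sigma)$, whereas $\chi_{\T \field\shift{1}}(\sigma) = \tr_{\cD}\big(\sigma \text{ on } \field\shift{\degree}\big) = \sgn(\sigma)\cdot(-1)^{\degree} = (-1)^{\numcyc}$, consistently with \cref{lem:character-of-Tm} since $\dim_{\cD}(\field\shift{1}) = -1$. Your two characters differ by $(-1)^{\degree}$, so the argument as written would ``prove'' $\sgn(\sigma) = (-1)^{\numcyc}$.

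Relatedly, the step ``the augmentation $\ounit_{\cC}[\ZZ]\to\ounit_{\cC}$ corresponds to $\THH_{\cC}(\colim)$'' is asserted rather than proved, and this is exactly where the same twist hides: the identification $\THH_{\cC}(\Gr^{\Zchar}_{\ZZ}\cC)\simeq\ounit_{\cC}[\ZZ]$ used to define $\mscr{X}^{\Zchar}_{\T V\shift{1}}$ comes from \cref{thm:THH-of-Th}, and with respect to it the evaluation-at-$1$ map plays the role of the base-change map $\THH_{\cC}(\cD)\to\THH_{\cD}(\cD)\simeq\ounit_{\cC}$, whose composite with $\THH_{\cC}(\T V\shift{1})$ is the $\cD$-monoidal character of the braiding directly; this is the identification the paper's proof invokes when it says the composite ``is the usual character of $\T V\shift{1}$''. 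It is not the map $\THH_{\cC}(\colim)$, which under that identification differs from the augmentation by precisely the units $\dim_{\cD}(\ounit_{\cC}\shift{\degree})$. Your two inaccuracies are off by the same unit and thus compensate, so the conclusion happens to be correct, but the proof is not valid as written: either establish the base-change description of the augmentation (the paper's route), or, if you insist on passing through $\colim$, keep track of the twist on both sides instead of arguing it away in the degree-zero subcategory.
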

        \begin{proof}
            It is enough to prove it for the universal case $A = \ZZ$, $a = 1$. Note that the composition $\L\MM \xto{\mscr{X}^{\Zchar}_{\T V\shift{1}}} \ounit_{\cC}[\ZZ] \to \ounit_{\cC}$ is the usual character of $\T V\shift{1}$. Moreover, this map sends each $\L\B\Sm$ exactly to degree $\degree$, so the claim follows from \cref{lem:map-factors-constant} and \cref{cor:braiding-character-factors-constant}.
        \end{proof}        

        \begin{notation}
            When $\Zchar$ is not clear from context, we will denote by $\chi^{\Zchar}_{\T V\shift{1}}$ the usual character of the braiding map, i.e.\ the composition $\L\MM \xto{\mscr{X}^{\Zchar}_{\T V\shift{1}}} \ounit_{\cC}[\ZZ] \to \ounit_{\cC}$.
        \end{notation}

        In the semiadditive case, we can extend this result to any object with finite support which is pointwise dualizable. Such objects are of the form $\bigoplus V_i \shift{a_i}$ for $a_i\in A$ and $V_i \in \cC\dbl$. The elements $a_i$ define a map $\vec{a} \colon \ZZ^r \to A$, sending $e_i$ to $a_i$. This, allows one to define a braiding character 
        \begin{equation*}
            \L\MM \to \ounit_{\cC}[\ZZ^r].
        \end{equation*}

        \begin{definition}
            Assume that $\cC$ is semiadditive. Let $\Zchar \in \Map\Enull(\ZZ^r, \Sigma \cC\units)$. 
            Let $W \coloneqq \bigoplus_i V_i \shift{e_i} \in \Gr^{\Zchar}_{\ZZ^r} \cC$ where $V_i \in \cC\dbl$. 
            The braiding of $W$ induces the $\cC$-linear left adjoint functor $\T W \colon \cC[\MM] \to \Gr^{\Zchar}_{\ZZ^r} \cC$. Applying $\THH_{\cC}$ we get the map
            \begin{equation*}
                \widetilde{\mscr{X}}^{\Zchar}_{\T W} \colon \L\MM \to \ounit_{\cC}[\ZZ^r].
            \end{equation*}
            
            Composing with the summation map $\ZZ^r \to \ZZ$, we define
            \begin{equation*}
                \mscr{X}^{\Zchar}_{\T W} \colon \L\MM \xto{\widetilde{\mscr{X}}^{\Zchar}_{\T W}} \ounit_{\cC}[\ZZ^r] \to \ounit_{\cC}[\ZZ].
            \end{equation*}
        \end{definition}

        \begin{definition}
            Assume that $\cC$ is semiadditive. Let $A \in \Ab$, $\Zchar \in \Map\Enull(A, \Sigma \cC\units)$ and $W \in \Gr^{\Zchar}_A \cC$ be pointwise dualizable and with finite support. Then $W$ must be of the form $\bigoplus_{i=1}^r V_i\shift{a_i}$ for some $a_i \in A$ and $V_i \in \cC\dbl$. This defines a homomorphism $\vec{a} \colon \ZZ^r \to A$ sending $e_i$ to $a_i$. Let $W' \coloneqq \bigoplus_{i=1}^r V_i \shift{e_i} \in \Gr^{\Zchar \circ \vec{a}}_{\ZZ^r} \cC$. Then $W'$ is sent to $W$ under the symmetric monoidal functor $\vec{a}_! \colon \Gr^{\Zchar\circ \vec{a}}_{\ZZ^r} \cC \to \Gr^{\Zchar}_A \cC$. Similarly to \cref{def:braiding-character-of-shift-by-an-element}, we define
            \begin{equation*}
                \mscr{X}^{\Zchar}_{\T W} \coloneqq \mscr{X}^{\Zchar \circ \vec{a}}_{\T W'} \colon \L \MM \to \ounit_{\cC}[\ZZ].
            \end{equation*}
        \end{definition}

        Repeating the proof of \cref{lem:braiding-characters-agree}, we get
        
        \begin{lemma}\label{lem:braiding-characters-agree-multiple}
            Assume that $\cC$ is semiadditive. Let $A\in \Ab$ and $\Zchar\in \Map\Enull(\A, \Sigma \cC\units)$. 
            Let $W \in \Gr^{\Zchar}_A \cC$ with finite support and assume that $W$ is pointwise dualizable. 
            Then the braiding character of $W$ agrees with $\mscr{X}^{\Zchar}_{\T W}$.
        \end{lemma}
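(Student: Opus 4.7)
The plan is to mimic the strategy used for \cref{lem:braiding-characters-agree}, now carried out in a multi-graded setting, and conclude by invoking \cref{cor:braiding-character-factors-constant}.

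First, I reduce to the universal case. Write $W \simeq \bigoplus_{i=1}^r V_i\shift{a_i}$, let $\vec{a} \colon \ZZ^r \to A$ be the induced map, and $W' = \bigoplus_i V_i\shift{e_i} \in \Gr^{\Zchar \circ \vec{a}}_{\ZZ^r}\cC$. The $\cC$-linear symmetric monoidal functor $\vec{a}_! \colon \Gr^{\Zchar \circ \vec{a}}_{\ZZ^r}\cC \to \Gr^{\Zchar}_A\cC$ sends $W'$ to $W$, and both $\mscr{X}^{\Zchar}_{\T W}$ (by its very definition) and the braiding character from \cref{def:braiding-character-general} are natural along $\vec{a}_!$ (via naturality of $\T(-)$ and of $\THH_\cC$). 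Hence it suffices to treat the case $A = \ZZ^r$, $\vec{a} = \id$, and $W = W'$.

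In that universal setting, by \cref{cor:braiding-character-factors-constant} and \cref{lem:map-factors-constant} I only need to verify two properties of $\mscr{X}^{\Zchar}_{\T W} \colon \L\MM \to \ounit_\cC[\ZZ]$: that the composition with the augmentation $\ounit_\cC[\ZZ] \to \ounit_\cC$ recovers the usual character $\chi_{\T W}$ of the braiding, and that each $\L\B\Sm \subseteq \L\MM$ is sent entirely into the single degree $\degree \in \ZZ$. For the first, the augmentation $\ounit_\cC[\ZZ^r] \to \ounit_\cC$ is obtained by applying $\THH_\cC$ to the symmetric monoidal colimit functor $\Gr^{\Zchar \circ \vec{a}}_{\ZZ^r}\cC \to \cC$; composing $\widetilde{\mscr{X}}^{\Zchar}_{\T W}$ with it thus computes $\THH_\cC$ of $\colim \circ \T W$, which by naturality is the character $\chi_{\T W}$, and this augmentation factors through $\ounit_\cC[\ZZ^r] \to \ounit_\cC[\ZZ] \to \ounit_\cC$ via the sum map. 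For the second, using \cref{cor:braiding-sends-coproducts-to-tensors} together with semiadditivity, the object $\T W(m) = W^{\otimes m} \in \Gr^{\Zchar \circ \vec{a}}_{\ZZ^r}\cC$ decomposes as a direct sum indexed by multi-indices $(m_1,\dots,m_r)$ with $\sum_i m_i = m$, with the summand of index $(m_1,\dots,m_r)$ sitting in degree $(m_1,\dots,m_r) \in \ZZ^r$; after pushing along the sum map $\ZZ^r \to \ZZ$, every such summand lands in degree $\degree$.

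With both properties verified, \cref{lem:map-factors-constant} identifies $\mscr{X}^{\Zchar}_{\T W}$ with the factorization through $\ounit_\cC \times \ZZ \to \ounit_\cC[\ZZ]$ whose components are $\chi_{\T W}$ and $\deg$, which by \cref{cor:braiding-character-factors-constant} is exactly the braiding character in the sense of \cref{def:braiding-character-general}. The main subtlety is not conceptual but organizational: one must carefully keep track of the interplay between the multi-grading on $\Gr^{\Zchar \circ \vec{a}}_{\ZZ^r}\cC$, the decomposition of $W^{\otimes m}$, and the collapsing sum map $\ZZ^r \to \ZZ$, which is what guarantees that everything ultimately lands in the correct degree.
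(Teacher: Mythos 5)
Your proof is correct and takes essentially the same route as the paper, which literally instructs the reader to repeat the proof of \cref{lem:braiding-characters-agree}: reduce to the universal case over $\ZZ^r$, check that composing with the augmentation recovers the usual character and that each $\L\B\Sm$ is supported in a single total degree, and conclude via \cref{lem:map-factors-constant} and \cref{cor:braiding-character-factors-constant}. Your explicit bookkeeping of the multi-index decomposition of $W^{\otimes m}$ and the collapse along the sum map $\ZZ^r \to \ZZ$ is exactly the implicit content of that repetition.
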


    \subsection{Exterior algebras}
    \label{subsec:exterior-algebras}

        Let $\cC\in \CAlg(\PrL)$ and $\Zchar \colon \ZZ \to \Sigma^2 \ounit_{\cC}\units$ be a map of connective spectra.
        Let $X \in \cC$. The free commutative algebra generated in $\Gr^{\Zchar}_{\ZZ} \cC$ by $X\shift{1}$ is a graded commutative algebra, which we think of as an analog of the exterior algebra. 
        For graded vector spaces, where there exist exactly two symmetric monoidal structures (\cref{rmrk:2-structurs-on-GrVect}), this construction recovers the classical (graded) symmetric and exterior algebras. In $\Mod_{\sVect}$, it naturally endows the exterior powers of super categories, defined in \cite{Ganter-Kapranov-2014-exterior-categories} by Ganter and Kapranov, with an algebra structure.

        \begin{definition}\label{def:exterior}
            Let $X \in \cC$. Define the $\Zchar$-twisted exterior algebra $\extAlg_{\Zchar} X \in \CAlg(\Gr_{\ZZ}^\Zchar \cC)$ of $X$ as the free commutative algebra generated by $X\shift{1}$:
            \begin{equation*}
                \extAlg_{\Zchar} X \coloneqq \mrm{Fr}_{\EE_{\infty}}(X\shift{1}) = \Sym[\bullet](X\shift{1}).
            \end{equation*}
        \end{definition}

        \begin{example}
            Assume $\Zchar \colon \ZZ \to \Sigma \cC\units$ is the trivial map. Then $\Gr^{\Zchar}_{\ZZ} \cC \simeq \Gr_{\ZZ} \cC$ and $\extAlg_{\Zchar} X \simeq \Sym[\bullet] X$ is the usual graded symmetric algebra.
        \end{example}

        \begin{example}\label{exm:exteior-Koszul}
            Assume $\cC = \Vect$. Let $(-1) \colon \ZZ/2 \to \field\units$ be the usual minus one map. Then $\Gr^{\Kos}_{\ZZ} \Vect$ is identified with the usual category of graded vector spaces with the Koszul sign (\cref{def:Koszul}) and $\extAlg_{\Kos} X$ is the usual graded exterior algebra of vector spaces.
        \end{example}

        \begin{example}
            For $\cC = \Mod_{\sVect}$, let $\widehat{\eta^2} \in \pinD[2]$ be the unique non-trivial element. Then, working in $\Gr^{\hat{\eta^2}}_{\ZZ} \cC$ as in \cref{def:dual-stable-stems}, $\alt_{\widehat{\eta^2}}$ is identified with the exterior power of super linear categories which is the main object of study of \cite{Ganter-Kapranov-2014-exterior-categories}. In particular, $\extAlg_{\widehat{\eta^2}}$ gives a natural graded symmetric monoidal structure on the collection of exterior power of super linear categories.
        \end{example}    
        
        \begin{corollary}
            For any $X \in \cC$, $\bigsqcup_{\degree} \alt_{\Zchar} X$ is naturally an associative algebra in $\cC$.
        \end{corollary}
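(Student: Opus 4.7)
The plan is to exhibit a natural $\EE_1$-monoidal ``forgetful'' functor $U \colon \Gr^{\Zchar}_{\ZZ} \cC \to \cC$ whose underlying functor computes the total coproduct, and then to apply it to the $\EE_\infty$-algebra $\alt_{\Zchar} X$.

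First, by \cref{lem:twFun-Day-E1}, the $\EE_1$-nullhomotopy built into the definition of $\Gr^{\Zchar}_{\ZZ} \cC$ identifies its underlying $\EE_1$-monoidal structure with the Day convolution $\Gr_{\ZZ} \cC = \cC[\ZZ]$. The unique map of commutative monoids $\ZZ \to \pt$ then induces, via the universal property of $\cC[-] \in \CAlg(\Mod_\cC)$, a symmetric monoidal colimit functor $\Gr_{\ZZ} \cC \to \cC$ sending a graded object $Y_\bullet$ to $\bigsqcup_{\degree \in \ZZ} Y_\degree$. Composing with the $\EE_1$-isomorphism of \cref{lem:twFun-Day-E1} yields the desired $\EE_1$-monoidal functor $U$.

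Finally, since $\alt_{\Zchar} X \in \CAlg(\Gr^{\Zchar}_{\ZZ} \cC)$ is in particular an $\EE_1$-algebra, its image under the $\EE_1$-monoidal functor $U$ is an $\EE_1$-algebra in $\cC$, and its underlying object is $\bigsqcup_{\degree} (\alt_{\Zchar} X)_\degree$, giving the claim.

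There is no real obstacle: the only subtle point to keep in mind is that the forgetful functor $U$ cannot be upgraded to a symmetric monoidal functor in general (since the twist $\Zchar$ need not be $\EE_\infty$-null), which is precisely why the resulting algebra structure on $\bigsqcup_\degree (\alt_\Zchar X)_\degree$ is only guaranteed to be associative and not commutative.
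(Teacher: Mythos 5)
Your proof is correct and follows essentially the same route as the paper: identify $\Gr^{\Zchar}_{\ZZ}\cC$ with $\Gr_{\ZZ}\cC$ as $\EE_1$-monoidal categories via \cref{lem:twFun-Day-E1}, observe that the colimit (total coproduct) functor $\Gr_{\ZZ}\cC \to \cC$ is monoidal, and push the algebra $\extAlg_{\Zchar} X$ forward. Your closing remark about why the structure is only associative, not commutative, matches the intended point of the statement.
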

        \begin{proof}
            As a monoidal category $\Gr^{\Zchar}_{\ZZ} \cC \simeq \Gr_{\ZZ} \cC$. The colimit functor, which in this case is the coproduct $\bigsqcup \colon \Gr_{\ZZ} \cC \to \cC$, is monoidal, therefore sends algebras to algebras. In particular $\extAlg_{\Zchar} X \in \Alg(\Gr_{\ZZ} \cC)$ is sent to $\bigsqcup_{\degree} \alt_{\Zchar} X \in \Alg(\cC)$.
        \end{proof}

        \begin{remark}\label{rmrk:exterior-as-alternating}
           The construction of the classical symmetric and alternating powers is through the fact that $\Sm$ admits exactly two characters --- the trivial and the sign. Given a character $\hchar \colon \Sm \to \field\units$ and $V \in \Vect$ we define
           \begin{equation*}
               \alt_{\hchar} V \coloneqq (V\om \otimes \field[\hchar])_{h\Sm}
           \end{equation*}
           where $\field[\hchar]$ is the one-dimensional representation corresponding to $\hchar$. Indeed for $\hchar = \triv$ we get the symmetric powers and for $\hchar = \sgn$ we get the exterior powers.

           For a general category $\cC$ there might be many more characters $\hchar \colon \Sm \to \ounit_{\cC}\units$. The braiding $\hchar_{\Zchar} \colon \MM \to \B\ounit_{\cC}\units$ of $\Zchar \colon \ZZ \to \Sigma^2\ounit_{\cC}\units$, defines a character $\hchar_{\Zchar}^{\degree}$ of $\B\Sm$ for each $\degree$. Moreover, for any $X\in \cC$, $\extAlg_{\Zchar} X$ at degree $\degree$ is identified with
           \begin{equation*}
               \alt_{\Zchar} X \simeq (X\om \otimes \ounit_{\cC}[\hchar_{\Zchar}^{\degree}])_{h\Sm}.
           \end{equation*}
           This is another justification for the name \quotes{exterior algebra}. 

            Note that not all characters arise in this manner; a further investigation of exterior powers associated with a broader class of characters is carried out in a separate work \cite{Keidar-Ragimov-2025-alternating}.
        \end{remark}

        \subsubsection*{Bialgebra structure}

        The classical exterior algebra is a Hopf algebra, taking advantage of the fact that $\extAlg (V \oplus W) \simeq \extAlg V \otimes \extAlg W$. We now extend this phenomenon to our settings. The existence of the antipode map is due to the fact that any vector space is grouplike. Therefore we manage to extend the construction to that of a bialgebra, which becomes Hopf for grouplike objects. We expect that for virtually $(\Fp, \chrHeight)$-orientable categories (see \cite{BCSY-Fourier}) there should be more structure on these rings, resembling a higher version of an antipode, yet we did not study it in this paper.

        \begin{definition}
            Let $\cC$ be a category. Define the category of co-groups in $\cC$ as
            \begin{equation*}
                \coGrp(\cC) \coloneqq \coCMon\gp(\cC) = (\CMon\gp(\cC\op))\op = \Grp(\cC\op)\op.
            \end{equation*}
        \end{definition}

        \begin{remark}
            When $\cC$ is semiadditive, $\coCMon(\cC) = \cC$ and being co-grouplike is a property of objects of $\cC$.
            Moreover in this case, for any object $X$, the shearing map of $X$ as a monoid is an isomorphism if and only if the shearing map of $X$ as a comonoid is an isomorphism. Thus being co-grouplike is the same as being grouplike.
        \end{remark}

        \begin{lemma}
            Assume that $\cC$ is semiadditive. Let $X\in \cC$. Then $\extAlg_{\Zchar} X$ has a natural structure of a bialgebra. If moreover $X$ is grouplike then $\extAlg_{\Zchar} X$ has a structure of a Hopf algebra.
        \end{lemma}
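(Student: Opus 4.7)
The plan is to leverage the semiadditivity hypothesis. Since $\cC$ is semiadditive, so is $\Gr^{\Zchar}_{\ZZ}\cC$: as a plain category it coincides with $\Fun(\ZZ,\cC)$, and functor categories into a semiadditive category are semiadditive. In any semiadditive category the biproduct $\oplus$ simultaneously serves as product and coproduct, and consequently every object $Y$ carries a canonical cocommutative comonoid structure with respect to $\oplus$, given by the diagonal $\Delta \colon Y \to Y \oplus Y$. When $Y$ is grouplike, this comonoid structure is automatically a cogroup, since the shearing map is an isomorphism.

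The key step is to promote the free commutative algebra functor $\Sym$ to a symmetric monoidal functor. Being a left adjoint, $\Sym \colon \Gr^{\Zchar}_{\ZZ}\cC \to \CAlg(\Gr^{\Zchar}_{\ZZ}\cC)$ preserves coproducts; since coproducts of commutative algebras are computed by the tensor product, we obtain natural isomorphisms $\Sym(Y_1 \oplus Y_2) \simeq \Sym(Y_1) \otimes \Sym(Y_2)$ that enhance $\Sym$ to a symmetric monoidal functor
\begin{equation*}
    \Sym \colon (\Gr^{\Zchar}_{\ZZ}\cC,\oplus) \to (\CAlg(\Gr^{\Zchar}_{\ZZ}\cC),\otimes).
\end{equation*}
Such functors transport cocommutative comonoids to cocommutative comonoids, and cogroups to cogroups.

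Applying this to $X\shift{1}$ equipped with its canonical cocommutative comonoid structure yields that $\extAlg_{\Zchar}X = \Sym(X\shift{1})$ is a cocommutative comonoid in $(\CAlg(\Gr^{\Zchar}_{\ZZ}\cC),\otimes)$, which is precisely the data of a commutative bialgebra (together with its pre-existing algebra structure from being in $\CAlg$). If moreover $X$ is grouplike, then so is $X\shift{1}$ (the functor $(-)\shift{1} \colon \cC \to \Gr^{\Zchar}_{\ZZ}\cC$ is additive, being defined via the Yoneda and tensor product maps between semiadditive categories), and hence $\extAlg_{\Zchar}X$ acquires the structure of a cogroup in $\CAlg(\Gr^{\Zchar}_{\ZZ}\cC)$, i.e., a commutative Hopf algebra.

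The main potential obstacle is verifying at the $\infty$-categorical level that $\Sym$ genuinely refines to a symmetric monoidal functor between $(\Gr^{\Zchar}_{\ZZ}\cC,\oplus)$ and $(\CAlg(\Gr^{\Zchar}_{\ZZ}\cC),\otimes)$, but this is a standard consequence of the universal property of $\Sym$ as the left adjoint to the forgetful functor $\CAlg(\Gr^{\Zchar}_{\ZZ}\cC) \to \Gr^{\Zchar}_{\ZZ}\cC$: the coproduct in $\CAlg$ of a presentably symmetric monoidal category is the tensor product, and a left adjoint between presentable $\infty$-categories canonically enhances to a symmetric monoidal functor whenever it identifies the coproduct structure on the source with the given symmetric monoidal structure on the target. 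All remaining verifications amount to transporting the canonical diagonal of $X\shift{1}$ through this functoriality.
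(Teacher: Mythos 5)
Your argument is correct and follows essentially the same route as the paper: both rest on the facts that the free commutative algebra functor is a left adjoint, hence sends biproducts to tensor products (the coproduct in $\CAlg$), and that in a semiadditive category every object carries a canonical cocommutative comonoid (resp.\ cogroup, when grouplike) structure via the diagonal. The only cosmetic difference is that you equip $X\shift{1}$ with its diagonal comonoid structure inside $\Gr^{\Zchar}_{\ZZ}\cC$ and transport it through the symmetric monoidality of $\Sym$ for the cocartesian structures, whereas the paper applies the diagonal already in $\cC$ and phrases the same mechanism as the induced functor $\coCMon(\cC)\to\coCMon(\CAlg(\Gr_{\ZZ}\cC))$.
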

        
        \begin{proof}
            The functor 
            \begin{equation*}
                \extAlg_{\Zchar}\colon \cC \xto{(-)\shift{1}} \Gr^{\Zchar}_{\ZZ} \cC \xto{\mrm{Fr}_{\EE_{\infty}}} \CAlg(\Gr^{\Zchar}_{\ZZ} \cC)
            \end{equation*}
            is left adjoint to the evaluation at $1$ functor $\CAlg(\Gr^{\Zchar}_{\ZZ} \cC) \to \cC$. Therefore it commutes with colimits, and in particular sends finite direct sums to tensor products. In particular it gives rise to a functor
            \begin{equation*}
                \extAlg_{\Zchar}\colon \cC \simeq \coCMon(\cC) \to \coCMon(\CAlg(\Gr_{\ZZ} \cC)) \simeq \BiCAlg(\Gr_{\ZZ} \cC)).
            \end{equation*}

            Restricting to grouplike, or equivalently, co-grouplike objects we get
            \begin{equation*}
                \extAlg_{\Zchar}\colon \coGrp(\cC) \simeq \coCMon\gp(\cC) \to \coCMon\gp(\CAlg(\Gr_{\ZZ} \cC)) = \Hopf(\Gr_{\ZZ} \cC).
            \end{equation*}
        \end{proof}
        
        \begin{corollary}
            Assume $\cC$ is additive. Then the exterior algebra functor extends to
            \begin{equation*}
                \extAlg_{\Zchar}\colon \cC \to \Hopf(\Gr_{\ZZ} \cC).
            \end{equation*}
        \end{corollary}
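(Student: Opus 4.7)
The plan is to deduce this directly from the preceding lemma, using that additivity is precisely the condition making every object (co-)grouplike.

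First I would unpack the definitions: an additive category is a semiadditive category in which the shearing map of every object is an isomorphism, i.e.\ every object is grouplike as a commutative monoid. By the remark immediately preceding the previous lemma, in a semiadditive category the shearing map on an object $X$ as a monoid is an isomorphism if and only if the shearing map on $X$ as a comonoid is, so grouplike coincides with co-grouplike. Hence in an additive $\cC$ we have
\begin{equation*}
    \cC \simeq \coGrp(\cC)
\end{equation*}
via the forgetful functor (which on a semiadditive category lands in $\coCMon(\cC) = \cC$ and becomes an isomorphism under the additivity hypothesis).

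Second, I would simply compose: the previous lemma supplies
\begin{equation*}
    \extAlg_{\Zchar} \colon \coGrp(\cC) \to \Hopf(\Gr_{\ZZ} \cC),
\end{equation*}
and pre-composing with the isomorphism $\cC \isoto \coGrp(\cC)$ yields the desired extension $\extAlg_{\Zchar} \colon \cC \to \Hopf(\Gr_{\ZZ} \cC)$. Its underlying bialgebra recovers the one constructed in the lemma, and its image lies in the Hopf algebras since every object of $\cC$ is grouplike.

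There is no real obstacle here; the work has already been done in the lemma, and the corollary is essentially a bookkeeping statement that the additivity of $\cC$ upgrades the input category from $\coGrp(\cC)$ to all of $\cC$. The only point worth double-checking is that the forgetful functor $\coGrp(\cC) \to \cC$ is genuinely an equivalence rather than merely fully faithful, but this is immediate from the characterization of additive categories as semiadditive categories in which being a (co)group is a property automatically satisfied by every object.
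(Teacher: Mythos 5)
Your argument is correct and is essentially the paper's intended deduction: the lemma already produces $\extAlg_{\Zchar}\colon \coGrp(\cC) \to \Hopf(\Gr_{\ZZ}\cC)$, and additivity of $\cC$ means every object is grouplike, hence (by the remark identifying grouplike with co-grouplike in the semiadditive setting) $\coGrp(\cC) \simeq \cC$, so the corollary follows by precomposition exactly as you describe.
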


        \subsubsection*{Dimensions comparison}

        One can extend the construction of the exterior algebra to a lax symmetric monoidal functor $\NN \to \Gr^{\Zchar}_{\ZZ} \cC$, thinking of it as a graded algebra in $\Gr^{\Zchar}_{\ZZ} \cC$.
        
        \begin{definition}\label{def:exterior-functor}
            Let $\cD$ be a presentably symmetric monoidal category. Define the functor
            \begin{equation*}
                \extAlg \colon \cD \xto{\T} \cD[\MM] \xto{\deg_!} \cD[\NN].
            \end{equation*}
            As $\deg_!$ is symmetric monoidal and $\T$ factors through $\CAlg(\cD[\MM])$, the functor $\extAlg$ lands in lax symmetric monoidal functors $\NN \to \cD$.
        \end{definition}
        By \cref{rmrk:exterior-as-alternating}, when $\cD = \Gr^{\Zchar}_{\ZZ}\cC$, for $X \in \cC$, $\alt_{\Zchar} X$ as in \cref{def:exterior}, agrees with $\alt X\shift{1}$ as in \cref{def:exterior-functor}. This also allows us to define an analog of the exterior algebra for objects not concentrated only in degree 1 (which is just the free commutative algebra generated by this object).

        \begin{corollary}\label{cor:exterior-takes-sums-to-products}
            Let $\cD$ be a presentably symmetric monoidal category. Then for any $X,Y \in \cD$
            \begin{equation*}
                \extAlg (X \sqcup Y) \simeq \extAlg X \otimes_{\Day} \extAlg Y.
            \end{equation*}
        \end{corollary}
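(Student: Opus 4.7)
The plan is to observe that this is a direct consequence of two facts already established in the paper: the braiding $\T$ sends coproducts in $\cD$ to Day tensor products in $\cD[\MM]$ (the earlier corollary), and the pushforward $\deg_! \colon \cD[\MM] \to \cD[\NN]$ is symmetric monoidal (part of the definition of $\extAlg$).

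More precisely, I would unfold the definition
\begin{equation*}
    \extAlg = \deg_! \circ \T \colon \cD \to \cD[\NN]
\end{equation*}
and compute, for any $X, Y \in \cD$,
\begin{equation*}
    \extAlg(X \sqcup Y) = \deg_!\,\T(X \sqcup Y) \simeq \deg_!\bigl(\T X \otimes_{\Day} \T Y\bigr) \simeq \deg_!\,\T X \otimes_{\Day} \deg_!\,\T Y = \extAlg X \otimes_{\Day} \extAlg Y,
\end{equation*}
where the first isomorphism uses the corollary that $\T$ carries finite coproducts to Day tensor products (which was itself derived from the fact that $\T$ factors as a composite of left adjoints followed by the forgetful functor out of $\CAlg$, together with the identification of coproducts in $\CAlg$ with tensor products), and the second isomorphism uses symmetric monoidality of $\deg_! \colon \cD[\MM] \to \cD[\NN]$, which is the left Kan extension along the map of commutative monoids $\deg \colon \MM \to \NN$ and is therefore a symmetric monoidal left adjoint between Day convolution structures.

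There is no real obstacle here: the content lies entirely in the two prior results, and this corollary is a formal consequence. The only thing to be mindful of is that the symmetric monoidality of $\deg_!$ should be invoked cleanly; this is a standard fact about Day convolution (a morphism of commutative monoids induces a symmetric monoidal left adjoint on presheaf categories with Day structure), and is already asserted in \cref{def:exterior-functor}.
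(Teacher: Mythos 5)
Your proof is correct and is essentially identical to the paper's: both reduce the claim to \cref{cor:braiding-sends-coproducts-to-tensors} together with the symmetric monoidality of $\deg_!$ from \cref{def:exterior-functor}. You simply spell out the chain of isomorphisms that the paper leaves implicit.
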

        \begin{proof}
            Since $\deg_!$ is symmetric monoidal, this follows from \cref{cor:braiding-sends-coproducts-to-tensors}.
        \end{proof}

        Written concretely, we get the known formula
        \begin{equation*}
            \alt(X \sqcup Y) \simeq \bigsqcup_{i+j = \degree} \alt[i] X \otimes \alt[j] Y.
        \end{equation*}

        Now, similarly to the braiding character, we can consider the image of this map under $\THH_{\cD}$, getting a generating function for the dimensions:
        \begin{lemma}\label{lem:extAlg-is-dbl}
             Assume $\cD$ is 1-semiadditive. Then for any $V \in \cD\dbl$, $\extAlg_{\Zchar} V$ is point-wise dualizable.
        \end{lemma}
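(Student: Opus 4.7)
The plan is to identify the value of $\extAlg V$ in degree $\degree$ explicitly as the homotopy orbits $(V^{\otimes \degree})_{h\Sm}$, and then reduce pointwise dualizability to the general fact that, in a $1$-semiadditive symmetric monoidal category, taking homotopy orbits under a finite group preserves dualizability.

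First, unpacking \cref{def:exterior-functor}, the value of $\extAlg V = \deg_!\,\T V$ at $\degree \in \NN$ is the colimit of $\T V$ restricted to the connected component $\B\Sm \subseteq \MM$. Since $\T V|_{\B\Sm}$ is the permutation representation $V^{\otimes \degree}$ with its canonical $\Sm$-action, this colimit is $(V^{\otimes \degree})_{h\Sm}$. Moreover, $V^{\otimes \degree}$ is dualizable as a finite tensor product of dualizable objects, with dual $(V\dual)^{\otimes \degree}$.

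It therefore suffices to show: in a $1$-semiadditive presentably symmetric monoidal category $\cD$, if $G$ is a finite group and $X \in \cD^{\B G}$ is dualizable as an object of $\cD$, then $X_{hG}$ is dualizable. By $1$-semiadditivity the norm map $X_{hG} \isoto X^{hG}$ is an isomorphism, and similarly after tensoring with any object $Y$. Using the adjunction between constant diagrams and colimits, followed by the dualizability of $X$ and the fact that $(-) \otimes Y$ preserves colimits, one computes
\begin{equation*}
    \hom(X_{hG}, Y) \simeq \hom(X, Y)^{hG} \simeq (X\dual \otimes Y)^{hG} \simeq (X\dual \otimes Y)_{hG} \simeq (X\dual)_{hG} \otimes Y,
\end{equation*}
naturally in $Y$. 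This exhibits $X_{hG}$ as dualizable with dual $(X\dual)_{hG}$ (equipped with the dual $G$-action). Applied to $X = V^{\otimes \degree}$ and $G = \Sm$, we conclude that every level $\alt V = (V^{\otimes \degree})_{h\Sm}$ is dualizable in $\cD$.

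The main technical point is the norm-equivalence step and its compatibility with the $\Sm$-action on the dual; this is precisely the content of $1$-semiadditivity in the sense of \cite{Hopkins-Lurie-2013-ambi}, and once invoked the argument proceeds by formal adjunction manipulations. Beyond that the proof is routine.
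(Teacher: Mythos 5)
Your proof is correct and follows essentially the same route as the paper: both identify the degree-$\degree$ component of $\extAlg_{\Zchar} V$ as $(V\om)_{h\Sm}$ and deduce dualizability from $1$-semiadditivity together with the dualizability of $V\om$ and the $1$-finiteness of $\B\Sm$. The only difference is that the paper simply invokes the closure of dualizable objects under such $1$-finite (homotopy-orbit) colimits, whereas you reprove this fact via the norm equivalence and the projection formula, which is exactly how the standard closure result is established.
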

        \begin{proof}
            The object $\extAlg V$ at degree $\degree$ is given by $(V\om)_{h\Sm}$ which is dualizable as $V\om$ is dualizable and $\B\Sm$ is 1-finite. 
        \end{proof}
        \begin{definition}
            Assume that $\cD$ is 1-semiadditive. Let $V \in \cD\dbl$. Let
            \begin{equation*}
            \extAlg V\shift{1} \coloneqq \deg_! \T V\shift{1} \colon \NN \to \Gr_{\ZZ} \cD.
            \end{equation*}
            Notice again that its image under the symmetric monoidal functor $\colim \colon \Gr_{\ZZ} \cD \to \cD$ is $\extAlg V$. Then define
            \begin{equation*}
                \dim \extAlg V \colon \ounit_{\cD}[\NN] \to \ounit_{\cD}[\ZZ] \qin \CAlg(\cD).
            \end{equation*}
            as its image under $\THH_{\cD}$ of $\extAlg V\shift{1}$.
        \end{definition}

        \begin{remark}
            Writing $\ounit_{\cD}[\ZZ] = \ounit_{\cD}[t^{\pm 1}]$, the evaluation of this map at $\degree \in \NN$ is $\dim \alt_{\Zchar} V\, t^{\degree}$. We sometimes abuse notations, and write
            \begin{equation*}
                \dim \extAlg V = \sum_{\degree} \dim \alt V\, t^{\degree},
            \end{equation*}
            as the generating function of all the dimensions.
        \end{remark}

        \begin{corollary}\label{cor:dimensions-take-sums-to-products}
            Assume $\cD$ is 1-semiadditive. Let $V, W \in \cD\dbl$. Then, as generating functions
            \begin{equation*}
                \dim \extAlg (V \oplus W) = (\dim \extAlg V) (\dim \extAlg W).
            \end{equation*}
        \end{corollary}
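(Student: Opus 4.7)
The plan is to lift the identity to the categorical level via \cref{cor:exterior-takes-sums-to-products} and then apply $\THH_\cD$. Since $\cD$ is 1-semiadditive, binary coproducts agree with direct sums, so \cref{cor:exterior-takes-sums-to-products} yields an isomorphism
\begin{equation*}
    \extAlg(V \oplus W)\shift{1} \simeq \extAlg V\shift{1} \otimes_{\Day} \extAlg W\shift{1} \qin (\Gr_\ZZ \cD)[\NN].
\end{equation*}
Concretely, at level $n\in\NN$ both sides compute to $\bigoplus_{i+j=n}(\alt[i]V \otimes \alt[j]W)\shift{n}$, and by \cref{lem:extAlg-is-dbl} each level is dualizable in $\Gr_\ZZ\cD$, so the relevant $\THH_\cD$-characters exist.

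I would then apply $\THH_\cD \colon \Mod_\cD \to \cD$, which is symmetric monoidal. Under the identifications $\THH_\cD(\cD[\NN]) \simeq \ounit_\cD[\NN]$ and $\THH_\cD(\Gr_\ZZ \cD) \simeq \ounit_\cD[\ZZ]$, the Day convolution on $\cD[\NN]$ arising from the addition $\NN \times \NN \to \NN$ and the tensor product on $\Gr_\ZZ \cD$ arising from the addition $\ZZ \times \ZZ \to \ZZ$ both translate to the corresponding ring multiplications on $\ounit_\cD[\NN]$ and $\ounit_\cD[\ZZ]$. Therefore the isomorphism above descends under $\THH_\cD$ to the desired equality
\begin{equation*}
    \dim \extAlg(V \oplus W) \simeq \dim \extAlg V \cdot \dim \extAlg W
\end{equation*}
of maps $\ounit_\cD[\NN] \to \ounit_\cD[\ZZ]$, interpreted as the product of generating functions in $t = [1] \in \ounit_\cD[\ZZ]$.

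Unwinding at each degree $n$, the identity reads
\begin{equation*}
    \dim \alt(V \oplus W) \simeq \sum_{i+j=n} \dim \alt[i] V \cdot \dim \alt[j] W,
\end{equation*}
which may alternatively be checked directly from the binomial decomposition using the additivity of $\dim$ on fiber sequences (\cref{cor:dim-splits-exact-sequences}) together with the multiplicativity of $\dim$ on tensor products of dualizable objects. The main technical subtlety is ensuring that the Day convolution multiplication in $(\Gr_\ZZ \cD)[\NN]$ genuinely corresponds, via $\THH_\cD$, to ordinary polynomial multiplication in $\ounit_\cD[\ZZ]$; once the symmetric monoidality of $\THH_\cD$ and its compatibility with the inherited commutative algebra structures of $\cD[\NN]$ and $\Gr_\ZZ\cD$ are in hand, the identity is immediate.
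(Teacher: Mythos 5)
Your argument is correct, and it hinges on the same key input as the paper's proof, namely \cref{cor:exterior-takes-sums-to-products} applied in $\Gr_{\ZZ}\cD$ to $V\shift{1}$ and $W\shift{1}$; the difference lies in how you descend from the categorical isomorphism to the identity of generating functions. The paper takes the elementary route: the claim is equivalent to the degreewise identity $\dim \alt (V\oplus W) = \sum_{i+j=\degree}(\dim\alt[i] V)(\dim \alt[j] W)$, which follows from the levelwise splitting $\alt(V\oplus W)\simeq \bigoplus_{i+j=\degree}\alt[i]V\otimes\alt[j]W$ together with additivity of $\dim$ on finite direct sums and multiplicativity on tensor products. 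You instead apply $\THH_{\cD}$ and invoke its symmetric monoidality; this gives the statement directly at the level of maps $\ounit_{\cD}[\NN]\to\ounit_{\cD}[\ZZ]$, but the step you yourself flag --- that $\otimes_{\Day}$ of the two internal left adjoint functors $\cD[\NN]\to\Gr_{\ZZ}\cD$ is carried by $\THH_{\cD}$ to the convolution product of the two generating functions --- is precisely the nontrivial content of your route, and you do not prove it. It can be supplied, for instance, by writing the Day product as the external product followed by left Kan extension along the addition map $\NN\times\NN\to\NN$ (which has finite discrete fibers) and applying the induced character formula as in \cref{lem:induced-character-formula-for-dimensions}; the paper's degreewise check sidesteps this bookkeeping entirely. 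One small correction to your alternative argument: \cref{cor:dim-splits-exact-sequences} assumes $\cD$ is stable, whereas here $\cD$ is only assumed $1$-semiadditive; since only split direct sums occur, you should instead use the elementary additivity of $\dim$ on biproducts, valid in any semiadditive symmetric monoidal category. The levelwise dualizability needed to form the relevant characters is indeed \cref{lem:extAlg-is-dbl}, as you cite.
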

        \begin{proof}
            Concretely, we need to prove that
            \begin{equation*}
                \sum_{\degree} \dim \alt (V\oplus W)\, t^{\degree} = (\sum_{\degree} \dim \alt V\, t^{\degree})(\sum_{\degree} \dim \alt W\, t^{\degree}).
            \end{equation*}
            Equivalently, that
            \begin{equation*}
                \dim \alt (V\oplus W) = \sum_{i+j = \degree} (\dim \alt[i] V)(\dim\alt[j] W).
            \end{equation*}
            This follows by \cref{cor:exterior-takes-sums-to-products}. 
        \end{proof}

        \begin{remark}
            If $\cD\in \CAlg(\PrL)$ is $1$-semiadditive, then $\deg \colon \MM \to \NN$ is $\cD$-adjointable\footnote{sometimes also called $\cD$-semi-affine}, i.e.\ $\deg^* \colon \cD^{\NN} \to \cD^{\MM}$ is internal left adjoint in $\Mod_{\cD}$. This follows from \cite[Proposition~2.32]{BCSY-Fourier} since the fiber of $\deg$ at each point is 1-finite.
        \end{remark}

        \begin{lemma}\label{lem:induced-character-formula-for-dimensions}
            Assume that $\cD$ is 1-semiadditive. Then $\dim \extAlg V$ is isomorphic to the composition
            \begin{equation*}
                \ounit_{\cD}[\NN] \xto{\THH_{\cD}(\deg^*)} \ounit_{\cD}[\L\MM] \xto{\mscr{X}_{\T V}} \ounit_{\cD}[\ZZ].
            \end{equation*}
        \end{lemma}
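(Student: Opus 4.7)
The plan is to realize the $\cD$-linear colimit-preserving extension of $\extAlg V\shift{1}$ as a composition of internal left adjoints factoring through $\cD[\MM]$, and then invoke the functoriality of $\THH_{\cD}$ with respect to internal left adjoints.

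First, I would identify the $\cD$-linear functor $\extAlg V\shift{1} \colon \cD[\NN] \to \Gr_{\ZZ}\cD$ with the composite
\[
\cD[\NN] \xto{\deg^*} \cD[\MM] \xto{\T V\shift{1}} \Gr_{\ZZ}\cD.
\]
This amounts to a universal-property check: both functors are $\cD$-linear and colimit-preserving, so they are determined by their restrictions to the generators $\NN \subseteq \cD[\NN]$. By the definition $\extAlg V\shift{1} = \deg_!\T V\shift{1}$, the restriction of the left-hand side at $n \in \NN$ is $\colim_{\deg^{-1}(n)} \T V\shift{1}$. On the right-hand side, $\deg^*$ sends the representable at $n$ to the characteristic function of $\deg^{-1}(n) \subseteq \MM$, which $\T V\shift{1}$ then sends to the same colimit $\colim_{\deg^{-1}(n)} \T V\shift{1}$.

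Next, I would check that each factor is an internal left adjoint in $\Mod_{\cD}$. The functor $\T V\shift{1}$ is an internal left adjoint, as was already exploited in the definition of $\mscr{X}_{\T V}$ (via rigidity of $\Gr_{\ZZ}\cD$ from \cref{lem:Thom-is-rigid}). The functor $\deg^*$ is an internal left adjoint by the $\cD$-adjointability of $\deg$, which is exactly the content of the remark immediately preceding the statement: the fibers of $\deg$ are $1$-finite and $\cD$ is $1$-semiadditive, so the right adjoint $\deg_*$ of $\deg^*$ is $\cD$-linear.

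Finally, applying $\THH_{\cD}$ to the composition and using its functoriality on internal left adjoints yields
\[
\dim \extAlg V \simeq \THH_{\cD}(\T V\shift{1}) \circ \THH_{\cD}(\deg^*) = \mscr{X}_{\T V} \circ \THH_{\cD}(\deg^*),
\]
which is the desired identification. The only conceptual step is the first one; the remainder is essentially formal given the internal-adjointability observations already recorded earlier in the paper.
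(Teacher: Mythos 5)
Your proposal is correct and follows essentially the same route as the paper: the paper's proof likewise observes that the $\cD$-linear functor $\extAlg V\shift{1} = \deg_!\T V\shift{1}$ decomposes as $\cD[\NN] \xto{\deg^*} \cD[\MM] \xto{\T V\shift{1}} \Gr_{\ZZ}\cD$ (this being an instance of the induced character formula of Carmeli--Cnossen--Ramzi--Yanovski) and then applies $\THH_{\cD}$. You merely make explicit the generator check and the internal-left-adjointness of $\deg^*$ and $\T V\shift{1}$, which the paper leaves to the preceding remark and \cref{lem:Thom-is-rigid}.
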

        \begin{proof}
            This is essentially the induced character formula \cite[Theorem 5.20]{Carmeli-Cnossen-Ramzi-Yanovski-2022-characters}. The functor $\extAlg V\shift{1} = \deg_! \T V\shift{1}$ decomposes as
            \begin{equation*}
                \cD[\NN] \xto{\deg^*} \cD[\MM] \xto{\T V\shift{1}} \Gr_{\ZZ} \cD.
            \end{equation*}
            The result now follows by taking $\THH_{\cD}$.
        \end{proof} 

        \begin{corollary}\label{cor:dim-depends-only-on-dim}
            Assume that $\cD$ is 1-semiadditive. Then $\dim \extAlg V$ depends only on $\dim(V) \in (\ounit_{\cD})^{\B\vee_k \Ck}$.
        \end{corollary}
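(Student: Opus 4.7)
The plan is to combine the two preceding results, \cref{lem:induced-character-formula-for-dimensions} and \cref{thm:braiding-depends-only-on-dim}, which together essentially give the result for free. The key observation is that \cref{lem:induced-character-formula-for-dimensions} factors $\dim \extAlg V$ through the braiding character, so any dependency statement about the braiding character transfers directly to a statement about $\dim \extAlg V$.

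More precisely, I would argue as follows. By \cref{lem:induced-character-formula-for-dimensions}, the map $\dim \extAlg V \colon \ounit_{\cD}[\NN] \to \ounit_{\cD}[\ZZ]$ factors as
\begin{equation*}
    \ounit_{\cD}[\NN] \xto{\THH_{\cD}(\deg^*)} \ounit_{\cD}[\L\MM] \xto{\mscr{X}_{\T V}} \ounit_{\cD}[\ZZ].
\end{equation*}
The first map $\THH_{\cD}(\deg^*)$ depends only on the adjointability data of $\deg \colon \MM \to \NN$ inside $\Mod_{\cD}$, which is independent of $V$. Therefore $\dim \extAlg V$ is determined by the second map $\mscr{X}_{\T V}$. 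By \cref{thm:braiding-depends-only-on-dim}, the braiding character $\mscr{X}_{\T V}$ depends only on $\dim V \in \End(\ounit_{\cD})^{\vee_k \B\Ck}$, and the conclusion follows.

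Since both inputs are already proved earlier in the paper, I do not expect any serious obstacle. The only thing to double-check is that the factorization in \cref{lem:induced-character-formula-for-dimensions} is genuinely functorial in $V$ in the appropriate sense, so that the dependency on $V$ is localized in the $\mscr{X}_{\T V}$ factor; this is immediate from the construction of $\extAlg V\shift{1}$ as $\deg_! \T V\shift{1}$ and the functoriality of $\THH_{\cD}$ with respect to internal left adjoints.
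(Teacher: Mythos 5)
Your argument is correct and coincides with the paper's proof, which deduces the statement directly from \cref{lem:induced-character-formula-for-dimensions} and \cref{thm:braiding-depends-only-on-dim} in exactly the way you describe. The elaboration about the factor $\THH_{\cD}(\deg^*)$ being independent of $V$ is a fine (and accurate) way to make the one-line deduction explicit.
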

        \begin{proof}
            This follows from \cref{lem:induced-character-formula-for-dimensions,thm:braiding-depends-only-on-dim}.
        \end{proof}

        \begin{corollary}\label{cor:dim-exterior-internal}
            Assume $\cC$ is 1-semiadditive. Let $A \in \Ab$ and $\Zchar \in \Map\Enull(A, \Sigma\cC\units)$. Let $V \in \cC\dbl$ and $a\in A$. Then $\dim \extAlg V\shift{a}$ identifies with
            \begin{equation*}
                \dim \extAlg_{\Zchar} V\shift{a} \coloneqq \THH_{\cC}(\extAlg_{\Zchar \circ a}V\shift{1}) \colon \ounit_{\cC}[\NN] \to \ounit_{\cC}[\ZZ].
            \end{equation*}
        \end{corollary}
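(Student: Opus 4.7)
The plan is to reduce to the universal case $A = \ZZ$ and $a = 1$ via the symmetric monoidal pushforward, and then chase definitions using the induced character formula together with the identification of internal and external braiding characters established earlier.

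First, I would reduce to the universal case. The homomorphism $a \colon \ZZ \to A$ is a map over $\Sigma \cC\units$, so it induces a $\cC$-linear symmetric monoidal functor $a_! \colon \Gr^{\Zchar \circ a}_{\ZZ} \cC \to \Gr^{\Zchar}_A \cC$ sending $V\shift{1}$ to $V\shift{a}$. The construction of $\extAlg$ in \cref{def:exterior-functor} is natural in symmetric monoidal functors (both $\T$ and $\deg_!$ are), so $a_!$ intertwines the two exterior algebras. Consequently it suffices to prove the statement for $V\shift{1} \in \Gr^{\Zchar \circ a}_{\ZZ} \cC$, i.e.\ to treat the case $A = \ZZ$, $a = 1$.

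In this universal case, set $\cD = \Gr^{\Zchar}_{\ZZ} \cC$. By \cref{lem:induced-character-formula-for-dimensions} applied in $\cD$, the map $\dim \extAlg V\shift{1}$ factors as
\begin{equation*}
    \ounit_{\cD}[\NN] \xto{\THH_{\cD}(\deg^*)} \ounit_{\cD}[\L\MM] \xto{\mscr{X}_{\T V\shift{1}}} \ounit_{\cD}[\ZZ].
\end{equation*}
By \cref{lem:braiding-characters-agree}, the braiding character $\mscr{X}_{\T V\shift{1}}$ of $V\shift{1} \in \cD$ is computed internally by applying $\THH_{\cC}$ to the $\cC$-linear symmetric monoidal functor $\T V\shift{1} \colon \cC[\MM] \to \Gr^{\Zchar}_{\ZZ} \cC$, yielding the map $\mscr{X}^{\Zchar}_{\T V\shift{1}} \colon \ounit_{\cC}[\L\MM] \to \ounit_{\cC}[\ZZ]$. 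Since $\deg^* \colon \cC[\NN] \to \cC[\MM]$ is $\cC$-linear and the map $\cC \to \cD$ is symmetric monoidal, $\THH_{\cD}(\deg^*)$ is obtained from $\THH_{\cC}(\deg^*)$ by base change along the unit map $\cC \to \cD$, so the two composites agree under the canonical identifications of $\ounit_{\cD}[-]$ with $\ounit_{\cC}[-]$.

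It then remains to recognize the composition $\mscr{X}^{\Zchar}_{\T V\shift{1}} \circ \THH_{\cC}(\deg^*)$ as $\THH_{\cC}(\extAlg_{\Zchar} V\shift{1})$. This is just the induced character formula applied one more time, now inside $\cC$: since $\extAlg_{\Zchar} V\shift{1} = \deg_! \circ \T V\shift{1}$ as a $\cC$-linear colimit-preserving functor $\cC[\NN] \to \Gr^{\Zchar}_{\ZZ}\cC$, functoriality of $\THH_{\cC}$ along internal left adjoints gives
\begin{equation*}
    \THH_{\cC}(\extAlg_{\Zchar} V\shift{1}) \simeq \THH_{\cC}(\T V\shift{1}) \circ \THH_{\cC}(\deg^*) = \mscr{X}^{\Zchar}_{\T V\shift{1}} \circ \THH_{\cC}(\deg^*),
\end{equation*}
which matches the previous expression. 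The main obstacle I expect is purely bookkeeping: carefully distinguishing $\THH_{\cC}$ from $\THH_{\cD}$ and verifying that the base change identification $\THH_{\cD}(\deg^*) \simeq \ounit_{\cD} \otimes_{\ounit_{\cC}} \THH_{\cC}(\deg^*)$ is compatible with the natural isomorphism $\THH_{\cC}(\Gr^{\Zchar}_{\ZZ}\cC) \simeq \ounit_{\cC}[\ZZ]$ used to define $\mscr{X}^{\Zchar}_{\T V\shift{1}}$; once this is unwound, the identification is formal.
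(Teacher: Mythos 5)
Your proof is correct and is essentially the paper's argument: the paper's proof of this corollary is precisely the combination of \cref{lem:induced-character-formula-for-dimensions} and \cref{lem:braiding-characters-agree}, together with the decomposition $\extAlg_{\Zchar \circ a} V\shift{1} \simeq \T V\shift{1} \circ \deg^*$ and the functoriality of $\THH_{\cC}$ (and the unit identifications) that you spell out. The only deviation is your preliminary reduction to $A = \ZZ$, $a = 1$ via $a_!$, which is harmless but not needed, since both cited lemmas are already stated for general $(A,a)$ (indeed \cref{lem:braiding-characters-agree} performs that reduction internally).
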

        \begin{proof}
            This follows by \cref{lem:induced-character-formula-for-dimensions,lem:braiding-characters-agree}.
        \end{proof}
        Similarly:
        \begin{corollary}\label{cor:dim-exterior-internal-multiple}
            Assume $\cC$ is 1-semiadditive. Let $A \in \Ab$ and $\Zchar \in \Map\Enull(A, \Sigma \cC\units)$. Let $V_1,\dots,V_r \in \cC\dbl$ and $a_1,\dots, a_r \in A$. Then $\dim \extAlg (\bigoplus_i V_i\shift{a_i})$ identifies with
            \begin{equation*}
                \ounit_{\cC}[\NN] \xto{\THH_{\cC}[\otimes V_i\shift{e_i}]} \ounit_{\cC}[\ZZ^r] \to \ounit_{\cC}[\ZZ].
            \end{equation*}
        \end{corollary}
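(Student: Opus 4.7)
The plan is to mirror the proof of Corollary \ref{cor:dim-exterior-internal}, reducing to the $\ZZ^r$-graded case via the homomorphism $\vec{a}\colon \ZZ^r \to A$ sending $e_i$ to $a_i$. Set $W \coloneqq \bigoplus_i V_i\shift{a_i} \in \Gr^\Zchar_A\cC$ and $W' \coloneqq \bigoplus_i V_i\shift{e_i} \in \Gr^{\Zchar\circ\vec{a}}_{\ZZ^r}\cC$, so that the symmetric monoidal functor $\vec{a}_!$ carries $W'$ to $W$.

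First I would apply Lemma \ref{lem:induced-character-formula-for-dimensions} in $\cD = \Gr^\Zchar_A\cC$ (which is $1$-semiadditive since $\cC$ is) to express $\dim\extAlg W$ as the composition $\mscr{X}_{\T W} \circ \THH_\cC(\deg^*)$, and then invoke Lemma \ref{lem:braiding-characters-agree-multiple} to identify $\mscr{X}_{\T W}$ with $\mscr{X}^\Zchar_{\T W}$. By definition, the latter factors as $\widetilde{\mscr{X}}^{\Zchar\circ\vec{a}}_{\T W'}\colon \ounit_\cC[\L\MM] \to \ounit_\cC[\ZZ^r]$ followed by the summation map $\ounit_\cC[\ZZ^r] \to \ounit_\cC[\ZZ]$ induced by $\vec{1}\colon \ZZ^r \to \ZZ$. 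This exhibits $\dim\extAlg W$ as
\begin{equation*}
    \ounit_\cC[\NN] \xto{\THH_\cC(\deg^*)} \ounit_\cC[\L\MM] \xto{\widetilde{\mscr{X}}^{\Zchar\circ\vec{a}}_{\T W'}} \ounit_\cC[\ZZ^r] \to \ounit_\cC[\ZZ].
\end{equation*}

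It then remains to match the composition of the first two arrows with $\THH_\cC[\otimes V_i\shift{e_i}]$. Unwinding definitions, $\widetilde{\mscr{X}}^{\Zchar\circ\vec{a}}_{\T W'} = \THH_\cC(\T W')$, while $\THH_\cC[\otimes V_i\shift{e_i}]$ is $\THH_\cC$ of the internal left adjoint $\extAlg W' = \deg_!\T W' \colon \cC[\NN] \to \Gr^{\Zchar\circ\vec{a}}_{\ZZ^r}\cC$. The required identification
\begin{equation*}
    \THH_\cC(\deg_!\T W') \simeq \THH_\cC(\T W') \circ \THH_\cC(\deg^*)
\end{equation*}
is exactly the Beck-Chevalley input driving Lemma \ref{lem:induced-character-formula-for-dimensions}: since $\deg\colon \MM \to \NN$ has $1$-finite fibers and $\cC$ is $1$-semiadditive, $\deg_!$ and $\deg^*$ yield the same internal left adjoint on $\THH_\cC$.

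The main care required is notational bookkeeping: keeping track of which $\THH_\cC$ lives over which $\cC$-linear category, and verifying that the summation map $\ounit_\cC[\ZZ^r] \to \ounit_\cC[\ZZ]$ appearing in $\mscr{X}^\Zchar_{\T W}$ is indeed induced by $\vec{1}$ (rather than, say, pulled back along $\vec{a}$ and then restricted). Since these compatibilities are built into the construction of $\mscr{X}^\Zchar$ in the multi-variable definition, the argument is essentially formal and reduces to the same one-line justification as Corollary \ref{cor:dim-exterior-internal}.
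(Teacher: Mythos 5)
Your proposal is correct and follows the paper's own (one-line) argument: the paper likewise deduces the corollary directly from \cref{lem:induced-character-formula-for-dimensions} and \cref{lem:braiding-characters-agree-multiple}, with the remaining identifications being the definitional unwinding you spell out. The extra bookkeeping you add (factoring $\mscr{X}^{\Zchar}_{\T W}$ through $\ounit_{\cC}[\ZZ^r]$ and matching $\THH_{\cC}(\deg_!\T W')$ with $\THH_{\cC}(\T W')\circ\THH_{\cC}(\deg^*)$) is consistent with how those lemmas are set up, so nothing is missing.
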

        \begin{proof}
            This follows by \cref{lem:induced-character-formula-for-dimensions,lem:braiding-characters-agree-multiple}.
        \end{proof}

        \begin{notation}
            When $\Zchar \colon \ZZ \to \Sigma^2 \ounit_{\cC}\units$ is not clear from context we denote the dimension in $\Gr^{\Zchar}_{\ZZ} \cC$ by $\dim_{\Zchar}$.
        \end{notation}
        
        \begin{corollary}\label{cor:generating-functions-dimensions}
            Let $\Zchar_1,\dots, \Zchar_r \colon \ZZ \to \Sigma^2 \ounit_{\cC}\units$. Then the generating function
            \begin{equation*}
                (\dim \extAlg_{\Zchar_1} V) \cdots (\dim \extAlg_{\Zchar_r} V)
            \end{equation*}
            depends only on $\sum_i \dim_{\Zchar_i} (V\shift{1}) \in (\ounit_{\cC})^{\vee_k\B\Ck}$.
        \end{corollary}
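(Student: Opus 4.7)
The plan is to reduce the multi-twist product to a single dimension computation by bundling all of the $\Zchar_i$ into one multi-graded category. I assemble them into a single map $\vec{\Zchar} \colon \ZZ^r \to \Sigma^2\ounit_\cC\units$ characterized by $\vec{\Zchar}\circ e_i = \Zchar_i$, and work inside the twisted graded category $\Gr^{\vec{\Zchar}}_{\ZZ^r}\cC$ (which is $1$-semiadditive since its underlying category is $\Fun(\ZZ^r,\cC)$) with the object $W \coloneqq \bigoplus_{i=1}^r V\shift{e_i}$. Since each summand $V\shift{e_i}$ is pointwise dualizable, so is $W$.

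The first step is to identify the product of generating functions with $\dim \extAlg W$. Applying \cref{cor:dim-exterior-internal-multiple} to each summand gives $\dim \extAlg V\shift{e_i} = \dim \extAlg_{\Zchar_i} V$, and the multiplicativity of $\dim\extAlg$ on direct sums (\cref{cor:dimensions-take-sums-to-products}) then yields
\begin{equation*}
    \dim\extAlg W \;=\; \prod_{i=1}^r \dim\extAlg V\shift{e_i} \;=\; \prod_{i=1}^r \dim\extAlg_{\Zchar_i} V.
\end{equation*}

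The second step is to express $\dim W$ in terms of the $\dim_{\Zchar_i}(V\shift{1})$. The coordinate inclusions $e_i \colon \ZZ \hookrightarrow \ZZ^r$ are maps of spectra over $\Sigma^2\ounit_\cC\units$, so they induce $\cC$-linear symmetric monoidal functors $e_{i,!} \colon \Gr^{\Zchar_i}_\ZZ\cC \to \Gr^{\vec{\Zchar}}_{\ZZ^r}\cC$ sending $V\shift{1}$ to $V\shift{e_i}$. Since symmetric monoidal functors preserve dimensions together with their $\TT$-action, and dimension is additive on direct sums (by \cref{cor:dim-splits-exact-sequences}), this gives
\begin{equation*}
    \dim W \;=\; \sum_{i=1}^r \dim V\shift{e_i} \;=\; \sum_{i=1}^r \dim_{\Zchar_i}(V\shift{1}) \qin (\ounit_\cC)^{\B\TT}.
\end{equation*}

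Combining the two steps, I invoke \cref{cor:dim-depends-only-on-dim} applied to $W$, which says that $\dim\extAlg W$ depends only on the image of $\dim W$ in $(\ounit_\cC)^{\vee_k \B\Ck}$, completing the argument. The main subtlety is to ensure that the symmetric monoidal functoriality of dimension -- including the $\TT$-action, not merely the underlying element of $\ounit_\cC$ -- propagates correctly through each $e_{i,!}$; but this is built into the construction of the equivariant trace in \cref{subsec:equivariant-trace}, so the obstacle dissolves.
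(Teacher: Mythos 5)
Your proof is correct and follows essentially the same route as the paper: assemble the twists into $\vec{\Zchar}\colon \ZZ^r \to \Sigma^2\ounit_{\cC}\units$, apply \cref{cor:dim-exterior-internal,cor:dim-exterior-internal-multiple} and \cref{cor:dimensions-take-sums-to-products} to identify the product with $\dim\extAlg(\bigoplus_i V\shift{e_i})$, and conclude via \cref{cor:dim-depends-only-on-dim} together with the identification $\dim(V\shift{e_i}) = \dim_{\Zchar_i}(V\shift{1})$ through the functors $(e_i)_!$. The only cosmetic quibble is your appeal to \cref{cor:dim-splits-exact-sequences} (stated for stable categories) for additivity of dimension on finite direct sums, which in the 1-semiadditive setting is elementary and is what the paper implicitly uses.
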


        \begin{proof}
            Define $\Zchar \colon \ZZ^r \to \Sigma^2 \ounit_{\cC}\units$ as the product of $\Zchar_i$. Then the commutative diagram
            \begin{equation*}
                \begin{tikzcd}
                    \ZZ & {\ZZ^r} \\
                    & {\Sigma\cC\units}
                    \arrow["{j_i}", hook, from=1-1, to=1-2]
                    \arrow["{\Zchar_i}"', from=1-1, to=2-2]
                    \arrow["\Zchar", from=1-2, to=2-2]
                \end{tikzcd}
            \end{equation*} 
            induces a symmetric monoidal functor $(j_i)_! \colon \Gr^{\Zchar_i}_{\ZZ} \cC \to \Gr^{\Zchar}_{\ZZ^r} \cC$, sending $V\shift{1}$ to $V\shift{e_i}$. By the formulas in \cref{cor:dim-exterior-internal,cor:dim-exterior-internal-multiple}
            \begin{equation*}
                \dim \extAlg V\shift{e_i} = \dim \extAlg_{\Zchar_i} V.
            \end{equation*}
            Now, by \cref{cor:dimensions-take-sums-to-products},
            \begin{equation*}
                \dim \extAlg(\bigoplus V_i\shift{e_i}) = \bigsqcap_i \dim \extAlg V_i \shift{e_i} = \bigsqcap_i \dim \extAlg_{\Zchar_i} V.
            \end{equation*}
            By \cref{cor:dim-depends-only-on-dim}, $\dim \extAlg(\bigoplus V_i\shift{e_i})$ depends only on 
            \begin{equation*}
                \dim(\bigoplus V_i\shift{e_i}) = \sum_i \dim (V_i\shift{e_i}) \in (\ounit_{\cC})^{\vee_k \B\Ck}.
            \end{equation*}
            Again using the symmetric monoidal functor $(j_i)_! \colon \Gr^{\Zchar_i}_{\ZZ} \cC \to \Gr^{\Zchar}_{\ZZ^r} \cC$, we get that $\dim(V\shift{e_i}) = \dim_{\Zchar_i}(V\shift{1})$ as needed.
        \end{proof}

 \section{Graded categories and orientation}
 \label{sec:Galois-closed-unit}
    
    Throughout this section we fix a presentably symmetric monoidal, $\infty$-semiadditive, $(\SS, \chrHeight)$-oriented category $\cC$ (see \cref{Galois-closed}). As before, when working $p$-typically we assume that it is $(\SS_{(p)},\chrHeight)$-oriented and will write $\In$ for $\Inprime$ and $\pinD$ for $\widehat{\pi}_{\chrHeight+1}(\SS_{(p)})$ trusting the reader to infer the relevant $p$ from the context. 
    
    We start in \cref{subsec:graded-categories-and-orientability} by showing that there exists a universal $\cC[\omega^{(0)}_{\SS}]$ Galois extension of $\cC$, such that $\Mod_{\cC[\omega^{(0)}_{\SS}]}$ is $(\SS,\chrHeight+1)$-oriented. It is constructed as a homotopy twisted graded category. In particular, $\Mod_{\cC}$ is $(\SS,\chrHeight+1)$-oriented if and only if it is the trivial Galois extension, or equivalently, its braiding is trivial.

    This motivates us to investigate the braiding of $\cC[\omega^{(0)}_{\SS}]$, which can be analyzed independently for each $\pichar \in \pinD$. In line with the theme of this paper, we do so by examining the associated braiding characters.
    
    In \cref{subsec:low-heights} we show that, at height $\le 4$, the reduced $\Ck$-action on the monoidal dimension of  $\ounit_{\cC}\shift{1} \in \Gr^{\pichar}_{\ZZ} \cC$ is trivial for any $k$. We use this and the results of \cref{sec:graded-braiding} to get generating functions for the dimensions of exterior algebras under additive assumptions, and to completely compute the braiding characters in the categorical case.
    
    In \cref{subsec:chromatic-braiding-character} we study the case $\cC = \ModEn$ for any height, and demonstrate that the  reduced $\Cn[p^k]$-action on the dimension of any object in $\Gr^{\pichar}_\ZZ (\ModEn)\dbl$ is trivial. As a result, we compute the braiding characters for all $\pichar$ at any height, and all primes.
    This provides some support for the orientability conjecture of $\Mod_{\ModEn}$ \cite[Conjecture~7.10]{BCSY-Fourier}, which, in our language, predicts that $\Gr^{\pichar}_\ZZ \ModEn \simeq \Gr_\ZZ \ModEn$ for any $\pichar \in \pinD$.

    \subsection{The orientable extension}
    \label{subsec:graded-categories-and-orientability}

        In this subsection we construct a cyclotomically-closed Galois extension of $\cC$ as a homotopy twisted graded category. 
        Recall that since $\cC$ is $(\SS,\chrHeight)$-oriented we have a map $\In \to \cC\units$ which we think of as the embedding of spherical roots of unity.
       
        \begin{definition}\label{def:zeta}
            Rolling the cofiber sequence 
            \begin{equation*}
                \Sigma \In \to \Inplus\to \pinD,
            \end{equation*}
            we get a map 
            \begin{equation*}
                \zeta = \zeta_{\chrHeight} \colon \pinD \to \Sigma^2 \In \to \Sigma^2\ounit_\cC\units \to \Sigma\cC\units.
            \end{equation*}
    
            We define $\cC[\omega^{(0)}_{\SS}] \coloneq \Gr^{\zeta}_{\pinD} \!\! (\cC)$\footnote{Or $\cC[\omega^{(0)}_{\SS_{(p)}}] \coloneqq \Gr^{\zeta}_{\pinD} \!\! (\cC)$ when working $p$-typically.}.
        \end{definition}
    
        Let $\pin \coloneqq \pi_{\chrHeight+1}(\SS)$ be the $(\chrHeight+1)$-st stable stem. 
        We will show that $\cC[\omega^{(0)}_{\SS}]$ is a $\B^{\chrHeight+1}\pin$-Galois extension, that $\Mod_{\cC[\omega^{(0)}_{\SS}]}$ it is $(\SS,\chrHeight+1)$-oriented, and that it is the universal such extension in an appropriate sense. See \cref{subsec:intro-galois-closed} for further justification of this notation.
        
        We first show it admits a natural $\B^{\chrHeight+1}\pin$-action.
        The map $\zeta \colon \pinD \to \Mod_{\cC}\units$ has a mate, which we also denote 
        \begin{equation*}
            \zeta \colon \Mod_{\cC}[\pinD] \to \Mod_{\cC}.
        \end{equation*}
    
        \begin{lemma}\label{lem:image-under-Fourier-and-zeta}
            The commutative algebra $\cC^{\B^{\chrHeight+1}\pin} \in \CAlg(\Mod_{\cC}^{\B^{\chrHeight+2}\pin})$ is sent to $\cC[\omega^{(0)}_{\SS}]$ under the composition of symmetric monoidal functors
            \begin{equation*}
                \Mod_{\cC}^{\B^{\chrHeight+2}\pin} \xgets[{\,\smash{\raisebox{0.9ex}{\ensuremath{\scriptstyle\sim}}}\,}]{\mscr{F}} \Mod_{\cC}[\pinD] \xto{\zeta} \Mod_{\cC},
            \end{equation*}
            where $\mscr{F}$ is the semiadditive Fourier transform (using that $\Mod_{\Mod_{\cC}}$ is $(\ZZ, \chrHeight+2)$-oriented).
        \end{lemma}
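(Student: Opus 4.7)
The plan is to identify $\mscr{F}^{-1}(\cC^{\B^{\chrHeight+1}\pin})$ inside $\CAlg(\Mod_{\cC}[\pinD])$ and then verify that applying $\zeta$ reproduces the Thom construction $\cC[\omega^{(0)}_{\SS}]$.

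First, I would realize $\cC^{\B^{\chrHeight+1}\pin}$ as $p_*\cC$, where $p \colon * \to \B^{\chrHeight+2}\pin$ is the basepoint inclusion: its fiber at the basepoint is the limit along $\Omega\B^{\chrHeight+2}\pin \simeq \B^{\chrHeight+1}\pin$, giving $\cC^{\B^{\chrHeight+1}\pin}$ with its natural translation action, and the commutative algebra structure is the one induced by the unit $\cC$. By $\infty$-semiadditivity of $\Mod_{\cC}$ and $\pi$-finiteness of $\B^{\chrHeight+1}\pin$, ambidexterity identifies $p_* \simeq p_!$, and both are symmetric monoidal with respect to the pointwise structure on $\Mod_{\cC}^{\B^{\chrHeight+2}\pin}$.

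Next, I would exploit the Fourier duality between the adjunctions $(p^*, p_*)$ on the pointwise side and $(\epsilon_!, \epsilon^*)$ on the Day-convolution side, where $\epsilon \colon \pinD \to 0$ is the height-$(\chrHeight+2)$ Pontryagin dual of $p$ (under the identification $\Map(\pinD, \In[\chrHeight+2]) \simeq \B^{\chrHeight+2}\pin$). The strong symmetric monoidal functor $p^* = \mrm{ev}_*$ corresponds under $\mscr{F}$ to the summation $\epsilon_!$, so passing to right adjoints yields $\mscr{F}^{-1}(p_*\cC) \simeq \epsilon^*\cC$, and the constant diagram $\epsilon^*\cC$ decomposes in the Day convolution as
\begin{equation*}
\epsilon^*\cC \simeq \bigoplus_{\alpha \in \pinD} \cC\shift{\alpha},
\end{equation*}
endowed with its natural group-algebra multiplication $\cC\shift{\alpha} \otimes \cC\shift{\beta} \isoto \cC\shift{\alpha+\beta}$.

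Finally, I would apply the strong symmetric monoidal mate $\zeta \colon \Mod_{\cC}[\pinD] \to \Mod_{\cC}$ of $\zeta \colon \pinD \to \Mod_{\cC}\units$, which preserves colimits and sends each generator $\cC\shift{\alpha}$ to $\zeta(\alpha) \in \Mod_{\cC}\units$. This yields
\begin{equation*}
\zeta\left(\bigoplus_{\alpha \in \pinD} \cC\shift{\alpha}\right) \simeq \bigoplus_{\alpha \in \pinD}\zeta(\alpha) \simeq \colim_{\pinD}\zeta = \Th(\zeta) = \cC[\omega^{(0)}_{\SS}],
\end{equation*}
with the Thom multiplication coming from $\zeta$ being a map of $\EE_{\infty}$-groups. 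The main obstacle will be carefully verifying that the Fourier equivalence intertwines the adjoint pair $(p^*, p_*)$ with $(\epsilon_!, \epsilon^*)$ at the level of commutative algebra structures, using strong monoidality of $\mscr{F}$ together with ambidexterity to match both the underlying objects and the multiplications.
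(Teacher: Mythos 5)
Your proof is correct and follows essentially the same route as the paper: both arguments identify $\cC^{\B^{\chrHeight+1}\pin}$ as the right adjoint applied to $\cC$ along the basepoint inclusion $0 \to \B^{\chrHeight+2}\pin$, use functoriality of the Fourier transform on the dual map $\pinD \to 0$ to intertwine the colimit functor on the Day side with evaluation at the basepoint on the pointwise side, pass to right adjoints to identify $\mscr{F}^{-1}(\cC^{\B^{\chrHeight+1}\pin})$ with the constant functor, and then compose with $\zeta$ to recover the Thom construction. The additional details you include (the ambidexterity identification $p_*\simeq p_!$ and the explicit decomposition $\bigoplus_{\alpha\in\pinD}\cC\shift{\alpha}$ with its group-algebra multiplication) are harmless elaborations of the same argument.
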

    
        \begin{proof}
            The map of connective spectra $\pinD \colon \pinD \to 0$, is sent under the height $(\chrHeight+2)$-Pontryagin duality (i.e.\ $\tau_{\ge 0}\hom(-,\In[\chrHeight+2])$) to the map $0 \to \B^{\chrHeight+2}\pin$. By the functoriality of the Fourier transform, we have a commutative diagram of symmetric monoidal functors
            \begin{equation*}
                \begin{tikzcd}
                    {\Mod_{\cC}[\pinD]} & {\Mod_{\cC}^{\B^{\chrHeight+2}\pin}} \\ \\
                    {\Mod_{\cC}} & {\Mod_{\cC},}
                    \arrow["{\mscr{F}}", "\sim"', from=1-1, to=1-2]
                    \arrow["{(\pinD)_!}", from=1-1, to=3-1]
                    \arrow["{0^*}", from=1-2, to=3-2]
                    \arrow["{\mscr{F}}", equals, from=3-1, to=3-2]
                \end{tikzcd}
            \end{equation*}
            and $(\pinD)_!$ is the colimit functor and $0^*$ is the functor forgetting the $\B^{\chrHeight+1}\pin$-action.
            Taking right adjoints, we see that $\mscr{F}^{-1}(0_* \cC) = \mscr{F}^{-1}(\cC^{\B^{\chrHeight+1}\pin})$ is isomorphic to $(\pinD)^* \cC$ which is the constant symmetric monoidal functor $\pinD \to \Mod_{\cC}$ with value $\cC$.
    
            The claim now follows by composing with $\zeta$.
        \end{proof}
    
        \begin{corollary}
            $\cC[\omega^{(0)}_{\SS}]$ admits a natural $\B^{\chrHeight+1}\pin$-action.
        \end{corollary}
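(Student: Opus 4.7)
The plan is to apply \cref{lem:image-under-Fourier-and-zeta} and then transport structure. The lemma exhibits $\cC[\omega^{(0)}_{\SS}]$ as the image of $\cC^{\B^{\chrHeight+1}\pin} = 0_*\cC \in \CAlg(\Mod_{\cC}^{\B^{\chrHeight+2}\pin})$ under the symmetric monoidal functor $\zeta \circ \mscr{F}^{-1}$. My strategy is to carry the canonical translation $\B^{\chrHeight+1}\pin$-action on $\cC^{\B^{\chrHeight+1}\pin}$ across the resulting identification in $\CAlg(\Mod_\cC)$.

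First I would invoke the canonical equivalence
\begin{equation*}
    \CAlg(\Mod_{\cC}^{\B^{\chrHeight+2}\pin}) \simeq \CAlg(\Mod_\cC)^{\B^{\chrHeight+2}\pin},
\end{equation*}
which holds because $\CAlg$ commutes with limits and $(-)^{\B^{\chrHeight+2}\pin}$ is a limit. Under this identification an object corresponds to a commutative algebra in $\Mod_\cC$ together with a $\B^{\chrHeight+1}\pin$-action, and $0_*\cC$ in particular corresponds to $\cC^{\B^{\chrHeight+1}\pin}$ equipped with its translation action, encoded as a map $\B^{\chrHeight+1}\pin \to \Aut_{\CAlg(\Mod_\cC)}(\cC^{\B^{\chrHeight+1}\pin})$. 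The lemma then supplies a canonical isomorphism $\phi \colon \cC^{\B^{\chrHeight+1}\pin} \isoto \cC[\omega^{(0)}_{\SS}]$ in $\CAlg(\Mod_\cC)$; conjugation by $\phi$ induces a group isomorphism between the two automorphism groups, and composing it with the translation action yields the desired $\B^{\chrHeight+1}\pin$-action on $\cC[\omega^{(0)}_{\SS}]$.

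No serious obstacle arises; the genuine content is already packaged in the lemma. The only point that requires some care is justifying the word ``natural'' in the statement, which follows because $\phi$ is built from canonical ingredients --- the Fourier equivalence and the functoriality of the Thom construction --- and involves no arbitrary choices, so the transported action is canonically determined by $\cC$.
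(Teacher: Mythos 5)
There is a genuine gap, and it is located exactly where you claim the lemma ``supplies a canonical isomorphism $\phi \colon \cC^{\B^{\chrHeight+1}\pin} \isoto \cC[\omega^{(0)}_{\SS}]$ in $\CAlg(\Mod_\cC)$.'' \cref{lem:image-under-Fourier-and-zeta} says no such thing: it says that the image of $\cC^{\B^{\chrHeight+1}\pin} \in \CAlg(\Mod_{\cC}^{\B^{\chrHeight+2}\pin})$ under the \emph{nontrivial} symmetric monoidal composite $\zeta \circ \mscr{F}^{-1}$ (the inverse semiadditive Fourier transform followed by the twisted colimit/Thom functor along $\zeta$) is $\cC[\omega^{(0)}_{\SS}]$. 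That composite is not the equivalence $\CAlg(\Mod_{\cC}^{\B^{\chrHeight+2}\pin}) \simeq \CAlg(\Mod_\cC)^{\B^{\chrHeight+2}\pin}$ followed by forgetting the action, so its output is not the underlying $\cC$-algebra of $0_*\cC$. Indeed, if the underlying-algebra isomorphism $\phi$ you invoke existed for every oriented $\cC$, then $\cC[\omega^{(0)}_{\SS}]$ would always be the trivial Galois extension $\cC^{\B^{\chrHeight+1}\pin}$, which by \cref{prop:C-cyclotoimcally-closed-iff-gradedpi-is-trivial} is equivalent to $(\SS,\chrHeight+1)$-orientability of $\Mod_\cC$ and fails in the paper's motivating example $\sVect = \Vect[\omega^{(0)}_{\SS_{(2)}}] \not\simeq \Vect^{\B\ZZ/2}$. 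So there is nothing to conjugate by, and transport of structure along $\phi$ cannot get off the ground.

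The correct route (and the paper's) is to get the action by \emph{functoriality through} $\zeta \circ \mscr{F}^{-1}$ rather than by transport across an identification of underlying algebras. The category $\cC^{\B^{\chrHeight+1}\pin}$ carries two commuting translation actions of $\B^{\chrHeight+1}\pin$; one of them is used to regard it as an object of $\CAlg(\Mod_{\cC}^{\B^{\chrHeight+2}\pin})$, and the other exhibits it as an object of $\CAlg(\Mod_{\cC}^{\B^{\chrHeight+2}\pin})^{\B^{\chrHeight+2}\pin}$, i.e.\ it still retains a residual $\B^{\chrHeight+1}\pin$-action inside that category. Applying the cotensor $(-)^{\B^{\chrHeight+2}\pin}$ to the functor of \cref{lem:image-under-Fourier-and-zeta} yields
\begin{equation*}
    \CAlg(\Mod_{\cC}^{\B^{\chrHeight+2}\pin})^{\B^{\chrHeight+2}\pin} \to \CAlg_{\cC}(\PrL)^{\B^{\chrHeight+2}\pin},
\end{equation*}
and the image of $\cC^{\B^{\chrHeight+1}\pin}$ under this equivariant functor is, by the lemma, $\cC[\omega^{(0)}_{\SS}]$ now equipped with the induced $\B^{\chrHeight+1}\pin$-action. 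Your first step (the equivalence $\CAlg$ commutes with the limit $(-)^{\B^{\chrHeight+2}\pin}$) is fine and your instinct about naturality is fine; the missing idea is the second, unused, translation action and the observation that the symmetric monoidal functor of the lemma, not an isomorphism of underlying algebras, is what carries that action to $\cC[\omega^{(0)}_{\SS}]$.
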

        \begin{proof}
            The symmetric monoidal category $\cC^{\B^{\chrHeight+1}\pin}$ admits a $(\B^{\chrHeight+1}\pin \times \B^{\chrHeight+1}\pin)$-action. Therefore the symmetric monoidal functor of \cref{lem:image-under-Fourier-and-zeta}, which extends to
            \begin{equation*}
                \CAlg(\Mod_{\cC}^{\B^{\chrHeight+2}\pin})^{\B^{\chrHeight+2}\pin} \to \CAlg_{\cC}(\PrL)^{\B^{\chrHeight+2}\pin},
            \end{equation*}
            sends $\cC^{\B^{\chrHeight+1}\pin}$ to $\cC[\omega^{(0)}_{\SS}]$ equipped with a $\B^{\chrHeight+1}\pin$-action.
        \end{proof}
    
        \begin{proposition}\label{prop:cyc-is-Galois}
            $\cC[\omega^{(0)}_{\SS}]$ is a $\B^{\chrHeight+1}\pin$-Galois extension of $\cC$.
        \end{proposition}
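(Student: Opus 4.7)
The plan is to verify Rognes' two Galois axioms for the $\B^{\chrHeight+1}\pin$-action on $\cC[\omega^{(0)}_{\SS}]$ constructed above, using the Thom isomorphism for the tensor-product axiom and the semiadditive Fourier transform to compare with the cotensor.

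For the tensor-product axiom, I would first apply the Thom isomorphism (\cite[Proposition~3.16]{Antolin-Barthel-2019-Thom}, as already invoked in \cref{lem:Thom-is-rigid}) to get
\begin{equation*}
    \cC[\omega^{(0)}_{\SS}] \otimes_\cC \cC[\omega^{(0)}_{\SS}] \simeq \Th(\zeta \oplus \zeta).
\end{equation*}
The change of variables $(a,b) \mapsto (a+b, b)$ on $\pinD \oplus \pinD$ shows that $\zeta \oplus \zeta$ factors through the first projection, so $\Th(\zeta \oplus \zeta) \simeq \Th(\zeta)[\pinD] \simeq \cC[\omega^{(0)}_{\SS}][\pinD]$. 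Since $\cC$ is $(\SS,\chrHeight)$-oriented and hence $(\ZZ,\chrHeight)$-oriented, the category $\Mod_{\cC}$ is $(\ZZ,\chrHeight+1)$-oriented by \cite[Corollary~5.16]{BCSY-Fourier}, and this orientation base-changes to $\Mod_{\cC[\omega^{(0)}_{\SS}]}$. Applying the Fourier transform to the discrete finite abelian group $\pinD$ then identifies
\begin{equation*}
    \cC[\omega^{(0)}_{\SS}][\pinD] \simeq \cC[\omega^{(0)}_{\SS}]^{\B^{\chrHeight+1}\pin}.
\end{equation*}

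For the fixed-point axiom $\cC \isoto \cC[\omega^{(0)}_{\SS}]^{h\B^{\chrHeight+1}\pin}$, I would exploit \cref{lem:image-under-Fourier-and-zeta}: under $\mscr{F}^{-1}$ the trivial Galois extension $\cC^{\B^{\chrHeight+1}\pin}$ corresponds to the constant algebra at $\cC$ in $\Mod_{\cC}[\pinD]$, whose $\B^{\chrHeight+1}\pin$-fixed points recover $\cC$. The $\cC$-linear symmetric monoidal functor $\zeta$ then transports this fixed-point identification to $\cC[\omega^{(0)}_{\SS}]$.

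The main obstacle will be a coherence question: ensuring that the action on $\cC[\omega^{(0)}_{\SS}]$ produced by \cref{lem:image-under-Fourier-and-zeta} is precisely the one making both axioms commute with their respective identifications. To sidestep this, one may invoke Mathew's characterization (\cite[Proposition~6.2.1]{Mathew-2016-Galois}): once the tensor-product axiom holds and the unit $\cC \to \cC[\omega^{(0)}_{\SS}]$ is shown to be faithful, the Galois property is automatic. Faithfulness here follows from the Thom decomposition $\cC[\omega^{(0)}_{\SS}] \simeq \bigoplus_{a\in\pinD} \zeta(a)$ as a $\cC$-module, each summand being $\otimes$-invertible in $\Mod_{\cC}$, so that the unit inclusion admits a $\cC$-linear retraction.
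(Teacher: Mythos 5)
Your computation of the two sides is fine as far as it goes: the Thom isomorphism plus the shear $(a,b)\mapsto(a+b,b)$ does give $\cC[\omega^{(0)}_{\SS}]\otimes_{\cC}\cC[\omega^{(0)}_{\SS}]\simeq \cC[\omega^{(0)}_{\SS}][\pinD]$, and the base-changed $(\ZZ,\chrHeight+1)$-orientation of $\Mod_{\cC[\omega^{(0)}_{\SS}]}$ does identify $\cC[\omega^{(0)}_{\SS}][\pinD]$ with $\cC[\omega^{(0)}_{\SS}]^{\B^{\chrHeight+1}\pin}$. The genuine gap is the step you try to wave away at the end. Being a $\B^{\chrHeight+1}\pin$-Galois extension is a condition on \emph{specific} maps built from the chosen action: the unramifiedness axiom asks that the canonical map $\cC[\omega^{(0)}_{\SS}]\otimes_{\cC}\cC[\omega^{(0)}_{\SS}]\to \cC[\omega^{(0)}_{\SS}]^{\B^{\chrHeight+1}\pin}$ (multiplication twisted by the action on one factor) be an equivalence, not that the two sides be abstractly equivalent. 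Your Thom/Fourier identification produces an abstract equivalence and never compares it with the action coming from \cref{lem:image-under-Fourier-and-zeta}. Invoking a ``faithful $+$ unramified $\Rightarrow$ Galois'' criterion does not sidestep this: it only removes the need to check the fixed-point axiom, while the unramifiedness hypothesis it requires is exactly the statement about the canonical map that you have not verified. (Two further soft spots: the criterion you cite is stated for finite groups, whereas here the Galois group is the non-discrete $\pi$-finite group $\B^{\chrHeight+1}\pin$, so at minimum a different reference or an $\infty$-semiadditive descent argument is needed; and the same coherence issue infects your fixed-point step, since transporting $\cC\simeq(\cC^{\B^{\chrHeight+1}\pin})^{h\B^{\chrHeight+1}\pin}$ along $\zeta\circ\mscr{F}^{-1}$ only helps if the action you carry along is identified with the one used in the axiom.)

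This is precisely the coherence that the paper's one-line proof is designed to avoid. There, one observes that $\cC[\omega^{(0)}_{\SS}]$ is, by \cref{lem:image-under-Fourier-and-zeta}, the image of the \emph{trivial} Galois extension $\cC^{\B^{\chrHeight+1}\pin}\in\Mod_{\cC}^{\B^{\chrHeight+2}\pin}$ under the symmetric monoidal composite $\zeta\circ\mscr{F}^{-1}$, together with its induced action; the trivial extension is Galois because $\B^{\chrHeight+1}\pin$ is dualizable in $\Mod_{\cC}$, and a transport result (\cite[3.12]{BMCSY-cycloshift}) says that such functors preserve the Galois property, canonical maps and all. If you want to keep your direct approach, you must either (i) trace through the Thom and Fourier equivalences and show they agree with the canonical Galois comparison map for the action of the corollary following \cref{lem:image-under-Fourier-and-zeta}, or (ii) replace your last step by an appeal to a transport-of-Galois statement as in the paper, at which point the direct verification of the tensor axiom becomes unnecessary.
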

    
        \begin{proof}
            By construction and \cite[3.12]{BMCSY-cycloshift}, it suffices to show that $\cC^{\B^{\chrHeight+1}\pin}\in \Mod_\cC^{\B^{\chrHeight+2}\pin}$ is a $\B^{\chrHeight+1}\pin$-Galois extension which is immediate as $\B^{\chrHeight+1}\pin$ is dualizable in $\Mod_\cC$. 
        \end{proof}
    
        \begin{remark}
            Note that by \cite[Theorem 5.15]{BCSY-Fourier} we have that $\Mod_\cC$ is $(\SS,\chrHeight+1)$-oriented if and only if there exists a lifting
            \begin{equation*}
                \begin{tikzcd}
                   {\Sigma \In } & {\Sigma \ounit_\cC\units} & {\cC\units} \\
                   {\Inplus}
                   \arrow[from=1-1, to=1-2]
                   \arrow[from=1-1, to=2-1]
                   \arrow[from=1-2, to=1-3]
                   \arrow[dashed, from=2-1, to=1-3]
                \end{tikzcd}
            \end{equation*}
            where the map $\In  \to \ounit_{\cC}\units$ is the orientation and $\Sigma \In \to \Inplus$ is the connected cover map.   
        \end{remark}

        \begin{lemma}\label{lem:cyc_is_cyc_closed}
            The category $\Mod_{\cC[\omega^{(0)}_{\SS}]}$ is $(\SS,\chrHeight+1)$-oriented.
        \end{lemma}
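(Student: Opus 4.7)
The plan is to apply the criterion recalled in the remark preceding the lemma (itself a consequence of \cite[Theorem~5.15]{BCSY-Fourier}): since $\cC[\omega^{(0)}_{\SS}]$ is $\cC$-linear with $\ounit_{\cC[\omega^{(0)}_{\SS}]} = \ounit_{\cC}$, the shifted orientation $\Sigma \In \to \Sigma \ounit_{\cC}\units \to \cC[\omega^{(0)}_{\SS}]\units$ is defined, and it suffices to produce a dashed lift
\begin{equation*}
    \begin{tikzcd}
        {\Sigma \In} & {\Sigma \ounit_{\cC[\omega^{(0)}_{\SS}]}\units} & {\cC[\omega^{(0)}_{\SS}]\units} \\
        {\Inplus} &
        \arrow[from=1-1, to=1-2]
        \arrow[from=1-1, to=2-1]
        \arrow[from=1-2, to=1-3]
        \arrow[dashed, from=2-1, to=1-3]
    \end{tikzcd}
\end{equation*}
along the connected cover map $\Sigma \In \to \Inplus$.

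First I would roll the cofiber sequence $\Sigma \In \to \Inplus \to \pinD \xto{\partial} \Sigma^2 \In$ to identify the obstruction to the existence of such a lift: it is the single composite
\begin{equation*}
    \pinD \xto{\partial} \Sigma^2 \In \to \Sigma^2 \ounit_{\cC}\units \to \Sigma \cC[\omega^{(0)}_{\SS}]\units,
\end{equation*}
where the middle arrow is the shift of the orientation of $\cC$. By the very definition of $\zeta$ in \cref{def:zeta}, the factor $\pinD \xto{\partial} \Sigma^2 \In \to \Sigma^2 \ounit_{\cC}\units \to \Sigma\cC\units$ is exactly $\zeta$, so the obstruction is $\zeta$ post-composed with the natural map $\Sigma\cC\units \to \Sigma\cC[\omega^{(0)}_{\SS}]\units$ induced by the $\cC$-algebra structure $\cC \to \cC[\omega^{(0)}_{\SS}]$.

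To conclude, I would invoke the cofiber sequence of Picard spectra from \cref{lem:cofiber-of-pic-Day} (combined with the identification of the connecting map $\cD\units \to \pinD \to \Sigma\cC\units$ with $\zeta$ given in the proof of \cref{thm:twFun-are-all-lifts})
\begin{equation*}
    \cC\units \to \cC[\omega^{(0)}_{\SS}]\units \to \pinD \xto{\zeta} \Sigma \cC\units,
\end{equation*}
which rolls to yield a cofiber sequence $\pinD \xto{\zeta} \Sigma \cC\units \to \Sigma \cC[\omega^{(0)}_{\SS}]\units$. Exactness of this sequence forces the obstruction to vanish, producing the desired lift.

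The argument is essentially an obstruction-theoretic diagram chase, and the only non-formal input is that the boundary map of the Picard cofiber sequence associated to the Thom construction is \emph{exactly} $\zeta$; this is what allows the universal obstruction $\partial$ on $\pinD$ to cancel against $\zeta$ after passing to $\cC[\omega^{(0)}_{\SS}]\units$. If there is a subtle point to check, it is this compatibility --- namely that the connecting map $\pinD \to \Sigma\cC\units$ produced by \cref{thm:twFun-are-all-lifts} agrees on the nose with the $\zeta$ used to build $\cC[\omega^{(0)}_{\SS}]$ --- but this is essentially tautological from the construction and should require no real work beyond unwinding definitions.
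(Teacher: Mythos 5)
Your proof is correct and is essentially the paper's argument in repackaged form: the paper takes fibers of the square comparing $\zeta \colon \pinD \to \Sigma^2\In$ with $\zeta \colon \pinD \to \Sigma\cC\units$ to construct the lift $\Inplus \to (\cC[\omega^{(0)}_{\SS}])\units$ explicitly, while you phrase the same cancellation as vanishing of the lifting obstruction, and both hinge on the identification (via \cref{lem:cofiber-of-pic-Day} and the proof of \cref{thm:twFun-are-all-lifts}) of the connecting map of the Picard cofiber sequence with $\zeta$. The only difference is cosmetic: the paper must additionally verify that its constructed map restricts on loops to the orientation, which in your formulation is automatic since you lift the orientation composite itself.
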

    
        \begin{proof}
            Recall from \cref{prop:picard-of-Day} and from the proof of \cref{thm:twFun-are-all-lifts} that there exists a cofiber sequence
            \begin{equation*}
                \cC\units \to (\cC[\omega^{(0)}_{\SS}])\units \to \pinD,
            \end{equation*}
            and the induced map $\pinD \to \Sigma \cC\units$ is $\zeta$.
            Consider the following commutative square
            \begin{equation*}
                \begin{tikzcd}
                    \pinD & {\Sigma^2 \In} \\
                    \pinD & {\Sigma \cC\units.}
                    \arrow["{\zeta}", from=1-1, to=1-2]
                    \arrow[equals, from=1-1, to=2-1]
                    \arrow[from=1-2, to=2-2]
                    \arrow["{\zeta}", from=2-1, to=2-2]
                \end{tikzcd}
            \end{equation*}
            Taking fibers, we get a map
            \begin{equation*}
                i_{\chrHeight+1} \colon \Inplus \to (\cC[\omega^{(0)}_{\SS}])\units.
            \end{equation*}
            and by construction we have that.
            \begin{equation*}
                \Omega i_{\chrHeight+1} \colon \In \simeq \Omega \Inplus \to \Omega (\cC[\omega^{(0)}_{\SS}])\units \simeq \ounit_{\cC}\units
            \end{equation*}
            is identified with the canonical map $(\ounit_{\cC}\units)\pitor \to \ounit_{\cC}\units$, as required.
        \end{proof}

        The following proposition allows us to view $\cC[\omega^{(0)}_\infty]$ as the universal $(\SS,\chrHeight+1)$-oriented extension of $\cC$:
    
        \begin{proposition}\label{prop:C-cyclotoimcally-closed-iff-gradedpi-is-trivial}
               The following are equivalent:
               \begin{enumerate}
                   \item\label{item:cyc-closed} The category $\Mod_\cC$ is $(\SS,\chrHeight+1)$-orientable.
                   \item\label{item:trivial-Galois} $\cC[\omega^{(0)}_{\SS}]$ is a trivial Galois extension, i.e.\ $\cC[\omega^{(0)}_{\SS}] \simeq \cC^{\B^{\chrHeight+1} \pin}$.
                   \item\label{item:trivial-braiding} $\cC[\omega^{(0)}_{\SS}] \simeq \Gr_{\pinD} (\cC)$, i.e.\ the braiding of $\cC[\omega^{(0)}_{\SS}]$ is trivial.
               \end{enumerate}
        \end{proposition}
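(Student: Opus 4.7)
The plan is to establish $(3) \Leftrightarrow (1)$ via obstruction theory on the classifying map $\zeta$, and $(3) \Leftrightarrow (2)$ via the Fourier transform. The $(3) \Leftrightarrow (1)$ direction decomposes into three intermediate equivalences. First, by the classification of twisted graded categories \cref{thm:twFun-are-all-lifts} applied to $A = \pinD$, the $\EE_\infty$-lifts of the Day convolution on $\Gr_{\pinD}(\cC)$ are classified by $\pi_0 \Map\Enull(\pinD, \Sigma\cC\units)$, with $\cC[\omega^{(0)}_{\SS}]$ corresponding to the class of $\zeta$; the analog of \cref{rmrk:kernel-of-forgetting-the-E1-nullhomotopy} for $\pinD$ identifies (3) — namely isomorphism in $\CAlg_\cC(\PrL)$, after forgetting the $\EE_1$-trivialization — with the vanishing of the image of $\zeta$ in $\pi_0\Map(\pinD, \Sigma\cC\units)$. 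Second, by the mapping-space long exact sequence for the cofiber sequence $\Sigma\In \to \Inplus \to \pinD \xto{\partial} \Sigma^2\In$, this vanishing is equivalent to the existence of an extension $\Inplus \to \cC\units$ of the composite $\Sigma\In \to \Sigma\ounit_\cC\units \to \cC\units$. Third, such an extension is precisely an $(\SS_{(p)}, n+1)$-orientation of $\Mod_\cC$ compatible with the ambient $(\tau_{\le n}\SS_{(p)}, n+1)$-orientation inherited from $\cC$'s $(\SS_{(p)}, n)$-orientation via \cite[Corollary~5.16]{BCSY-Fourier}.

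For $(3) \Leftrightarrow (2)$, the same $(\tau_{\le n}\SS_{(p)}, n+1)$-orientation of $\Mod_\cC$ provides a Fourier equivalence $\mscr{F} \colon \Mod_\cC[\pinD] \xto{\sim} \Mod_\cC^{\B^{\chrHeight+2}\pin}$ of $\cC$-linear symmetric monoidal categories (applicable because $\pinD$ is discrete and finite). The $\zeta = 0$ case of \cref{lem:image-under-Fourier-and-zeta} identifies the Day algebra $\Gr_{\pinD}(\cC) = \cC[\pinD]$ with the trivial Galois extension $\cC^{\B^{\chrHeight+1}\pin}$ under $\mscr{F}$, so (3) and (2) become equivalent — both as $\cC$-linear $\CAlg$ isomorphisms and, by tracking the natural $\B^{\chrHeight+1}\pin$-actions through $\mscr{F}$, as isomorphisms of $\B^{\chrHeight+1}\pin$-Galois extensions.

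The main obstacle is the preorientation-to-orientation step in the third intermediate equivalence above: verifying that any lift $\Inplus \to \cC\units$ extending the given data is not merely an $(\SS_{(p)}, n+1)$-preorientation but an actual orientation. The expected route is to combine the existing Fourier isomorphism on $\tau_{\le n}\SS_{(p)}$-modules with a direct check on the new summand of $[0, n+1]$-finite modules corresponding to $\pin$, or alternatively to argue via descent along the trivial Galois extension $\cC[\omega^{(0)}_{\SS}] \simeq \cC^{\B^{\chrHeight+1}\pin}$ using \cref{lem:cyc_is_cyc_closed} and \cref{prop:cyc-is-Galois}.
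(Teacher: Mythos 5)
Your proposal follows essentially the same route as the paper's proof: (2)$\Leftrightarrow$(3) via the semiadditive Fourier transform (the $\zeta=0$ case of \cref{lem:image-under-Fourier-and-zeta}), and (1)$\Leftrightarrow$(3) by identifying triviality of the braiding with nullness of $\zeta$ as an $\EE_\infty$-map and then translating that, via the cofiber sequence $\Sigma\In \to \Inplus \to \pinD$, into the lifting problem for the orientation map. The ``main obstacle'' you flag (upgrading the lift from a preorientation to an orientation of $\Mod_{\cC}$) is exactly what the paper's citation of \cite[Theorem~5.15]{BCSY-Fourier}, recorded in the remark preceding its proof, supplies, so no additional argument is needed at that step.
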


        \begin{proof}
            The equivalence of (\labelcref{item:trivial-Galois}) and (\labelcref{item:trivial-braiding}) follows by the semiadditive Fourier transform.
            
            We show now that (\labelcref{item:cyc-closed}) is equivalent to (\labelcref{item:trivial-braiding}).  
            By the universal property of the Thom construction, $\cC[\omega^{(0)}_{\SS}] = \Gr^{\zeta}_{\pinD}\!\!(\cC)$ is isomorphic to $\Gr_{\pinD} (\cC)$ if and only if the map $\zeta \colon \pinD \to \Sigma \cC\units$ is null as an $\EE_{\infty}$-map. 
            Using the cofiber sequence 
            \begin{equation*}
                \pinD \to \Sigma^2 \In \to \Sigma \Inplus,
            \end{equation*}
            $\zeta$ is null if and only if there exists a lift
            \begin{equation*}
                \begin{tikzcd}
                    \pinD \\
                    {\Sigma^2 \In} & {\Sigma\cC\units.} \\
                    {\Sigma \Inplus}
                    \arrow[from=1-1, to=2-1]
                    \arrow[from=1-1, to=2-2]
                    \arrow[from=2-1, to=2-2]
                    \arrow[from=2-1, to=3-1]
                    \arrow[dashed, from=3-1, to=2-2]
                \end{tikzcd}
            \end{equation*}
        \end{proof}
    
       \begin{remark}
           One can use \cref{prop:C-cyclotoimcally-closed-iff-gradedpi-is-trivial} in order to construct the cyclotomic closure of $\cC$ as in \cref{def:cyc-closure}. More generally by adding homotopy groups of $\Inplus$ inductively one can construct the the cyclotomic closure of $\cC$ for any $\infty$-semi-additive category which is virtually $(\Fp,\chrHeight)$-orientable.
       \end{remark}

    \subsection{Braiding characters of oriented categories in low heights}
    \label{subsec:low-heights}

        Thinking of $\SS$ as a symmetric monoidal category, the element $1 \in \SS$ is invertible. Therefore it has a dimension
        \begin{equation*}
            \dim(1) \in \End_{\SS}(1)^{\B\TT} = (\Omega \SS)^{\B\TT}.
        \end{equation*}
        By \cite[Proposition~3.20]{CSY-cyclotomic}, $\dim(1)$ is identified with $\eta$ in $\Omega \SS$. 
        \begin{definition}
            We denote by $\eta \in (\Omega\SS)^{\B\TT}$ the dimension of $1\in \SS$ with the corresponding $\TT$-action.
        \end{definition}

        Let $\cC \in \CAlg(\PrL)$ and $\Zchar \colon \ZZ \to \Sigma \cC\units$. By \cref{lem:cofiber-of-pic-Day} and the proof of \cref{thm:twFun-are-all-lifts}, the fiber of this map is $(\Gr^{\Zchar}_{\ZZ}\cC)\units$.
        As $(\Gr^{\Zchar}_{\ZZ}\cC)\units$ is a spectrum, the dimension of $\ounit_{\cC}\shift{1}$ is given by the action of $\eta \in (\Omega \SS)^{\B\TT}$
        \begin{equation*}
            \dim(\ounit_{\cC}\shift{1}) = \eta \cdot \ounit_{\cC}\shift{1} \qin \End_{(\Gr^{\Zchar}_{\ZZ}\cC)\units}(\ounit_{\cC}\shift{1})^{\B\TT} = (\Omega(\Gr^{\Zchar}_{\ZZ}\cC)\units)^{\B\TT} \simeq (\ounit_{\cC}\units)^{\B\TT}.
        \end{equation*}

        \begin{notation}
            Let $\pichar \in \pinD$. Then $\pichar \in \Inplus$ is invertible. Denote its dimension by
            \begin{equation*}
                \eta_{\pichar} \coloneqq \dim(\pichar) \qin \End_{\Inplus}(1)^{\B\TT} = (\Omega \Inplus)^{\B\TT} \simeq (\In)^{\B\TT}.
            \end{equation*}
            Equivalently it is $\eta \cdot \pichar$, for $\eta \in (\Omega \SS)^{\B\TT}$ and $\pichar \in \Inplus$.
        \end{notation}

        \begin{lemma}
            Let $\pichar \in \pinD$ and $\cC \in \CAlg(\PrL)$ be $\infty$-semiadditive, $(\SS,\chrHeight)$-orientable. Then, under the inclusion map $\In \to \ounit_{\cC}\units$, the dimension of $\ounit_{\cC}\shift{1} \in \Gr^{\pichar}_{\ZZ} \cC$ is
            \begin{equation*}
                \dim(\ounit_{\cC}\shift{1}) = \eta_{\pichar} \qin (\ounit_{\cC}\units)^{\B\TT}.
            \end{equation*}
        \end{lemma}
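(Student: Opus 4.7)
The plan is to use the discussion immediately preceding the statement, which already reduces the computation: for any $\Zchar$ with $\EE_1$-nullhomotopy, $\dim(\ounit_\cC\shift{1})$ is the image of $\ounit_\cC\shift{1} \in \pi_0(\Gr^{\Zchar}_\ZZ\cC)\units$ under the $\TT$-equivariant action of $\eta \in (\Omega\SS)^{\B\TT}$, landing in $\pi_1(\Gr^{\Zchar}_\ZZ\cC)\units \simeq \pi_0\ounit_\cC\units$. So specializing to $\Zchar = \Zchar_\pichar$, it remains to identify this action with $\eta_\pichar = \eta \cdot \pichar \in \pi_0 \In \to \pi_0 \ounit_\cC\units$, equivariantly for $\TT$.

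First I would use the factorization coming from \cref{def:zeta}, namely
\begin{equation*}
    \Zchar_\pichar \colon \ZZ \xrightarrow{\pichar} \Sigma^2 \In \to \Sigma^2 \ounit_\cC\units \to \Sigma \cC\units,
\end{equation*}
and set $F_\pichar \coloneqq \fib(\pichar \colon \ZZ \to \Sigma^2 \In) \in \cnSp$. Taking fibers of the factorization yields a map of fiber sequences $F_\pichar \to (\Gr^{\pichar}_\ZZ \cC)\units$ over $\ZZ$, and on the one-fold loops the comparison is precisely the orientation $\In \to \ounit_\cC\units$. In particular $1 \in \pi_0 F_\pichar$ lifts $1 \in \pi_0 \ZZ$ and is sent to $\ounit_\cC\shift{1} \in \pi_0(\Gr^{\pichar}_\ZZ\cC)\units$. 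By $\TT$-equivariant naturality of the $\eta$-action on $\pi_0 \to \pi_1$, this reduces the problem to computing $\eta \cdot 1 \in \pi_1 F_\pichar \simeq \pi_0 \In$ and showing it equals $\eta_\pichar$.

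Next I would identify $F_\pichar$ explicitly. Rolling the defining cofiber sequence $\Sigma \In \to \Inplus \to \pinD$ gives $\Inplus \simeq \fib(\zeta \colon \pinD \to \Sigma^2 \In)$. Since $\pichar \colon \ZZ \to \Sigma^2 \In$ factors through $\zeta$ as $\ZZ \xrightarrow{\pichar} \pinD \xrightarrow{\zeta} \Sigma^2 \In$, we obtain the pullback description $F_\pichar \simeq \ZZ \times_{\pinD} \Inplus$, together with a canonical map $F_\pichar \to \Inplus$ sending $1 \in \pi_0 F_\pichar$ to $\pichar \in \pi_0 \Inplus$. Because both $\ZZ$ and $\pinD$ are discrete (concentrated in $\pi_0$), the pullback square induces an isomorphism $\pi_k F_\pichar \xto{\sim} \pi_k \Inplus$ for all $k \ge 1$, identifying $\pi_1 F_\pichar \simeq \pi_1 \Inplus \simeq \pi_0 \In$. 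Under this identification $\eta \cdot 1 \leftrightarrow \eta \cdot \pichar = \eta_\pichar$, with the same equivariance since the map $F_\pichar \to \Inplus$ underlies a map of (naturally $\TT$-trivial) connective spectra and the $\eta$-action is natural in $(\cnSp)^{\B\TT}$. Combining this with the previous step gives the claimed equality in $(\ounit_\cC\units)^{\B\TT}$.

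The main point requiring care — more bookkeeping than a real obstacle — is tracking the $\TT$-equivariance through the chain of identifications: that $F_\pichar \to (\Gr^{\pichar}_\ZZ\cC)\units$ may be promoted to a map in $(\cnSp)^{\B\TT}$ (with trivial $\TT$-action on the source) and that the induced isomorphism $\pi_1 F_\pichar \simeq \pi_0 \In$ is $\TT$-equivariant, so that the single application of the $\TT$-equivariant $\eta$ at the end produces exactly $\eta_\pichar$ with its intrinsic $\TT$-action.
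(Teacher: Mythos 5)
Your proof is correct and follows essentially the same route as the paper: your $F_\pichar$ is exactly the paper's $\Inpi$ (fiber of $\ZZ \xto{\pichar} \pinD \to \Sigma^2\In$, equivalently the pullback $\ZZ\times_{\pinD}\Inplus$), and your appeal to naturality of the $\TT$-equivariant $\eta$-action is the paper's use of naturality of the equivariant dimension along the two comparison maps $\Inpi \to \Inplus$ and $\Inpi \to (\Gr^{\pichar}_{\ZZ}\cC)\units$, both sending $1$ to the relevant invertible object. The only difference is presentational (you phrase everything on Picard spectra, the paper via a symmetric monoidal functor $\Inpi \to \Gr^{\pichar}_{\ZZ}\cC$), so no changes are needed.
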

        \begin{proof}
            Let $\Inpi \in \cnSp$ be the pullback
            \begin{equation*}
                \begin{tikzcd}
                    \Inpi & \Inplus \\
                    \ZZ & \pinD.
                    \arrow[from=1-1, to=1-2]
                    \arrow[from=1-1, to=2-1]
                    \arrow["\lrcorner"{anchor=center, pos=0.125}, draw=none, from=1-1, to=2-2]
                    \arrow[from=1-2, to=2-2]
                    \arrow["\pichar", from=2-1, to=2-2]
                \end{tikzcd}
            \end{equation*}
            Equivalently, it is the fiber of the map
            \begin{equation*}
                \ZZ \xto{\pichar} \pinD \to \Sigma^2 \In.
            \end{equation*}
            The map of connective spectra $\Inpi \to \Inplus$ sends $1$ to $\pichar$. Taking fibers of the commutative diagram
            \begin{equation*}
                \begin{tikzcd}
                    \ZZ & {\Sigma^2 \In} \\
                    \ZZ & {\Sigma \cC\units,}
                    \arrow["{\Zchar_{\pichar}}", from=1-1, to=1-2]
                    \arrow[equals, from=1-1, to=2-1]
                    \arrow[hook, from=1-2, to=2-2]
                    \arrow["{\Zchar_{\pichar}}", from=2-1, to=2-2]
                \end{tikzcd}
            \end{equation*}
            we get a symmetric monoidal map $\Inpi \to \Gr^{\pichar}_{\ZZ}\cC$ sending $1\in \Inpi$ to $\ounit_{\cC}\shift{1}$. Therefore, both the dimension of $\pichar \in \Inplus$ and the dimension of $\ounit_{\cC}\shift{1}\in \Gr^{\pichar}_{\ZZ} \cC$, agree with the dimension of $1 \in \Inpi$.
        \end{proof}

        \begin{corollary}\label{cor:dim-of-shift-in-cyc-closed}
            Let $\pichar \in \pinD$ and $\cC \in \CAlg(\PrL)$ be $\infty$-semiadditive, $(\SS,\chrHeight)$-orientable. Then, working in $\Gr^{\pichar}_{\ZZ} \cC$, for any $V \in \cC\dbl$
            \begin{equation*}
                \dim(V\shift{1}) \simeq \eta_{\pichar} \cdot \dim(V) \qin \End(\ounit_{\cC})^{\B\TT}.
            \end{equation*}
        \end{corollary}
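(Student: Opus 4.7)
The plan is to reduce the statement to the previous lemma via the multiplicativity of the $\TT$-equivariant dimension under tensor products. The key observation is that $V\shift{1} \simeq V\shift{0} \otimes_{\Day} \ounit_{\cC}\shift{1}$ in $\Gr^{\pichar}_{\ZZ} \cC$, as follows directly from \cref{def:shift-and-yoneda}(\labelcref{item:(-)<->}), whose construction persists in the twisted setting by \cref{lem:twFun-Day-E1} since the underlying $\EE_1$-structure is the Day convolution.

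First I would invoke the fact that the $\TT$-equivariant dimension is symmetric monoidal, i.e.\ for any $X, Y \in \cD\dbl$ in a symmetric monoidal category,
\begin{equation*}
    \dim(X \otimes Y) \simeq \dim(X) \cdot \dim(Y) \qin \End(\ounit_{\cD})^{\B\TT}.
\end{equation*}
This is a standard consequence of the fact that $\dim$ arises as the trace of the identity, combined with the zigzag identities for $(X \otimes Y)\dual \simeq Y\dual \otimes X\dual$; the $\TT$-equivariance is preserved since the cyclic bar construction is symmetric monoidal, as used in the proof of \cref{lem:equivariant-trace}. Applying this to the decomposition above,
\begin{equation*}
    \dim(V\shift{1}) \simeq \dim(V\shift{0}) \cdot \dim(\ounit_{\cC}\shift{1}) \qin (\ounit_{\cC}\units)^{\B\TT}.
\end{equation*}

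Next, I would observe that the $\cC$-linear symmetric monoidal functor $(-)\shift{0} \colon \cC \to \Gr^{\pichar}_{\ZZ} \cC$ preserves dualizable objects and their $\TT$-equivariant dimensions (since it is symmetric monoidal, and both the trace and the $\TT$-action are functorial in symmetric monoidal maps between the categories, see \cref{lem:equivariant-trace}). In particular, $\dim(V\shift{0}) \simeq \dim(V)$ as elements of $(\ounit_{\cC}\units)^{\B\TT}$, using the canonical identification $\End_{\Gr^{\pichar}_{\ZZ}\cC}(\ounit_{\cC}\shift{0}) \simeq \End_{\cC}(\ounit_{\cC})$.

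Finally, combining this with the preceding lemma, which computes $\dim(\ounit_{\cC}\shift{1}) \simeq \eta_{\pichar}$, we conclude
\begin{equation*}
    \dim(V\shift{1}) \simeq \dim(V) \cdot \eta_{\pichar} \simeq \eta_{\pichar} \cdot \dim(V).
\end{equation*}
No step presents a serious obstacle; the only minor subtlety is ensuring that the multiplicativity of the dimension is promoted to the $\TT$-equivariant setting, but this is already implicit in the cyclic-bar construction used to define the $\TT$-action on $\tr$ in \cref{lem:equivariant-trace}.
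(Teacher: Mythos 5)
Your proposal is correct and matches the argument the paper leaves implicit: the corollary is deduced from the preceding lemma exactly via the decomposition $V\shift{1} \simeq V\shift{0} \otimes \ounit_{\cC}\shift{1}$, multiplicativity of the $\TT$-equivariant dimension, and the fact that the symmetric monoidal functor $(-)\shift{0}$ preserves dimensions. The only cosmetic slip is that your intermediate display places $\dim(V\shift{0})\cdot\dim(\ounit_{\cC}\shift{1})$ in $(\ounit_{\cC}\units)^{\B\TT}$, whereas for merely dualizable $V$ it lives in $\End(\ounit_{\cC})^{\B\TT}$, as your final line correctly states.
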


        Moreover, using \cref{prop:dim-is-transfer}, we can understand the $\TT$-action on $\eta_{\pichar} \in \In$ --- it is just (the loops of) the composition
        \begin{equation*}
            \Sigma \SS[\B\TT] \xto{\Tr} \SS \xto{\pichar} \Inplus.
        \end{equation*}

        In low heights, everything is much simpler: Choose $\pichar \in \pinD$ and consider $\Gr^{\pichar}_{\ZZ} \cC$ as in \cref{def:dual-stable-stems}.
        \begin{lemma}\label{lem:trivial-T-action-0-1}
            Assume $\chrHeight \le 1$. Let $V \in \cC\dbl$ and assume that the $\TT$-action on $\dim(V)$ is trivial. Then the $\TT$-action on $\dim(V\shift{1})$ is trivial.     
        \end{lemma}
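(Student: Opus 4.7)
The plan is to reduce triviality of the $\TT$-action on $\dim(V\shift{1})$ to triviality of the $\TT$-action on $\eta_{\pichar}$, and then to dispatch the latter by a direct connectivity-versus-truncatedness count.

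First I would invoke \cref{cor:dim-of-shift-in-cyc-closed}, which gives $\dim(V\shift{1}) \simeq \eta_{\pichar}\cdot\dim(V)$ in $\End(\ounit_{\cC})^{\B\TT}$. The elements with trivial $\TT$-action form the image of the ring inclusion $\End(\ounit_{\cC}) \hookrightarrow \End(\ounit_{\cC})^{\B\TT}$; in particular this image is closed under multiplication. Since $\dim(V)$ lies in this subring by hypothesis, it suffices to show that $\eta_{\pichar}$ has trivial $\TT$-action.

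Next, I would use \cref{prop:dim-is-transfer} to represent $\eta_{\pichar}$ as the composition of connective spectra
\[
    \Sigma\SS[\B\TT] \xto{\Tr} \SS \xto{\pichar} \Inplus.
\]
Under the splitting $\Sigma\SS[\B\TT] \simeq \Sigma\SS \oplus \Sigma\redSS[\B\TT]$, the trivial-action part corresponds to the summand projection to $\Sigma\SS$; so the $\TT$-action on $\eta_{\pichar}$ is trivial if and only if the restriction
\[
    \Sigma\redSS[\B\TT] \to \Sigma\SS[\B\TT] \xto{\Tr} \SS \xto{\pichar} \Inplus
\]
is nullhomotopic.

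The final step is a degree count: $\Inplus = \tau_{\ge 0}\Sigma^{n+1} I_{\QQ/\ZZ}$ is $(n+1)$-truncated, so for $n \le 1$ it is at most $2$-truncated; while $\redSS[\B\TT] = \Sigma^{\infty}\B\TT$ is $1$-connected, with bottom cell in dimension $2$, so $\Sigma\redSS[\B\TT]$ is $2$-connected. Any map from a $2$-connected spectrum to a $2$-truncated spectrum factors through the vanishing $2$-truncation of its source, so the restriction is automatically null. I do not anticipate any serious obstacle here: once the reduction to $\eta_{\pichar}$ and the transfer description are in place, the rest is essentially a matter of homotopy-group bookkeeping.
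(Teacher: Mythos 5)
Your proof is correct and follows essentially the same route as the paper: reduce via \cref{cor:dim-of-shift-in-cyc-closed} to showing that the $\TT$-action on $\eta_{\pichar}$ is trivial, then kill it by the same connectivity-versus-truncatedness count (the paper phrases this simply as the absence of non-trivial maps $\B\TT \to \In$ for $\chrHeight \le 1$). Your detour through the transfer description of \cref{prop:dim-is-transfer} is harmless but not needed, since the degree count rules out any non-trivial $\TT$-action on an element of $\In$ regardless of how the action arises.
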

        \begin{proof}
            It suffices to show that the $\TT$-action on $\eta_{\pichar} \in \In$ is trivial. This follows as there are no non-trivial maps $\B\TT \to \In$ for $\chrHeight \le 1$. 
        \end{proof}

        \begin{lemma}\label{lem:trivial-T-action-2}
            Assume $\chrHeight = 2$. Let $V \in \cC\dbl$ and assume that the $\TT$-action on $\dim(V)$ is trivial. Then the restricted $\Ck$-action on $\dim(V\shift{1})$ is trivial for any $k$.
        \end{lemma}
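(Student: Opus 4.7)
The plan is to reduce the claim to a statement about $\eta_{\pichar}$ alone, and then analyze its $\Ck$-action via the transfer description of \cref{prop:dim-is-transfer}. By \cref{cor:dim-of-shift-in-cyc-closed}, we have
\[
\dim(V\shift{1}) \simeq \eta_{\pichar} \cdot \dim(V) \qin \End(\ounit_{\cC})^{\B\TT}.
\]
Since $\dim(V)$ has trivial $\TT$-action by hypothesis, it is in particular $\Ck$-fixed for every $k$, and multiplication by a $\Ck$-fixed element is a $\Ck$-equivariant operation on $\End(\ounit_{\cC})$. Thus the restricted $\Ck$-action on $\dim(V\shift{1})$ is determined by that on $\eta_{\pichar}$, and it suffices to prove that $\eta_{\pichar}$ is $\Ck$-fixed after restriction, for every $k$.

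Next, I will represent the $\TT$-action on $\eta_{\pichar}$ using \cref{prop:dim-is-transfer}: $\eta_{\pichar}$ is the image of $\pichar \in \pi_0 \Inplus[2]$ under the dimension map on invertibles, so it is represented by the composite
\[
\Sigma \SS[\B\TT] \xrightarrow{\Tr} \SS \xrightarrow{\pichar} \Inplus[2].
\]
Under the splitting $\SS[\B\Ck] \simeq \SS \oplus \redSS[\B\Ck]$, triviality of the $\Ck$-action is equivalent to vanishing of the composite
\[
\alpha_k \colon \Sigma \redSS[\B\Ck] \hookrightarrow \Sigma \SS[\B\Ck] \to \Sigma \SS[\B\TT] \xrightarrow{\Tr} \SS \xrightarrow{\pichar} \Inplus[2].
\]

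To conclude, I will exploit that $\Inplus[2]$ is $3$-truncated (its homotopy groups being concentrated in degrees $0,1,2,3$). Consequently $\alpha_k$ is determined by its behavior on $\pi_i$ for $i \le 3$, i.e.\ factors through $\tau_{\le 3}\Sigma \redSS[\B\Ck]$. Further, $p$-locally $\Inplus[2]_{(p)} = \Inprime[2]_+$, and the $p$-local component of $\Sigma \redSS[\B\Ck]$ depends only on the $p$-part of $k$; since $\pinD[2]_{(p)} = \pi_0 \Inprime[2]_+$ vanishes for $p \ge 5$, one is reduced to the cases $k = p^r$ with $p \in \{2, 3\}$. For each such prime, the low-dimensional cells of $\Sigma \redSS[\BCpr]$ are well understood, and the transfer $\Tr$ on the corresponding cells of $\Sigma \SS[\B\TT]$ lands in an explicit small piece of $\pi_{\le 3}\SS$ (with bottom-cell contribution given by $\eta$ via \cref{lem:trace-is-eta+id}), which can be checked to be annihilated upon further composition with $\pichar$ into $\Inprime[2]_+$.

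The main obstacle is the explicit low-degree verification at $p = 2$ and $p = 3$: one must identify, in terms of the cell structure of $\BCpr$ and the $p$-local AHSS, the image of $\alpha_{p^r}$ on $\pi_i$ for $i \le 3$ and confirm that it is killed by $\pichar$. I expect this to follow from the fact that the classes in $\pi_{\le 3}\SS$ hit by the restricted transfer (essentially duals of $\eta, \nu$, and multiples thereof) pair to zero with the generators of the very small $p$-local homotopy of $\Inprime[2]_+$, either for parity reasons or because the relevant extension in the Postnikov tower of $\Inprime[2]_+$ is split on those classes.
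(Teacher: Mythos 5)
Your opening reductions are sound and match the paper's setup: reducing to the restricted $\Ck$-action on $\eta_{\pichar}$ via \cref{cor:dim-of-shift-in-cyc-closed}, representing that action by $\Sigma \SS[\B\TT] \xrightarrow{\Tr} \SS \xrightarrow{\pichar} \Inplus$ via \cref{prop:dim-is-transfer}, and localizing to $p\in\{2,3\}$ are all fine. The gap is in the finishing step, which is the entire content of the lemma: you leave the low-degree verification as an expectation, and the mechanism you propose for it cannot work. By Brown--Comenetz duality, for connective $X$ a homotopy class of maps $X \to \In[3]$ is the same as a homomorphism $\pi_3 X \to \QQ/\ZZ$; so the class of your $\alpha_k$ is exactly $\pichar$ evaluated on the image of $\pi_3(\Sigma\redSS[\B\Ck]) \cong \pi_2(\redSS[\B\Ck])$ in $\pi_3\SS \cong \ZZ/24$ under the restricted transfer. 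Since $\pichar$ is an \emph{arbitrary} element of $\pinD \cong \Hom_{\Ab}(\ZZ/24,\QQ/\ZZ)$ --- in particular it may be a faithful character, which annihilates no nonzero element of $\pi_3\SS$ --- there is no hope that nonzero transfer-image classes such as multiples of $\nu$ or $\eta^3$ ``pair to zero with $\pichar$ for parity reasons'' or because a Postnikov extension of the target splits. The lemma can only hold for all $\pichar$ because the relevant image in $\pi_3\SS$ is itself zero, and your sketch does not establish that; this is precisely the missing step.

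The vanishing is in fact easy, and proving it closes your route --- but it is essentially the paper's two-line argument read through duality. Since $\B\Ck \to \B\TT$ is pointed, $\alpha_k$ factors through $\Sigma\redSS[\B\TT]$, so $\pi_3(\alpha_k)$ factors through $\pi_3(\Sigma\redSS[\B\TT]) \cong \pi_2(\redSS[\B\TT]) \cong \ZZ$ (the spectrum $\redSS[\B\TT]$ is $1$-connected with second homology $\ZZ$); as $\pi_2(\redSS[\B\Ck])$ is finite, the induced homomorphism is zero and hence $\alpha_k$ is null. Concretely, at odd $p$ one even has $\pi_2(\redSS[\B\Cn[p^r]]) = 0$, and at $p=2$ the at-most-$\ZZ/2$ generated by $\eta$ on the bottom cell dies already in $\pi_2(\redSS[\B\TT])$ for torsion reasons --- the transfer and $\pichar$ never enter the computation. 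The paper makes the same point on the target side: since $\B\TT$ is simply connected, the reduced action map factors through $\tau_{\ge 2}\In[2] \simeq \Sigma^2\QQ/\ZZ$, and its restriction to $\B\Ck$ is null because $H^2(\B\Ck;\QQ/\ZZ) = 0$ ($\QQ/\ZZ$ being divisible). So your transfer-and-cell-structure plan is salvageable but far more roundabout than necessary, and as written the decisive step is a genuine gap, with the offered heuristic pointing in the wrong direction.
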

        \begin{proof}
            It suffices to show that the $\Ck$-action on $\eta_{\pichar} \in \In[2]$ is trivial. 
            As $\B\TT$ is simply connected, the map $\B\TT \to \In[2]$ factors through $\tau_{\ge 2} \In[2] = \B^2 \QQ/\ZZ$. Therefore the $\Ck$-action maps factors as maps
            \begin{equation*}
                \B\Ck \to \B^2 \QQ/\ZZ,
            \end{equation*}
            or equivalently,
            \begin{equation*}
                \B\Ck \to \B^3 \ZZ.
            \end{equation*}
            As there is no 3-rd cohomology to $\B\Ck$, these maps are null.
        \end{proof}

        Combining both lemmas and \cref{thm:braiding-depends-only-on-dim} we conclude.
        \begin{corollary}
            Assume $\chrHeight \le 4$. Let $V \in \cC\dbl$ and assume that the $\TT$-action on $\dim(V)$ is trivial. Then $\mscr{X}^{\Zchar}_{\T V}$ depends only on $\eta_{\pichar} \cdot \dim(V) \in \ounit_{\cC}$.
        \end{corollary}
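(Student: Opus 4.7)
The plan is to combine the two preceding lemmas with \cref{thm:braiding-depends-only-on-dim}. By that theorem, $\mscr{X}^{\Zchar}_{\T V}$ depends only on $\dim(V\shift{1})$ as an element of $\End(\ounit_{\cC})^{\vee_k \B\Ck}$, i.e.\ on this element together with the restricted $\Ck$-action for every $k$. By \cref{cor:dim-of-shift-in-cyc-closed} this dimension equals $\eta_{\pichar}\cdot\dim(V)$ in $(\ounit_{\cC})^{\B\TT}$, and since by hypothesis the $\TT$-action on $\dim(V)$ is trivial, the entire $\Ck$-action on $\dim(V\shift{1})$ is inherited from the $\Ck$-action on $\eta_{\pichar}$. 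It therefore suffices to verify that for every $k$ the $\Ck$-action on $\eta_{\pichar}$ is trivial.

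This I would do by splitting on the height. For $\chrHeight \le 1$, \cref{lem:trivial-T-action-0-1} gives the stronger statement that the full $\TT$-action on $\dim(V\shift{1})$ is trivial, so a fortiori each $\Ck$-action is. For $\chrHeight = 2$, \cref{lem:trivial-T-action-2} gives exactly what is needed. For $\chrHeight \in \{3,4\}$ the conclusion is vacuous: by \cref{rmrk:maps-to-In} we have
\begin{equation*}
    \pinD \;=\; \Hom_{\Ab}\!\big(\pi_{\chrHeight+1}\SS,\, \QQ/\ZZ\big),
\end{equation*}
and since $\pi_4 \SS = \pi_5 \SS = 0$ we obtain $\pinD = 0$. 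Therefore $\pichar = 0$, the twist $\Zchar_{\pichar}$ vanishes, and \cref{exm:twFun-0-is-Day} identifies $\Gr^{\pichar}_{\ZZ}\cC$ with $\Gr_{\ZZ}\cC$, so both $\eta_{\pichar}$ and the full $\TT$-action on $\dim(V\shift{1})$ are trivial automatically.

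Once triviality of the $\Ck$-action on $\dim(V\shift{1})$ is established for all $k$, \cref{thm:braiding-depends-only-on-dim} implies that $\mscr{X}^{\Zchar}_{\T V}$ depends only on the underlying element $\eta_{\pichar}\cdot\dim(V) \in \ounit_{\cC}$, as asserted. There is no substantive obstacle; the corollary is a formal assembly of the two lemmas with the previously established dependence of the braiding character on the cyclic-equivariant dimension, together with the elementary input that the stable stems $\pi_4\SS$ and $\pi_5\SS$ vanish.
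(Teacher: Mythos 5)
Your proposal is correct and follows essentially the same route as the paper's proof: for $\chrHeight \le 2$ it combines \cref{lem:trivial-T-action-0-1} and \cref{lem:trivial-T-action-2} (reducing, via \cref{cor:dim-of-shift-in-cyc-closed} and the triviality hypothesis on $\dim(V)$, to the $\Ck$-action on $\eta_{\pichar}$) with \cref{thm:braiding-depends-only-on-dim}, and for $\chrHeight = 3,4$ it invokes $\pi_4\SS = \pi_5\SS = 0$, hence $\pinD = 0$. The only cosmetic quibble is calling the case $\chrHeight = 3,4$ "vacuous" --- it is rather the statement for $\pichar = 0$, which your observation that the twist and the action are then trivial handles correctly.
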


        \begin{proof}
            For $\chrHeight \le 2$ it follows from \cref{lem:trivial-T-action-0-1,lem:trivial-T-action-2,thm:braiding-depends-only-on-dim}. For $\chrHeight = 3,4$ it follows since $\pinD[4] = \pinD[5] = 0$.
        \end{proof}

        \subsubsection*{Dimensions of exterior algebras}
        We can use the triviality of the $\Ck$-actions to compare, in nice cases, dimensions of exterior algebras coming from different elements in $\pinD$. 

        In height 0 everything works, and we get a generalization of a well known generating function (in the case $\cC = \Vect$) for the dimensions of symmetric and exterior powers:
        \begin{proposition}\label{prop:generating-function-ht-0}
            Let $\cC$ be an $\infty$-semiadditive, $(\SS,0)$-oriented\footnote{It is enough to assume we are working 2-typically.} category.
            Let $V \in \cC\dbl$ such that the $\TT$-action on $\dim(V)$ is trivial. Then,
            \begin{equation*}
                (\sum_{\degree} \dim(\Sym V)\, t^{\degree}) (\sum_{\degree} \dim(\alt_{\Kos} V)\, (-t)^{\degree}) = 1.
            \end{equation*}
        \end{proposition}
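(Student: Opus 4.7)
The plan is to apply \cref{cor:generating-functions-dimensions} to the pair $\Zchar_1 = 0$ and $\Zchar_2 = \Kos = \Zchar_{\hat{\eta}}$, where $\hat{\eta} \in \pinD[0]$ is the unique nontrivial element (cf.\ \cref{rmrk:height-0-dual-stems}). That corollary tells us that the product $(\dim \extAlg_0 V)(\dim \extAlg_{\Kos} V)$ depends only on the sum $\dim_0(V\shift{1}) + \dim_{\Kos}(V\shift{1})$ in $(\ounit_\cC)^{\vee_k \B\Ck}$. Using \cref{cor:dim-of-shift-in-cyc-closed}, each summand rewrites as $\eta_\pichar \cdot \dim V$: we have $\eta_0 = 1$, whereas $\eta_{\hat{\eta}} = \eta \cdot \hat{\eta}$ represents $1/2 \in \QQ/\ZZ$ and is mapped to $-1 \in \ounit_\cC^\times$ under the height $0$ orientation. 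By \cref{lem:trivial-T-action-0-1}, the $\TT$-action on each summand is trivial given the hypothesis on $\dim V$, so the sum vanishes in $(\ounit_\cC)^{\vee_k \B\Ck}$.

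Having identified the input to \cref{cor:generating-functions-dimensions} as zero, I will pin down the actual value of the product by specializing to the test case $V = 0$. In that case both generating functions collapse to $1$ (since $\alt^\degree_\Zchar 0 = 0$ for $\degree \ge 1$ and either twist), so the product is $1$; hence the product equals $1$ for every $V$ satisfying the hypothesis.

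Finally, to translate the product into the stated form, I will unpack the Koszul generating function. The factor $\dim \extAlg_0 V$ is the ordinary $\sum_\degree \dim(\Sym^\degree V)\, t^\degree$, since the trivial grading introduces no extra signs. For the Koszul twist, the $\degree$-th coefficient is $\dim_{\Kos}(\alt^\degree_{\Kos} V\, \shift{\degree})$, and the identity $\dim_{\Kos}(\ounit_\cC\shift{1}) = -1$ inserts a factor $(-1)^\degree$, converting $t^\degree$ into $(-t)^\degree$. The main obstacle is this last sign bookkeeping together with verifying the identification $\eta_{\hat{\eta}} \mapsto -1$; granted these, the argument is a direct application of the machinery of \cref{subsec:exterior-algebras} combined with the low-height triviality of the restricted $\TT$-action.
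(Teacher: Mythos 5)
Your proposal is correct and takes essentially the same route as the paper's own proof: identify $\dim_0(V\shift{1}) + \dim_{\Kos}(V\shift{1}) = 0$ with trivial $\TT$-action via \cref{cor:dim-of-shift-in-cyc-closed,lem:trivial-T-action-0-1} and the identification $\eta_{\hat{\eta}} \mapsto -1$, then invoke \cref{cor:generating-functions-dimensions} and evaluate at the zero object (the paper phrases this as $\dim \extAlg_{0,\Kos} 0 = 1$). The sign bookkeeping converting $t^{\degree}$ into $(-t)^{\degree}$, which you spell out via $\dim_{\Kos}(\ounit_{\cC}\shift{1}) = -1$, is left implicit in the paper's notation for $\dim \extAlg_{\Kos} V$ but is the same content.
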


        \begin{proof}
            Let $\hat{\eta}\in\pinD[1]$ be the unique non-trivial element. By \cref{cor:dim-of-shift-in-cyc-closed,lem:trivial-T-action-0-1}, $\dim_0 (V\shift{1}) = \dim(V) \in \Gr_{\ZZ} \cC$ and $\dim_{\hat{\eta}}(V\shift{1}) = \eta_{\hat{\eta}} \cdot \dim(V) \in \Gr^{\Kos}_{\ZZ} \cC$.\footnote{We sincerely apologize for the notation $\eta_{\hat{\eta}}$. It is a good opportunity to apologize for the excessive use of footnotes in this paper.}
            As an element in $\In[0] = \QQ/\ZZ$, $\eta_{\hat{\eta}}$ is just the action of $\eta \in \Omega \SS$ on $\hat{\eta} \in \In[1]$, which is $(-1) \in \QQ/\ZZ$. 
            Summarizing: $\dim_0(V\shift{1}) + \dim_{\hat{\eta}}(V\shift{1}) = 0$, where $0$ is equipped with the trivial $\TT$-action.
            By \cref{rmrk:height-0-dual-stems}, $\Gr^{\hat{\eta}}_{\ZZ} \cC = \Gr^{\Kos}_{\ZZ} \cC$. Finally, by \cref{cor:generating-functions-dimensions},
            \begin{equation*}
                (\dim \Sym[\bullet] V) \cdot (\dim \extAlg_{\Kos} V) = \dim \extAlg_{0,\Kos} 0 = 1,
            \end{equation*}
            where $\extAlg_{0,\Kos}$ is the exterior algebra functor in $\Gr^{0,\Kos}_{\ZZ^2} \cC$, proving the statement.
        \end{proof}

        \begin{remark}
            The same proof shows that, working in $\Gr^{0,\hat{\eta}}_{\ZZ^2} \cC$,
            \begin{equation*}
                \mscr{X}_{\T (V\shift{e_1} \oplus V\shift{e_2})} = \mscr{X}_{0} = 1.
            \end{equation*}
            Using that $\T (V \shift{e_1} \oplus V\shift{e_2}) = \T V\shift{e_1} \otimes_{\Day} \T V\shift{e_2}$, and the symmetric monoidal functors $\Gr^0_{\ZZ} \cC \to \Gr^{0,\hat{\eta}}_{\ZZ^2} \cC$, $\Gr^{\Kos}_{\ZZ} \cC \to \Gr^{0,\hat{\eta}}_{\ZZ^2} \cC$, we get a relation between $\chi^{0}_{\Tm V}$ and $\chi^{\Kos}_{\Tm V}$.
            This relation is, unfortunately, much less comprehensible.
        \end{remark}

        In higher heights, the orientation does not provide us with minus one. Therefore we can not hope generally that the sum of the dimensions will be 0. E.g. for $\cC = \Mod_{\sVect}$, the dimension $\dim_{\hat{\eta^2}}(\sVect\shift{1})$ is the  $(0|1) $-dimensional super vector space, and $\dim_0(\sVect \shift{1}) \oplus \dim_{\hat{\eta^2}}(\sVect \shift{1})$ is the $(1|1)$-dimensional super vector space, which is non-zero. 
        
        We can try to overcome this by forcing additivity, but even that does not guarantee that the dimension would be sent to -1. Thus, we will force both:

        \begin{notation}
            Denote the generators of the cyclic groups $\pinD[2]$ and $\pinD[3]$ by $\hat{\eta^2}$ and $\hat{\nu}$ respectively.
        \end{notation}

        The proof of the following is exactly as \cref{prop:generating-function-ht-0}, using that $\dim(\hat{\eta^2}) = \hat{\eta} \in \In[1]$ and $\dim(\hat{\nu}) = \hat{\eta^2} \in \In[2]$:
        \begin{lemma}\label{lem:generating-function-ht-1-additive}
            Let $\cC$ be an $\infty$-semiadditive, $(\SS,1)$-oriented category. Assume that $\cC$ is additive and that the map $\In[1] \to \ounit_{\cC}\units$ sends $\hat{\eta}$ to $(-1)\in \ounit_{\cC}\units$.
            Let $V \in \cC\dbl$ such that the $\TT$-action on $\dim(V)$ is trivial. Then,
            \begin{equation*}
                (\sum_{\degree} \dim(\Sym V)\, t^{\degree}) (\sum_{\degree} \dim(\alt_{\hat{\eta^2}} V)\, (-t)^{\degree}) = 1.
            \end{equation*}
        \end{lemma}
        \begin{lemma}\label{lem:generating-function-ht-2-additive}
            Let $\cC$ be an $\infty$-semiadditive, $(\SS,2)$-oriented category. Assume that $\cC$ is additive and that the map $\In[2] \to \ounit_{\cC}\units$ sends $\hat{\eta^2}$ to $(-1)\in \ounit_{\cC}\units$.
            Let $V \in \cC\dbl$ such that the $\TT$-action on $\dim(V)$ is trivial. Then,
            \begin{equation*}
                (\sum_{\degree} \dim(\Sym V)\, t^{\degree}) (\sum_{\degree} \dim(\alt_{\hat{\nu}} V)\, (-t)^{\degree}) = 1.
            \end{equation*}
        \end{lemma}

        \begin{example}
            In the prime $p=2$, at heights $\chrHeight = 1,2$, $\ModEn$ satisfies the above conditions (see \cref{prop:truncated-units}).
        \end{example}

        \begin{remark}\label{rmrk:iterated-dimension}
            A way to bypass this problem in categorical examples such as $\cVectn$, which are never additive, is to consider the iterated dimension. For example, for $\cC = \Mod_{\sVect}$, in both graded categories, corresponding to $0$ and $\hat{\eta^2}$, the object $\sVect \shift{1}$ is 2-dualizable and 
            \begin{equation*}
                \dim^2_0(\sVect\shift{1}) = 1, \qquad \dim^2_\Kos(\sVect\shift{1}) = -1.
            \end{equation*}
            Therefore, one can get a generating function for the iterated dimensions. This is similar in flavor to \cite[Example~4.1.3]{Ganter-Kapranov-2014-exterior-categories}.
        \end{remark}

        \subsubsection*{Examples of braiding characters}

        Using \cref{lem:character-of-Tm} we can compute by hand the braiding characters in low heights.

        \begin{example}[Day convolution]
            Let $\cC$ be an $\infty$-semiadditive, presentably symmetric monoidal category and $(\SS,\chrHeight)$-oriented for height $\chrHeight \le 2$. Let $V \in \cC\dbl$ and assume that $\TT$ acts trivially on $\dim(V)$. Then, in $\Gr_{\ZZ} \cC$, for $\sigma \in \L\B\Sm\subseteq \L\MM$
            \begin{equation*}
                \chi_{\T V}(\sigma) = (\dim(V))^{\numcyc}\, t^{\degree}.
            \end{equation*}
        \end{example}

        In height 0, we know that the dimension of $\hat{\eta}$ is $(-1)$, so $\dim_{\hat{\eta}}(V\shift{1}) = -\dim(V)$. In particular,
        \begin{example}[Height 0]
            Let $\cC$ be an $\infty$-semiadditive, presentably symmetric monoidal category which is $(\SS,0)$-oriented. Let $V \in \cC\dbl$ and assume that $\TT$ acts trivially on $\dim(V)$. Then, in $\Gr^{\Kos}_{\ZZ} \cC$, for $\sigma \in \L\B\Sm\subseteq \L\MM$
            \begin{equation*}
                \chi^{\Kos}_{\T V}(\sigma) = (-\dim(V))^{\numcyc}\, t^{\degree} = \begin{cases}
                    \dim(V)^{\numcyc}\, t^{\degree}, & \text{$\numcyc$ is even}, \\
                    -\dim(V)^{\numcyc}\, t^{\degree}, & \text{$\numcyc$ is odd}.
                \end{cases}.
            \end{equation*}
        \end{example}

        \begin{remark}
            The same can be said for the braiding character in $\Gr^{\hat{\eta^2}}_{\ZZ} \cC$ and $\Gr^{\hat{\nu}}_{\ZZ} \cC$, under the assumptions of \cref{lem:generating-function-ht-1-additive} and \cref{lem:generating-function-ht-2-additive}, respectively. 
            
            In particular, this allows us to compute the braiding character in $\ModEn$ for $\chrHeight \le 4$.
        \end{remark}

        \begin{example}[Height 1]
            In $\Mod_{\sVect}$, which is $(\SS,1)$-oriented, $\dim_{\hat{\eta^2}}(\ounit_{\cC}\shift{1}) = \Pi \field$ is the $(0|1)$-dimensional super vector space. For any $\mcal{V} \in \Mod_{\sVect}\dbl$
            \begin{equation*}
                \chi_{\T \mcal{V}}(\sigma) = (\Pi \dim(\mcal{V}))^{\otimes \numcyc}\, t^{\degree} = \begin{cases}
                    \dim(\mcal{V})^{\otimes \numcyc}\, t^{\degree}, & \text{$\numcyc$ is even}, \\
                    \Pi \dim(\mcal{V})^{\otimes \numcyc}\ t^{\degree}, & \text{$\numcyc$ is odd}
                \end{cases}
                \qin \sVect.
            \end{equation*}
        \end{example}

        \begin{remark}
           One can write similar results in height $2$ in terms of $\dim_{\hat{\nu}}(\cVectn[2]\shift{1})$.
        \end{remark}

        \begin{remark}
            Similarly to \cref{rmrk:iterated-dimension}, working in categorical examples, one can consider the iterated character. These behave more like the characters in the additive case. A study of these is carried out in \cite{Keidar-Ragimov-2025-alternating}.
        \end{remark}

    \subsection{The chromatic braiding character}
    \label{subsec:chromatic-braiding-character}

        In the categorical setting, for example in $\Vectn[\chrHeight+1] = \Mod_{\Vectn}$, the monoidal character map is a categorical operation that decreases the height at the cost of adding a free loops:
        \begin{equation*}
            \chi \colon ((\Vectn[\chrHeight+1])\dbl)^A = (\Mod_{\Vectn}\dbl)^A \to (\Vectn)^{\L A}
        \end{equation*}
        for any space $A$.

        In the chromatic setting, we have an analogous story, using the transchromatic character theory developed by Hopkins--Kuhn--Ravenel, Stapleton and Lurie (\cite{Hopkins-Kuhn-Ravanel-2000-HKR,Stapleton-2013-HKR,Lurie-2019-Elliptic3}). There exists a $\Kn$-local algebra, called the ($\Kn$-local) splitting algebra at height $\chrHeight$  \footnote{That is, it is the $\Kn[\chrHeight]$-localization of the splitting algebra constructed in \cite{Stapleton-2013-HKR} and \cite{Lurie-2019-Elliptic3}. Throughout this paper, we always consider the splitting algebra in its monochromatically localized form.},
        \begin{equation*}
            \LKn \Enp \to C^{\chrHeight+1}_{\chrHeight},
        \end{equation*}
        and a transchromatic character map
        \begin{equation*}
            \chi^{\HKR} \colon \Enp^A \to (C^{\chrHeight+1}_{\chrHeight})^{\L A}
        \end{equation*}
        for any $p$-finite space $A$. Since $C^{\chrHeight+1}_{\chrHeight}$ is a nonzero $\Kn$-local algebra, we can relate it to a Morava $E$-theory via the chromatic Nullstellensatz \cite{Burklund-Schlank-Yuan-2022-Nullstellensatz}.
        
        \begin{notation}
            A map $R \to \En(L)$ in $\CAlg(\SpTn)$, for $L$ algebraically closed, is called a \emph{geometric point of $R$}.
        \end{notation}

        Thus, we obtain a refined transchromatic character map
        \begin{equation*}
            \chi^{\HKR} \colon \Enp^A \to (C^{\chrHeight+1}_{\chrHeight})^{\L A} \to \En(K)^{\L A},
        \end{equation*}
        where $K$ is an algebraically closed field, and the second map arises from a chosen geometric point.

        This connects with the monoidal character, by using again the chromatic Nullstellensatz, which allows us to relate $(\ModEn)\dbl$ with $\Enp$.
        
        \begin{notation}
            Let $\KTnp \coloneqq \LTnp \K$ denote the $\Tnp$-localized $\K$-theory functor. We also write $\KTnp(\En) \coloneqq \KTnp((\ModEn)\dbl)$.
        \end{notation}
        
        There is a canonical map of commutative monoids
        \begin{equation*}
            (\ModEn)\dblspace \to \KTnp(\En).
        \end{equation*}
        By chromatic redshift for $\En$ \cite{Yuan-2024-redshift-En}, we know that $\KTnp(\En)$ is nontrivial, so we fix a geometric point
        \begin{equation*}
            \KTnp(\En) \to \Enp(L),
        \end{equation*}
        for some algebraically closed field $L$.
        \begin{definition}
            Using this geometric point, we define the \emph{decategorification map}
            \begin{equation*}
                \de \colon (\ModEn)\dblspace \to \KTnp(\En) \to \Enp(L).
            \end{equation*}
        \end{definition}

        Let $\LKn \Enp(L) \to C^{\chrHeight+1}_{\chrHeight}(L)$ be the associated $\Kn$-local splitting algebra at height $\chrHeight$.
        
        \begin{construction}\label{cons:character-diagram}
            Choose a geometric point $C^{\chrHeight+1}_{\chrHeight}(L) \to \En(K')$. By \cite[Lemma 7.14, Theorem 7.2(2)]{Burklund-Schlank-Yuan-2022-Nullstellensatz}, we obtain geometric points $\En \to \En(K)$ and $\En(K') \to \En(K)$. For $A \in \pspcpi$, this gives rise to a (not necessarily commutative) diagram:
            \begin{equation*}
                \begin{tikzcd}
                    {((\ModEn)\dblspace)^A} && {\Enp(L)^A} \\
                    && {C^{\chrHeight+1}_{\chrHeight}(L)^{\L A}} \\
                    {\En^{\L A}} && {\En(K)^{\L A}}
                    \arrow["\de", from=1-1, to=1-3]
                    \arrow["\chi", from=1-1, to=3-1]
                    \arrow["\chi^{\HKR}", from=1-3, to=2-3]
                    \arrow[from=2-3, to=3-3]
                    \arrow[from=3-1, to=3-3]
                \end{tikzcd}
            \end{equation*}
            We refer to this as the \emph{character diagram}.
        \end{construction}

        While we expect this diagram to commute in general (up to an isomorphism of $\En(K)$), thereby providing an interpretation of transchromatic character theory as a form of monoidal character theory, we do not know how to prove this in full generality. However, we verify that the two paths agree in special cases (\cref{cor:commutativity-of-character-diagram-suspension-In}, \cref{lem:commutativity-of-character-diagram-En}). These cases serve as the main tool in our analysis of the chromatic braiding character.

        By \cref{lem:character-of-Tm,thm:braiding-depends-only-on-dim}, the study of the braiding character reduces to understanding the character of the restricted action of $\ZZ/p^k \subseteq \Sm[p^k]$ for $p^k$-permutation representations for all $k$. Let $\pichar \in \pinD$. The permutation representation
        \begin{equation*}
            \Tm[p^k] \En\shift{1} \qin (\Gr^{\pichar}_\ZZ \ModEn)^{\B\ZZ/p^k}
        \end{equation*}
        is concentrated in a single degree $p^k$, and thus corresponds to a $\ZZ/p^k$-action on $\En$ in $\ModEn$. We show that the decategorification of this representation is given by
        \begin{equation*}
            \Tm[p^k](\omega(\pichar)) = \omega(\pichar)^{p^k} \qin \Enp(L)^{\B\ZZ/p^k},
        \end{equation*}
        where
        \begin{equation*}
            \omega \colon \pinD = \pi_0(\Inplus) \to \pi_0 \Enp(L)\units
        \end{equation*}
        is the $\pi_0$ of the orientation map.

        The image of $\omega$ lies in $\{\pm 1\}$ (\cref{def:truncated-units}), therefore $\omega(\pichar)$ admits a categorification (i.e.\ a preimage under $\de$):
        \begin{equation*}
            \de(\En) = 1, \qquad \de(\Sigma \En) = -1.
        \end{equation*}
        Using the commutativity of the character diagram in these cases, together with \cref{cor:no-action-on-dim-En}, we conclude that the character of $\Tm \En\shift{1}$ agrees with the character of one of the following:
        \begin{equation*}
            \Tm \En \qor \Tm (\Sigma \En).
        \end{equation*}

        We begin in \cref{subsubsec:character-diagram} by showing that the character diagram commutes in these specific cases. Then in \cref{subsubsec:truncated-units} we study the image of the orientation map $\omega$, classifying when it takes the values $1$ and $-1$ (\cref{prop:truncated-units}). Finally, in \cref{subsubsec:T-action-chromatic}, we deduce a formula for the braiding character in the chromatic setting (\cref{thm:chromatic-braiding-character}).

        \subsubsection{Compatibility between monoidal and transchromatic characters}
        \label{subsubsec:character-diagram}

        We investigate certain relations between transchromatic character theory and monoidal character theory, expressed through the commutativity of the character diagram \cref{cons:character-diagram}. Some of the results in this subsection likely hold in greater generality than we state. However, since the special cases presented here suffice for our purposes, we do not pursue the general case further.

        This subsubsection is relatively technical. We begin with a few notations that will help us keep track:
        
        \begin{notation}
            When working in an $\infty$-commutative monoid $M$ (e.g.\ a $\Tn$-local spectrum) in the sense of \cite[Definition~5.10]{Harpaz-2020-ambidexterity}, we denote the operation of semiadditive integration in $M$ by~$\int^M$.
        \end{notation}
        
        \begin{notation}
            We work with Morava $E$-theories over various algebraically closed fields. When stating a claim that holds for all such theories, we typically write $\En$ to mean $\En(L)$ for some algebraically closed field $L$.
        \end{notation}
        
        \begin{notation}
            Let $L$ be an algebraically closed field and $0 \le t \le \chrHeight$. We denote by
            \begin{equation*}
                \LKn[t]\En(L) \to C^{\chrHeight}_t(L)
            \end{equation*}
            the $\Kn[t]$-local splitting algebra for $\En(L)$ at height $t$ 
            (\cite[\textsection~3.10]{Stapleton-2013-HKR}, 
            \cite{Barthel-Stapleton-2016-centralizers-good-groups}, 
            \cite{Barthel-Stapleton-2018-rings-transchromatic}, \cite[Definition~2.7.12]{Lurie-2019-Elliptic3}).
        
            For any $p$-local $\pi$-finite space $A$, we denote the transchromatic character map by
            \begin{equation*}
                \chi^{\chrHeight - t, \HKR} \colon \En(L)^A \to C^{\chrHeight}_t(L)^{\L^{\chrHeight - t} A},
            \end{equation*}
            and write specifically $\chi^{\HKR} = \chi^{1, \HKR}$.
            We write the evaluation of the character at a specific map $\rho \colon A \to \En(L)$ as
            \begin{equation*}
                \chi^{\chrHeight - t, \HKR}_{\rho} \colon \L^{\chrHeight - t} A \to C^{\chrHeight}_t(L).
            \end{equation*}
            
            When there is no need to distinguish between different $E$-theories, we omit the field $L$ from the notation.
        \end{notation}

        Recall the following claim from \cite{BMCSY-cardinality}:
        \begin{proposition}[{\cite[Proposition~11]{BMCSY-cardinality}}]\label{prop:KTnp-is-infty-semiadditive}
            The functor $\KTnp \colon \Cat_{\Lnf, \mfin[p]} \to \SpTnp$ is $p$-typically $\infty$-semiadditive.
        \end{proposition}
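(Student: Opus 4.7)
The plan is to verify that $\KTnp$ is compatible with the $p$-typical semiadditive integration maps on both sides: $\SpTnp$ is already known to be $p$-typically $\infty$-semiadditive (Carmeli--Schlank--Yanovski), so the task is to show that for every $p$-local $\pi$-finite space $A$ and every diagram $F \colon A \to \Cat_{\Lnf, \mfin[p]}$, the norm
\[
\Nm_A \colon (\KTnp F)_{hA} \to (\KTnp F)^{hA}
\]
in $\SpTnp$ arises coherently from the categorical colimit of $F$, and hence that $\KTnp$ transports the semiadditive structure of the source to that of the target.

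First, using the standard devissage that every $p$-local $\pi$-finite space is built from $\B\Ck[p]$ under finite (co)products and iterated classifying-space functors, together with the fact that norm maps compose, I would reduce the claim to the case $A = \B\Ck[p]$. The passage to higher layers $\B^m \Ck[p^k]$ and the coherence with the target semiadditive structure on $\SpTnp$ are then formal once this base case is in hand.

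For $A = \B\Ck[p]$, the norm comparison would be produced by combining: (i) the additivity theorem for algebraic $\K$-theory of stable $\infty$-categories in its Blumberg--Gepner--Tabuada form, which ensures $\K$ preserves the relevant exact sequences and orbit-style colimits; (ii) the universal property of $\KTnp$ as the $\Tnp$-localization of connective $\K$-theory, which passes this compatibility to $\SpTnp$; and (iii) the target-side norm produced by the $\infty$-semiadditivity of $\SpTnp$, allowing one to identify the two sides up to a canonical comparison map.

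The main obstacle is to show this comparison is invertible. This is essentially a chromatic ambidexterity / redshift statement for telescopic $\K$-theory: one needs that, after $\Tnp$-localization, $\K$-theory of categories in $\Cat_{\Lnf, \mfin[p]}$ is ambidextrous for $p$-group actions. I expect the argument to proceed by identifying the norm with a trace in the $\THH$-picture (using Dundas--Goodwillie--McCarthy and the cyclotomic formalism of Nikolaus--Scholze) and then invoking the higher semiadditive structure of $\SpTnp$ together with $\Tn$-local redshift to trivialize the resulting Tate-type obstruction. Once invertibility is established on $\B\Ck[p]$, the devissage of the first step concludes the proof.
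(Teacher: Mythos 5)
You should first be aware that the paper does not prove this statement at all: it is recalled verbatim from \cite{BMCSY-cardinality} (Proposition~11) and used as a black box, so there is no internal proof to compare against. The real content of the quoted result lives in the recent work on chromatically localized $\K$-theory (purity in the sense of Land--Mathew--Meier--Tamme and Clausen--Mathew--Naumann--Noel, and the higher descent / cyclotomic redshift results of Ben-Moshe--Carmeli--Schlank--Yanovski), and your sketch does not actually reconstruct that content.

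Concretely, there are two gaps. First, your dévissage is miscalibrated: reducing to Eilenberg--MacLane pieces is fine, but the correct base cases are $\B^m C_p$ for \emph{all} $m$, and the passage from level $m$ to level $m+1$ is emphatically not formal --- that inductive step is precisely where the ambidexterity-type theorems live. Knowing that the norm comparison for $\KTnp$ is invertible over $\B C_p$ (essentially $1$-semiadditivity, which already needs purity) gives you nothing formal about $\B^2 C_p$, $\B^m C_{p^k}$, etc.; each higher level requires its own descent statement, and proving these is the bulk of the cited paper. Second, the tools you invoke for invertibility do not do the job: additivity/localizing-invariance in the Blumberg--Gepner--Tabuada sense says nothing about colimits indexed by $\B C_p$ (unlocalized $\K$-theory genuinely fails to commute with them), and Dundas--Goodwillie--McCarthy compares $\K$ with $\TC$ only along nilpotent extensions, which is not the relevant mechanism here. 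The sentence ``invoke the higher semiadditive structure of $\SpTnp$ together with redshift to trivialize the resulting Tate-type obstruction'' is a placeholder for exactly the theorem being claimed --- the vanishing of the relevant Tate constructions after $\Tnp$-localization and the identification $\KTnp(\cC^{hG}) \simeq \KTnp(\cC)^{hG}$ for $\pi$-finite $p$-groups are the hard inputs (purity plus cyclotomic redshift and hyperdescent), and they are nowhere established in your argument. As written, the proposal restates the statement rather than proving it; if you want a self-contained treatment you would need to either reproduce those external theorems or cite them explicitly at the point where the norm map is inverted.
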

        
        \begin{corollary}\label{cor:integral-in-K-is-colim}
            Let $A\in \pspcpi$ and let $V \in ((\ModEn)\dbl)^A$ be an $A$-local system of dualizable objects. Then 
            \begin{equation*}
                \int_A^{\KTnp(\En)} [V] = [\colim_A V] \qin \KTnp(\En).
            \end{equation*}
        \end{corollary}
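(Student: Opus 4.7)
The plan is to deduce this directly from the $p$-typical $\infty$-semiadditivity of $\KTnp$ (\cref{prop:KTnp-is-infty-semiadditive}), by interpreting both sides in terms of the ambidexterity of $\Cat_{\Lnf, \mfin[p]}$ along $p$-typical $\pi$-finite spaces.

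First, I would unstraighten the local system $V \colon A \to (\ModEn)\dbl$ to obtain a dualizable object of $\ModEn^A = \Fun(A, \ModEn)$, yielding a class $[V] \in \KTnp(\ModEn^A)$. Its colimit $\colim_A V$ is itself dualizable, since $\ModEn$ is $\infty$-semiadditive and the colimit of a pointwise dualizable diagram along a $\pi$-finite space agrees with the corresponding limit and hence remains in $(\ModEn)\dbl$. By functoriality of $\KTnp$, the class $[\colim_A V]$ is the image of $[V]$ under the map $\KTnp(\colim_A) \colon \KTnp(\ModEn^A) \to \KTnp(\ModEn) = \KTnp(\En)$.

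The key step is then to recognize $\ModEn^A$ as the semiadditive integral $\int_A \ModEn$ of the constant local system of categories in $\Cat_{\Lnf, \mfin[p]}$, with $\colim_A$ playing the role of the structural integration map. Applying $\KTnp$ and invoking \cref{prop:KTnp-is-infty-semiadditive}, I obtain a natural identification
\begin{equation*}
    \KTnp(\ModEn^A) \simeq \int_A \KTnp(\En)
\end{equation*}
under which $\KTnp(\colim_A)$ corresponds to the integration map $\int_A \KTnp(\En) \to \KTnp(\En)$, and the class $[V]$ corresponds to the $A$-indexed family $a \mapsto [V_a]$. Chasing $[V]$ through the diagram then yields $[\colim_A V] = \int_A^{\KTnp(\En)} [V]$, as desired.

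The main point to verify carefully is that these identifications really match in the claimed way: that the ambidexterity of $\Cat_{\Lnf, \mfin[p]}$ along $A$ on the constant local system identifies $\Fun(A, \ModEn)$ with $\int_A \ModEn$ in such a fashion that the ordinary colimit functor becomes the ambidextrous integration map, and that the class $[V]$ transforms to the expected pointwise family of classes in $\KTnp(\En)$. Both of these should be formal consequences of packaging $\KTnp$ as a symmetric monoidal $p$-typically $\infty$-semiadditive functor, so the real content is entirely concentrated in \cref{prop:KTnp-is-infty-semiadditive}.
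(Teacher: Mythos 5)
Your argument is correct and is essentially the paper's proof: the whole content is \cref{prop:KTnp-is-infty-semiadditive}, from which the paper concludes in one line that $\KTnp$ respects $p$-typical semiadditive integrals, so that the class of $\colim_A V = \int^{\ModEn}_A V$ is $\int_A^{\KTnp(\En)}[V]$; your identifications of $\Fun(A,\ModEn)$ with the ambidextrous (co)limit in $\Cat_{\Lnf,\mfin[p]}$ and of $[V]$ with the pointwise family just unpack this formal consequence. One small caveat: dualizability of $\colim_A V$ does not follow from ``the colimit agrees with the limit'' --- it holds because the dual is $\colim_A V^{\vee}$, with (co)evaluation furnished by the ambidextrous transfer along the diagonal of $A$, a standard fact in $\infty$-semiadditive categories that the paper also uses implicitly.
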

        
        \begin{proof}
            As $\KTnp$ is $p$-typically $\infty$-semiadditive, it respects $p$-typical semiadditive integrals. Therefore it sends
            \begin{equation*}
                \colim_A V = \int^{\ModEn}_A V \qin \ModEn
            \end{equation*}
            to 
            \begin{equation*}
                [\colim_A V ] = \int_A^{\KTnp(\En)} [V] \qin \KTnp(\En)
            \end{equation*}
            as claimed.
        \end{proof}
        
        \begin{lemma}\label{lem:integral-Enp-to-En}
            Let $A \in \pspcpi$, $V \in ((\ModEn)\dbl)^A$. Then the following three semiadditive integrals
            \begin{equation*}
                \int_{\L_p A}^{\En} \chi_V \in \pi_0 \En, \qquad
                \int_{A}^{\Enp(L)} \de(V) \in \pi_0 \Enp(L), \qquad
                \int_{\L_p A}^{C^{\chrHeight+1}_{\chrHeight}(L)} \chi^{\HKR}_{\de(V)} \in \pi_0 C^{\chrHeight+1}_{\chrHeight}(L)
            \end{equation*}
            land in $\ZZ$, and they are all identified.
        \end{lemma}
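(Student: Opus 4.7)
The plan is to show that each of the three integrals computes a single integer, namely the $\K$-theoretic rank of the perfect $\En$-module $\colim_A V$, with the colimit taken in $\ModEn$. Since $(\ModEn)\dbl \simeq \Perf(\En)$ by \cite[Proposition~10.11]{Mathew-2016-Galois}, this rank is indeed an integer, detected for instance by base change to $\Kn$.

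For the middle integral $\int_A^{\Enp(L)} \de(V)$, I would combine \cref{cor:integral-in-K-is-colim} with the observation that the decategorification map $\de$ is $p$-typically $\infty$-semiadditive. Indeed, $\de$ factors as $(\ModEn)\dblspace \to \KTnp(\En) \to \Enp(L)$; the first map is $p$-typically $\infty$-semiadditive by \cref{prop:KTnp-is-infty-semiadditive}, and the second is a map of $p$-typically $\infty$-semiadditive commutative algebras. This yields
\begin{equation*}
    \int_A^{\Enp(L)} \de(V) \;=\; \de(\colim_A V) \;=\; \mrm{rank}(\colim_A V) \qin \ZZ \subseteq \pi_0 \Enp(L).
\end{equation*}

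For the first integral, I would apply the induced character formula \cite[Theorem~5.20]{Carmeli-Cnossen-Ramzi-Yanovski-2022-characters} to the projection $A \to \mrm{pt}$, which identifies $\int_{\L A}^{\En} \chi_V$ with $\dim_{\En}(\colim_A V)$. Since $\dim_{\En}$ is additive over cofiber sequences and every object of $\Perf(\En)$ is built from $\En$ by finite extensions, this dimension factors through $\K_0(\Perf(\En)) \simeq \ZZ$, and hence equals the same rank integer as above.

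For the third integral, I would use the defining HKR isomorphism $\Enp^A \otimes_{\Enp} C^{\chrHeight+1}_{\chrHeight} \simeq C^{\chrHeight+1}_{\chrHeight}{}^{\L A}$ of the splitting algebra. Since the structure map $\Enp \to C^{\chrHeight+1}_{\chrHeight}$ preserves integrals over $p$-finite spaces, this iso identifies $\int_{\L A}^{C^{\chrHeight+1}_{\chrHeight}} \chi^{\HKR}_{\de(V)}$ with the image of $\int_A^{\Enp(L)} \de(V)$, which is already an integer. The main obstacle will be verifying the semiadditive compatibility of the HKR iso with sufficient care: transchromatic character theory is standardly formulated at the level of cohomology rings, and extracting the precise normalization needed for preservation of $p$-typical semiadditive integrals probably requires unpacking the universal property of the splitting algebra and exploiting the $p$-typical $\infty$-semiadditivity of $\Tn$-localized commutative algebras.
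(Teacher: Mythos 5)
Your treatment of the first two integrals is essentially the paper's own argument: the induced character formula of \cite[Theorem~5.20]{Carmeli-Cnossen-Ramzi-Yanovski-2022-characters} identifies $\int_{\L A}^{\En}\chi_V$ with $\dim(\colim_A V)$ (note you should record that $\Lp A \simeq \L A$ since $A$ is $p$-typically $\pi$-finite), additivity of $\dim$ identifies this with the class in $\pi_0\K(\En)\cong\ZZ$ (the paper cites \cite[Proposition~13]{BMCSY-cardinality} for this), and \cref{cor:integral-in-K-is-colim} together with the fact that the geometric point $\KTnp(\En)\to\Enp(L)$ preserves semiadditive integrals (\cite[Proposition~2.1.14(1)]{CSY-teleambi}) gives $\int_A^{\Enp(L)}\de(V)=[\colim_A V]$. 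So far this is fine.

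The gap is in your argument for the third integral. What has to be proved is an equality between a \emph{height-$(\chrHeight+1)$} integral over $A$ (computed in $\Enp(L)$) and a \emph{height-$\chrHeight$} integral over the larger space $\L A$ (computed in the $\Kn$-local algebra $C^{\chrHeight+1}_{\chrHeight}(L)$). Your proposed mechanism --- the HKR isomorphism $\Enp^A\otimes_{\Enp}C^{\chrHeight+1}_{\chrHeight}\simeq (C^{\chrHeight+1}_{\chrHeight})^{\L A}$ together with the statement that the structure map $\Enp\to C^{\chrHeight+1}_{\chrHeight}$ preserves integrals over $p$-finite spaces --- only tells you that base change preserves $\int_A$; it does not identify the integration map $(C^{\chrHeight+1}_{\chrHeight})^{\L A}\to C^{\chrHeight+1}_{\chrHeight}$ (defined by $\Kn$-local ambidexterity of $\L A$) with the base change of the integration map $\Enp^A\to\Enp$ (defined by ambidexterity one height up). That intertwining of norm/ambidexterity data across the character map is a genuine theorem, not a formal consequence of the ring-level HKR isomorphism, and it is exactly what the paper imports at this step from \cite[Remark~7.4.8, Remark~7.4.9]{Lurie-2019-Elliptic3} and \cite[Theorem~A]{Ben-Moshe-2024-shifted-semiadditive}. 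You flag this as the ``main obstacle,'' but the route you sketch is aimed at the wrong integral, so as written the second equality of the lemma is not established; either invoke the shifted-semiadditivity results or supply an independent proof of the compatibility of the transchromatic character with the two integration maps.
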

        
        \begin{proof}
            By \cref{cor:no-action-on-dim-En}, for each $a \in A$, the value $\de(V)(a) \in \pi_0 \Enp(L)$ is an integer and agrees with $\dim(V_a) \in \ZZ \subseteq \pi_0 \En$.
            
            As $A$ is $p$-typically $\pi$-finite, $\Lp A \simeq \L A$. Thus, by the induced character formula \cite[Theorem 5.20]{Carmeli-Cnossen-Ramzi-Yanovski-2022-characters},
            \begin{equation*}
                \int_{\Lp A}^{\En} \chi_V \simeq \int_{\L A}^{\En}\chi_V \simeq \dim(\colim_A V).
            \end{equation*}
            By \cite[Proposition~13]{BMCSY-cardinality}, the dimension map
            \begin{equation*}
                \dim \colon \K(\En) \to \ZZ \subseteq \pi_0 \En
            \end{equation*}
            is an isomorphism on $\pi_0$. Under this identification, we identify $\dim(\colim_A V)$ with $[\colim_A V] \in \pi_0 \K(\En)$. The $\Tnp$-localization map gives an identification of $\ZZ\subseteq \K(\En)$ with $\ZZ \subseteq \KTnp(\En)$, so we identify $\dim(\colim_A V)$ with $[\colim_A V] \in \pi_0 \KTnp(\En)$. Now, using \cref{cor:integral-in-K-is-colim},
            \begin{equation*}
                [\colim_A V] = \int_A^{\KTnp(\En)} [V] \qin \pi_0 \KTnp(\En).
            \end{equation*}
            Using the geometric point $\KTnp(\En)\to \Enp(L)$ and \cite[Proposition~2.1.14(1)]{CSY-teleambi}, we get
            \begin{equation*}
                \int_A^{\KTnp(\En)} [V] = \int_A^{\Enp(L)} \de(V),
            \end{equation*}
            which together implies the first equality. The second equality follows by \cite[Remark~7.4.8, Remark~7.4.9]{Lurie-2019-Elliptic3}, see also \cite[Theorem~A]{Ben-Moshe-2024-shifted-semiadditive}.
        \end{proof}
 
        \begin{definition}\label{def:iterated-character-of-a-map}
            Let $A$ be a $\pi$-finite $p$-space and let $\rho \colon A \to \Sigma\In$ be a map of spaces.  
            For $k \ge 1$, we define the \emph{$k$-fold character} of $\rho$ to be the composition
            \begin{equation*}
                \chi^k_\rho \colon \L^kA \xto{\L^k \rho} \L^k \Sigma\In \xrightarrow{(1+\eta)^k} \Omega^k \Sigma \In \simeq \In[\chrHeight+1-k].
            \end{equation*}
            This is the $k$-fold iterated character of the map $\rho$, as in \cref{def:character-of-a-map}.
        \end{definition}

        \begin{notation}\label{not:higher-character}
            By an abuse of notation, we will usually think of $\rho \colon A \to \Sigma \In$ as the composition (of maps of spaces)
            \begin{equation*}
                A \xto{\rho} \Sigma \In \to \Enp\units \to \Enp.
            \end{equation*}
            Similarly, we consider the the $k$-fold character of $\rho$ as a map to $\En[\chrHeight+1-k]$:
            \begin{equation*}
                \chi^k_{\rho} \colon \L^k A \to \In[\chrHeight+1-k] \to \En[\chrHeight+1-k]\units \to \En[\chrHeight+1-k].
            \end{equation*}
        \end{notation}
            
        \begin{definition}
            Let $\alpha \in \pinD = \pi_0 (\Inplus)$. The composition
            \begin{equation*}
                \B\ZZ/p^k \to \B\Sm[p^k] \into \MM \xto{\T \alpha} \Inplus \xto{\alpha^{-p^k} \cdot (-)} \Inplus
            \end{equation*}
            is a pointed map. We denote its factorization through the connected cover by
            \begin{equation*}
                \rho_{\alpha,k} \colon \B\ZZ/p^k \to \Sigma\In.
            \end{equation*}
        \end{definition}

         \begin{definition}
            Let $A$ be a $\pi$-finite $p$-space and $\rho \colon A \to \Sigma\In$ be a map of spaces. We denote by $\En{}[\rho]\in (\ModEn)^{A}$ the local system
            \begin{equation*}
                A \xto{\rho} \Sigma\In \to \Sigma \En\units \to \ModEn
            \end{equation*}
            
            In particular, for $\alpha \in \pinD$, we denote by $\En{}[\rho_{\alpha,k}]$ the local system 
            \begin{equation*}
                \B\ZZ/p^k \xto{\rho_{\alpha,k}} \Sigma \In \to \B\En\units \to \ModEn.
            \end{equation*}
        \end{definition}

        \begin{remark}
            Let $A$ be $\pi$-finite $p$-space and $\rho \colon A \to \Sigma\In$ be a map of spaces. Then $\chi_{\rho}$ as in \cref{not:higher-character} agrees with the character $\chi_{\En{}[\rho]}$.
        \end{remark}

        \begin{lemma}
            Let $\pichar \in\pinD$. We identify the automorphism group of $\En\shift{p^k} \in \Gr^{\pichar}_{\ZZ} \cC$ with $\Aut(\En)$, via the $(-p^k)$ shift map. Then $(\En\shift{1})^{\otimes p^k} \in \B\Aut(\En\shift{p^k})^{\B\ZZ/p^k}$ identifies with $\En{}[\rho_{\pichar, k}] \in \B\Aut(\En)^{\B\ZZ/p^k}$. 
        \end{lemma}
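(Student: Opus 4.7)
The plan is to realize both local systems as pointed maps $\B\ZZ/p^k \to \Sigma \En\units$ that factor through $\Sigma \In$ via the orientation, and show that the resulting maps to $\Sigma \In$ coincide with $\rho_{\pichar,k}$. The setup is as follows: since $\En\shift{1}$ is invertible, the braiding $\T \En\shift{1} \colon \MM \to \Gr^{\pichar}_{\ZZ}\ModEn$ factors through $(\Gr^{\pichar}_{\ZZ}\ModEn)\units$ as a map of connective spectra, hence factors through the group completion $\SS \to (\Gr^{\pichar}_{\ZZ}\ModEn)\units$. The orientation $\Sigma \In \to \Sigma \En\units$ induces a map of cofiber sequences
\begin{equation*}
    \begin{tikzcd}
        \Sigma \In \ar[d] \ar[r] & \Inpi \ar[d] \ar[r] & \ZZ \ar[d, equal] \\
        \Sigma \En\units \ar[r] & (\Gr^{\pichar}_{\ZZ}\ModEn)\units \ar[r] & \ZZ,
    \end{tikzcd}
\end{equation*}
where $\Inpi$ is the pullback of $\pichar \colon \ZZ \to \pinD$ along $\Inplus \to \pinD$. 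The map $\Inpi \to (\Gr^{\pichar}_{\ZZ}\ModEn)\units$ sends $1$ to $\En\shift{1}$, and the factorization $\T \En\shift{1} \colon \SS \to \Inpi \to (\Gr^{\pichar}_{\ZZ}\ModEn)\units$ is the unique one.

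First, I would restrict along $\B\ZZ/p^k \to \B\Sm[p^k] \into \MM \to \SS$ and use that the composition lands at $\En\shift{p^k}$. Multiplying by $\En\shift{-p^k}$ (which is an equivalence of connective spectra) translates the basepoint to $\En$; passing to connected covers identifies the result with a pointed map $\B\ZZ/p^k \to \tau_{\ge 1}((\Gr^{\pichar}_{\ZZ}\ModEn)\units,\En) \simeq \Sigma\En\units$ that classifies the automorphism $\ZZ/p^k$-action of interest. Since $\Inpi \to (\Gr^{\pichar}_{\ZZ}\ModEn)\units$ is a map of connective spectra, it intertwines the translation by $-p^k$ on $\Inpi$ with translation by $\En\shift{-p^k}$ on the target; by naturality, the resulting pointed map factors as
\begin{equation*}
    \B\ZZ/p^k \to \tau_{\ge 1}(\Inpi, 0) \simeq \Sigma \In \to \Sigma \En\units,
\end{equation*}
where the last map is induced by the orientation.

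Next, I would show that the map $\B\ZZ/p^k \to \Sigma\In$ appearing above agrees with $\rho_{\pichar,k}$. The comparison map $\Inpi \to \Inplus$ sends $1$ to $\pichar$, hence is a map of connective spectra intertwining translation by $-p^k$ on $\Inpi$ with translation by $\pichar^{-p^k}$ on $\Inplus$. Moreover, the connected covers $\tau_{\ge 1}(\Inpi,0)$ and $\tau_{\ge 1}(\Inplus,1)$ are both canonically identified with $\Sigma \In$, and the induced map between them is the identity (since both arise from the same fiber $\Sigma \In$ in the two parallel cofiber sequences). It follows that the pointed map constructed from the $\Inpi$-side equals the pointed map constructed from $\Inplus$ via $\MM \xto{\T \pichar} \Inplus \xto{\cdot \pichar^{-p^k}} \Inplus$ and factored through the connected cover; but this is, by definition, $\rho_{\pichar,k}$.

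The main obstacle will be carefully bookkeeping the basepoints and shifts across the two parallel cofiber sequences, and verifying the multiplicativity needed to translate the ``shift by $-p^k$'' operation between $\Inpi$, $\Inplus$, and $(\Gr^{\pichar}_{\ZZ}\ModEn)\units$. All of this is formal consequence of the fact that the orientation produces a map of cofiber sequences, but one must be careful not to confuse the various multiplicative structures in play.
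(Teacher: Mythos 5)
Your proposal is correct in substance, but it takes a more hands-on route than the paper, which disposes of this lemma in one line by citing \cref{prop:braiding-of-1<1>-determines-the-character}: since $\Zchar_{\pichar}$ factors as $\ZZ \to \Sigma^2\In \to \Sigma^2\ounit_{\cC}\units$, that proposition identifies the braiding of $\En\shift{1}$ with $\tau_{\ge 1}\pichar$ composed with the orientation, and restricting along $\B\ZZ/p^k \to \B\Sm[p^k] \into \MM$ (as in \cref{rmrk:Sm-action-from-braiding}) literally reproduces the definition of $\rho_{\pichar,k}$. What you do instead is reprove the relevant special case of that proposition directly, by factoring $\T \En\shift{1}$ through the pullback $\Inpi$ and comparing $\Inpi$ with $\Inplus$ on connected covers; this reuses the symmetric monoidal map $\Inpi \to \Gr^{\pichar}_{\ZZ}\ModEn$ sending $1$ to $\En\shift{1}$, which the paper constructs anyway for the computation of $\dim(\ounit_{\cC}\shift{1}) = \eta_{\pichar}$, and the shift and basepoint bookkeeping you describe does go through.

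One imprecision you should repair: the bottom row of your diagram, $\Sigma\En\units \to (\Gr^{\pichar}_{\ZZ}\ModEn)\units \to \ZZ$, is not a cofiber sequence --- the cofiber of the first map is $\Pic(\Gr^{\pichar}_{\ZZ}\ModEn) \cong \Pic(\ModEn)\times\ZZ$, which is strictly larger than $\ZZ$ because $\Pic(\ModEn)$ is nontrivial (e.g.\ $\Sigma\En$). The correct input, from the proof of \cref{thm:twFun-are-all-lifts}, is that $(\Gr^{\pichar}_{\ZZ}\ModEn)\units$ is the fiber of $\Zchar_{\pichar}\colon \ZZ \to \Sigma(\ModEn)\units$, so the middle vertical map $\Inpi \to (\Gr^{\pichar}_{\ZZ}\ModEn)\units$ should be produced as the induced map on fibers of the square whose right-hand vertical is $\Sigma^2\In \to \Sigma(\ModEn)\units$, together with the identification of the image of $1$ as $\En\shift{1}$ (not merely as some degree-one Picard element); this is exactly how the paper obtains it. Since $\Sigma\In$ is connected, the induced map $\Sigma\In \to (\ModEn)\units$ on the fibers over $0 \in \ZZ$ factors through the connected cover $\Sigma\En\units$, and that factorization is the suspension of the orientation --- which is all your argument actually uses. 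With that repair (and noting that translation by $\En\shift{-p^k}$ is an equivalence of underlying spaces rather than a map of spectra), your proof is fine.
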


        \begin{proof}
            This is a reformulation of \cref{prop:braiding-of-1<1>-determines-the-character}. 
        \end{proof}

        \begin{lemma}\label{lem:HKR-factors-through-a-map-of-spectra}
            Let $X$ be an $\Ind$-$\pi$-finite $p$-local connective spectrum and $f \colon X \to \En\units$ be a map of spectra. Let $0\le t\le \chrHeight-1$. Then the transchromatic character 
            \begin{equation*}
                \chi^{\chrHeight-t,\HKR}_f \colon \L^{\chrHeight-t}X \to C^{\chrHeight}_t \qin \spc
            \end{equation*}
            factors through a map of connective spectra $\L^{\chrHeight-t} X \to (C^{\chrHeight}_t)\units$.
        \end{lemma}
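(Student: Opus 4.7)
I would reduce the claim to the fact that the transchromatic character, as constructed in the cited works, is a natural transformation of \emph{commutative ring spectrum}-valued functors, so that passing to units gives the required lift. Concretely, the HKR character provides for each $\pi$-finite $p$-local space $A$ a natural $\CAlg$-map
\begin{equation*}
    \chi^{\chrHeight-t,\HKR}_A \colon \En^A \to (C^{\chrHeight}_t)^{\L^{\chrHeight-t} A}.
\end{equation*}
Since any morphism in $\CAlg$ carries invertible elements to invertible elements, applying the Picard functor $(-)\units \colon \CAlg \to \cnSp$ yields a natural transformation
\begin{equation*}
    (\En^A)\units \to ((C^{\chrHeight}_t)^{\L^{\chrHeight-t} A})\units
\end{equation*}
of connective spectra in the variable $A$.

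Using the standard identification $(R^A)\units \simeq (R\units)^A$ as cotensors in $\cnSp$, this reads as a natural map
\begin{equation*}
    \hom_{\cnSp}(Y, \En\units) \to \hom_{\cnSp}(\L^{\chrHeight-t} Y, (C^{\chrHeight}_t)\units)
\end{equation*}
for each connective spectrum $Y$. Evaluating at $Y = X$ and $f \in \hom_{\cnSp}(X, \En\units)$, and post-composing with the inclusion $(C^{\chrHeight}_t)\units \into C^{\chrHeight}_t$, recovers the underlying character map $\chi^{\chrHeight-t,\HKR}_f$, showing it factors through the units as a map of connective spectra.

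For the $\mrm{Ind}$-$\pi$-finite case I would write $X \simeq \colim_i X_i$ as a filtered colimit of $\pi$-finite $p$-local connective spectra. Applying the construction above to each restriction $f_i \colon X_i \to \En\units$ produces compatible lifts $\chi^{\chrHeight-t,\HKR}_{f_i} \colon \L^{\chrHeight-t} X_i \to (C^{\chrHeight}_t)\units$, and the desired factorization follows by passing to the colimit (using that $\L^{\chrHeight-t}$ and $(-)\units$ are both compatible with filtered colimits in the relevant sense). The main obstacle is ensuring the transchromatic character genuinely assembles into a $\CAlg$-valued natural transformation rather than some weaker compatibility, so that the Picard functor can be applied cleanly; but this is precisely the content of the constructions reviewed in Stapleton's and Lurie's accounts which we cite.
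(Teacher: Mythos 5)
Your reduction is fine up to the point where you use that $(-)\units$ preserves limits, so that $(\En^A)\units \simeq (\En\units)^A$ and the character yields, for each $\pi$-finite space $A$, a map $(\En\units)^A \to ((C^{\chrHeight}_t)\units)^{\L^{\chrHeight-t}A}$ natural in $A$. The gap is the next step, where you say this ``reads as'' a natural map $\hom_{\cnSp}(Y,\En\units) \to \hom_{\cnSp}(\L^{\chrHeight-t}Y,(C^{\chrHeight}_t)\units)$ for an arbitrary connective spectrum $Y$ and then evaluate at $Y=X$. The transformation you actually have is indexed by \emph{spaces}: its value at $A$ only concerns maps out of $\SS[A]$, and $\L^{\chrHeight-t}$ of a spectrum is not the suspension spectrum of $\L^{\chrHeight-t}$ of a space. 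Evaluating your transformation at the space $\Omega^{\infty}X$ on the point $\Omega^{\infty}f$ produces only a map of \emph{spaces} $\L^{\chrHeight-t}\Omega^{\infty}X \to \Omega^{\infty}(C^{\chrHeight}_t)\units$, whereas the lemma asserts an $\EE_{\infty}$-coherent (infinite-loop) map. Naturality in $A$ plus each $\chi_A$ being a ring map gives external multiplicativity of the character only up to homotopy, and the spectrum structure cannot be recovered from naturality alone: a spectrum map $f\colon X\to \En\units$ gives no maps $\B^k\Omega^{\infty}X \to \Omega^{\infty}\En$ to feed into the transformation, so the deloopings are simply not in its domain. What is needed is that the character construction be \emph{coherently symmetric monoidal} with respect to cartesian products of spaces and the multiplicative structures of the rings, so that it can be applied to the whole $\EE_{\infty}$-monoid $\Omega^{\infty}X$ lying over $(\Omega^{\infty}\En,\cdot)$. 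Your closing sentence locates the obstacle in the existence of a $\CAlg$-valued natural transformation, but even granting that (which is indeed available from the cited sources), the passage from spaces to spectra does not follow formally.

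That missing coherence is exactly what the paper's proof supplies. It realizes the transchromatic character as a symmetric monoidal functor $\colim_k\Map((\B\ZZ/p^k)^{\chrHeight-t},-)\colon (\pspcpi)_{/\En} \to (\pspcpi)_{/C^{\chrHeight}_t}$, where the slice categories carry the monoidal structure coming from the multiplication on $\Omega^{\infty}\En$, resp.\ $\Omega^{\infty}C^{\chrHeight}_t$; it then passes to $\Ind$ (using compactness of $\TT$ for the filtered colimits, as you also do) and applies the functor to commutative algebra objects. A spectrum map $f\colon X \to \En\units$ is precisely a grouplike commutative algebra object of $\Ind(\pspcpi)_{/\En}$, its image is an $\EE_{\infty}$-map $\L^{\chrHeight-t}X \to \Omega^{\infty}C^{\chrHeight}_t$ for the multiplicative structure, and grouplikeness of the source forces the factorization through $(C^{\chrHeight}_t)\units$ as a map of connective spectra. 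To repair your argument you would need to upgrade your natural transformation to a (lax) symmetric monoidal one in the space variable before attempting the passage to spectra; the filtered-colimit treatment of the $\Ind$ case at the end is fine once that is in place.
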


         \begin{proof}
            For a commutative ring spectrum $R$, we denote by $\spc_{/R}$ the category of spaces over $\Omega^{\infty} R$. It is equipped with the symmetric monoidal structure given by the cartesian product of spaces, using the commutative monoid structure on $\Omega^{\infty} R$ induced by the multiplication on $R$.
            
            Let $k \in \NN$. Then the map $\En^{(-)}\to \LKn[t]\En^{(\B\ZZ/p^k)^{\chrHeight-t} \times\Map((\B \ZZ/p^k)^{\chrHeight-t},(-))}$, induces a symmetric monoidal map
            \begin{equation*}
                \Map((\B\ZZ/p^k)^{\chrHeight-t},(-)) \colon \spc_{/\En} \to \spc_{/\LKn[t]\En^{(\B\ZZ/k)^{\chrHeight-t}}}.
            \end{equation*}
            Composing with the natural map $\LKn[t]\En^{(\B\ZZ/p^k)^{\chrHeight-t}}\to C^{\chrHeight}_t$ and using the compatibility of these maps, we get a symmetric monoidal map
            \begin{equation*}
                \colim_k\Map((\B\ZZ/p^k)^{\chrHeight-t},(-)) \colon \spc_{/\En} \to \spc_{/C^{\chrHeight}_t}.
            \end{equation*}
            Restricting to $(\pspcpi)_{/\En}$ and using \cite[Proposition 3.4.7]{Lurie-2019-Elliptic3} we get a symmetric monoidal map 
            \begin{equation*}
                \L^{\chrHeight-t} \colon (\pspcpi)_{/\En} \to (\pspcpi)_{/C^{\chrHeight}_t}.
            \end{equation*}
            Using that the free loops functor $\L$ commutes with filtered colimits since $\TT$ is compact, we may take $\Ind$ to both sides. As the functor is symmetric monoidal it induces a map on commutative algebras
            \begin{equation*}
                \L^{\chrHeight-t} \colon \CAlg(\Ind(\pspcpi)_{/\En}) \to \CAlg(\Ind(\pspcpi)_{/C^{\chrHeight}_t}),
            \end{equation*}
            sending $f \colon X \to \En$ to $\chi^{\chrHeight-t, \HKR}_f \colon \L^{\chrHeight-t} X \to C^{\chrHeight}_t$. In particular, if $X$ is a spectrum and $f$ is a map of commutative monoids, then $\chi^{\chrHeight-t,\HKR}_f$ factors through $(C^{\chrHeight}_t)\units$.
          \end{proof}   

        \begin{remark} 
            By the proof of \cref{lem:HKR-factors-through-a-map-of-spectra}, for any $\pi$-finite $p$-space $X$ and 
            \begin{equation*}
                \rho \colon X\to \In \to \En,
            \end{equation*}
            the transchromatic character map $\chi^{\chrHeight-t, \HKR}_{\rho}\colon \L^{\chrHeight-t}X \to C^{\chrHeight}_t$ factors through a map of spectra $\L^{\chrHeight-t}\In \to (C^{\chrHeight}_t)\units$.  
        \end{remark}

        \begin{lemma}\label{lem:orbits-of-free-loops-is-loops}
            Let $X$ be a spectrum. Then $(\Lnc^k\, X)_{h\TT^k} \simeq \Sigma^{-k} X$, where $\Lnc$ is the non-connective free loops spectrum. Under the isomorphism
            \begin{equation*}
                \Lnc^k \, X \simeq (\Sigma^{-1} \oplus \id_{\cnSp})^k\, X \simeq \bigoplus_{i=0}^k \bigoplus_{k \choose i} (\Sigma^{-i} X),
            \end{equation*} 
            the canonical map to the orbit is $(1+\eta)^k \simeq \bigoplus_{i=0}^k \bigoplus_{k \choose i} \eta^i$.  
        \end{lemma}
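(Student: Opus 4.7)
The plan is to prove the $k=1$ case carefully, then obtain the general case by iterating through the decomposition $\SS[\TT^k] \simeq \SS[\TT]^{\otimes k}$.

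First, the basepoint splitting $\SS[\TT] \simeq \SS \oplus \Sigma\SS$, dual to the cofiber sequence $\SS \to \SS[\TT] \to \Sigma\SS$ split by the pointing of $\TT$, immediately gives $\Lnc X = \hom(\SS[\TT], X) \simeq X \oplus \Sigma^{-1}X$ on underlying spectra. Iterating $k$ times and using multiplicativity of $\hom$ under smash products yields the stated decomposition
\begin{equation*}
\Lnc^k X \simeq X \otimes (\SS \oplus \Sigma^{-1}\SS)^{\otimes k} \simeq \bigoplus_{i=0}^{k} \bigoplus_{\binom{k}{i}} \Sigma^{-i} X.
\end{equation*}

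Next, to compute the $\TT$-orbits, I would invoke Atiyah duality for the compact Lie group $\TT$ (with its Lie framing), which provides a $\TT$-equivariant equivalence $\SS[\TT]^\vee \simeq \Sigma^{-1}\SS[\TT]$. Hence $\Lnc X \simeq \Sigma^{-1}X \otimes \SS[\TT]$ as a $\TT$-spectrum, where $\TT$ acts by left translation on the $\SS[\TT]$ factor. Since this action is free, $(\SS[\TT])_{h\TT} \simeq \SS$, so $(\Lnc X)_{h\TT} \simeq \Sigma^{-1}X$, and the canonical quotient map is the identity on $\Sigma^{-1}X$ smashed with the augmentation $\SS[\TT] \to \SS$. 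Iterating over the $k$ commuting $\TT$-factors in $\TT^k$ yields $(\Lnc^k X)_{h\TT^k} \simeq \Sigma^{-k}X$.

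The remaining task is to identify the orbit map with $(1+\eta)^k$ under the basepoint decomposition. The two underlying splittings of $\Lnc X$ --- one from the basepoint splitting directly, and one from Atiyah duality followed by the basepoint splitting --- differ by a non-equivariant automorphism of $X \oplus \Sigma^{-1}X$. Since there are no non-zero maps $\Sigma^{-1}X \to X$ and the diagonal entries must be units, the only possibly non-trivial off-diagonal entry is a map $X \to \Sigma^{-1}X$ classified by $\pi_1\SS = \ZZ/2$. I would show this entry is precisely $\eta$ by observing that the composition of the inclusion $X \hookrightarrow \Lnc X$ of constant loops with the orbit map $\Lnc X \to \Sigma^{-1}X$ is dual to a composition $\SS \to \SS[\TT] \to \SS$ that encodes the $\TT$-transfer; by \cref{lem:trace-is-eta+id} (equivalently \cref{prop:trace-is-counit,prop:dim-is-transfer}), this transfer restricts to $\eta$ on the degree-$0$ summand. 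The $k$-fold statement follows by multiplicativity: the orbit map factors through the tensor product of the $k$ individual orbit maps, so it is $(1+\eta)^{\otimes k}$, which under the stated decomposition becomes the binomial expansion $\sum_{i=0}^k \binom{k}{i}\eta^i$.

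The main obstacle will be justifying the appearance of $\eta$ --- that the Atiyah duality of $\TT$ is genuinely twisted by the Hopf map, rather than being the formal consequence $\SS[\TT]^\vee \simeq \SS \oplus \Sigma^{-1}\SS$ of the basepoint splitting. This is not a purely formal statement and is the essential input; I would extract it from the explicit $\TT$-transfer computation already performed in \cref{prop:dim-is-transfer}, which encodes exactly the non-triviality of the dimension of the unit $1 \in \SS$ viewed as an invertible object.
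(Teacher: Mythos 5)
Your proof is correct and follows essentially the paper's route: the paper likewise reduces to $k=1$ and to the universal case $X=\SS$ (using that $\Lnc X=\hom(\SS[\TT],X)$ with the $\TT$-action carried entirely by the dualizable spectrum $\SS[\TT]$), and then reads off the orbit map as the dual of the counit decomposition of \cref{prop:trace-is-counit} and \cref{lem:trace-is-eta+id} --- precisely the transfer input from which you extract the $\eta$, so your explicit detour through equivariant Atiyah duality $\SS[\TT]^\vee\simeq\Sigma^{-1}\SS[\TT]$ is just an unpacking of the same duality. One sentence needs repair: for a general spectrum $X$ there certainly are non-zero maps $\Sigma^{-1}X\to X$, and maps $X\to\Sigma^{-1}X$ are not classified by $\pi_1\SS$; your comparison-of-splittings step is valid only because both splittings arise from splittings of $\SS[\TT]^\vee$ smashed with $X$, hence are natural in $X$ and determined by the universal case $X=\SS$, where $\pi_{-1}\SS=0$ and $\pi_1\SS=\ZZ/2$ give exactly the triangular-matrix argument you intend (this is the same reduction the paper performs via dualizability of $\SS[\TT]$).
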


        \begin{proof}
            It is enough to prove it for the case $k=1$. As $\Lnc\, X = \hom(\SS[\TT], X)$ with the $\TT$-action only coming from $\SS[\TT]$, and as $\SS[\TT]$ is dualizable, it is enough to prove it for $X = \SS$, in which case it is of the dual decomposition in \cref{prop:trace-is-counit}.
        \end{proof}

        For the next two lemmas we will use some notation from \cite{Lurie-2019-Elliptic3}. Mainly:
        \begin{enumerate}
            \item Given a commutative ring spectrum $A$ and a preoriented $p$-divisible group $\mbf{G}$ over $A$, we write $A_\mbf{G}$ for the \emph{$\mbf{G}$-tempered function spectrum} as defined in \cite[Construction~4.0.3]{Lurie-2019-Elliptic3}.
            \item For a complex periodic and $\Kn$-local commutative ring $A$, we denote its Quillen $p$-divisible group by $\mbf{G}^Q_A$ (see \cite[\textsection~2.4]{Lurie-2019-Elliptic3}). 
        \end{enumerate}
        
        \begin{lemma}\label{lem:Qformal-group-pulled-back-to-Qformal-group}
            Let $f \colon A \to B$ be a map of complex periodic and $\Kn$-local commutative rings. Then $\mbf{G}^Q_A$ is pulled back under $f$ to $\mbf{G}^Q_B$
        \end{lemma}
        
        \begin{proof}
            By \cite[Theorem~3.5.5]{Lurie-2019-Elliptic3} and by the definition \cite[Notation~4.0.1, Consrtuction~4.0.3]{Lurie-2019-Elliptic3}, it suffice to show that the canonical preorientation gives an isomorphism of the $p$-divisible functors $\cT\op \to \CAlg_B$ 
            \begin{equation*}
                B_{f^* \mbf{G}^Q_A}\simeq B_{\mbf{G}^Q_B}.
            \end{equation*}
            By \cite[Theorem~4.7.1]{Lurie-2019-Elliptic3} we have an isomorphism
            \begin{equation*}
                A_{\mbf{G}^Q_A}^X \otimes_{A} B \simeq B_{f^* \mbf{G}^Q_A}^X 
            \end{equation*}
            By \cite[Theorem~4.2.5]{Lurie-2019-Elliptic3}, for any $p$-local $\pi$-finite space $X$, in particular for any $X \in \cT$, the Atiyah-Segal comparison map (\cite[Construction~4.2.2]{Lurie-2019-Elliptic3}) provides an isomorphism
            \begin{equation*}
                A_{\mbf{G}^Q_A}^X\simeq A^{X}.
            \end{equation*} 
            By \cite[Theorem~4.4.16(1)]{Lurie-2019-Elliptic3}, for any $X \in \cT$ there is an isomorphism
            \begin{equation*}
                A^X \otimes_A B \simeq B^X.
            \end{equation*}
            Applying \cite[Theorem 4.2.5]{Lurie-2019-Elliptic3} again we see that for any $X \in \cT$
            \begin{equation*}
                B_{f^* \mbf{G}^Q_A}^X \simeq B^X \simeq B_{\mbf{G}^Q_B}^X. 
            \end{equation*}
        \end{proof}

        \begin{lemma}\label{lem:HKR-vs-twice-HKR}
            Let $A \in \pspcpi$ and $0 \le s \le t \le \chrHeight$. Then for every geometric point $C^{\chrHeight}_t(K) \to \En[t](K_t)$ there exists a map of $\Kn[s]$-local commutative algebras $C^{\chrHeight}_s(K) \to C^t_s(K_t)$ rendering the following diagram commutative
            \begin{equation*}
                \begin{tikzcd}
                    {\En(K)^{A}} & {C^{\chrHeight}_t(K)^{\L^{\chrHeight-t}A}} & {\En[t](K_t)^{\L^{\chrHeight-t}A}} \\ \\
                    {C^{\chrHeight}_s(K)^{\L^{\chrHeight-s}A}} && {C^t_s(K_t)^{\L^{\chrHeight-s}A}.}
                    \arrow["{\chi^{\chrHeight-t,\HKR}}", from=1-1, to=1-2]
                    \arrow["{\chi^{\chrHeight-s,\HKR}}", from=1-1, to=3-1]
                    \arrow[from=1-2, to=1-3]
                    \arrow["{\chi^{t-s,\HKR}}", from=1-3, to=3-3]
                    \arrow[from=3-1, to=3-3]
                \end{tikzcd}
            \end{equation*}
        \end{lemma}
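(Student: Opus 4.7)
The plan is to construct the map using the universal property of the splitting algebra $C^{\chrHeight}_s(K)$, and then deduce commutativity of the square from the iterative nature of transchromatic character theory.

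First, I would recall that $C^{\chrHeight}_s(K)$ is characterized (as in \cite{Stapleton-2013-HKR,Barthel-Stapleton-2016-centralizers-good-groups,Lurie-2019-Elliptic3}) as the universal $\Kn[s]$-local commutative $\En(K)$-algebra over which the $p$-divisible group $\mbb{G}_{\En(K)}[p^\infty]$ associated to the formal group of $\En(K)$ acquires a splitting as an extension of an étale $p$-divisible group of height $\chrHeight-s$ by a formal group of height $s$. The geometric point $C^{\chrHeight}_t(K) \to \En[t](K_t)$ determines an intermediate such splitting at height $t$: it exhibits $\mbb{G}_{\En(K)}[p^\infty]$ as an extension of height-$(\chrHeight-t)$ étale by the formal group $\mbb{G}_{\En[t](K_t)}$ of height $t$. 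Splitting the latter further at height $s$ is, by definition, exactly what $C^t_s(K_t)$ classifies. Composing the two splittings yields a splitting of $\mbb{G}_{\En(K)}[p^\infty]$ at height $s$ over $C^t_s(K_t)$, and by the universal property this produces the required map of $\Kn[s]$-local commutative $\En(K)$-algebras $C^{\chrHeight}_s(K) \to C^t_s(K_t)$.

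Second, I would verify commutativity of the square. As in the construction in \cref{lem:HKR-factors-through-a-map-of-spectra}, the transchromatic character $\chi^{k,\HKR}$ is obtained, at the level of underlying spaces and compatibly with filtered colimits, from the natural maps $\B\ZZ/p^j \to \Omega^\infty C^{\chrHeight}_{\chrHeight-k}(K)$ classifying the $p^j$-torsion sections of the splittings of $\mbb{G}_{\En(K)}[p^\infty]$. Consequently, both $\chi^{\chrHeight-s,\HKR}$ and the composite $\chi^{t-s,\HKR}\circ \chi^{\chrHeight-t,\HKR}$ are determined by how one evaluates $p^j$-torsion sections in the relevant splitting algebra. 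The composed splitting used to define the map $C^{\chrHeight}_s(K)\to C^t_s(K_t)$ was built precisely so that these classifying maps from $\B\ZZ/p^j$ agree, which means that the square commutes after evaluation on each $A = \B\ZZ/p^j$. General $A \in \pspcpi$ then follows by the symmetric monoidal functoriality of $\L^{\chrHeight-t}, \L^{\chrHeight-s}$ established in the proof of \cref{lem:HKR-factors-through-a-map-of-spectra}.

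The main obstacle will be choosing a framework for splitting algebras in which the universal property is clean enough to read off the iterative compatibility. I would work in Lurie's setup of oriented deformations of $p$-divisible groups (\cite{Lurie-2019-Elliptic3}), where the classifying object for a height-$s$ splitting is visibly the pullback of the two classifying objects for successive splittings at heights $t$ and then $s$; this makes the existence of the map and the commutativity of the diagram two aspects of the same universal property. The only subtlety is keeping track of base-change along the geometric point $C^{\chrHeight}_t(K)\to\En[t](K_t)$, so that the height-$t$ residual formal group on which one then splits is indeed $\mbb{G}_{\En[t](K_t)}$; but this identification is built into the definition of a geometric point of $C^{\chrHeight}_t(K)$, so once set up correctly the argument is essentially tautological.
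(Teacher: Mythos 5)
Your construction of the map $C^{\chrHeight}_s(K)\to C^t_s(K_t)$ is essentially the paper's: the paper also obtains it from the universal property (initiality) of the splitting algebra, by tracking the Quillen $p$-divisible group through the pullbacks $\En(K)\to C^{\chrHeight}_t(K)\to \En[t](K_t)\to C^t_s(K_t)$ and observing that over $C^t_s(K_t)$ it splits as $\underline{(\QQ_p/\Zp)}^{\chrHeight-s}\oplus \mbf{G}^{\mcal{Q}}_{C^t_s(K_t)}$, so the composite factors through $C^{\chrHeight}_s(K)$. One small point worth keeping from the paper's version: the universal property of $C^{\chrHeight}_s(K)$ is initiality among algebras over which the \'etale quotient is the \emph{constant} group $\underline{(\QQ_p/\Zp)}^{\chrHeight-s}$, not merely some \'etale $p$-divisible group, so you must check that the successively split \'etale parts are constant after the two base changes (this is what the paper's bookkeeping with \cite{Stapleton-2013-HKR} and \cite{Lurie-2018-Elliptic2,Lurie-2019-Elliptic3} accomplishes).

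Where your proposal falls short is the commutativity of the square. The paper does not argue via torsion points: it observes that all the maps in sight are maps of tempered $\EE_\infty$-rings and invokes the naturality of tempered cohomology / character comparison, \cite[Theorem~4.3.2]{Lurie-2019-Elliptic3}, which is precisely the statement that the character maps are compatible with a map of tempered rings covering a map of $p$-divisible groups. Your substitute argument asserts that the composed splitting ``was built precisely so that the classifying maps from $\B\ZZ/p^j$ agree'' --- but that is exactly the nontrivial compatibility to be proven, not a consequence of the construction of the map (the universal property only produces \emph{some} algebra map; one must still verify it intertwines the level-structure data entering the two character constructions). Moreover, the reduction ``commutes on each $A=\B\ZZ/p^j$, hence for all $A\in\pspcpi$ by symmetric monoidal functoriality'' is not valid as stated: general $\pi$-finite $p$-spaces are not finite products of the $\B\ZZ/p^j$, and what one actually needs is commutativity of the ring-level diagrams involving $\LKn[t]\En(K)^{(\B\ZZ/p^k)^{\chrHeight-t}}$ and its analogues that define the character functors of \cref{lem:HKR-factors-through-a-map-of-spectra}; that is the content of the cited naturality theorem. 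So the map-construction step matches the paper, but the commutativity step as written has a genuine gap that Lurie's tempered comparison (or an equivalent explicit level-structure compatibility argument) is needed to close.
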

        
        \begin{proof}
            Let $\mbf{G}_{\LKn[t]\En(K)}$ be the pullback of $\mbf{G}^{\mcal{Q}}_{\En(K)}$ to $\LKn[t]\En(K)$. By \cite[Corollary~2.5.7(2)]{Lurie-2018-Elliptic2}, the canonical orientation of $\mbf{G}^{\mcal{Q}}_{\En(K)}$ induces an injective preorientation $\mbf{G}^{\mcal{Q}}_{\LKn[t]\En(K)} \to \mbf{G}_{\LKn[t]\En(K)}$. By the proof of \cite[Proposition~2.5.6]{Lurie-2018-Elliptic2}, it is identified with the connected component at the unit so sits in a cofiber sequence
            \begin{equation*}
                0 \to \mbf{G}^{\mcal{Q}}_{\LKn[t](\En)} \to \mbf{G}_{\LKn[t](\En)} \to \mbf{G}^{\mrm{\'et}}_{\LKn[t](\En)} \to 0.
            \end{equation*}
            By \cite[Corollary~2.18]{Stapleton-2013-HKR},\cite[Definition~2.7.12]{Lurie-2019-Elliptic3} the pullback of $\mbf{G}_{\LKn[t]}(\En)$ to $C^{\chrHeight}_t(K)$ splits and has a trivial \'etale part 
            \begin{equation*}
                \mbf{G}_{C^{\chrHeight}_t(K)} \simeq \underline{(\QQ_p/\Zp)}^{\chrHeight-t} \oplus \mbf{G}^{\mcal{Q}}_{C^{\chrHeight}_t(K)}.
            \end{equation*}
            This in turn, by \cref{lem:Qformal-group-pulled-back-to-Qformal-group}, is pulled to the $p$-divisible group
            \begin{equation*}
                \underline{(\QQ_p/\Zp)}^{\chrHeight-t} \oplus \mbf{G}^{\mcal{Q}}_{\En[t](K_t)}
            \end{equation*}
            on $\En[t](K_t)$. 
            Finally, repeating the above arguments, this is pulled to the $p$-divisible group 
            \begin{equation*}
                \underline{(\QQ_p/\Zp)}^{\chrHeight-s} \oplus \mbf{G}^{\mcal{Q}}_{C^t_s(K_t)}
            \end{equation*}
            on $C^t_s(K_t)$. As $C^{\chrHeight}_s(K)$ is initial with respect to this property (\cite[Corollary~2.18]{Stapleton-2013-HKR}, \cite[Definition~2.7.12, Proposition 2.7.15]{Lurie-2019-Elliptic3}), we get a factorization
            \begin{equation*}
                \begin{tikzcd}
        	        {\En(K)_{\mbf{G}^{\mcal{Q}}}} & {C_t^{\chrHeight}(K)_{\mbf{G}^{\mcal{Q}}\oplus\QQ_p/\ZZ_p^{\chrHeight-t}}} & {\En[t](K_t)_{\mbf{G}^{\mcal{Q}}\oplus\QQ_p/\ZZ_p^{\chrHeight-t}}} \\
        	        {C_s^n(K)_{\mbf{G}^{\mcal{Q}}\oplus\QQ_p/\ZZ_p^{\chrHeight-s}}} && {C_s^t(K_t)_{\mbf{G}^{\mcal{Q}}\oplus\QQ_p/\ZZ_p^{\chrHeight-s}}.}
        	        \arrow[from=1-1, to=1-2]
                    \arrow[from=1-1, to=2-1]
                    \arrow[from=1-2, to=1-3]
                    \arrow[from=1-3, to=2-3]
                    \arrow[from=2-1, to=2-3]
                \end{tikzcd}
            \end{equation*}
            Applying \cite[Theorem 4.2.5, Theorem 4.3.2]{Lurie-2019-Elliptic3} gives the desired diagram.
        \end{proof}

        \begin{proposition}\label{prop:HKR-factors-through-trace}
            Let $A\in \pspcpi$ and $\rho \colon A \to \Inplus$ be a map of spaces. Using the notations of \cref{cons:character-diagram}, the map
            \begin{equation*}
                \L A \xto{\chi^\HKR_\rho} C^{\chrHeight+1}_{\chrHeight}(L)\to \En(K)
            \end{equation*}
            we get by the right vertical composition of \cref{cons:character-diagram} factors as 
            \begin{equation*}
                \L A \xto{\L \rho} \L\Inplus \xto{1+\eta} \In \to \En(K)\units \to \En(K).
            \end{equation*}
        \end{proposition}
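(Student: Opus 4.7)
The strategy is to reduce to the universal case $A = \Inplus$, $\rho = \id_{\Inplus}$ and then compare two maps of connective spectra summand-wise.

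By naturality of the transchromatic character in its source, we have $\chi^{\HKR}_\rho \simeq \chi^{\HKR}_{\id_{\Inplus}}\circ \L\rho$, where $\chi^{\HKR}_{\id_{\Inplus}}$ is the character associated with the orientation $\Inplus \to \En\units$ (interpreted via \cref{not:higher-character} and extended to Ind-$\pi$-finite spaces by filtered colimits, using that $\Inplus$ is such). It therefore suffices to identify the composition
\[
\L\Inplus \xto{\chi^\HKR_{\id_{\Inplus}}} C^{\chrHeight+1}_\chrHeight(L) \to \En(K)
\]
with the composition $\L\Inplus \xto{1+\eta}\In \to \En(K)\units \to \En(K)$.

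Since $\Inplus$ is an Ind-$\pi$-finite, $p$-local, connective spectrum and the orientation is a map of connective spectra, \cref{lem:HKR-factors-through-a-map-of-spectra} guarantees that $\chi^\HKR_{\id_{\Inplus}}$ factors through a map of connective spectra landing in $(C^{\chrHeight+1}_\chrHeight(L))\units$. Composing with the geometric point yields a map of connective spectra $\phi\colon \L\Inplus \to \En(K)\units \to \En(K)$. The right-hand side of the proposition is evidently also a map of connective spectra $\psi$, so the proof reduces to showing $\phi \simeq \psi$ in $\pi_0 \hom(\L\Inplus, \En(K))$.

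By \cref{lem:orbits-of-free-loops-is-loops}, $\L\Inplus \simeq \Inplus \oplus \In$, and $1+\eta$ is the identity on the $\In$-summand and equals $\eta\colon \Inplus \to \In$ on the $\Inplus$-summand. On the $\In$-summand, which corresponds to the $\TT$-loop part of $\L\Inplus$, the HKR character is by construction the orientation $\In \to \En(K)\units$, matching $\psi$. On the $\Inplus$-summand, the desired identification says that the HKR character of the universal invertible Picard class agrees with its monoidal dimension, which by \cite[Proposition~3.20]{CSY-cyclotomic} (cf.\ \cref{lem:trace-is-eta+id}) is exactly $\eta$ composed with the orientation.

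The main obstacle is establishing this constant-summand identification between the HKR character and the monoidal dimension. My plan is to iterate the character using \cref{lem:HKR-vs-twice-HKR} to descend all the way to height $0$, where $C^{\chrHeight+1}_0(L)$ is $\QQ_p$-linear and the iterated character is explicitly computable. By \cref{lem:orbits-of-free-loops-is-loops}, the iterated orbits of $\L^{\chrHeight+1}\Inplus$ realize $(1+\eta)^{\chrHeight+1}$, and matching this combinatorially with the iterated transchromatic character---using the compatibility of $\chi^{\HKR}$ with the $\Tnp$-local semiadditive integration established in \cref{prop:KTnp-is-infty-semiadditive} and \cref{lem:integral-Enp-to-En}---should yield $\phi \simeq \psi$ on the constant summand, completing the proof.
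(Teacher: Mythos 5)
Your skeleton matches the paper's proof at a high level (factor the character through a map of spectra via \cref{lem:HKR-factors-through-a-map-of-spectra}, split $\L\Inplus \simeq \Inplus \oplus \In$, and identify the map on each summand), but both summand identifications, as you present them, are gaps. On the loop summand $\In$ you assert the identification holds ``by construction.'' It does not: after choosing the geometric point $C^{\chrHeight+1}_{\chrHeight}(L) \to \En(K)$, the restriction is a map of spectra $\In \to \In$ classified by some element of $\Zp$, and nothing a priori rules out that this element is divisible by $p$ or even zero. This is precisely what the paper has to prove: using $\tau \colon \Sigma^{\chrHeight+1}\ZZ/p \to \Inplus$, the fact that $\tau$ composed with the orientation picks out the higher root of unity $\omega^{(\chrHeight)}_p$ and is therefore non-null, the vanishing $\dim(\colim_{\B^{\chrHeight+1}\ZZ/p}\En{}[\tau]) = 0$ from \cite[Corollary~8.12]{Burklund-Schlank-Yuan-2022-Nullstellensatz}, and the integral comparison \cref{lem:integral-Enp-to-En}, one shows the mod-$p$ reduction of the classifying element is nonzero, hence a unit --- and even then the conclusion only holds up to a change of orientation, which is exactly why \cref{cor:commutativity-of-character-diagram-suspension-In} carries the caveat ``up to multiplication by an invertible $p$-adic integer.'' (You also skip the intermediate factorization through $\In = (\En(K)\units)\pitor$, obtained from the $(\chrHeight+1)$-connectedness of $\Mod_{\En(K)}$, which is what reduces the summand-wise comparison to identifying classifying elements in $\widehat{\pi}_{\chrHeight}$.)

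On the constant summand $\Inplus$ the appeal to the monoidal dimension of the universal invertible object (\cref{lem:trace-is-eta+id}, \cite[Proposition~3.20]{CSY-cyclotomic}) is circular: the monoidal dimension lives on the categorified side of the character diagram of \cref{cons:character-diagram}, and its agreement with the transchromatic character is exactly the kind of statement this subsection is trying to establish --- the paper explicitly states that commutativity of that diagram is not known in general, so it cannot be invoked here. Your fallback plan (iterate via \cref{lem:HKR-vs-twice-HKR} down to the rational algebra $C^{\chrHeight+1}_0$ and ``match combinatorially'' with $(1+\eta)^{\chrHeight+1}$) is a sketch rather than an argument, and it misses the essential structure of the problem: for odd $p$ the summand contributes nothing since $\widehat{\pi}_{\chrHeight}(\Inplus) = 0$, while at $p=2$ the whole difficulty is to detect the $2$-torsion class $\eta$ at a rational endpoint. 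The paper does this by assuming the restriction is null and deriving a contradiction, using the auxiliary spectrum $I = \hom(\tau_{\le 1}\SS/2, \Inplus)$, the factorization of the iterated character through $\pi_0 \L^{\chrHeight+1} I \cong \ZZ/2^{\oplus(\chrHeight+2)}$, its $\TT^{\chrHeight+1}$-invariance, the orbit decomposition of \cref{lem:orbits-of-free-loops-is-loops}, and the nonvanishing of the restriction along the $(\chrHeight+1)$-connective cover coming from root-of-unity detection. None of these ingredients appear in your outline, so the constant-summand step remains open.
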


        \begin{proof}
            Since $\Inplus$ is $\Ind$-$\pi$-finite, by \cref{lem:HKR-factors-through-a-map-of-spectra} we have that $\chi_\rho^\HKR$ factors through a map of spectra 
            \begin{equation*}
                f \colon \L \Inplus \to  C^{\chrHeight+1}_{\chrHeight}(L)\units.
            \end{equation*}
            As $\Mod_{\En(K)}$ is $(\chrHeight+1)$-connected (\cref{def: cat connectedness}), using that $\L\Inplus$ is $\Ind$-$\pi$-finite, the composition 
            \begin{equation*}
                \L\Inplus \xto{f} C^{\chrHeight+1}_{\chrHeight}(L)\units \to \En(K)\units,
            \end{equation*}
            factors, as a map of spectra, through $\In = (\En(K)\units)\pitor$. 
            It is left to show that the map 
            \begin{equation*}
                f \colon \Inplus \times \In \simeq \L\Inplus \to \In
            \end{equation*}
            is the trace map, i.e.\ $1+\eta$. 
            Using the universal property of $\In$ (\cref{rmrk:maps-to-In}), this is equivalent to the map being classified by:
            \begin{itemize}
                \item  $1 \in \ZZ_2 = \widehat{\pi}_{\chrHeight}(\In)$ and $\eta \in \{1,\eta\} = \widehat{\pi}_{\chrHeight}(\Inplus)$ when $p=2$, or
                \item $1 \in \Zp = \widehat{\pi}_{\chrHeight}(\In)$ (and $\widehat{\pi}_{\chrHeight}(\Inplus) = 0$) when $p \neq 2$. 
            \end{itemize}
            
            We first show that for any $p$, the restriction to $\In$ is classified by an invertible element in $\Zp$.
            Let $\tau \colon \Sigma^{\chrHeight+1}\ZZ/p \to \Sigma^{\chrHeight+1}\QQ_p/\Zp \to \Inplus$ be the natural map. Then, the composition of maps of connective spectra
            \begin{equation*}
                \Sigma^{\chrHeight+1} \ZZ/p \xto{\tau} \Inplus \to \En\units
            \end{equation*}
            chooses the higher $p$-th root of unity $\omega^{(\chrHeight)}_p$ (see \cite{CSY-cyclotomic}) and therefore is not nullhomotopic. 
            By \cite[Corollary 8.12]{Burklund-Schlank-Yuan-2022-Nullstellensatz} and \cref{lem:integral-Enp-to-En}, we see that 
            \begin{equation*}
                0=\dim(\colim_{\B^{\chrHeight+1}\ZZ/p} \En[\chrHeight][\tau])=\int_{\B^{\chrHeight+1}\ZZ/p}^{\En[\chrHeight+1](L)} \tau=\int_{\L\B^{\chrHeight+1}\ZZ/p}^{C^{\chrHeight+1}_{\chrHeight}(L)}\chi_\tau^\HKR.
            \end{equation*}
            Therefore, the image of the element that classifies our map, under the modulo $p$ map
            \begin{equation*}
                \Zp = \Hom_{\Ab}(\QQ_p/\Zp, \QQ_p/\Zp) \to \Hom_{\Ab}(\ZZ/p,\QQ_p/\Zp) \simeq \ZZ/p,
            \end{equation*}
            is non zero. That is, it is an invertible element in $\Zp$. Thus, up to a choice of a different orientation $\In\to \En\units$, the restriction to $\In$ is the identity.
            
            We are now left with the case $p=2$. We will show that the restriction to $\Inplus$ is classified by $\eta \in  \pi_{\chrHeight}(\Inplus)$. Assume otherwise, i.e.\ that it is the trivial map. Let 
            $I \coloneqq \hom(\tau_{\le 1}\SS/2, \Inplus)$ and consider the map 
            \begin{equation*}
                g \colon I \to \Inplus,
            \end{equation*}
            dual to the natural modulo 2 and truncation map $\SS \to \tau_{\le 1}\SS/2$. 
            By assumption, the composition
            \begin{equation*}
                I \xto{g} \Inplus \to \L\Inplus \xto{f} \In
            \end{equation*}
            is trivial, or equivalently,
            \begin{equation*}
                I \to \L I \xto{\L g} \L\Inplus \xto{f} \In
            \end{equation*}
            is trivial.
            By \cref{lem:HKR-vs-twice-HKR}, we have a commutative diagram 
            \begin{equation*}
                \begin{tikzcd}
                    {\Enp^{I}} & {(C^{\chrHeight+1}_{\chrHeight})^{\L I}} & {\En(K)^{\L I}} \\ \\
                    {(C^{\chrHeight+1}_0)^{\L^{\chrHeight+1}I}} && {C^{\chrHeight}_0(K)^{\L^{\chrHeight+1}I}.}
                    \arrow["{\chi^{\HKR}}", from=1-1, to=1-2]
                    \arrow["{\chi^{\chrHeight+1,\HKR}}", from=1-1, to=3-1]
                    \arrow[from=1-2, to=1-3]
                    \arrow["{\chi^{\chrHeight,\HKR}}", from=1-3, to=3-3]
                    \arrow[from=3-1, to=3-3]
                \end{tikzcd}
            \end{equation*}
            As $C^{\chrHeight}_0(K)$ is rational and $\L^{\chrHeight+1} I$ is $\pi$-finite, the map of spaces 
            \begin{equation*}
                \chi^{\chrHeight+1, \HKR}_g \colon \L^{\chrHeight+1} I \to C^{\chrHeight}_0(K)
            \end{equation*}
            factors through $\pi_0 \L^{\chrHeight+1} I \simeq \ZZ/2^{\oplus(\chrHeight+2)}$. By \cref{lem:HKR-factors-through-a-map-of-spectra}, this map factors through a map of spectra $\ZZ/2^{\oplus(\chrHeight+2)} \to C^{\chrHeight}_0(K)\units$. 
            The space of maps $\ZZ/2^{\oplus (\chrHeight+2)} \to C^{\chrHeight}_0(K)\units$ is discrete, therefore the map
            \begin{equation*}
                \L^{\chrHeight+1} I \to C^{\chrHeight}_0(K)\units
            \end{equation*}
            is $\TT^{\chrHeight+1}$-invariant, and induces a map of connective spectra
            \begin{equation*}
                (\L^{\chrHeight+1} I)_{h\TT^{\chrHeight+1}} \to C^{\chrHeight}_0(K)\units.
            \end{equation*}
            
            Consider the $(\chrHeight+1)$-connective cover map
            \begin{equation*}
                \Sigma^{\chrHeight+1} \ZZ/2 \to I
            \end{equation*}
            and the commutative diagram
            \begin{equation*}
                \begin{tikzcd}
                    {\L^{\chrHeight+1}\Sigma^{\chrHeight+1}\ZZ/2} & {\L^{\chrHeight+1}I} & {\Lnc^{\chrHeight+1}I} \\
                    {(\L^{\chrHeight+1}\Sigma^{\chrHeight+1}\ZZ/2)_{h\TT^{\chrHeight+1}}} & {(\L^{\chrHeight+1}I)_{h\TT^{\chrHeight+1}}} & {(\Lnc^{\chrHeight+1}I)_{h\TT^{\chrHeight+1}}.}
                    \arrow[from=1-1, to=1-2]
                    \arrow[from=1-1, to=2-1]
                    \arrow[from=1-2, to=1-3]
                    \arrow[from=1-2, to=2-2]
                    \arrow[from=1-3, to=2-3]
                    \arrow[from=2-1, to=2-2]
                    \arrow[from=2-2, to=2-3]
                \end{tikzcd}            
            \end{equation*}
            Using \cref{lem:orbits-of-free-loops-is-loops}, and that $\L^{\chrHeight+1}\Sigma^{\chrHeight+1} \ZZ/2 = \Lnc^{\chrHeight+1} \Sigma^{\chrHeight+1}\ZZ/2$, we write it equivalently as
            \begin{equation*}
                \begin{tikzcd}
                    {\L^{\chrHeight+1}\Sigma^{\chrHeight+1}\ZZ/2} & {\L^{\chrHeight+1}I} & {\Lnc^{\chrHeight+1}I} \\
                    {\ZZ/2} & {(\L^{\chrHeight+1}I)_{h\TT^{\chrHeight+1}}} & {\Sigma^{-(n+1)}I.}
                    \arrow[from=1-1, to=1-2]
                    \arrow["{(\eta+\id)^{\chrHeight+1}}", from=1-1, to=2-1]
                    \arrow[from=1-2, to=1-3]
                    \arrow[from=1-2, to=2-2]
                    \arrow["{(\eta+\id)^{\chrHeight+1}}", from=1-3, to=2-3]
                    \arrow[from=2-1, to=2-2]
                    \arrow[from=2-2, to=2-3]
                \end{tikzcd}
            \end{equation*}
            Note that the vertical left map and the bottom horizontal composition are isomorphisms on $\pi_0$. In particular, there is a $\ZZ/2$-summand in $\pi_0((\L^{\chrHeight+1} I)_{h\TT^{\chrHeight+1}})$ such that
            \begin{enumerate}
                \item\label{item:onto} the map $\pi_0 (\L^{\chrHeight+1} I) \to \pi_0((\L^{\chrHeight+1} I)_{h\TT^{\chrHeight+1}}) \onto \ZZ/2$ is given by $\pi_0 (\eta + \id)^{\chrHeight+1}$, and
                \item\label{item:into} the composition 
                \begin{equation*}
                    \ZZ/2 \into \pi_0((\L^{\chrHeight+1} I)_{h\TT^{\chrHeight+1}}) \to \pi_0 C^{\chrHeight}_0(K) \units
                \end{equation*}
                agrees with
                \begin{equation*}
                    \ZZ/2 \xto{\pi_0 \chi^{\chrHeight+1, \HKR}_{\tau}} \pi_0 C^{\chrHeight+1}_0 \to \pi_0 C^{\chrHeight}_0(K).
                \end{equation*}
                where $\tau \colon \Sigma^{\chrHeight+1} \ZZ/2 \to \Inplus$ is the natural map.
            \end{enumerate}
            
            By our assumption, the composition
            \begin{equation*}
                \pi_0 \L^{\chrHeight} I \to \pi_0 \L^{\chrHeight+1} I  \to \pi_0 C^{\chrHeight}_0(K)\units
            \end{equation*}
            is zero, therefore so is the composition with the projection to the $\ZZ/2$-summand
            \begin{equation*}
                \pi_0 \L^{\chrHeight} I \to \pi_0 \L^{\chrHeight+1} I  \onto \ZZ/2 \into \pi_0 \L^{\chrHeight+1} I \to \pi_0 C^{\chrHeight}_0(K)\units.
            \end{equation*}
            By (\labelcref{item:onto}), the composition
            \begin{equation*}
               \ZZ/2 = \pi_0 \L^{\chrHeight} I \to \pi_0 \L^{\chrHeight+1} I \onto \ZZ/2
            \end{equation*}
            is an isomorphism, therefore the restriction
            \begin{equation*}
                \ZZ/2 \into \pi_0 \L^{\chrHeight+1} I \to \pi_0 C^{\chrHeight}_0(K)\units
            \end{equation*}
            is zero.            
            But by (\labelcref{item:into}), this composition is not zero, in contradiction.
        \end{proof} 


        \begin{lemma}\label{lem:spherical-roots-of-unity-Nullstellensatz-shift}
            Following the notations of \cref{cons:character-diagram}, the orientation map, composed with the geometric point
            \begin{equation*}
                \Sigma \In \to \Sigma \En\units \to (\ModEn)\units \to \KTnp(\En)\units \to \Enp(L)\units,
            \end{equation*}
            is a $(\tau_{\le \chrHeight} \SS, \chrHeight+1)$-orientation. In particular, we can assume it is the connected cover of the $(\SS,\chrHeight+1)$-orientation map of $\Enp(L)$.
        \end{lemma}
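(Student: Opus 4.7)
The plan is to reduce the statement to a comparison of two maps into the $\pi$-torsion part of $E_n^+(L)^\times$, using the connectedness machinery from \cite{BCSY-Fourier}. First, I would identify $E_n^+(L)$ as a Morava $E$-theory at height $\chrHeight+1$. By Yuan's chromatic redshift theorem for $\En$ \cite{Yuan-2024-redshift-En}, $\KTnp(\En)$ is nonzero, so by the chromatic Nullstellensatz \cite{Burklund-Schlank-Yuan-2022-Nullstellensatz} every geometric point produces a $T(\chrHeight+1)$-local Morava $E$-theory $\En[\chrHeight+1](L')$ for some algebraically closed $L'$. Hence $\Enp(L)$ is $(\SS,\chrHeight+1)$-orientable by \cite[Theorem~7.8]{BCSY-Fourier}, and in particular $\Mod_{\Enp(L)}$ is $(\chrHeight+2)$-connected.

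In the second step, I would use \cref{thm:connectedness-implies-pitor} to deduce that the $\pi$-torsion part of $\Enp(L)\units$ is $\In[\chrHeight+1]$. Since $\Sigma\In$ is itself $\pi$-finite (in particular $\pi$-torsion), the composition in the statement factors canonically as
\[
    \Sigma\In \to \In[\chrHeight+1] \to \Enp(L)\units,
\]
where the second map is an $(\SS,\chrHeight+1)$-orientation of $\Enp(L)$. Thus, to prove the lemma, it suffices to show that the first map $\Sigma\In \to \In[\chrHeight+1]$ is (up to an automorphism of either side) the connective cover map, since the connected cover of an $(\SS,\chrHeight+1)$-orientation is exactly a $(\tau_{\le \chrHeight}\SS,\chrHeight+1)$-orientation.

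In the third step, I would reduce this to a check on homotopy groups, using that the space of maps $\Sigma \In \to \In[\chrHeight+1]$ is classified by $\widehat\pi_{\chrHeight}$ of the loop, i.e.\ by $\End_{\Ab}(\pi_{\chrHeight}\In) \simeq \Zp$. The map $\Sigma\In \to \In[\chrHeight+1]$ arising above is a unit multiple of the connected cover iff its loop $\In \to \In$ is invertible. Looping the composition yields precisely the map $\In \to \En\units \to \Enp(L)\units$ obtained from the original $(\SS,\chrHeight)$-orientation of $\En$ composed with the geometric point. Nontriviality of this composition on $\pi_{\chrHeight}$ (equivalently, the fact that the higher roots of unity $\omega^{(\chrHeight)}_p$ of $\En$ remain non-primitive after base change to $\Enp(L)$) follows from \cite[Corollary~8.12]{Burklund-Schlank-Yuan-2022-Nullstellensatz} applied to the Morava $E$-theory $\Enp(L)$, which will be the main obstacle and needs to be handled carefully. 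Once this is established, we obtain the desired identification, possibly after rescaling the chosen $(\SS,\chrHeight+1)$-orientation of $\Enp(L)$ by the corresponding unit in $\Zp$; this is the content of the ``we can assume'' clause in the statement.
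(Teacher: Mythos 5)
Your first two steps are sound and essentially parallel the paper's reduction: the paper reduces along the $(\chrHeight+1)$-connective cover $\Sigma^{\chrHeight+1}\QQ_p/\Zp \to \Sigma\In$ (using that $\pi_0\Map(\Sigma\In,\Inplus)\to\pi_0\Map(\Sigma^{\chrHeight+1}\QQ_p/\Zp,\Inplus)$ is an isomorphism), while you reduce by factoring through $(\Enp(L)\units)\pitor\simeq \Inplus$ and comparing the resulting map $\Sigma\In\to\Inplus$ with the connective cover; both reductions use the same connectedness machinery and are fine (modulo noting that the canonical map $\Inplus\to\Enp(L)\units$ is itself an orientation, via \cite[Theorem~7.8]{BCSY-Fourier}).

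The genuine gap is in your third step. The composite in the statement runs through $\Sigma\En\units \to (\ModEn)\units \to \KTnp(\En)\units \to \Enp(L)\units$, and the geometric point is a ring map out of $\KTnp(\En)$, not out of $\En$; there is no ring map $\En\to\Enp(L)$ at all (indeed $\LTnp\En=0$), so "the map $\In\to\En\units\to\Enp(L)\units$ obtained from the original orientation composed with the geometric point" does not exist, and looping the composite does not produce it. Looping $(\ModEn)\units\to\KTnp(\En)\units$ lands in $\Omega\KTnp(\En)\units$, whose homotopy involves $\pi_{*+1}\KTnp(\En)$, and relating the suspended height-$\chrHeight$ roots of unity of $\En$ to genuine (primitive) height-$(\chrHeight+1)$ roots of unity of $\KTnp(\En)$ and then of $\Enp(L)$ is exactly the non-formal input: this is the cyclotomic redshift theorem \cite[Theorem~B]{BMCSY-cycloshift}, which is what the paper invokes (after using \cite[Theorem~5.15]{BCSY-Fourier} to see that $\Sigma^{\chrHeight+1}\QQ_p/\Zp\to(\ModEn)\units$ is an orientation of $\ModEn$). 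Your appeal to \cite[Corollary~8.12]{Burklund-Schlank-Yuan-2022-Nullstellensatz} cannot substitute for it: that result concerns properties of Morava $E$-theories themselves (e.g.\ vanishing of colimits attached to their own nontrivial characters), and knowing that $\Enp(L)$ abstractly admits primitive roots of unity says nothing about whether the particular classes coming from $\En$ through $K$-theory are nontrivial. Without the redshift input, your coefficient $\lambda\in\Zp$ could a priori be divisible by $p$ (or zero), and the proof does not close.
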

        
        \begin{proof}
            Let $\Sigma^{\chrHeight+1} \QQ_p / \Zp \to \Sigma \In$ denote the $(\chrHeight+1)$-connective cover. Since the induced map
            \begin{equation*}
                \pi_0 \Map(\Sigma \In, \Inplus) \to \pi_0 \Map(\Sigma^{\chrHeight+1} \QQ_p / \Zp, \Inplus)
            \end{equation*}
            is an isomorphism, it suffices to verify the claim after precomposing with it.  
            By \cite[Theorem~5.15]{BCSY-Fourier}, the map
            \begin{equation*}
                \Sigma^{\chrHeight+1} \QQ_p / \Zp \to \Sigma \En\units \to (\ModEn)\units
            \end{equation*}
            is an orientation of $\ModEn$, exhibiting it as having primitive roots of unity of height $\chrHeight+1$.
            
            By \cite[Theorem~B]{BMCSY-cycloshift}, primitive roots of unity of height $\chrHeight+1$ in $\ModEn$ map to the corresponding height $\chrHeight+1$ primitive roots of unity in $\KTn(\En)$, which in turn are sent to the primitive roots of unity of $\Enp(L)$.    
        \end{proof}

         \begin{corollary}\label{cor:commutativity-of-character-diagram-suspension-In}
            Let $A$ be a $\pi$-finite $p$-local space and $\rho \colon A\to \Sigma \In$ be a map of spaces. 
            Then the image of $\En{}[\rho] \in ((\ModEn)\dbl)^A$, under both compositions in the character diagram \cref{cons:character-diagram}, is the same, up to multiplication by an invertible $p$-adic integer.\footnote{I.e.\ up to a change of $(\SS,\chrHeight)$-orientation of $\En(K)$.}
            
            Written differently, $\chi_{\rho}$ in the sense of \cref{def:iterated-character-of-a-map} agrees with $\chi^{\HKR}_{\rho}$ (after mapping both to $\En(K)$).
         \end{corollary}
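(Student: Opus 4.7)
The plan is to chase the local system $\En[\rho]\in ((\ModEn)\dbl)^A$ around the character diagram of \cref{cons:character-diagram} and identify both legs with the map
\[ \L A \xto{\L \rho} \L \Sigma \In \xto{1+\eta} \Omega \Sigma \In \simeq \In \xto{\omega} \En(K), \]
for an appropriate $(\SS,\chrHeight)$-orientation $\omega$ of $\En(K)$. The two legs will differ only in their choice of $\omega$, and any two such orientations differ by an element of $\Zp\units$, which is precisely the stated ambiguity.

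For the right (transchromatic) leg, by \cref{lem:spherical-roots-of-unity-Nullstellensatz-shift} the map $\Sigma \In \to \Enp(L)\units$ arising from decategorification is (up to a chosen orientation) the connective cover of the $(\SS,\chrHeight+1)$-orientation of $\Enp(L)$, so $\de(\En[\rho])$ is the composite $A \xto{\rho} \Sigma \In \to \Inplus \to \Enp(L)$. Applying \cref{prop:HKR-factors-through-trace} to this composite, I obtain that $\chi^{\HKR}_{\de(\En[\rho])}$ followed by the geometric point $C^{\chrHeight+1}_{\chrHeight}(L)\to \En(K)$ factors as
\[ \L A \xto{\L \rho} \L \Inplus \xto{1+\eta} \In \to \En(K)\units \to \En(K). \]
Since $\Omega \Sigma \In \to \Omega \Inplus$ is an isomorphism, naturality of the trace $1+\eta$ allows me to replace $\Inplus$ by $\Sigma\In$, recognizing $\L A \xto{\L\rho} \L\Sigma \In \xto{1+\eta} \In$ as exactly $\chi_\rho$ from \cref{def:iterated-character-of-a-map}. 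So the right leg is $\chi_\rho$ followed by one $(\SS,\chrHeight)$-orientation $\In \to \En(K)$.

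For the left (monoidal) leg, the local system $\En[\rho]$ factors through the Picard spectrum as $A \xto{\rho} \Sigma \In \to \Sigma \En\units \to (\ModEn)\units$. Using \cref{def:character-of-a-map} together with \cref{lem:trace-is-eta+id}, which identifies the trace of an invertible endomorphism with $\eta+\id$, the monoidal character of this Picard local system is computed by $\L$ followed by $\eta+\id$. Postcomposing with the geometric point $\En \to \En(K)$ and using naturality gives
\[ \L A \xto{\L\rho} \L \Sigma \In \xto{1+\eta} \In \to \En\units \to \En \to \En(K), \]
i.e.\ $\chi_\rho$ followed by the $(\SS,\chrHeight)$-orientation $\In \to \En(K)$ induced from $\ModEn$'s own orientation.

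Both legs therefore agree on the initial composite $\L A \to \In$ and differ only in their terminal orientation $\In \to \En(K)\units$. Both such maps are $(\SS,\chrHeight)$-orientations of $\En(K)$, and the space of such orientations is a torsor under $\Zp\units$ (acting on $\pi_\chrHeight$), so the two legs differ by multiplication by an invertible $p$-adic integer, which is the stated result. The main technical content is already packaged in \cref{prop:HKR-factors-through-trace}: once the transchromatic character is known to be represented by the trace $1+\eta$, matching the two legs reduces to naturality together with the classification of orientations in $\En(K)$.
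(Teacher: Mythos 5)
Your proof is correct and follows essentially the same route as the paper: the paper's own proof is precisely the combination of \cref{prop:HKR-factors-through-trace} (identifying the transchromatic leg with the transgression $1+\eta$ applied to $\rho$, up to a unit in $\Zp$) and \cref{lem:spherical-roots-of-unity-Nullstellensatz-shift} (identifying $\de(\En{}[\rho])$ with $\rho$ composed with the orientation of $\Enp(L)$), with the monoidal leg computed by the footnote to \cref{def:character-of-a-map} via \cref{lem:trace-is-eta+id}. You have simply spelled out the naturality steps and the orientation-ambiguity bookkeeping that the paper leaves implicit.
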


         \begin{proof}
             This follows from \cref{prop:HKR-factors-through-trace} and \cref{lem:spherical-roots-of-unity-Nullstellensatz-shift}.
         \end{proof}  

         \begin{lemma}\label{lem:commutativity-of-character-diagram-En}
            Let $P \subseteq \Sm$ be a $p$-subgroup. Assume that $P$ is an $\En$-good group in the sense of \cite{Barthel-Stapleton-2016-centralizers-good-groups} (which is implied by \cite[Definition~7.1]{Hopkins-Kuhn-Ravanel-2000-HKR}). Let $V \in (\ModEn)\dbl$. Then the image of $\Tm V \in ((\ModEn)\dbl)^{\B P}$ under the two composition in the character diagram \cref{cons:character-diagram} is the same.
         \end{lemma}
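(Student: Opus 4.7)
The plan is to verify the commutativity of the character diagram for the permutation local system $\Tm V$ by computing both compositions explicitly and observing that they both yield the character of a permutation representation, which counts fixed points.

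First, I would use \cite[Proposition~10.11]{Mathew-2016-Galois} to identify $(\ModEn)\dbl \simeq \Perf(\En)$. Let $d \coloneqq \dim V \in \pi_0 \En$; by \cref{cor:no-action-on-dim-En} the $\TT$-action on $d$ is trivial, so $d$ may be regarded as an ordinary integer (with trivial higher structure). Consequently, as an $\En$-module, $V$ has a well-defined $\En$-theoretic rank $d$, and $\Tm V = V^{\otimes_\En m}$ is the $m$-th tensor power equipped with the permutation action of $P \subseteq \Sm$ on the factors.

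Next, I would compute the left--down composition. The free loops decomposition $\L\B P = \bigsqcup_{[g] \in P/\mrm{conj}} \B C_P(g)$ reduces the monoidal character $\chi_{\Tm V} \colon \L\B P \to \En$ to its restriction on each $\B C_P(g)$. By \cref{lem:character-of-Tm}, this restriction equals $(\dim V)^{\numcyc(g)} = d^{\numcyc(g)}$ equipped with the $C_P(g)$-action of \cref{def:stabilizer-action}, which is built by restricting the $\TT$-action on $d$ along cyclic subgroups $\ZZ/k \to \TT$ coming from the cycles of $g$. Since that $\TT$-action is trivial, the restriction $\chi_{\Tm V}|_{\B C_P(g)}$ is the \emph{constant} map at the integer $d^{\numcyc(g)}$, and composing with $\En \to \En(K)$ gives the constant map at $d^{\numcyc(g)} \in \En(K)$.

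For the top--right--down composition, one observes that $\de(\Tm V) \in \Enp(L)^{\B P}$ is the decategorification of the permutation representation of $P$ on a free $\Enp(L)$-module of rank $d^m$ with basis $\{1,\dots,d\}^m$ permuted through $P \subseteq \Sm$. The $\En$-good hypothesis on $P$ is precisely what allows the transchromatic character theory of \cite{Hopkins-Kuhn-Ravanel-2000-HKR,Stapleton-2013-HKR,Barthel-Stapleton-2016-centralizers-good-groups} to apply; under the resulting character isomorphism $\Enp(L)^0(\B P) \otimes_{\Enp(L)} C^{n+1}_n(L) \simeq C^{n+1}_n(L)[P^{p\hyphen\mrm{tors}}/\mrm{conj}]$, the class of a permutation representation on a set $X$ is sent to the function $g \mapsto |X^g|$. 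Applied to $X = \{1,\dots,d\}^m$, this gives $d^{\numcyc(g)}$, constant on $\B C_P(g)$, and mapping this class into $\En(K)$ via the chosen geometric point yields the same constant map $\B C_P(g) \to \En(K)$ at $d^{\numcyc(g)}$.

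The main obstacle will be rigorously matching the \emph{higher} structure of both characters on each component $\B C_P(g)$, not merely their values at basepoints. On the monoidal side this follows cleanly from the triviality of the $\TT$-action on $\dim V$; on the HKR side this reduces, via the good group hypothesis, to the purely combinatorial fact that the permutation action of $C_P(g)$ on the $g$-fixed basis vectors in $\{1,\dots,d\}^m$ contributes no higher structure beyond its underlying cardinality — a point that should be extractable from the explicit formulas for the HKR character of permutation representations on free modules in \cite{Hopkins-Kuhn-Ravanel-2000-HKR}.
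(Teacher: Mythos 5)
There is a genuine gap, and it sits exactly where you flag your "main obstacle." Your argument rests on both composites being \emph{constant} on each component $\B C_P(g) \subseteq \L\B P$, with value $d^{\numcyc(g)}$. That is not true, and the triviality of the $\TT$-action on $\dim V$ does not give it to you. By \cref{lem:character-of-Tm} and \cref{def:stabilizer-action}, the restriction of $\chi_{\Tm V}$ to $\B C_P(g)$ is $d^{\numcyc(g)}$ equipped with an action in which the cyclic pieces act through the $\TT$-action (these do die by \cref{cor:no-action-on-dim-En}) but the factors $\Sigma_{N_k}$ still permute the identical copies of $\dim V$; the resulting class in $\En(K)^0(\B C_P(g))$ is a power-operation/transfer-type class, which at height $\ge 1$ is generally \emph{not} the constant $d^{\numcyc(g)}$ (e.g.\ expanding $P_m(1+1)$ produces nontrivial transfer classes). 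The same problem occurs on the other side: the one-step transchromatic character lands in $C^{\chrHeight+1}_{\chrHeight}(L)^0(\B C_P(g))$, where the value attached to a permutation class is the class of the $C_P(g)$-set of fixed points, not its cardinality; your "purely combinatorial fact" that this carries no higher information is false at positive height. Two further inaccuracies compound this: $\de(\Tm V)$ is not the class of a permutation representation on a basis $\{1,\dots,d\}^m$, since $V$ need not be free and $d$ can be negative (the correct statement is that $\de(\Tm V)$ is the total power operation applied to $d$, using multiplicativity of the decategorification); and the fixed-point formula you quote, $g \mapsto |X^g|$, is the height-one character formula — transchromatically one evaluates at commuting $(\chrHeight+1)$-tuples, i.e.\ at $\Zp^{\chrHeight+1}$-sets, and counts orbits/fixed points of the whole tuple.

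This also explains why the good-group hypothesis is not merely what "allows the theory to apply" (the character maps exist for all finite groups): its actual role is to supply injectivity. The paper's proof applies a further character $\chi^{\chrHeight,\HKR} \colon \En(K)^{\L\B P} \to C^{\chrHeight}_0(K)^{\L^{\chrHeight+1}\B P}$, which is injective on $\pi_0$ because centralizers of good groups are good; after this second character everything does become a numerical function on $\pi_0\L^{\chrHeight+1}\B P$, and both composites are identified with the character of the total power operation applied to $d$, computed by the Barthel--Stapleton orbit-counting formula ($d^{r}$ with $r$ the number of $\Zp^{\chrHeight+1}$-orbits), which matches the evaluation of \cref{lem:character-of-Tm}. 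To repair your argument you would need either this extra character-and-injectivity step, or an independent proof that the two non-constant classes over each $\B C_P(g)$ agree before passing to height $0$ — which is precisely the content you have not supplied.
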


        \begin{proof}
            Since $P$ is an $\En$-good group, by \cite[Corollary 7.1]{Barthel-Stapleton-2016-centralizers-good-groups}, so are its centralizers, and therefore the transchromatic character map 
            \begin{equation*}
                \chi^{\chrHeight,\HKR} \colon \En(K)^{\L \BP}\to C^{\chrHeight}_0(K)^{\L^{\chrHeight+1}\BP}
            \end{equation*}
            is injective on $\pi_0$. Therefore it suffices to check commutativity after composing to $C^{\chrHeight}_0(K)^{\L^{\chrHeight+1} \BP}$. 

            We now show that both compositions arise as instances of the \emph{character of the total power operation}, as constructed in \cite[Theorem 9.1]{Barthel-Stapleton-2017-power-operations}. More precisely, they both coincide with the character applied to $d \coloneqq \dim(V)$ with $G = e$, and restricted along $\L^{\chrHeight+1} \BP \to \Lp^{\chrHeight+1} \B \Sm$.

            First, by \cite[Corollary 10.2]{Barthel-Stapleton-2017-power-operations}, the character of the total power operation evaluated on $d$ is independent of the choice of $\phi$. By \cref{lem:HKR-vs-twice-HKR} $\Tm V$ is sent under the composition 
            \begin{equation*}
                ((\ModEn)\dbl)^{\BP} \xto{\de} \En[n+1](\L)^{\BP} \xto{\chi^{\HKR}} \En(K)^{\L\BP} \xto{\chi^{\chrHeight,\HKR}} C^{\chrHeight}_0(K)^{\L^{\chrHeight+1}\BP}
            \end{equation*}
            to the character of the total power operation applied to $d$. 

            To see that the other composition agrees with this character, we use the formula for the character of the total power operation given just before \cite[Proposition 5.5]{Barthel-Stapleton-2017-power-operations}. Let $[\degree]$ be a set of cardinality $\degree$. An element of
            \begin{equation*}
                \pi_0 \Lp^{\chrHeight+1} \B \Sm = \Hom(\Zp^{\chrHeight+1}, \Sm) / \mrm{conj}
            \end{equation*}
            can be viewed as a $\Zp^{\chrHeight+1}$-action on $[\degree]$. 
            The formula then evaluates the character of the total power operation applied to $d =\dim(V)$ at this $\Zp^{\chrHeight+1}$-set, as $d^r$, where $r$ is the number of $\Zp^{\chrHeight+1}$-orbits in $[\degree]$.

            The claim now follows from \cref{lem:character-of-Tm}, which asserts that the value of $\chi_{\Tm V}$ on a $\Zp$-set of size $m$ is $d^\ell$, where $\ell$ is the number of $\Zp$-orbits in its decomposition. In this case, the stabilizer of the corresponding conjugacy class acts by permuting isomorphic orbits.
        \end{proof}

        \subsubsection{Truncated units}
        \label{subsubsec:truncated-units}

        By \cref{subsubsec:character-diagram}, since the character diagram commutes for both permutation representations and characters factoring through~$\In$, it follows that the character of $\Tm\En\shift{1} \in \Gr^{\pichar}_{\ZZ} \ModEn$ agrees with the transchromatic character of the image of $\pichar$ under $\pi_0$ of the orientation map (\cref{lem:character-of-rho-is-character-of-Tm}).

        We are thus led to study the image of the orientation map $\In \to \En\units$ on~$\pi_0$. We call elements in this image truncated units, and immediately show that almost always there are no such units.

        \begin{definition}\label{def:truncated-units}
            Denote the $\pi_0$ of the orientation map by
            \begin{equation*}
                \omega \colon \pinD[\chrHeight] = \pi_0 \In \to \pi_0 \En\units.
            \end{equation*}
            We call an element in $\pi_0 \En\units$ a truncated unit if it is in the image of $\omega$.
        \end{definition}

        Note that a truncated unit must lie in the $p$-power torsion group
        \begin{equation*}
            \pi_0 \En\units[p^{\infty}] = \Witt(\Fpbar)[\![u_1,\dots,u_{\chrHeight-1}]\!]\units[p^{\infty}] = \begin{cases}
                \{1\}, & p \neq 2 \\
                \{\pm 1\}, & p = 2.
            \end{cases}
        \end{equation*}
        Therefore, when $p \neq 2$ there are no non-trivial truncated units. When $p=2$, only $-1$ can be a non-trivial truncated unit. 
        Our goal is therefore to understand when $-1\in \pi_0 \En\units$ is truncated in the case $p=2$. Until the end of this part we assume $p=2$. We will use the following notation:
        \begin{notation}
             Let $\pichar \in \pinD[\chrHeight]$. Define
             \begin{equation*}
                 \Sigma^{\pichar} \Enm \coloneqq \begin{cases}
                     \Sigma \Enm, & \omega(\pichar) = -1, \\
                     \Enm, & \omega(\pichar) = 1
                 \end{cases}
                 \qin \ModEnm.
             \end{equation*}
        \end{notation}

        \begin{definition}
            Let $\pichar \in \pinD[\chrHeight]$. Define
            \begin{equation*}
                \rho_{\pichar,k} \colon \B\ZZ/2^k \to \B\Sm[2^k] \into \MM \xto{\pichar} \In.
            \end{equation*}
            As before, we will abuse notation and sometimes identify it with the map of spaces
            \begin{equation*}
                \rho_{\pichar,k} \colon \B\ZZ/2^k \to \In \to \En\units \to \En.
            \end{equation*}
        \end{definition}

        \begin{lemma}\label{lem:character-of-rho-is-character-of-Tm}
             Let $\pichar \in\pinD[\chrHeight]$. Then 
            \begin{equation*}
                \chi_{\Tm[p^k](\Sigma^\pichar \Enm)} \simeq \chi_{\Enm{}[\rho_{\pichar,k}]},
            \end{equation*}
            where $\Tm[p^k](\Sigma^{\pichar} \Enm) \in (\ModEnm)^{\B\ZZ/p^k}$ is equipped with the restricted action of $\ZZ/p^k \subseteq \Sm[p^k]$.
        \end{lemma}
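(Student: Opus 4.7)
The plan is to apply \cref{lem:character-of-Tm} to both sides and compare componentwise on the decomposition $\L\B\ZZ/p^k \simeq \bigsqcup_{j \in \ZZ/p^k} \B\ZZ/p^k$, each piece being $\B\ZZ/p^k$ since $\ZZ/p^k$ is abelian.

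First I would compute the LHS. Applying \cref{lem:character-of-Tm} to $V = \Sigma^\pichar \Enm \in (\ModEnm)\dbl$, the character at a cyclic element $\sigma \in \ZZ/p^k \subseteq \Sm[p^k]$ of order $d$ is
\begin{equation*}
\dim(\Sigma^\pichar \Enm)^{\numcyc(\sigma)} = \omega(\pichar)^{p^k/d},
\end{equation*}
equipped with the centralizer action of \cref{def:stabilizer-action}. Since $\dim(\Sigma^\pichar\Enm) = \omega(\pichar) \in \{\pm 1\} \subseteq \pi_0 \Enm\units$, \cref{cor:no-action-on-dim-En} applied to $\Enm$ and $\Sigma \Enm$ forces the $\TT$-action on this dimension to be trivial. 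Consequently the whole centralizer action degenerates and the character reduces to the constant $\omega(\pichar)^{p^k/d} \in \pi_0 \Enm$.

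Next I would compute the RHS. By definition, $\rho_{\pichar,k}$ factors as $\B\ZZ/p^k \to \B\Sm[p^k] \into \MM \xto{\pichar} \In$, so the local system $\Enm[\rho_{\pichar,k}]$ is obtained by post-composition with the orientation $\In \to \Enm\units$. Its character, a priori computed via \cref{def:character-of-a-map} as $(1+\eta)\circ \L\rho_{\pichar,k}$, can alternatively be obtained by applying \cref{lem:character-of-Tm} to the universal dualizable object in the cobordism $1$-category (following the proof of \cref{lem:character-of-Tm} via the cobordism hypothesis) and then post-composing with $\pichar$. In either computation, the contribution at $\sigma \in \ZZ/p^k$ of order $d$ is the number of $\langle \sigma \rangle$-orbits on $[p^k]$, i.e.\ $p^k/d$, so the character evaluates to $\omega(\pichar)^{p^k/d} \in \pi_0 \Enm\units$, with trivial $\TT$-action by the same reasoning as on the LHS.

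Both sides therefore agree componentwise, which gives the claim. The main technical obstacle is reconciling the two computations of the multiplicity $p^k/d$: on the LHS it comes directly from the combinatorial formula of \cref{lem:character-of-Tm}, while on the RHS it arises as the image of $\sigma$ under the transgression along $\B\Sm[p^k]\into\MM \xto{\pichar} \In$. The bridge between them is the universal calculation in $\Bordn[1]$ used in the proof of \cref{lem:character-of-Tm}, together with the fact that all $\TT$-actions in sight are trivial so the comparison happens in the discrete group $\pi_0 \Enm\units$.
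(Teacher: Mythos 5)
Your computation of the left-hand side is fine, but the right-hand side is where all the content of the lemma sits, and there your identification is unjustified. The local system $\Enm{}[\rho_{\pichar,k}]$ is \emph{not} a restricted permutation representation inside $\ModEnm$: it is built from the height-$(\chrHeight-1)$ categorical orientation $\Sigma\In[\chrHeight-1]\to\Sigma\Enm\units$, applied to the (shifted, pointed) braiding of $\pichar$. In particular there is no ``orientation $\In\to\Enm\units$'' as you write -- extending the connected-cover map over $\pi_0\In$ is exactly the truncated-units/orientability question being investigated. If you compute the character of $\Enm{}[\rho_{\pichar,k}]$ honestly, i.e.\ apply \cref{lem:character-of-Tm} to the invertible object $\pichar\in\In$ and then transgress, the value on the component of an element of order $d$ is $(\eta\cdot\pichar)^{p^k/d}$ pushed into $\pi_0\Enm\units$ by the height-$(\chrHeight-1)$ orientation, i.e.\ $\omega_{\chrHeight-1}(\eta\cdot\pichar)^{p^k/d}$ (compare the appearance of $\omega_t$, the image of $\eta^{\chrHeight-t}\cdot\pichar$, in \cref{cor:character-of-alpha}). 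This is not visibly $\omega(\pichar)^{p^k/d}$, where $\omega$ is the height-$\chrHeight$ orientation used to define $\Sigma^{\pichar}\Enm$: the two orientations live at different heights and nothing in the cobordism-hypothesis computation relates them. Taking $d=p^k$ shows that the equality you assert componentwise is essentially the lemma itself, so your argument is circular at the decisive step; if this identification were available by pure combinatorics, the induction of \cref{lem:-1-truncated-unit-for-n<=3} and most of \cref{prop:truncated-units} would come for free.

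The paper's proof supplies precisely the missing bridge, and it is not a direct computation of either character. By \cref{lem:spherical-roots-of-unity-Nullstellensatz-shift}, both $\Tm[p^k](\Sigma^{\pichar}\Enm)$ and $\Enm{}[\rho_{\pichar,k}]$ have the same image under the decategorification map $\de\colon(\ModEnm)\dblspace\to\En(L)$, namely the restriction of the braiding of $\omega(\pichar)\in\En(L)\units$. One then runs the character diagram \cref{cons:character-diagram} at height $\chrHeight-1$: it commutes for local systems pulled back from $\Sigma\In$ (\cref{cor:commutativity-of-character-diagram-suspension-In}) and for permutation representations of dualizable objects restricted to the good group $\ZZ/p^k$ (\cref{lem:commutativity-of-character-diagram-En}), so both monoidal characters are computed by the transchromatic character of the common decategorification; injectivity of $\pi_0\Enm^{\L\B\ZZ/p^k}\to\pi_0\Enm(K)^{\L\B\ZZ/p^k}$ then gives the equality. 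To repair your approach you would have to prove the cross-height compatibility $\omega_{\chrHeight-1}(\eta\cdot\pichar)=\omega(\pichar)$ independently, and that is exactly what the decategorification/transchromatic machinery is doing.
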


        \begin{proof}
            By \cref{lem:spherical-roots-of-unity-Nullstellensatz-shift}, the image of ${\Enm{}[\rho_{\pichar,k}]}$ and $\Tm[p^k](\Sigma^\pichar \Enm)$ is the same under the decategorification map
            \begin{equation*}
                \de \colon (\ModEnm)^{\B\ZZ/p^k} \to \En(L)^{\B\ZZ/p^k}.
            \end{equation*}
            Consider the character diagram \cref{cons:character-diagram} for height $\chrHeight-1$. Since the bottom horizontal map $\Enm^{\L \B \ZZ/p^k} \to \Enm(K)^{\L\B\ZZ/p^k} $ is injective on $\pi_0$, the claim follows from \cref{cor:commutativity-of-character-diagram-suspension-In} and \cref{lem:commutativity-of-character-diagram-En}.
        \end{proof}

        \begin{corollary}\label{cor:character-of-alpha}
            Let $0 \le t \le \chrHeight$. Choose a geometric point $C^{\chrHeight}_{t} \to \En[t](K_t)$ for some algebraically-closed field $K_t$. 
            Let $\pichar \in \pinD[\chrHeight]$. Define $\pichar_{t} \coloneqq \eta^{\chrHeight-t}\cdot \pichar \in \pi_{\chrHeight-t} (\In) = \pinD[t]$, where $\eta \in \pi_1 \SS$ acts by the $\SS$-module structure. Let $\omega_t \in \pi_0 \En[t](K_t)\units$ be the image of $\pichar_t$ under the orientation map. Then 
            \begin{equation*}
                \chi^{\chrHeight-t,\HKR}_{\rho_{\pichar,k}} \colon \L^{\chrHeight-t} \B \ZZ/2^k \simeq \bigsqcup_{(\ZZ/2^k)^{\chrHeight-t}} \B\ZZ/2^k \to C^{\chrHeight}_{t} \to \En[t](K_t)
            \end{equation*}
            can be computed inductively on $k$, as follows:
            \begin{enumerate}
                \item If $k = 0$, it is $\omega_t \in \En[t](K_t)\units$.
                \item On a connected component $(x_1,\dots,x_{\chrHeight-t})$ which is not 2-divisible, it is $\omega_t \in \En[t](K_t)\units$ with the trivial $\ZZ/2^k$-action.
                \item On a connected component of the form 
                \begin{equation*}
                    2(x_1,\dots, x_{\chrHeight-t}) \in (\ZZ/2^k)^{\chrHeight-t}, \qquad \text{for} \qquad (x_1,\dots, x_{\chrHeight-t}) \in (\ZZ/2^{k-1})^{\chrHeight-t},
                \end{equation*}
                it is $(\chi^{\chrHeight-t, \HKR}_{\rho_{\pichar,k-1}}(x_1,\dots,x_{\chrHeight-t}))^2$ computed as the induction (i.e.\ the 0-semiadditive integral) from $\ZZ/2^{k-1}$ to $\ZZ/2^k$, in $\En[t](K_t)\units$:
                \begin{equation*}
                    \begin{split}
                        \chi^{\chrHeight-t, \HKR}_{\rho_{\pichar,k-1}}(2x_1,\dots,2x_{\chrHeight-t}) 
                        & = \int^{\En[t](K_t)\units}_{\B\ZZ/2^{k-1} \to \B\ZZ/2^k} \chi^{\chrHeight-t, \HKR}_{\rho_{\pichar,k-1}}(x_1,\dots,x_{\chrHeight-t}) \\
                        & \simeq (\chi^{\chrHeight-t, \HKR}_{\rho_{\pichar,k}}(x_1,\dots,x_{\chrHeight-t}))^2 \qin (\En[t](K_t)\units)^{\B\ZZ/2^k}.
                    \end{split}
                \end{equation*}                
            \end{enumerate}
        \end{corollary}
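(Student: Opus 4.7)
The plan is to reduce the computation to the iterated monoidal trace formula established in \cref{prop:HKR-factors-through-trace}, and then analyze each connected component of $\L^{\chrHeight-t}\B\ZZ/2^k \simeq \bigsqcup_{(\ZZ/2^k)^{\chrHeight-t}} \B\ZZ/2^k$ separately.

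First, I would iterate \cref{lem:HKR-vs-twice-HKR} by choosing compatible geometric points at each intermediate height $\chrHeight \ge s \ge t$, and at each step apply \cref{prop:HKR-factors-through-trace} to factor the character through a trace. Combining these, the full character factors as
\begin{equation*}
    \chi^{\chrHeight-t,\HKR}_{\rho_{\pichar,k}} \colon \L^{\chrHeight-t}\B\ZZ/2^k \xto{\L^{\chrHeight-t}\rho_{\pichar,k}} \L^{\chrHeight-t}\Sigma\In \xto{(1+\eta)^{\chrHeight-t}} \In[t] \to \En[t](K_t)\units,
\end{equation*}
matching the iterated character of \cref{def:iterated-character-of-a-map}. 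The base case $k = 0$ is then immediate: $\L^{\chrHeight-t}(*) = *$, and evaluating the iterated trace sends $\pichar$ to $\eta^{\chrHeight-t}\cdot\pichar = \pichar_t$, which maps to $\omega_t$ under the orientation.

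For the inductive step, the 2-divisible components are handled by naturality of the construction under the index-$2$ inclusion $\ZZ/2^{k-1} \hookrightarrow \ZZ/2^k$: the induced map on $\L^{\chrHeight-t}$ of classifying spaces identifies $\L^{\chrHeight-t}\B\ZZ/2^{k-1}$ with the 2-divisible locus of $\L^{\chrHeight-t}\B\ZZ/2^k$, and the composition $\rho_{\pichar,k} \circ (\B\ZZ/2^{k-1} \hookrightarrow \B\ZZ/2^k \to \B\Sm[2^k] \into \MM)$ factors through $\rho_{\pichar,k-1}$, so the character pulls back accordingly. The squaring relation then follows from the Frobenius-type identity $\int^{R\units}_{\B H \to \B G} f = f^{[G:H]}$ for the semiadditive integral along an index-$2$ inclusion in a multiplicative group. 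The main obstacle is the non-2-divisible case: for $(x_1,\dots,x_{\chrHeight-t})$ with some $x_i$ of odd 2-adic valuation, one must verify that the character restricts to the constant $\omega_t$ with trivial $\ZZ/2^k$-action. My approach would be to identify the restriction of $\L^{\chrHeight-t}\rho_{\pichar,k}$ to this component as a pointed map $\B\ZZ/2^k \to \L^{\chrHeight-t}\Sigma\In$ whose composition with $(1+\eta)^{\chrHeight-t}$ is killed on positive homotopy by the orientation into $\En[t](K_t)\units$, analogous to the cohomological vanishing used in \cref{lem:trivial-T-action-2}; only the degree-$0$ contribution, namely $\omega_t$, survives.
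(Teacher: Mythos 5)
Your reduction step is fine in spirit: identifying $\chi^{\chrHeight-t,\HKR}_{\rho_{\pichar,k}}$ with the iterated transgression $(1+\eta)^{\chrHeight-t}\circ \L^{\chrHeight-t}\rho_{\pichar,k}$ by iterating \cref{lem:HKR-vs-twice-HKR} and \cref{prop:HKR-factors-through-trace} is essentially \cref{cor:commutativity-of-character-diagram-suspension-In}, and the base case $k=0$ comes out correctly. The genuine gap is in item (2), the claim that on a non-2-divisible component the $\ZZ/2^k$-action on the value is trivial. You argue this by a cohomological vanishing ``analogous to \cref{lem:trivial-T-action-2}'', i.e.\ that pointed maps $\B\ZZ/2^k \to \tau_{\ge 1}\In[t]$ (or their composites into $\En[t](K_t)\units$) die. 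That vanishing is special to very low heights: for general $t$ the group of pointed maps $\B\ZZ/2^k \to \tau_{\ge 1}\In[t]$ is a Pontryagin dual of a (truncated) stable homotopy group of $\B\ZZ/2^k$ and is typically nonzero, and nothing formal kills it after composing with the orientation. Indeed, the possible non-triviality of exactly these actions is the substantive content of \cref{subsec:chromatic-braiding-character}; the paper rules it out not by a vanishing argument but by identifying the $\ZZ/2^k$-action with the restricted $\TT$-action on the monoidal dimension of an honest dualizable $\En[t]$-module (via \cref{lem:character-of-rho-is-character-of-Tm}, which rests on \cref{lem:commutativity-of-character-diagram-En} for permutation representations, together with \cref{lem:character-of-Tm}), and then invoking \cref{cor:no-action-on-dim-En}, whose proof goes through $\Perf(\En)$ and the splitting of dimensions along exact sequences. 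Without some substitute for this input your item (2) is unproved.

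A secondary weakness is item (3): naturality along $\ZZ/2^{k-1}\hookrightarrow\ZZ/2^k$ only controls the restriction of the value on a 2-divisible component to $\B\ZZ/2^{k-1}$, since the restriction of the regular $\ZZ/2^k$-set to the index-two subgroup is two copies of its regular set (so the pullback is the square of $\chi_{\rho_{\pichar,k-1}}$). It does not by itself identify the full $\ZZ/2^k$-equivariant value as the induction (0-semiadditive integral) from $\ZZ/2^{k-1}$; your ``Frobenius-type identity'' presumes the equivariant structure you are trying to pin down. The paper gets this structure from the explicit description of the centralizer action on the trace of a permutation, \cref{def:stabilizer-action} and \cref{lem:character-of-Tm} (cycles of length $2^k$ versus pairs of cycles permuted and rotated), again transported through the commutativity of the character diagram. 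So the correct route, and the one the paper takes, replaces both of your local arguments by the single identification of the transchromatic character with the monoidal character of a permutation representation of $\Sigma^{\pichar}$-shifted $E$-theory, after which the inductive formula and the triviality of the actions are read off from \cref{lem:character-of-Tm} and \cref{cor:no-action-on-dim-En}.
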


        \begin{proof}
            This follows from \cref{cor:commutativity-of-character-diagram-suspension-In}, \cref{lem:character-of-rho-is-character-of-Tm}, \cref{lem:character-of-Tm} and \cref{cor:no-action-on-dim-En}.
        \end{proof}

        \begin{remark}
            Note that as we write the addition rule in $\In[t]$ and $\En[t](K_t)\units$ as multiplication, $\omega_t^2$ is the image of $\pichar_t^2$ which is the action of $2 \in \SS$ on $\pichar_t$. In particular,
            \begin{equation*}
                \pichar_t^2 = 2\eta^{\chrHeight-t} \cdot \pichar = 0 \cdot \pichar = \pichar^0 = 1
            \end{equation*}
            whenever $t < \chrHeight$, and therefore $\chi^{\chrHeight-t,\HKR}_{\rho_{\pichar,k}}$ sends any 2-divisible connected component to the connected component of $1 \in \En[t](\K_t)\units$. It can still admit a non-trivial $\ZZ/2^k$-action.
        \end{remark}

        \begin{remark}
            In particular, for $k=1$, 
            \begin{equation*}
                \chi^{\chrHeight-t,\HKR}_{\rho_{\pichar,1}} \colon \bigsqcup_{(\ZZ/2)^{\chrHeight-t}} \B\ZZ/2 \to \En[t](K_t)
            \end{equation*}
            is the map that 
            \begin{enumerate}
                \item on a connected component $(x_1,\dots, x_{\chrHeight-t}) \neq (0,\dots,0) \in (\ZZ/2)^{\chrHeight-t}$ is $\omega_t$ with the trivial $\ZZ/2$-action.
                \item on the connected component $(0,\dots,0)$ is $1 \simeq \omega_t^2$ in $\En[t](K_t)\units$, with the cyclic $\ZZ/2$-action.
            \end{enumerate}
        \end{remark}

        \begin{lemma}\label{lem:representation-is-trivial-iff-character}
            Let $\pichar \in \pinD[\chrHeight]$. Then $\rho_{\pichar,k} \colon \B\ZZ/2^k \to \En(K)\units$ is nullhomotopic if and only if there exists $0 \le t \le \chrHeight$ such that 
            \begin{equation*}
                \chi^{\chrHeight-t,\HKR}_{\rho_{\pichar,k}} \colon \L^{\chrHeight-t}\B\ZZ/2^k \to (C^{\chrHeight}_t)\units \to \En[t](K_t)\units
            \end{equation*}
            is null, where $C^{\chrHeight}_t \to \En[t](K_t)$ is a geometric point.
        \end{lemma}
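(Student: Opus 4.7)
The forward direction is immediate: if $\rho_{\pichar,k}$ is nullhomotopic then the associated local system $\En(K)[\rho_{\pichar,k}]$ is trivial, so by functoriality of the HKR character (via the character diagram, \cref{cons:character-diagram}), the character $\chi^{\chrHeight-t,\HKR}_{\rho_{\pichar,k}}$ is constant at $1 \in \En[t](K_t)\units$ for every $t$, hence null.

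For the converse, suppose $\chi^{\chrHeight-t,\HKR}_{\rho_{\pichar,k}}$ is null for some $0 \le t \le \chrHeight$. By \cref{cor:character-of-alpha}, evaluating on any non-$2$-divisible connected component of $\L^{\chrHeight-t}\B\ZZ/2^k$ --- for example, the component indexed by $(1,0,\ldots,0) \in (\ZZ/2^k)^{\chrHeight-t}$ --- gives $\omega_t \in \En[t](K_t)\units$ with trivial $\ZZ/2^k$-action. Nullity forces $\omega_t = 1$, and since $\omega_t \in \{\pm 1\} \subseteq \ZZ$, this is an equality in $\ZZ$.

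Next I would identify $\omega_t$ with $\omega_\chrHeight$ via \cref{cor:character-of-alpha} applied at $k = 0$: this gives $\chi^{\chrHeight-t,\HKR}_{\rho_{\pichar,0}} = \omega_t$, but $\rho_{\pichar,0}$ is the constant map at $\omega_\chrHeight = \omega(\pichar) \in \{\pm 1\} \subseteq \pi_0 \En(K)\units$, whose HKR character is the image of the integer $\omega_\chrHeight$ under the ring maps $\pi_0 \En(K) \to \pi_0 C^{\chrHeight}_t \to \pi_0 \En[t](K_t)$; as these maps preserve $\pm 1 \in \ZZ$, we conclude $\omega_t = \omega_\chrHeight$, so $\omega_\chrHeight = 1$.

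Finally, $\omega_\chrHeight = 1$ implies $\rho_{\pichar,k}$ is null: the composition $\MM \xto{\pichar} \In \to \En(K)\units$ is an $\EE_\infty$-map from the free $\EE_\infty$-space $\MM$ on one generator, and is therefore determined up to homotopy by the image of the generator, namely $\omega_\chrHeight = 1$. Hence this $\EE_\infty$-map is homotopic to the trivial one (which is constant at $1$ on underlying spaces), and so the restriction $\rho_{\pichar,k} \colon \B\ZZ/2^k \to \B\Sm[2^k] \into \MM \to \En(K)\units$ is null. The delicate step is the identification $\omega_t = \omega_\chrHeight$, which hinges on the fact that $\pm 1$ lie in the rigid $\ZZ$-subring of $\pi_0$ of every Morava $E$-theory and are preserved under all transchromatic ring maps.
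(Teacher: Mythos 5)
Your proposal is correct in substance but takes a genuinely different route from the paper's. The paper's proof never isolates the sign $\omega_t$: it composes with a further $t$-fold character to land at height $0$ (via \cref{lem:HKR-vs-twice-HKR}), identifies $\chi^{\chrHeight,\HKR}_{\rho_{\pichar,k}}$ with the character of the total power operation applied to $\omega(\pichar)$, which is integer-valued by the Barthel--Stapleton formula, so that constancy after the map $C^{\chrHeight}_0(K)\to C^t_0(K_t)$ already forces $\chi^{\chrHeight,\HKR}_{\rho_{\pichar,k}}$ to be constant at $1$; it then concludes by injectivity of $\chi^{\chrHeight,\HKR}$ on $\pi_0$ for the $\En$-good group $\ZZ/2^k$, giving nullity of $\rho_{\pichar,k}$ directly. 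You instead read off $\omega_t$ from a non-$2$-divisible component using \cref{cor:character-of-alpha}(2), identify $\omega_t$ with $\omega(\pichar)$ by comparing the $k=0$ clause of \cref{cor:character-of-alpha} with the elementary fact that the character of a constant is its image under the ring maps, and then trivialize the entire $\EE_\infty$-map $\MM\to\En(K)\units$ using freeness of $\MM$ on one generator. That last step is sound (note it is exactly the inference the paper itself makes later, in \cref{prop:chromatic-T-action-trivial}, when passing from \cref{prop:truncated-units} to nullity of $\rho_{\pichar,k}$), and it yields a strictly stronger conclusion: $\rho_{\pichar,k'}$ is null for \emph{every} $k'$, equivalently $\omega(\pichar)=1$. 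The identification $\omega_t=\omega(\pichar)$ is fine as you set it up, and the orientation ambiguity (an invertible $2$-adic scalar) lurking behind \cref{cor:character-of-alpha} is harmless here since the relevant elements have order at most $2$; there is also no circularity, since \cref{cor:character-of-alpha} is proved before and independently of the lemma.

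There is one genuine, though repairable, gap: your argument needs a non-$2$-divisible connected component of $\L^{\chrHeight-t}\B\ZZ/2^k$, which exists only when $t<\chrHeight$ (and $k\ge 1$). The lemma as stated allows $t=\chrHeight$, where $\L^{0}\B\ZZ/2^k=\B\ZZ/2^k$ has a single component, indexed by the ($2$-divisible) empty tuple, and your step extracting $\omega_t$ says nothing; the paper's proof, by contrast, is uniform in $t$. The case can be patched within your framework: if $\chi^{0,\HKR}_{\rho_{\pichar,k}}$ is null, apply a further character and \cref{lem:HKR-vs-twice-HKR} to produce a null $\chi^{\chrHeight-t',\HKR}_{\rho_{\pichar,k}}$ (composed with a suitable geometric point) for some $t'<\chrHeight$, and then run your argument at $t'$. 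With that caveat, and noting that in the paper's applications one always has $t=\chrHeight-4<\chrHeight$, your proof goes through.
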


        \begin{proof}
            The \quotes{only if} part is obvious. 
            Assume that $\chi^{\chrHeight-t, \HKR}_{\rho_{\pichar,k}}$ is null. Therefore, the $t$-fold character
            \begin{equation*}
                \chi^{t, \HKR}_{\chi^{\chrHeight-t, \HKR}_{\rho_{\pichar,k}}} \colon \L^{\chrHeight}\B\ZZ/2^k \to C^t_0(K_t)
            \end{equation*}
            is the constant map choosing $1 \in C^t_0(K_t)$.
            By \cref{lem:HKR-vs-twice-HKR} the following diagram is commutative
            \begin{equation*}
                \begin{tikzcd}
                    {\En(K)^{\B\ZZ/2^k}} & {C^{\chrHeight}_t(K)^{\L^{\chrHeight-t}\B\ZZ/2^k}} & {\En[t](K_t)^{\L^{\chrHeight-t}\B\ZZ/2^k}} \\
                    {C^{\chrHeight}_0(K)^{\L^{\chrHeight}\B\ZZ/2^k}} && {C^t_0(K_t)^{\L^{\chrHeight}\B\ZZ/2^k}.}
                    \arrow["{\chi^{\chrHeight-t,\HKR}}", from=1-1, to=1-2]
                    \arrow["{\chi^{\chrHeight,\HKR}}", from=1-1, to=2-1]
                    \arrow[from=1-2, to=1-3]
                    \arrow["{\chi^{t,\HKR}}", from=1-3, to=2-3]
                    \arrow[from=2-1, to=2-3]
                \end{tikzcd}
            \end{equation*}

            Combining with previous part, the composition
            \begin{equation*}
                \L^{\chrHeight}\B\ZZ/2^k \xto{\chi^{\chrHeight,\HKR}_{\rho_{\pichar,k}}} C^{\chrHeight}_0(K) \to C^t_0(K_t)
            \end{equation*}
            is the constant map choosing $1$.
        
            The character $\chi^{\chrHeight, \HKR}_{\rho_{\pichar,k}} \in C^{\chrHeight}_0(K)^{\L^{\chrHeight}\B\ZZ/2^k}$ is the character of the $\ZZ/2^k$-power operation for $G = e$, applied to $\omega(\pichar)$, as in \cite[Theorem 9.1]{Barthel-Stapleton-2017-power-operations}. Therefore, by the formula for the total power operation (appearing right before \cite[Proposition 5.5]{Barthel-Stapleton-2017-power-operations}), it lands in $\ZZ^{\L^{\chrHeight}\B\ZZ/2^k}$. In particular, as it is constant after pre-composition with $C^{\chrHeight}_0(K) \to C^t_0(K_t)$, it is the constant map choosing $1$.

            As $\ZZ/2^k$ is an $\En$-good group (\cite[Proposition~7.2]{Hopkins-Kuhn-Ravanel-2000-HKR}), the character map
            \begin{equation*}
                \chi^{\chrHeight,\HKR} \colon \En(K)^{\B\ZZ/2^k} \to C^{\chrHeight}_0(K)^{\L^{\chrHeight}\B\ZZ/2^k}
            \end{equation*}
            is injective on $\pi_0$, therefore $\rho_{\pichar,k}$ is the constant map choosing $1 \in \En(K)$ as needed.
        \end{proof}

        \begin{corollary}\label{cor:representation-is-null-for-n>=4}
            Let $\chrHeight \ge 4$ and $\pichar \in \pinD[\chrHeight]$. Then for any $k$, $\rho_{\pichar,k}$ is nullhomotopic.
        \end{corollary}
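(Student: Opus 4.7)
The plan is to apply \cref{lem:representation-is-trivial-iff-character} with $t=0$, i.e., to show that the rational transchromatic character
\[
\chi^{\chrHeight,\HKR}_{\rho_{\pichar,k}} \colon \L^{\chrHeight}\B\ZZ/2^k \to \En[0](K_0)\units
\]
is nullhomotopic for every $\pichar\in\pinD[\chrHeight]$ and every $k$. Since $\En[0](K_0)$ is a rational ring spectrum, its space of units is discrete, so nullhomotopy is equivalent to this map being constant at $1$ on every component of $\L^{\chrHeight}\B\ZZ/2^k \simeq \bigsqcup_{(\ZZ/2^k)^{\chrHeight}} \B\ZZ/2^k$.

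The key arithmetic input is that $\pichar_0 = \eta^{\chrHeight}\cdot\pichar \in \pinD[0]$ vanishes. Unpacking the definition, $\pichar_0 \in \Hom(\pi_1\SS_{(2)},\QQ_2/\Z_2)$ is the composition
\[
\pi_1\SS_{(2)} \xto{\cdot\eta^{\chrHeight}} \pi_{\chrHeight+1}\SS_{(2)} \xto{\pichar} \QQ_2/\Z_2,
\]
and the first map sends the generator $\eta$ to $\eta^{\chrHeight+1}$. Using $\eta\nu=0$ in the stable stems we have $\eta^4 = 4(\eta\nu)=0$, hence $\eta^{\chrHeight+1}=0$ for $\chrHeight\ge 3$. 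Therefore $\pichar_0=0$, and since the orientation map is a group homomorphism, the base value $\omega_0 = \omega(\pichar_0)$ from \cref{cor:character-of-alpha} equals $1\in\En[0](K_0)\units$.

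The remaining step is a straightforward induction on $k$ using the inductive description of $\chi^{\chrHeight,\HKR}_{\rho_{\pichar,k}}$ provided by \cref{cor:character-of-alpha} at $t=0$. The base case $k=0$ gives the constant value $\omega_0=1$. For the inductive step, on a non-$2$-divisible component the value is $\omega_0=1$ with trivial action, while on a $2$-divisible component $2y$ (with $y\in(\ZZ/2^{k-1})^{\chrHeight}$) the value is the transfer along $\B\ZZ/2^{k-1}\to\B\ZZ/2^k$ of $\chi^{\chrHeight,\HKR}_{\rho_{\pichar,k-1}}(y)$, which equals $1$ by the inductive hypothesis; discreteness of $\En[0](K_0)\units$ collapses this transfer to plain squaring, yielding $1^2=1$. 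Thus $\chi^{\chrHeight,\HKR}_{\rho_{\pichar,k}}\equiv 1$, and \cref{lem:representation-is-trivial-iff-character} delivers the conclusion.

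I do not anticipate a substantive obstacle: once the vanishing $\eta^{\chrHeight+1}=0$ has been identified as the relevant arithmetic input, the argument is a two-line induction. The only point warranting explicit mention is that the rationality of $\En[0](K_0)$ makes the norm/transfer step at each level $k$ collapse to squaring, which is what makes the induction close.
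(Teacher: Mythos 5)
Your proof is correct and follows essentially the paper's own route: the paper applies \cref{lem:representation-is-trivial-iff-character} with $t=\chrHeight-4$ while you take $t=0$, but in both cases the point is the same, namely that $\eta^{\chrHeight-t}\cdot\pichar$ vanishes (because $\eta^4=0$), so $\omega_t=1$ and the inductive description in \cref{cor:character-of-alpha} forces the character to be constant at the unit on every component. Two minor inaccuracies are worth fixing: $\En[0](K_0)\units$ is not discrete (the ring is $2$-periodic rational, so $\pi_2\neq 0$); what you actually need, and what holds, is that maps out of $\B\ZZ/2^k$ into it are detected on $\pi_0$ since $\B\ZZ/2^k$ has no positive-degree rational cohomology --- or, more simply, that the norm of the constant unit is again the constant unit, which is all the induction requires and is why the paper's choice $t=\chrHeight-4$ works with no rationality at all. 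Also, in this subsection $\pichar\in\pinD[\chrHeight]=\widehat{\pi}_{\chrHeight}(\SS)$, so $\pichar_0$ is the functional $x\mapsto\pichar(\eta^{\chrHeight}x)$ on $\pi_0\SS$ and the relevant vanishing is $\eta^{\chrHeight}=0$, i.e.\ $\chrHeight\ge 4$ (the statement genuinely fails at $\chrHeight=3$, where $-1$ is a truncated unit), not $\eta^{\chrHeight+1}=0$; under your hypothesis $\chrHeight\ge 4$ this off-by-one is harmless.
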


        \begin{proof}
            We use the criterion of \cref{lem:representation-is-trivial-iff-character} with $t = \chrHeight-4$. This now follows by \cref{cor:character-of-alpha}, since $\pichar_{t} = \eta^{4} \cdot \pichar = 1$.
        \end{proof}

        Recall the following fact from \cite{CSY-teleambi}:
        \begin{lemma}[{\cite[Lemma 5.3.3]{CSY-teleambi}}]\label{lem:cardinality-of-BZ/2}
            In $\ModEn$ at $p=2$, the cardinality of $\B\ZZ/2$ is
            \begin{equation*}
                |\B \ZZ/2| = 2^{\chrHeight-1}.
            \end{equation*}
        \end{lemma}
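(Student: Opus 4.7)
The cardinality $|\B\ZZ/2|$ in $\ModEn$ at $p=2$ is an element of $\pi_0 \En$ defined by the $\infty$-semiadditive structure of $\ModEn$: it is the scalar by which the composite
\[
    \En \xrightarrow{\eta} \En^{\B\ZZ/2} \xleftarrow[\sim]{\Nm} \En[\B\ZZ/2] \xrightarrow{\epsilon} \En
\]
acts on the unit, where $\eta$ and $\epsilon$ are the unit and counit of the trivial-action/fixed-points and orbits/trivial-action adjunctions, and $\Nm$ is the canonical norm. My plan is to compute this composite by exploiting the formal group of $\En$ together with the $K(\chrHeight)$-local vanishing of the Tate construction.

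First, I would identify $\En^{\B\ZZ/2}$ via the formal group. By the Atiyah--Hirzebruch spectral sequence and complex-orientability, $\En^0(\B\ZZ/2) \cong \pi_0(\En)[\![x]\!]/[2](x)$, where $[2]$ is the $2$-series of the formal group law of $\En$. Since $\En$ has height $\chrHeight$, $[2](x) \equiv u \cdot x^{2^\chrHeight} \pmod 2$ for some unit $u$, so this ring is a free $\pi_0(\En)$-module of rank $2^\chrHeight$. The $K(\chrHeight)$-local Tate vanishing (Hopkins--Kuhn--Ravanel, Greenlees--Sadofsky) guarantees that $\Nm$ is an equivalence, so the three maps in the composite above can each be written down explicitly.

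Second, I would compute $\epsilon \circ \Nm^{-1}$, i.e.\ the $\SpKn$-local Becker--Gottlieb-style transfer $\En^{\B\ZZ/2} \to \En$. Using Frobenius reciprocity together with the appearance of $[2]'(0) = 2$ in the conormal direction, the image of $1 \in \En^{\B\ZZ/2}$ under the transfer equals $2^\chrHeight / 2 = 2^{\chrHeight-1}$ in $\pi_0 \En$. Composing with $\eta$, which sends $1$ to $1$, yields $|\B\ZZ/2| = 2^{\chrHeight-1}$. The main obstacle is to make this formal-group transfer computation rigorous in a way that is uniform in $\chrHeight$; a cleaner backup is to apply the Hopkins--Lurie inductive framework for cardinalities in $\SpKn$, which reduces the computation to the recursive relationship between $|\B^k C_p|$ at successive $k$ via the ambidexterity structure and the explicit rank computations for $\En^{\B^k C_p}$.
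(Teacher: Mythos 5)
The paper does not actually prove this statement: it is quoted verbatim from \cite[Lemma~5.3.3]{CSY-teleambi} and used as a black box, so your proposal should be judged as an attempted independent proof. Its first half is fine: the semiadditive cardinality is indeed the composite $\epsilon\circ\Nm^{-1}\circ\eta$, the norm is invertible by $K(\chrHeight)$-local Tate vanishing, and $\En^0(\B\ZZ/2)\cong\pi_0(\En)[\![x]\!]/[2](x)$ is free of rank $2^{\chrHeight}$ by Weierstrass preparation applied to the $2$-series. The gap is in the second half, where the entire content of the lemma sits: the claim that the ambidextrous transfer sends $1$ to $2^{\chrHeight}/2$ ``by Frobenius reciprocity together with $[2]'(0)=2$ in the conormal direction'' is not an argument. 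Frobenius reciprocity (the projection formula) only tells you that $\int_{\B\ZZ/2}$ is $\pi_0\En$-linear, and the classical covering transfer $\En^0\to\En^0(\B\ZZ/2)$, whose value $[2](x)/x$ is where the $2$-series genuinely enters, computes a different map; composing it with the semiadditive integral gives $\int_{\B\ZZ/2}\bigl([2](x)/x\bigr)=1$, which says nothing directly about $\int_{\B\ZZ/2}1$. So the factor $1/2$ — the whole point — is asserted, not proved, and the ``Hopkins--Lurie inductive framework'' fallback is too vague to close it.

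The standard ways to make your ``rank divided by $2$'' heuristic rigorous are: (a) use the identity $\dim(\ounit[A])=|\L A|$ (dimension of the group algebra equals the cardinality of the free loop space), so that $2^{\chrHeight}=\dim(\En[\B\ZZ/2])=|\L\B\ZZ/2|=|\ZZ/2\times\B\ZZ/2|=2\,|\B\ZZ/2|$, where the first equality is the rank computation you already have (the module is free and concentrated in even degrees); or (b) transchromatic/HKR character theory, which computes the image of $\int_{\B\ZZ/2}1$ rationally as $|\Hom(\ZZ_2^{\chrHeight},\ZZ/2)|/2=2^{\chrHeight-1}$, and torsion-freeness of $\pi_0\En$ then pins down the integral itself. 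Either route (or simply citing \cite{CSY-teleambi} as the paper does) fills the hole; as written, your step from ``rank $2^{\chrHeight}$'' to ``cardinality $2^{\chrHeight-1}$'' does not.
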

        
        We also need the following simple computation:

        \begin{lemma}\label{lem:integral-of-(-1)^2}
            Let $\chrHeight \ge 0$. Then, in $\ModEn$
            \begin{equation*}
                \int_{\B\ZZ/2} (-1)^2 = -2^{\chrHeight-1}+1.
            \end{equation*}
        \end{lemma}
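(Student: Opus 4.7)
The plan is to compute the integral $\int_{\B\ZZ/2}(-1)^2$ in $\ModEn$ at $p=2$ directly, using the cardinality $|\B\ZZ/2| = 2^{\chrHeight-1}$ from \cref{lem:cardinality-of-BZ/2} together with the additivity of dimension along fiber sequences (\cref{cor:dim-splits-exact-sequences}).

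I would first identify $(-1)^2$ with the $\ZZ/2$-equivariant local system $\En_\epsilon \colon \B\ZZ/2 \to \ModEn$ whose underlying object is $\En$ and whose $\ZZ/2$-action is by the sign character. This arises as the restriction to $\ZZ/2 \subseteq \Sigma_2$ of the swap action on the tensor square $(\Sigma\En)^{\otimes 2}$ of the dimension~$(-1)$ invertible object; by \cref{lem:character-of-Tm}, the character of $\Tm[2](\Sigma\En)$ takes value $(-1)^{\numcyc}$, giving $(-1)^2 = 1$ on the identity and $(-1)^1 = -1$ on the swap, which is precisely the sign character of $\ZZ/2$.

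To compute $\int_{\B\ZZ/2}\En_\epsilon \simeq (\En_\epsilon)_{h\ZZ/2}$, I would use the fiber sequence of $\ZZ/2$-equivariant objects in $\ModEn$
\begin{equation*}
    \En_\epsilon \to \En[\ZZ/2] \to \En,
\end{equation*}
where $\En[\ZZ/2]$ carries the regular permutation action and the right map is the augmentation summing the two components (with fiber the antidiagonal $\{(a,-a)\}$, i.e.\ the sign representation). Taking $\ZZ/2$-orbits in $\ModEn$ (which coincides with fixed points by ambidexterity) produces a fiber sequence with middle term $(\En[\ZZ/2])_{h\ZZ/2} \simeq \En$ (the free orbit, of dimension~$1$) and right term $\En_{h\ZZ/2}$ of dimension $|\B\ZZ/2| = 2^{\chrHeight-1}$. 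Applying \cref{cor:dim-splits-exact-sequences} then yields
\begin{equation*}
    \int_{\B\ZZ/2}(-1)^2 \;=\; \dim\bigl((\En_\epsilon)_{h\ZZ/2}\bigr) \;=\; 1 - 2^{\chrHeight-1} \;=\; -2^{\chrHeight-1}+1.
\end{equation*}

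The main obstacle is matching conventions: precisely identifying $(-1)^2$ with the sign local system $\En_\epsilon$, and verifying the normalization $\dim(\En_{h\ZZ/2}) = |\B\ZZ/2|$ (this uses that, in an $\infty$-semiadditive category, the dimension of $\ounit_{hG}$ for trivial $G$-action is the semi-additive cardinality $|\B G|$). Both ingredients should follow from the character theory of \cref{subsec:braiding-character} and the cited cardinality lemma, after which the claimed formula is immediate from additivity along the fiber sequence above.
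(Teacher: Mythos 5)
Your identification of $(-1)^2$ with the rank-one local system ``$\En$ with $\ZZ/2$ acting by $-1$'' is fine, but the two steps that convert this into a number are both incorrect, and your final answer comes out right only because the two errors (each of size $2^{\chrHeight-1}$) cancel. First, the quantity $\int_{\B\ZZ/2}(-1)^2$ in the lemma is the semiadditive integral of an $\End(\ounit)$-valued function on $\B\ZZ/2$ (the integration map $\pi_0\En^{\B\ZZ/2}\to\pi_0\En$ applied to the class of the sign local system); it is \emph{not} the dimension of the homotopy orbits of the corresponding module-valued local system. By the induced character formula, $\dim\bigl(\colim_{\B\ZZ/2}V\bigr)$ is an integral over $\L\B\ZZ/2$, which has a second connected component (indexed by the nontrivial element) contributing $\int_{\B\ZZ/2}(-1)=-2^{\chrHeight-1}$ for the sign system; this is exactly the bookkeeping the paper performs when it uses the lemma in \cref{lem:-1-truncated-unit-for-n<=3}. (The convention is also visible in \cref{cor:no-truncated-units-for-n>=4}: if the class were constant at $1$ the integral would be $|\B\ZZ/2|=2^{\chrHeight-1}$, whereas $\dim(\ounit[\B\ZZ/2])=2^{\chrHeight}$.) Second, your ``normalization'' $\dim(\En_{h\ZZ/2})=|\B\ZZ/2|$ is false: for the trivial action, $\dim(\ounit[\B G])=|\L\B G|$, so here $\dim(\En[\B\ZZ/2])=2\cdot 2^{\chrHeight-1}=2^{\chrHeight}$ (sanity check at height $1$: the rank of $K^0(\B\ZZ/2)$ is $2$, while $|\B\ZZ/2|=1$). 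The cardinality $|\B\ZZ/2|$ of \cref{lem:cardinality-of-BZ/2} is the integral of the constant function $1$, which is a different invariant from the dimension of the colimit.

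With both points corrected, your fiber-sequence strategy can be repaired: taking orbits of the sequence (sign)\,$\to\En[\ZZ/2]\to$\,(trivial) and using \cref{cor:dim-splits-exact-sequences} gives $\dim\bigl((\text{sign system})_{h\ZZ/2}\bigr)=1-2^{\chrHeight}$, and then the induced character formula converts this into $\int_{\B\ZZ/2}(-1)^2=(1-2^{\chrHeight})+2^{\chrHeight-1}=-2^{\chrHeight-1}+1$. As written, however, the proof has a genuine gap at both of the places you yourself flagged as ``matching conventions.'' For comparison, the paper's own argument avoids local systems entirely: it observes that $x\mapsto 2\int_{\B\ZZ/2}x^2-x^2$ is an additive function of $x\in\pi_0\En$, evaluates it at $x=1$ to get $2|\B\ZZ/2|-1=2^{\chrHeight}-1$, and then solves for the value at $x=-1$.
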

        \begin{proof}
            The map 
            \begin{equation*}
                \begin{split}
                    \pi_0 \En & \to \pi_0 \En \\
                    x & \mapsto 2\int_{\B\Cn[2]}x^2-x^2
                \end{split}
            \end{equation*}
            is simply seen to be additive, and at $1$ it is equal to $2|\B\ZZ/2|-1 = 2^\chrHeight-1$. Therefore 
            \begin{equation*}
                \int_{\B\ZZ/2}(-1)^2 = \frac{1}{2}(2\int_{\B\ZZ/2}(-1)^2 - (-1)^2) + \frac{(-1)^2}{2} = -2^{\chrHeight-1}+1.
            \end{equation*}
        \end{proof}

        \begin{corollary}\label{cor:no-truncated-units-for-n>=4}
            Let $\chrHeight \ge 4$. Then $-1\in \pi_0 \En\units$ is not a truncated unit.  
        \end{corollary}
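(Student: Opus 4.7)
The plan is a proof by contradiction. Suppose $-1 \in \pi_0 \En\units$ is a truncated unit, so there exists $\pichar \in \pinD[\chrHeight]$ with $\omega(\pichar) = -1$. I would then compute the semiadditive integral $\int_{\B\ZZ/2} \rho_{\pichar, 1} \in \pi_0 \En$ in two incompatible ways and derive a relation that fails in $\pi_0 \En$.

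First, I would apply \cref{cor:representation-is-null-for-n>=4} to conclude that $\rho_{\pichar, 1} \colon \B\ZZ/2 \to \En\units$ is nullhomotopic. As a map to $\En$ it is therefore the constant map at the basepoint $1 \in \pi_0 \En$, and combining with \cref{lem:cardinality-of-BZ/2} this yields
\[
    \int_{\B\ZZ/2} \rho_{\pichar, 1} \;=\; |\B\ZZ/2| \cdot 1 \;=\; 2^{\chrHeight-1}.
\]
Second, I would identify $\rho_{\pichar, 1}$, viewed as a $\ZZ/2$-equivariant element of $\pi_0 \En$, with the element $(-1)^2$ appearing in \cref{lem:integral-of-(-1)^2}. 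By construction $\rho_{\pichar, 1}$ is precisely the $\ZZ/2$-equivariant tensor square of $\omega(\pichar) = -1$ obtained along $\ZZ/2 \hookrightarrow \Sigma_2 \hookrightarrow \MM$, which is the canonical equivariant structure on $(-1)^{\otimes 2}$ that \cref{lem:integral-of-(-1)^2} refers to. That lemma then gives
\[
    \int_{\B\ZZ/2} \rho_{\pichar, 1} \;=\; -2^{\chrHeight-1} + 1.
\]

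Equating the two calculations yields $2^{\chrHeight-1} = -2^{\chrHeight-1} + 1$ in $\pi_0 \En$, equivalently $2^\chrHeight = 1$. Since $\pi_0 \En \cong \Witt(\Fpbar)[\![u_1,\dots,u_{\chrHeight-1}]\!]$ is a local domain whose maximal ideal contains $2$, the element $2^\chrHeight$ is not a unit, so this is impossible for $\chrHeight \ge 1$; hence the assumption that $-1$ is a truncated unit is false. The only nontrivial verification in this plan is the identification in the second step of $\rho_{\pichar, 1}$ with the canonical equivariant $(-1)^2$ used in \cref{lem:integral-of-(-1)^2}, which should follow immediately by unwinding the definition of $\rho_{\pichar, 1}$ together with that of the braiding functor $\T$ applied to the invertible element $\omega(\pichar) = -1$.
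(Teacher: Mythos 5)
Your proposal is correct and follows essentially the same route as the paper: identify $\rho_{\pichar,1}$ with the $\ZZ/2$-equivariant square $(-1)^2$, compute $\int_{\B\ZZ/2}$ once via the nullhomotopy from \cref{cor:representation-is-null-for-n>=4} together with \cref{lem:cardinality-of-BZ/2}, and once via \cref{lem:integral-of-(-1)^2}, and contradict. Your extra remark that $2^{\chrHeight-1}\neq -2^{\chrHeight-1}+1$ because $2$ lies in the maximal ideal of $\pi_0\En$ just makes explicit what the paper leaves implicit.
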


        \begin{proof}
            Assume otherwise, then there exists $\chrHeight \ge 4$ and $\pichar \in \pinD[\chrHeight]$ that maps to $-1\in \pi_0 \En\units$ under $\omega$. Therefore the map 
            \begin{equation*}
                \rho_{\pichar,1} \colon \B\ZZ/2 \into \MM \xto{\pichar} \In \to \En\units \to \En
            \end{equation*}
            is the map that chooses $(-1)^2$. By \cref{cor:representation-is-null-for-n>=4} this map is nullhomotopic, and therefore, by \cref{lem:cardinality-of-BZ/2} 
            \begin{equation*}
                \int_{\B\ZZ/2}(-1)^2 = |\B\ZZ/2| = 2^{\chrHeight-1}.
            \end{equation*}
            On the other hand, by \cref{lem:integral-of-(-1)^2},
            \begin{equation*}
                \int_{\B\ZZ/2}(-1)^2 = -2^{\chrHeight-1} + 1,
            \end{equation*}
            in contradiction.
        \end{proof}

        Finally, we show:
        
        \begin{lemma}\label{lem:-1-truncated-unit-for-n<=3}
            Let $\chrHeight \le 3$. Then $-1\in \pi_0 \En \units$ is a truncated unit. 
        \end{lemma}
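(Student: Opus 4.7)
The plan is to handle the cases $n \in \{0,1,2,3\}$ by exhibiting an explicit element $\alpha_n \in \widehat{\pi}_n(\mathbb{S}_{(2)}) = \pi_0 I_n$ with $\omega(\alpha_n) = -1$. For $n=0$ this is immediate: the height-$0$ orientation is by construction the inclusion of $2$-primary roots of unity $\mathbb{Q}_2/\mathbb{Z}_2 \hookrightarrow \pi_0 \En[0]^\times$, and it sends $\tfrac{1}{2}$ to $-1$. For $n \in \{1,2,3\}$, I will take $\alpha_n$ to be the unique nontrivial $2$-torsion element of $\widehat{\pi}_n$, namely $\hat{\eta}$, $\widehat{\eta^2}$, and $4\hat{\nu}$ respectively. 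These are chosen so that $\eta^n \cdot \alpha_n = \tfrac{1}{2} \in \widehat{\pi}_0$, using the non-vanishing of $\eta^n$ in $\pi_n \mathbb{S}_{(2)}$ for $n \le 3$ (recalling $\eta^3 = 4\nu$).

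Since the $2$-power torsion of $\pi_0 \En^\times$ equals $\{\pm 1\}$ as noted before \cref{def:truncated-units}, it suffices to rule out $\omega(\alpha_n) = 1$, which I will do by a contradiction argument in the spirit of \cref{cor:no-truncated-units-for-n>=4}, run in the reverse direction. Assume $\omega(\alpha_n) = 1$. By \cref{lem:character-of-rho-is-character-of-Tm}, the representation $\En[\rho_{\alpha_n,1}]$ would then be the trivial $\mathbb{Z}/2$-action on $\En$, so its character $\chi_{\rho_{\alpha_n,1}}\colon \L B\mathbb{Z}/2 \to \En(K)$ would be constantly $1$. Pushing through the commutative character diagram (\cref{cor:commutativity-of-character-diagram-suspension-In}) yields a trivial transchromatic character $\chi^{n,\HKR}_{\rho_{\alpha_n,1}}\colon \L^n B\mathbb{Z}/2 \to C^n_0(K) \to \En[0](K_0)$.

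On the other hand, \cref{cor:character-of-alpha} computes this same transchromatic character directly by induction on the height: at the bottom height $t=0$, one uses $\alpha_0 = \eta^n \cdot \alpha_n = \tfrac{1}{2}$, whose image under the height-$0$ orientation is $\omega_0 = -1$ by the $n=0$ case established above. The inductive formula then forces $\chi^{n,\HKR}_{\rho_{\alpha_n,1}}$ to take the value $-1$ on the non-$2$-divisible connected components of $\L^n B\mathbb{Z}/2 = \bigsqcup_{(\mathbb{Z}/2)^n} B\mathbb{Z}/2$, which exist since $n \ge 1$. This contradicts the trivial character from the previous paragraph, forcing $\omega(\alpha_n) = -1$.

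The main obstacle is to ensure that the direct HKR computation in \cref{cor:character-of-alpha} does not implicitly depend on $\omega(\alpha_n)$ at the top height $n$, otherwise the argument would be circular. The key point is that the inductive formula uses only the orientations $\omega_t$ at the intermediate heights $t < n$, and at $t = 0$ it is pinned down by the standard height-$0$ orientation on $\En[0](K_0)$, independent of any choice made at height $n$. The comparison between the two resulting computations of $\chi^{n,\HKR}_{\rho_{\alpha_n,1}}$ then yields the desired contradiction.
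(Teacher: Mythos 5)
Your overall mechanism — anchor at height $0$ via multiplication by $\eta^{\chrHeight}$, then play the monoidal character of the braiding against the transchromatic character through the decategorification — is the same one the paper uses; the paper just runs it one height at a time and converts the $\pm 1$ dichotomy into the integer comparison $\int_{\B\ZZ/2}(-1)^2=-2^{\chrHeight-1}+1$ versus $|\B\ZZ/2|=2^{\chrHeight-1}$ (\cref{lem:integral-of-(-1)^2}, \cref{lem:cardinality-of-BZ/2}, \cref{lem:integral-Enp-to-En}), which avoids comparing character values across different splitting algebras and geometric points. However, your writeup has two genuine problems. First, the choice at $\chrHeight=3$ is wrong: the unique nontrivial $2$-torsion element of $\widehat{\pi}_3\cong \ZZ/8$ is $4\hat{\nu}$, and since $\eta^3=4\nu$ one has $(\eta^3\cdot 4\hat{\nu})(1)=(4\hat{\nu})(4\nu)=16/8=0$ in $\QQ_2/\ZZ_2$, not $1/2$; in fact, granting the lemma, $\omega(4\hat{\nu})=\omega(\hat{\nu})^4=+1$, so for your chosen $\alpha_3$ the equality $\omega(\alpha_3)=-1$ you aim to prove is false and no contradiction can arise. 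You must take the generator $\hat{\nu}$ (or an odd multiple), exactly as the paper does; then $\eta^3\cdot\hat{\nu}=1/2$ and the height-$0$ anchor $\omega_0=-1$ is restored.

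Second, the pivotal step ``if $\omega(\alpha_{\chrHeight})=1$ then $\En[\rho_{\alpha_{\chrHeight},1}]$ is the trivial $\ZZ/2$-action, so its character $\L\B\ZZ/2\to\En(K)$ is constantly $1$'' is not what \cref{lem:character-of-rho-is-character-of-Tm} gives, and as written it fails. That lemma does not trivialize the representation (nullhomotopy of $\rho_{\alpha,k}$ is exactly the delicate point, cf. \cref{lem:representation-is-trivial-iff-character}); it only identifies the character of the $\Enm$-module local system $\Enm{}[\rho_{\alpha_{\chrHeight},1}]$ with that of $\Tm[2](\Sigma^{\alpha_{\chrHeight}}\Enm)$, valued in $\Enm(K)$ — one categorical level below where you placed it. At height $\chrHeight$ itself, the (transgression) character of $\rho_{\alpha_{\chrHeight},1}$ is governed by $\eta\cdot\alpha_{\chrHeight}$ (this is \cref{cor:character-of-alpha} at $t=\chrHeight-1$), not by $\omega(\alpha_{\chrHeight})$, so ``constantly $1$ in $\En(K)$'' does not follow from your hypothesis. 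Even after correcting the indexing, you still owe the telescoping you summarize as ``pushing through the character diagram'': to promote ``constant $1$ in $\Enm(K)$'' to ``constant $1$ after $\chrHeight$ loops in $\En[0](K_0)$'' you need \cref{lem:HKR-vs-twice-HKR} repeatedly, and to contradict the value $\omega_0=-1$ from \cref{cor:character-of-alpha} you must justify that the two computations can be compared in a common ring despite the different geometric points (the paper handles exactly this by integrality of power-operation characters, and in the actual proof by comparing honest integers). The skeleton is viable and close in spirit to the paper's induction, but the $\chrHeight=3$ input must be corrected and the character step restated at the right height with this glue supplied.
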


        \begin{proof}
            For $\chrHeight=0$ the claim is trivial. 
            
            Note that for $1 \le \chrHeight \le 3$, the group $\pinD[\chrHeight]$ is cyclic and $2$-power torsion. Therefore, the claim is equivalent to saying that the generator of $\pinD[\chrHeight]$ is sent to $(-1)\in \pi_0 \En\units$.
            
            We prove the claim by induction on $\chrHeight$.
            Assume the claim is true for $\chrHeight$ and that $\chrHeight+1 \le 3$. 
            Let $\pichar \in \pinD$ be the generator (i.e.\ $\hat{\eta}$, $\hat{\eta^2}$, $\hat{\nu}$ for $\chrHeight=0,1,2$ respectively). Note that, $\eta \cdot \pichar$ is $(-1)\in \pinD[0] = \QQ/\ZZ$, when $\chrHeight = 0$, and is the generator of $\pinD[\chrHeight]$ when $\chrHeight > 0$.
            In both cases, by it being $-1$ or by the induction hypothesis, $\eta \cdot \pichar$ is sent to $(-1) \in \pi_0 \En\units$.
            
            Consider the decategorification map
            \begin{equation*}
                \de \colon ((\ModEn)\dblspace)^{\B\ZZ/2} \to \KTnp(\En)^{\B\ZZ/2}\to \Enp(L)^{\B\ZZ/2}.
            \end{equation*}
            Then, by \cref{lem:spherical-roots-of-unity-Nullstellensatz-shift}, it sends the representation $\En{}[\rho_{\pichar,1}]$ to 
            \begin{equation*}
                \rho_{\pichar,1} \simeq \de(\En{}[\rho_{\pichar,1}]) \colon \B\ZZ/2 \into \MM \xto{\pichar} \Inplus \to \Enp(L)\units \to \Enp(L).
            \end{equation*}
            By \cref{lem:integral-Enp-to-En} we get 
            \begin{equation*}
                \int_{\L\B\ZZ/2}^{\En}\chi_{\En{}[\rho_{\pichar,1}]}=\dim(\colim_{\B\ZZ/2} \En{}[\rho_{\pichar,1}])=\int_{\B\ZZ/2}^{\Enp} \rho_{\pichar,1}.
            \end{equation*}
            By our assumption, \cref{cor:character-of-alpha} and \cref{lem:integral-of-(-1)^2}:
            \begin{equation*}
                \int_{\L\B\ZZ/2}^{\En}\chi_{\rho_{\pichar,1}}= \int_{\B\ZZ/2}^{\En}(-1)^2-|\B\ZZ/2|_{\En}= -2^{\chrHeight-1}+1-2^{\chrHeight-1}=-2^{n}+1,
            \end{equation*}
            which by \cref{lem:integral-of-(-1)^2} and \cref{lem:cardinality-of-BZ/2}, is euqal to $\int_{\B\ZZ/2}^{\Enp} (-1)^2$ and not to $\int_{\B\ZZ/2}^{\Enp} 1^2=|\B\ZZ/2|_{\Enp}$.
        \end{proof}

        We summarize the discussion about truncated units, specifically \cref{cor:no-truncated-units-for-n>=4} and \cref{lem:-1-truncated-unit-for-n<=3}:
        \begin{proposition}\label{prop:truncated-units}
            Let $p$ be a prime and $\chrHeight \ge 0$. Then
            \begin{enumerate}
                \item If $p > 2$ or $\chrHeight \ge 4$ then there are no non-trivial truncated units in $\En\units$.
                \item If $p=2$ and $\chrHeight \le 3$ then $\pm 1$ are exactly the truncated units in $\En\units$.
            \end{enumerate}
        \end{proposition}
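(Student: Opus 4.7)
The plan is to organize the proof as a case analysis based on the parity of $p$ and the range of $\chrHeight$, drawing directly on the pieces already developed in the excerpt.

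First, I would reduce to analyzing $-1$ at the prime $2$. Since $\omega$ is a map of connective spectra with $\In$ being $p$-power torsion (being a bounded colimit of $p$-local truncated Brown--Comenetz duals), its image lies in $\pi_0 \En\units[p^\infty]$. Since $\pi_0 \En = \Witt(\Fpbar)[\![u_1,\dots,u_{\chrHeight-1}]\!]$, the constant term governs $p$-power torsion units, and so $\pi_0 \En\units[p^\infty] = \{1\}$ when $p$ is odd, and $\{\pm 1\}$ when $p = 2$. This immediately settles the odd prime case: the only truncated unit is $1$.

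For $p = 2$ and $\chrHeight \geq 4$, the claim that $-1$ is not a truncated unit is exactly \cref{cor:no-truncated-units-for-n>=4}, which was already proved by the following strategy: supposing $-1 = \omega(\pichar)$ for some $\pichar \in \pinD[\chrHeight]$, the representation $\rho_{\pichar,1}\colon \B\ZZ/2 \to \En$ picks out $(-1)^2 = 1$; \cref{cor:representation-is-null-for-n>=4} shows this representation is nullhomotopic, so $\int_{\B\ZZ/2}(-1)^2$ must equal the cardinality $|\B\ZZ/2| = 2^{\chrHeight-1}$, contradicting the direct computation $\int_{\B\ZZ/2}(-1)^2 = -2^{\chrHeight-1}+1$ from \cref{lem:integral-of-(-1)^2}. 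The key ingredient here is \cref{lem:representation-is-trivial-iff-character} applied at $t = \chrHeight - 4$, using that $\eta^4 \cdot \pichar = 0$ in $\pinD[\chrHeight-4]$ (since $\pinD$ vanishes in the relevant degrees).

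For $p = 2$ and $\chrHeight \leq 3$, the claim that $-1$ is a truncated unit is \cref{lem:-1-truncated-unit-for-n<=3}, proved by induction on $\chrHeight$: the base case $\chrHeight = 0$ is trivial, and the inductive step exploits that for $\chrHeight \in \{1,2,3\}$ the group $\pinD[\chrHeight]$ is cyclic and $2$-power torsion, with generator $\hat\eta, \hat{\eta^2}, \hat\nu$ respectively, whose image under multiplication by $\eta$ is (inductively) $-1 \in \pi_0 \En[\chrHeight-1]\units$. Comparing the two integrals $\int^{\En}_{\L\B\ZZ/2}\chi_{\En[\rho_{\pichar,1}]}$ and $\int^{\Enp}_{\B\ZZ/2}\rho_{\pichar,1}$ via \cref{lem:integral-Enp-to-En}, and using \cref{cor:character-of-alpha} together with \cref{lem:integral-of-(-1)^2} and \cref{lem:cardinality-of-BZ/2}, one sees $\int^{\Enp}_{\B\ZZ/2}\rho_{\pichar,1} = -2^{\chrHeight}+1$, which matches $\int^{\Enp}_{\B\ZZ/2}(-1)^2$ rather than $\int^{\Enp}_{\B\ZZ/2}1^2 = 2^{\chrHeight}$. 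Hence $\omega(\pichar) = -1$.

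The main obstacle is the asymmetry in arguments between $\chrHeight \leq 3$ and $\chrHeight \geq 4$: we must rule out $-1$ being truncated in high heights via the full transchromatic character machinery (notably the commutativity of the character diagram in the two special cases established in \cref{cor:commutativity-of-character-diagram-suspension-In} and \cref{lem:commutativity-of-character-diagram-En}), while in low heights we verify $-1$ \emph{is} truncated via an explicit integration computation. Assembling everything cleanly thus reduces to invoking the two named results, plus the odd-prime observation, which gives the stated dichotomy.
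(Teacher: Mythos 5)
Your proposal is correct and takes essentially the same route as the paper, whose own proof of this proposition is exactly the assembly you describe: the $p$-power-torsion observation following \cref{def:truncated-units} (settling $p>2$ and reducing $p=2$ to the element $-1$), combined with \cref{cor:no-truncated-units-for-n>=4} for $p=2$, $\chrHeight\ge 4$ and \cref{lem:-1-truncated-unit-for-n<=3} for $p=2$, $\chrHeight\le 3$. The only blemish is your parenthetical reason for the vanishing of $\eta^4\cdot\pichar$ in the high-height case: it holds because $\eta^4=0$ in $\pi_4\SS$, not because $\pinD[\chrHeight-4]$ vanishes (it need not, e.g.\ $\pinD[0]\neq 0$), but this is immaterial since you invoke \cref{cor:representation-is-null-for-n>=4} as already established.
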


        \subsubsection{The $\TT$-action and the braiding character}
        \label{subsubsec:T-action-chromatic}

        We now combine the previous results to show that the reduced $\B\ZZ/p^k$-action on the monoidal dimensions is trivial in our cases of interest. We then leverage this to compute the braiding character at all primes and heights.
        
        \begin{proposition}\label{prop:chromatic-T-action-trivial}
            Let $\pichar \in \pinD$. Then the action of $\ZZ/p^k \subseteq\TT$ on the dimension of $\En\shift{1} \in \Gr^{\pichar}_\ZZ\ModEn$ is trivial.  
        \end{proposition}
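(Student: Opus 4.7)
By \cref{cor:dim-of-shift-in-cyc-closed} and the fact that $\dim(\En) = 1$ has trivial $\TT$-action (\cref{cor:no-action-on-dim-En}), the dimension of $\En\shift{1}$ agrees, as a $\TT$-equivariant element, with $\eta_{\pichar}$, whose $\TT$-action is represented by $\Sigma \SS[\B\TT] \xto{\Tr} \SS \xto{\pichar} \Inplus \to (\ModEn)\units$ via \cref{prop:dim-is-transfer}. The plan is a case analysis on the chromatic height $\chrHeight$.

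When $\chrHeight \le 2$, \cref{lem:trivial-T-action-0-1} (for $\chrHeight \le 1$) and \cref{lem:trivial-T-action-2} (for $\chrHeight = 2$) apply directly to yield the triviality of the full $\TT$-action, respectively of every $\Ck$-action, on $\eta_{\pichar}$. When $\chrHeight = 3$, the vanishing $\pi_4(\SS) = 0$ forces $\pinD = 0$, so $\pichar$ must be zero, $\Gr^\pichar_\ZZ \ModEn \simeq \Gr_\ZZ \ModEn$, and $\dim(\En\shift{1}) = \dim(\En) = 1$ retains the trivial $\TT$-action.

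For $\chrHeight \ge 4$, the strategy is to invoke \cref{cor:representation-is-null-for-n>=4}, which asserts that $\rho_{\pichar, k} \colon \B\ZZ/p^k \to \Sigma \In$ is nullhomotopic. By \cref{cor:braiding-depends-only-on-dim-as-transfer}, the restricted $\ZZ/p^k$-action on $\dim(\En\shift{1})$ is governed by the composite
\begin{equation*}
\Sigma \SS[\B\ZZ/p^k] \to \Sigma \SS[\B\TT] \xto{\Tr} \SS \xto{\T \En\shift{1}} (\Gr^\pichar_\ZZ \ModEn)\units \to (\ModEn)\units.
\end{equation*}
On the reduced summand $\Sigma \redSS[\B\ZZ/p^k]$ this composite factors through $\rho_{\pichar, k}$: the functor $\T\pichar$ factors through the group completion $\MM \to \SS$, and the stabilized embedding $\B\ZZ/p^k \hookrightarrow \B\Sm[p^k] \hookrightarrow \MM$ realises the reduced $\ZZ/p^k$-transfer. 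Hence the nullity of $\rho_{\pichar, k}$ yields the triviality of the $\ZZ/p^k$-action.

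The main technical obstacle I anticipate is this last identification: matching the reduced $\ZZ/p^k$-transfer post-composed with $\pichar$ against $\rho_{\pichar, k}$ on the nose, rather than only after applying a transchromatic character. This is of the same flavour as the compatibilities implicit in \cref{cor:braiding-depends-only-on-dim-as-transfer} and \cref{lem:character-of-rho-is-character-of-Tm}, and ultimately rests on the Barratt--Priddy--Quillen-type relation between the $\TT$-transfer and the inclusions of cyclic subgroups into symmetric groups after stabilization.
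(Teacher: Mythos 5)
Your treatment of the low heights is correct and, there, genuinely simpler than the paper's uniform argument: for $\chrHeight \le 2$ the lemmas \cref{lem:trivial-T-action-0-1} and \cref{lem:trivial-T-action-2} apply verbatim to $\cC = \ModEn$, and for $\chrHeight = 3$ (in fact also $\chrHeight = 4$, which you fold into the hard case) one has $\pinD = \widehat{\pi}_{\chrHeight+1}(\SS) = 0$, so there is nothing to prove. The gap is in your case $\chrHeight \ge 4$, and it is where all the content of the proposition lies. \cref{cor:representation-is-null-for-n>=4} does not say what you quote: it is proved in the part of \textsection 5.3.2 where $p=2$ is assumed, it concerns $\pichar \in \pinD[\chrHeight] = \widehat{\pi}_{\chrHeight}(\SS)$ (one degree below the $\pichar \in \pinD = \widehat{\pi}_{\chrHeight+1}(\SS)$ of the proposition, paired with the height-$\chrHeight$ orientation), and the nullhomotopy it provides is of the composite $\B\ZZ/2^k \to \In \to \En(K)\units$, i.e.\ only \emph{after} applying the orientation, via the transchromatic criterion \cref{lem:representation-is-trivial-iff-character}. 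It does not give a nullhomotopy of a map $\B\ZZ/p^k \to \Sigma\In$ as you assert; that would be a purely stable-homotopy-theoretic statement, independent of $\En$, which the paper never proves and which is exactly the kind of claim that fails -- its failure at $p=2$ in low heights is what produces the answer $\Sigma\En$ (dimension $-1$) in \cref{thm:chromatic-braiding-character}(2). To bring the truncated-units analysis to bear on the proposition's $\pichar \in \widehat{\pi}_{\chrHeight+1}$ you must, as the paper does, decategorify: by \cref{lem:spherical-roots-of-unity-Nullstellensatz-shift} the representation $\Tm[p^k]\En\shift{1}$ is sent under $\de$ to $\rho_{\pichar,k} \colon \B\ZZ/p^k \to \Enp(L)\units$, and it is \cref{prop:truncated-units} applied at height $\chrHeight+1$ (together with the arguments of \textsection 5.3.2 behind \cref{cor:representation-is-null-for-n>=4}) that makes this decategorified local system constant when $p$ is odd or $\chrHeight+1 \ge 4$.

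Even granting that, your argument stops one step short: constancy of the decategorified local system in $\Enp(L)$ must be transported back to the $\ZZ/p^k$-action on $\dim(\En\shift{1}) \in \pi_0\En$ at height $\chrHeight$. This is not the Barratt--Priddy--Quillen-type bookkeeping you flag as the remaining obstacle -- that identification is already supplied by \cref{prop:dim-is-transfer} and \cref{lem:character-of-Tm}. What is actually needed is the compatibility between the monoidal character computed in $\ModEn$ and the transchromatic character of the decategorification, namely \cref{cor:commutativity-of-character-diagram-suspension-In}, combined with injectivity on $\pi_0$ of $\En^{\L\B\ZZ/p^k} \to \En(K)^{\L\B\ZZ/p^k}$ (goodness of $\ZZ/p^k$) and then \cref{lem:character-of-Tm} to read off the $\ZZ/p^k$-action on the dimension from the value of the character on the $p^k$-cycle component. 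That character-diagram step is the heart of the paper's proof of this proposition and is absent from your outline, so as written the proposal does not close the case $\chrHeight \ge 5$ (nor the odd-prime case in those heights).
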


        \begin{proof}
            Assume first that $p$ is odd or $\chrHeight \ge 4$. By \cref{prop:truncated-units}(1), the map 
            \begin{equation*}
                \rho_{\pichar,k} \colon \B\ZZ/p^k \to \SS \xto{\pichar} \Inplus \to \Enp\units
            \end{equation*}
            is nullhomotopic, and therefore $\En{}[\rho_{\pichar,k}]$ is mapped, under the decategorification map,
            \begin{equation*}
                \de \colon ((\ModEn)\dblspace)^{\B\ZZ/2^k} \to \Enp(L)^{\B\ZZ/2^k}
            \end{equation*}
           to a constant map choosing $1$. 
           By \cref{cor:commutativity-of-character-diagram-suspension-In}, using that the map $\En^{\L\B\ZZ/2^k}\to \En(K)^{\L\B\ZZ/2^k}$ is injective on $\pi_0$, the character of $\En{}[\rho_{\pichar,k}]$ is the constant map choosing 1. 
           But by \cref{lem:character-of-Tm}, this character, on a connected component of an element different from $0$, is the $\B\ZZ/p^k$ action on $\dim(\En\shift{1})$ coming from the $\TT$ action.

           Assume now that $p = 2$ and $\chrHeight \le 3$. Then the same argument, shows that the character of $\En{}[\rho_{\pichar,k}]$ agrees with the character of $(\Sigma \En)^{\otimes 2^k} \in (\ModEn)^{\B\ZZ/2^k}\textit{}$. The claim now follows from \cref{cor:no-action-on-dim-En}.   
        \end{proof}

        \begin{corollary}\label{cor:chromatic-T-action-zeta-trivial}
            Let 
            \begin{equation*}
                \zeta \colon \pinD \to\Sigma^2\In\to  \Mod_{\ModEn}
            \end{equation*}
            as in \cref{def:zeta}. Then the restricted $\ZZ/p^k$-action on the dimension of $\En\shift{\pichar} \in \ModEn{}[\omega^{(0)}_{\SS}]$ is trivial for all $\pichar \in\pinD$.
        \end{corollary}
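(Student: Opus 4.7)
The plan is to reduce the statement directly to \cref{prop:chromatic-T-action-trivial} by comparing the twisted $\pinD$-graded category $\ModEn[\omega^{(0)}_{\SS}] = \Gr^{\zeta}_{\pinD}\ModEn$ with the twisted $\ZZ$-graded categories $\Gr^{\pichar}_{\ZZ}\ModEn = \Gr^{\Zchar_{\pichar}}_{\ZZ}\ModEn$ as $\pichar$ ranges over $\pinD$.

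First, I would identify the two twisting data. The element $\pichar \in \pinD = \pi_0(\Inplus)$ corresponds on $\pi_0$ to a map of connective spectra $\pichar \colon \ZZ \to \pinD$, obtained as the $0$-truncation of $\pichar \colon \SS \to \Inplus$. Using the cofiber sequence $\Sigma\In \to \Inplus \to \pinD \to \Sigma^2\In$ from \cref{def:zeta} and the construction of $\Zchar_{\pichar}$ in \cref{def:dual-stable-stems}, the composite
\begin{equation*}
    \ZZ \xto{\pichar} \pinD \xto{\zeta} \Sigma^2\In \to \Sigma^2\ounit_{\cC}\units
\end{equation*}
coincides with $\Zchar_{\pichar}$; in symbols, $\zeta \circ \pichar \simeq \Zchar_{\pichar}$ in $\Map\Enull(\ZZ,\Sigma\Mod_{\cC}\units)$.

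Next, exactly as in the proof of \cref{lem:braiding-of-A-determined-by-Z}, the map $\pichar \colon \ZZ \to \pinD$ is a map of connective spectra over $\Sigma\Mod_{\ModEn}\units$, and therefore induces a $\cC$-linear symmetric monoidal functor
\begin{equation*}
    \pichar_! \colon \Gr^{\Zchar_{\pichar}}_{\ZZ}\ModEn \to \Gr^{\zeta}_{\pinD}\ModEn = \ModEn[\omega^{(0)}_{\SS}],
\end{equation*}
which sends $\En\shift{1}$ to $\En\shift{\pichar}$. Since $\pichar_!$ is $\cC$-linear and symmetric monoidal, monoidal dimensions together with their $\TT$-actions are preserved: the induced map on endomorphisms of the unit is the identity of $\End(\ounit_{\ModEn})\units$, and hence
\begin{equation*}
    \dim(\En\shift{\pichar}) \simeq \pichar_!\bigl(\dim(\En\shift{1})\bigr) \qin \End(\ounit_{\ModEn}\units)^{\B\TT}.
\end{equation*}

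Finally, applying \cref{prop:chromatic-T-action-trivial} to $\Gr^{\pichar}_{\ZZ}\ModEn = \Gr^{\Zchar_{\pichar}}_{\ZZ}\ModEn$, the restricted $\ZZ/p^k$-action on $\dim(\En\shift{1})$ is trivial, and transporting this along $\pichar_!$ yields the desired triviality of the $\ZZ/p^k$-action on $\dim(\En\shift{\pichar})$ in $\ModEn[\omega^{(0)}_{\SS}]$. The only real content beyond bookkeeping is the identification $\zeta \circ \pichar \simeq \Zchar_{\pichar}$, which is a direct consequence of the definitions of $\zeta$ and $\Zchar_{\pichar}$ via the same cofiber sequence $\Sigma\In \to \Inplus \to \pinD$; everything else is formal.
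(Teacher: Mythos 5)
Your proposal is correct and is essentially the paper's own argument: the paper's proof consists precisely of invoking the symmetric monoidal $\cC$-linear functor $\Gr^{\pichar}_{\ZZ}\ModEn \to \ModEn[\omega^{(0)}_{\SS}]$ sending $\En\shift{1}$ to $\En\shift{\pichar}$ and then applying \cref{prop:chromatic-T-action-trivial}. You merely spell out the details the paper leaves implicit, namely the identification $\zeta\circ\pichar \simeq \Zchar_{\pichar}$ coming from the cofiber sequence $\Sigma\In \to \Inplus \to \pinD$ and the fact that such a functor preserves dimensions together with their $\TT$-actions since both categories share the unit.
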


        \begin{proof}
            There is a symmetric monoidal functor $\Gr^{\pichar}_{\ZZ} \ModEn \to \ModEn{}[\omega^{(0)}_{\SS}]$, sending $\En\shift{1}$ to $\En\shift{\pichar}$.
        \end{proof}

        \begin{corollary}\label{cor:T-action-trivial-in-zeta}
            Let $V\in (\ModEn{}[\omega^{(0)}_{\SS}])\dbl$. Then the restricted $\ZZ/p^k$-action on $\dim(V)$ is trivial. 
        \end{corollary}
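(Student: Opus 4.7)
The plan is to reduce the claim for a general dualizable $V$ to the case of shifts $\En\shift{\pichar}$ already handled in \cref{cor:chromatic-T-action-zeta-trivial}. First, by \cref{def:zeta} the category $\ModEn{}[\omega^{(0)}_{\SS}]$ is a Thom category over $\ModEn$, hence rigid by \cref{lem:Thom-is-rigid}; therefore dualizable objects coincide with $\ModEn$-atomic objects, which in turn coincide with compact objects of the $\pinD$-graded $\ModEn$-linear category $\Fun(\pinD, \ModEn)$ (the underlying $\ModEn$-linear structure is unchanged by the twist). Since $\pinD$ is discrete, a standard argument then shows that such a compact object decomposes as
\begin{equation*}
    V \simeq \bigoplus_{i=1}^r V_i\shift{\pichar_i},
\end{equation*}
with $V_i \in (\ModEn)\dbl = \Perf(\En)$ and $\pichar_i \in \pinD$.

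Next I would invoke additivity of the monoidal dimension. Since $\ModEn{}[\omega^{(0)}_{\SS}]$ is stable, \cref{cor:dim-splits-exact-sequences} gives
\begin{equation*}
    \dim(V) \simeq \sum_{i=1}^r \dim(V_i\shift{\pichar_i}) \qin \End(\En)^{\B\TT},
\end{equation*}
with compatible $\TT$-actions, so it suffices to analyze a single summand. The Day-convolution identity $V_i\shift{\pichar_i} \simeq V_i\shift{0} \otimes \En\shift{\pichar_i}$ holds inside $\ModEn{}[\omega^{(0)}_{\SS}]$, and multiplicativity of the equivariant monoidal dimension yields
\begin{equation*}
    \dim(V_i\shift{\pichar_i}) \simeq \dim(V_i\shift{0})\cdot \dim(\En\shift{\pichar_i}) \qin \End(\En)^{\B\TT}.
\end{equation*}
The symmetric monoidal inclusion $\ModEn \to \ModEn{}[\omega^{(0)}_{\SS}]$, $V_i\mapsto V_i\shift{0}$, preserves equivariant dimensions, so $\dim(V_i\shift{0})$ agrees with $\dim(V_i)$ computed in $\ModEn$, equipped with its intrinsic $\TT$-action.

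Finally I would combine the previously established triviality results: by \cref{cor:no-action-on-dim-En} the full $\TT$-action on $\dim(V_i)$ is trivial, and by \cref{cor:chromatic-T-action-zeta-trivial} the restricted $\ZZ/p^k$-action on $\dim(\En\shift{\pichar_i})$ is trivial for every $k$. Hence each product $\dim(V_i)\cdot \dim(\En\shift{\pichar_i})$ has trivial restricted $\ZZ/p^k$-action, and summing over $i$ concludes the argument. The only nontrivial technical point is the opening step: the decomposition of a dualizable $V$ as a finite direct sum of shifted $\En$-perfect modules. Everything that follows is a clean additivity/multiplicativity calculation, so this structural decomposition of dualizable objects in the twisted graded category is where I would expect to spend the most care.
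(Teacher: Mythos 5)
Your proposal is correct and is essentially the paper's argument: decompose $V$ over $\pinD$ into shifted pieces, use multiplicativity of the equivariant dimension to write $\dim(V_{\pichar}\shift{\pichar}) = \dim(V_{\pichar})\cdot\dim(\En\shift{\pichar})$, and conclude from \cref{cor:no-action-on-dim-En} and \cref{cor:chromatic-T-action-zeta-trivial}; the only cosmetic difference is that the paper organizes the degree-wise reduction through $\THH(\ModEn{}[\omega^{(0)}_{\SS}])^{\B\ZZ/p^k} \simeq \ounit[\pinD]^{\B\ZZ/p^k}$ rather than through \cref{cor:dim-splits-exact-sequences}. The step you single out as the delicate one is in fact immediate: $\pinD$ is a finite group, so every graded object is the finite direct sum of its graded pieces, and each piece is dualizable in $\ModEn$ as a direct summand of the dualizable image of $V$ under the symmetric monoidal colimit functor $\ModEn{}[\omega^{(0)}_{\SS}] \to \ModEn$ --- which also lets you avoid the identification of atomic with compact objects, a claim that is shaky in the $\Tn$-local setting and unnecessary here.
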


        \begin{proof}        
            Write $V$ as $V = \bigoplus_{\pichar \in \pinD} V_{\pichar}\shift{\pichar}$. 
            The $\ZZ/p^k$-action on $V$ can be understood degree-wise, i.e.\ in $\THH(\ModEn{}[\omega^{(0)}_{\SS}])^{\B\ZZ/p^k} = \ounit[\pinD]^{\B\ZZ/p^k}$. Therefore, it suffices to show that the $\ZZ/p^k$-action on $\dim(V_{\pichar}\shift{\pichar}) = \dim(V_{\pichar}) \cdot \dim(\En\shift{\pichar})$ is trivial. This follows by \cref{cor:no-action-on-dim-En} and \cref{cor:chromatic-T-action-zeta-trivial}.
        \end{proof}

        \begin{corollary}\label{cor:T-action-is-trivial-in-Z-graded}
            Let $V\in \Gr^{\pichar}_{\ZZ}(\ModEn)\dbl$. Then the restricted $\ZZ/p^k$-action on $\dim(V)$ is trivial. 
        \end{corollary}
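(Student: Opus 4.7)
The plan is to deduce this corollary directly from \cref{cor:T-action-trivial-in-zeta} by means of the canonical $\ModEn$-linear symmetric monoidal functor
\begin{equation*}
    \pichar_! \colon \Gr^{\pichar}_{\ZZ}\ModEn \longrightarrow \ModEn{}[\omega^{(0)}_{\SS}] = \Gr^{\zeta}_{\pinD}\ModEn
\end{equation*}
induced by the factorization $\Zchar_{\pichar} \simeq \zeta \circ \pichar \colon \ZZ \xto{\pichar} \pinD \xto{\zeta} \Sigma^2 \In \to \Sigma\Mod_{\ModEn}\units$ (compare \cref{def:dual-stable-stems} and \cref{def:zeta}). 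This is the same functor already invoked in the proof of \cref{cor:chromatic-T-action-zeta-trivial}; it sends $W\shift{n}$ to $W\shift{n\pichar}$.

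The key observation is that $\pichar_!$ preserves $\TT$-equivariant dimensions of dualizable objects. Indeed, symmetric monoidal functors preserve dualizability as well as the evaluation and coevaluation data, and the $\TT$-equivariant dimension is determined by this data (as recalled in \cref{lem:equivariant-trace}). Since $\pichar_!$ is $\ModEn$-linear, the induced map on the endomorphism ring of the unit is the identity on $\En$, so for any $V \in \Gr^{\pichar}_\ZZ(\ModEn)\dbl$ we obtain
\begin{equation*}
    \dim(\pichar_!(V)) \simeq \dim(V) \qin \End(\En)^{\B\TT},
\end{equation*}
compatibly with the $\TT$-actions on both sides.

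Now $\pichar_!(V)$ is dualizable in $\ModEn{}[\omega^{(0)}_{\SS}]$, so \cref{cor:T-action-trivial-in-zeta} ensures that the restricted $\ZZ/p^k$-action on $\dim(\pichar_!(V))$ is trivial, and hence the same holds for $\dim(V)$. There is no substantive obstacle in this reduction; the genuine content has already been carried out in \cref{prop:chromatic-T-action-trivial} and propagated through \cref{cor:chromatic-T-action-zeta-trivial} and \cref{cor:T-action-trivial-in-zeta}.
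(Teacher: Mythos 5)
Your proposal is correct and is essentially the paper's own argument: push $V$ forward along the symmetric monoidal functor $\Gr^{\pichar}_{\ZZ}\ModEn \to \ModEn{}[\omega^{(0)}_{\SS}]$, note that it preserves dualizability and the $\TT$-equivariant dimension (with the shared unit $\En$ identifying the targets), and conclude from \cref{cor:T-action-trivial-in-zeta}. No meaningful differences from the paper's proof.
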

        \begin{proof}
            The symmetric monoidal map $\Gr^{\pichar}_{\ZZ} \ModEn \to \ModEn{}[\omega^{(0)}_{\SS}]$ sends $V$ to a dualizable object, and the dimension to the dimension. As both categories share a unit, this follows from \cref{cor:T-action-trivial-in-zeta}.
        \end{proof}
        
        Putting everything together, from \cref{thm:braiding-depends-only-on-dim}, \cref{cor:T-action-is-trivial-in-Z-graded} and \cref{prop:truncated-units},  we get 

        \begin{theorem}\label{thm:chromatic-braiding-character}
            Let $\pichar\in \pinD$. Then
            \begin{enumerate}
                \item If $p=2$, $n \le 2$ and $\pichar$ is not 2-divisible, then the the braiding character of $\Gr^{\pichar}_{\ZZ} \ModEn$ is the braiding character of $\Sigma\En\in\ModEn$;
                \item Otherwise, the braiding character of $\Gr^{\pichar}_{\ZZ} \ModEn$ is the braiding character of $\En \in \ModEn$.
            \end{enumerate}
        \end{theorem}        


\bibliographystyle{alpha}
\phantomsection\addcontentsline{toc}{section}{\refname}
\bibliography{references}

\end{document}